  \theoremstyle{plain}
\newtheorem{thm}{Theorem}[subsection]
\newtheorem*{thm*}{Theorem}
\newtheorem*{mthm*}{Main Theorem}
\newtheorem{prop}[thm]{Proposition}
\newtheorem{cor}[thm]{Corollary}
\newtheorem{lem}[thm]{Lemma}
\newtheorem{athm}{Theorem}[section]
\newtheorem{alem}[athm]{Lemma}
\newtheorem*{obs*}{Observation}
\newtheorem*{sum*}{Summary}
\newtheorem*{claim}{Claim}
  \theoremstyle{definition}
\newtheorem{prob}{Problem}
\newtheorem*{qn*}{Question}
  \theoremstyle{remark}
\newtheorem*{rmk}{Remark}
\newcommand{\paper}{paper}
\newcommand{\nonumsec}[1]{\section*{#1}
\addcontentsline{toc}{section}{#1}}
\newcommand{\fakeenv}{} %%% prints the emptystring
  \renewcommand{\fakeenv}{#2} %%% So now \fakeenv prints #2
  \theoremstyle{plain}
  \newtheorem*{\fakeenv}{#1~\ref{#2}} %%% so now #2 is the name of a
\title{The dynamics and geometry of free group endomorphisms}
\author{Jean Pierre Mutanguha\thanks{{\it Email:} {\tt \href{mailto:jpmutang@uark.edu}{jpmutang@uark.edu}}, {\it Web address:} {\tt \url{https://mutanguha.com}} \newline Department of Mathematical Sciences, University of Arkansas, Fayetteville, AR}}
\date{}  % Activate to display a given date or no date (if empty),
\begin{document}
\maketitle

\begin{abstract} 
We prove that ascending HNN extensions of free groups are word-hyperbolic if and only if they have no Baumslag-Solitar subgroups. This extends the theorem of Brinkmann that free-by-cyclic groups are word-hyperbolic if and only if they have no free abelian subgroups of rank 2. The paper is split into two independent parts: 

1) We study the dynamics of injective nonsurjective endomorphisms of free groups. We prove a canonical structure theorem that initializes the development of improved relative train tracks for endomorphisms; this structure theorem is of independent interest since it makes many open questions about injective endomorphisms tractable. 

2) As an application of the structure theorem, we are able to (relatively) combine Brinkmann's theorem with our previous work and obtain the main result stated above. In the final section, we further extend the result to HNN extensions of free groups over free factors.
\end{abstract}

\section*{Overview}

Word-hyperbolic groups are groups that act geometrically (properly and cocompactly) on proper $\delta$-hyperbolic spaces and these spaces are the coarse equivalents of negatively curved geometric spaces. Word-hyperbolic groups are an important class of groups introduced by Misha Gromov \cite{Gro87} and one of the fundamental problems in geometric group theory is determining necessary and sufficient algebraic conditions for a group to be word-hyperbolic.
\begin{qn*}[Coarse hyperbolization {\cite[Question~1.1]{BesQns}}] Let $G$ be a group of finite type. If $G$ contains no Baumslag-Solitar subgroups, then must it be word-hyperbolic?
\end{qn*}

Finite type can be thought of as a strengthening of finite presentation for torsion-free groups. Baumslag-Solitar groups are two-generator one-relator groups with the presentation $BS(m,n) = \langle a, t~|~ t^{-1}a^m t = a^n \rangle$ for $m, n \neq 0$. It is now a classical fact that a word-hyperbolic group cannot have subgroups isomorphic to Baumslag-Solitar groups. The question, historically attributed to Gromov, asks if Baumslag-Solitar subgroups are the only essential obstruction to word-hyperbolicity for finite type groups. It has been answered in the affirmative for certain classes of groups: the fundamental groups of closed $3$-manifolds (Thurston~\cite{Thu82, Thu86}, Perelman) and free-by-cyclic groups (Brinkmann~\cite{Bri00}). The main result of this paper extends the latter to a larger class of HNN extensions.

\begin{mthm*}Let $A \le F$ be a free factor of a finite rank free group $F$ and $\phi:A \to F$ be an injective homomorphism. The HNN extension $F*_A$ of $F$ over $A$ and $\phi$  is word-hyperbolic if and only if it contains no $BS(1,n)$ subgroups for $n \ge 1$.
\end{mthm*}

The HNN extension has the presentation $F*_A = \langle F, t ~|~ t^{-1}a t=\phi(a),~\forall a \in A \rangle$. If $A = F = \phi(A)$, then $F*_A = F\rtimes_\phi \mathbb Z$ is a {\bf free-by-cyclic group} and this case was proven by Peter Brinkmann. Our proof of the main theorem above uses Brinkmann's result. If $A=F  \neq \phi(A)$, then $F*_A = F*_\phi$ is a {\bf strictly ascending HNN extension} and coarse hyperbolization for this class of groups was our original motivation. Finally, when $A \neq F$, we more or less reduce this to the case $A = F$.

The case $A = F$ of the main theorem (Theorem~\ref{main}) proved to be difficult since it required an understanding of the dynamics of injective nonsurjective endomorphisms of free groups and no such complete study had been carried out yet. Patrick Reynolds studied the dynamics of irreducible nonsurjective endomorphisms of free groups \cite{Rey11} and we previously used this work to prove an instance of the main theorem where $\phi: F \to F$ is irreducible but not surjective \cite{JPMa, JPMb}. In this case, we showed that the strictly ascending HNN extension $F*_\phi$ is always word-hyperbolic.
Unfortunately, it remained unclear how Reynolds' work could be generalized to all injective nonsurjective endomorphisms and no further progress on the problem had been made.
\medskip

The first part of this paper (Sections~\ref{secStallBCC}--\ref{secRelImm}) is a self-contained systematic study of all injective nonsurjective endomorphisms of free groups and is the main novel contribution of the paper. We hereby present a summary of the important results from these sections. Note that the statements given here are not the complete statements of the cited results.

\begin{sum*}If $\phi:F \to F$ is injective but not surjective, then there is:
\begin{enumerate}
\item a unique maximal proper $[\phi]$-fixed free factor system $\mathcal A$; (Proposition~\ref{maxfixed})
\item a unique minimal proper $[\phi]$-invariant free factor system $\mathcal A^*$ that carries $\mathcal A$ and is fixed under backward iteration, i.e., $\phi^{-1}\cdot \mathcal A^* = \mathcal A^*$; (Proposition~\ref{canonical})
\item a unique expanding $\mathcal A^*$-relative immersion for $\phi$. (Corollary~\ref{uniqueexp})
\end{enumerate}
The free factor systems $\mathcal A$ and $\mathcal A^*$ could be trivial.
\end{sum*}
This means any injective nonsurjective endomorphism $\phi:F \to F$ is induced by a graph map $f:\Gamma \to \Gamma$ such that: 1) some possibly empty or disconnected proper subgraph $\Gamma' \subset \Gamma$ is $f$-invariant and the restriction of $f$ to $\Gamma'$ is a homotopy equivalence; 2) some $f$-invariant proper subgraph $\Gamma'' \supset \Gamma'$ has an $f^n$-image contained in $\Gamma'$ for some $n \ge 1$; and 3) collapsing the lifts of the subgraph $\Gamma''$ in the universal cover $\tilde \Gamma$ induces an {\it expanding immersion} $\bar f:T \to T$ on a simplicial $F$-tree. In particular, if $\Gamma'$ is empty, then $f$ is an expanding graph immersion.
\medskip

In our previous work \cite{JPMa}, we extended a result of Ilya Kapovich \cite{Kap00} and proved the special case of the main theorem with $A = F$ and $\phi:F \to F$ induced by an expanding graph immersion. The expanding immersion was crucial to the proof and so we developed expanding relative immersions with the intent of, in a sense, relativizing our previous proof and extending it to all injective nonsurjective endomorphisms. This application of the structure theorem is carried out in the second part of the paper (Sections~\ref{secPullbacks}--\ref{secHNNExts}).
\medskip

Expanding relative immersions are interesting in their own right as they can potentially be applied to many currently open problems about nonsurjective endomorphisms. To name a few: relative immersions may be needed to extend Hagen-Wise's cubulation of word-hyperbolic free-by-cyclic groups \cite{HW16,HW15} to the groups $F*_A$; we suspect that they could be used to extend the main theorem to a characterization of the possible Dehn/isoperimetric functions of the groups $F*_A$ following work by Bridson-Groves \cite{BG10} (See also Problem~\ref{isoProb}); they provide a framework for generalizing Bestvina-Feighn-Handel's construction of improved relative train tracks for automorphisms \cite{BFH00}, which were used in Brinkmann's, Hagen-Wise's, and Bridson-Groves' results; for a different direction of generalization, the main theorem may be extended to HNN extensions $G*_A$ of torsion-free word-hyperbolic groups $G$ over free factors $A \le G$; and finally, the {\it virtual fibering} question is a particularly interesting problem that was first posed to us by Dawid Kielak and our structure theorem along with the cubulation problem could be significant steps towards its resolution:

\begin{qn*}[Virtual fibering] Suppose $\phi:F \to F$ is injective but not surjective. If $F*_\phi$ is word-hyperbolic, then does it have a free-by-cyclic finite index subgroup? What if it has a quadratic Dehn function?
\end{qn*}

Besides Section~\ref{secHNNExts}, the results in this paper were the author's doctoral thesis. With future applications of the structure theorem in mind, we have chosen to emphasize in this paper the independence of the ``theory'' (first part) and ``application'' (second part). The two parts can be read independently. If you choose to skip the first part, then we encourage you to read the prologue and interlude; these optional sections contain several examples that capture the key points. The main results of the first part are summarized again in the interlude with a bit more detail since we are assuming you will, by then, be familiar with the terms defined in the definitions and conventions section. The summary is all you will need to use expanding relative immersions in your own work. The epilogue is a brief discussion of some questions about endomorphisms in the context of expanding relative immersions.

The main tools used in the first part are Bass-Serre theory, Stallings graphs, bounded cancellation, and train track theory. The second part uses some Bass-Serre theory and the Bestvina-Feighn combination theorem.
\medskip

\noindent \textbf{Acknowledgments:} I am greatly indebted to my advisor Matt Clay. I encountered all the tools used in this paper in our first directed readings, before I even knew what geometric group theory was.

\tableofcontents

%%%%%%%%%%%%%%%%%%%%%%%%%%%%%%%%%%%%%%%%%%%%%%%%%%%%%%%%%%
\nonumsec{Prologue}

We will start with a few motivating examples to illustrate the main construction of the first part of this \paper.
Let $F = F(a,b)$ be the free group on two generators and $\phi, \varphi, \psi:F \to F$ be injective nonsurjective endomorphisms given by 
\[ \phi:(a,b) \mapsto (ab, ba), \quad~
\varphi:(a, b) \mapsto (a, bab^{-1}), \quad \text{and} \quad
\psi:(a,b) \mapsto (a, abab).\]
The {\it standard rose} is a rose $R$ with two petals and an identification $F = \pi_1(R)$ such that the basis $\{a, b\}$ corresponds to the petals; it will be graphically represented by a rose with two oriented petals labelled by $a$ and $b$ respectively. 

\begin{figure}[ht]
 \centering 
 \includegraphics[scale=.9]{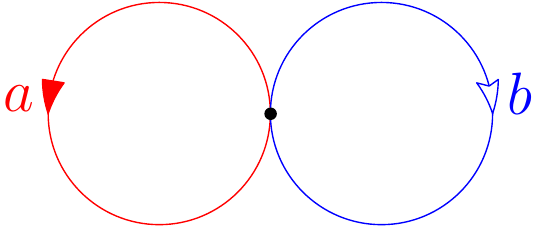}
 \caption{The standard rose.}
 \label{figstd}
\end{figure}

For each integer $k \ge 1$, let $\hat R_k$ be the cover of $R$ corresponding to the subgroup $\phi^k(F)$, i.e., it is the quotient of the universal cover $\tilde R$ by the action of the subgroup $\phi^k(F)$. The Stallings graph $S(\phi^k(F))$ is the core of $\hat R_k$, i.e., the smallest connected subgraph of $\hat R_k$ with rank 2. Alternatively, we can define $\tilde R(\phi^k(F)) \subset \tilde R$ to be the smallest subtree invariant under the $\phi^k(F)$-action and $S(\phi^k(F))$ to be the quotient $\phi^k(F)\backslash \tilde R(\phi^k(F))$. Evidently, there is a natural isomorphism $\phi^k(F) \cong \pi_1(S(\phi^k(F)))$. A Stallings graph $S$ is graphically represented as an $R$-digraph, i.e., $S$ will be an oriented graph whose oriented edges are labelled by $a$ or $b$ as shown in Figure~\ref{figphi}.

\begin{figure}[ht]
 \centering 
 \includegraphics[scale=.9]{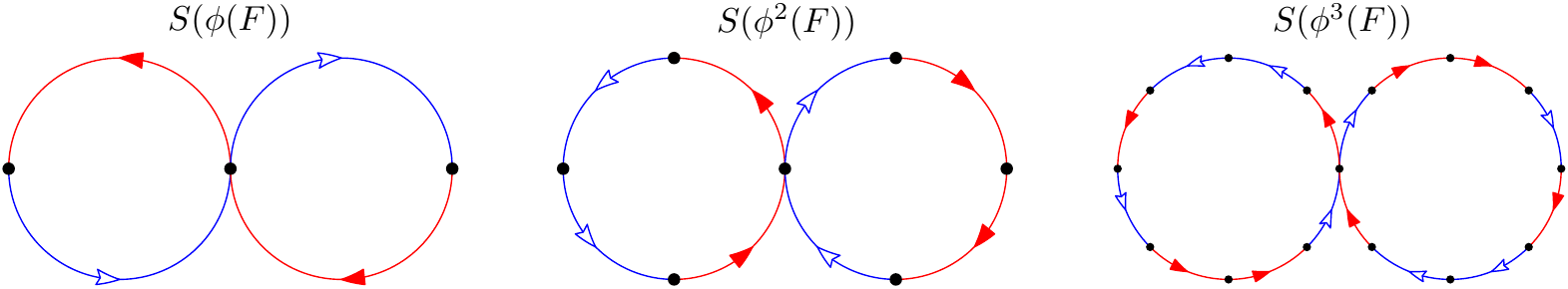}
 \caption{Stallings graphs $S(\phi^k(F))$ with respect to the standard rose for $k = 1,2,3$.}
 \label{figphi}
\end{figure}

The Stallings graphs $S(\phi^k(F))$ are roses for all $k \ge 1$ and each petal doubles in size with each iteration. Since $\phi$ is injective, it is an isomorphism onto its image. In particular, we may use $\phi$ to get an isomorphism $F \cong \phi^k(F) \cong \pi_1(S(\phi^k(F)))$ for $k \ge 1$. 
Under this isomorphism, the basis $\{a, b\}$ corresponds to the petals of $S(\phi^k(F))$ and we recover the standard rose for all $k \ge 1$. This is equivalent to the existence of an immersion (locally injective map) on the rose $f:R \to R$ such that $\pi_1(f) = \phi$ \cite[Lemma~3.2]{JPMb}. Indeed, the obvious map $f:R \to R$ that maps the $a$-petal to the path $ab$ and the $b$-petal to the path $ba$ is an immersion.
The petals of $S(\phi^k(F))$ doubling in size with each iteration implies $f$ is in fact an expanding immersion, i.e., all edges expand under $f$-iteration.
\medskip

\begin{figure}[ht]
 \centering 
 \includegraphics[scale=.9]{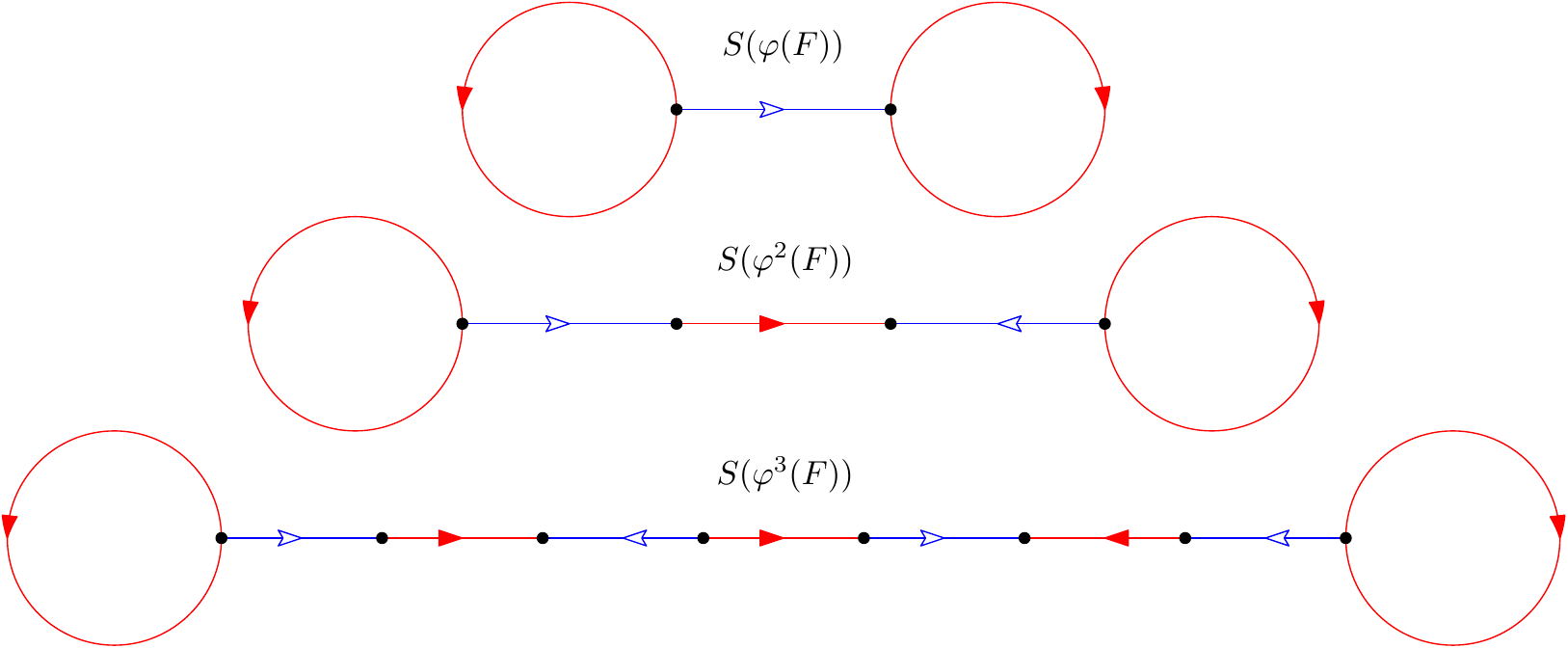}
 \caption{Stallings graphs $S(\varphi^k(F))$ for $k = 1,2,3$.}
 \label{figvarphi}
\end{figure}

The Stallings graphs $S(\varphi^k(F))$ are barbells for all $k \ge 1$, the middle bars roughly double in size with each iteration, and the {\it plates} are labelled by $a$ (See Figure~\ref{figvarphi}). Under the isomorphism $F \cong \varphi^k(F) \cong \pi_1(S(\varphi^k(F)))$, the conjugacy classes $\{[a], [b] \}$ correspond to the two plates of the barbell $S(\varphi^k(F))$ and we recover the so called {\it standard barbell} for all $k \ge 1$. Just as in the first example, this is equivalent to the existence of an immersion on the barbell $g: B \to B$ such that $\pi_1(g) = \varphi$ under some identification $F = \pi_1(B)$. However, the immersion is not expanding since the plates are not growing in size with each iteration.

What we are now about to do will be the crux of the first part of the paper. Since the (nongrowing) plates of $S(\varphi^k(F))$ are all labelled $a$, we deduce that $\langle a \rangle$ is a free factor fixed by $\varphi$ and $\langle b \rangle$ is mapped to a conjugate of $\langle a \rangle$. Thus, $\{ \langle a \rangle, \langle b \rangle \}$ forms an {\it invariant free factor system} that contains a {\it fixed free factor}. Now consider $\tilde g: \tilde B \to \tilde B$ to be the lift of $g$ to the universal cover $\tilde B$. Collapsing all translates of {axes} of $a$ and $b$ in $\tilde B$, i.e., collapsing all edges of $\tilde B$ labelled by the plates of $B$, will produce a so-called $(F, \{\langle a \rangle, \langle b \rangle \})$-tree $T$ with a nonfree $F$-action and $\tilde g$ induces an expanding immersion $\bar g:T \to T$ where each edge doubles in size under $\bar g$-iteration. The map $\bar g$ will be referred to as a {\it relative immersion}.
\medskip

\begin{figure}[ht]
 \centering 
 \includegraphics[scale=1]{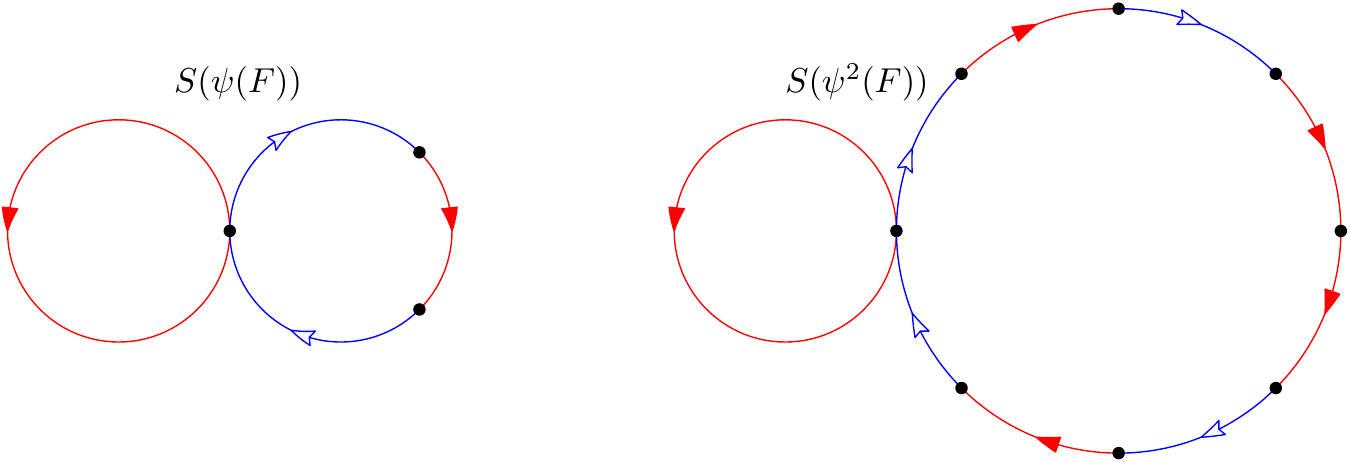}
 \caption{Stallings graphs $S(\psi^k(F))$ for $k = 1,2$.}
 \label{figpsi}
\end{figure}

For the final example, the Stallings graphs $S(\psi^k(F))$ are roses again where one petal roughly doubles in size with each iteration and another is labelled by $a$ (See Figure~\ref{figpsi}). Unlike the first example, the roses with the isomorphisms $F \cong \psi^k(F) \cong \pi_1(S(\psi^k(F)))$ are not standard roses since $b$ does not correspond to a petal for $k \ge 1$. In fact, applying $\psi$ to the labels of $S(\psi^k(F))$ forces a full fold with one petal to get $S(\psi^{k+1}(F))$ which means the Stallings graphs are all distinct {\it marked roses}. In particular, $\psi$ cannot be induced by a graph immersion! But the observation that the nongrowing petals of $S(\psi^k(F))$ are all labelled by $a$ implies $\langle a \rangle$ is a fixed free factor. Despite $\psi$ not being induced by an immersion, the obvious map on the standard rose $h:R \to R$ will induce an expanding immersion $\bar h: Y \to Y$ on an $(F, \{ \langle a \rangle\})$-tree $Y$ where every edge doubles in size. 
\medskip

The main result of the first part of this {\paper} is that this construction always produces expanding relative immersions, Theorem~\ref{expimmersion}: any injective nonsurjective endomorphism of a free group is induced by a graph map $f: \Gamma \to \Gamma$  such that collapsing the translates of axes in $\tilde \Gamma$ of some canonical invariant free factor system containing a fixed free factor system will induce an expanding relative immersion $\bar f: T \to T$. This can be summarized into two steps: first establishing the existence of a unique (possibly trivial) maximal fixed free factor system (Section~\ref{secElliptic}); then showing that collapsing this free factor system and its preimages in the universal cover of an appropriately chosen graph $\Gamma$ will produce a tree $T$ on which we can define an expanding relative immersion (Section~\ref{secRelImm}). The two guiding principles will be {\it bounded cancellation} (Lemmas~\ref{bcl}~{\&}~\ref{relbcl}) and the fact that Stallings graphs for iterated injective nonsurjective endomorphisms have unbounded size (Lemma~\ref{expand}~{\&}~Proposition~\ref{relexpand}).

%%%%%%%%%%%%%%%%%%%%%%%%%%%%%%%%%%%%%%%%%%%%%%%%%%%%%%%%%%
\nonumsec{Definitions and conventions}
%%%%%%%%%%%%%%%%%%%%%%%%%%%%%%%%%%%%%%%%%%%%%%%%%%%%%%%%%%

$F$ will always be a free group with finite rank at least $2$. A {\bf nontrivial subgroup system} of $F$ is a nonempty finite collection of nontrivial subgroups $\mathcal A = \{ A_1, \ldots, A_l \}$ of $F$. %that are in distinct conjugacy classes of subgroups, i.e.,  $[A_i] \neq [A_j]$ for $i \neq j$. 
A {\bf nontrivial free factor system} of $F$ is a nonempty collection of nontrivial free factors $\mathcal A = \{A_1, \ldots, A_l \}$ of $F$ such that $A_i \cap A_j$ is trivial if $i \neq j$ and $\langle A_1, \ldots, A_l \rangle$ is free factor of $F$. %One can check that nontrivial free factor systems are nontrivial subgroup systems. 
We define the {\bf trivial system} to be the collection consisting of the trivial subgroup; this will allow us to treat {\it absolute} and {\it relative} cases simultaneously in our proofs. For any subgroup system $\mathcal A$, the subgroups $A_i \in \mathcal A$ are called the {\bf components of $\boldsymbol{\mathcal A}$} or {\bf $\boldsymbol{\mathcal A}$-components}. A subgroup system $\mathcal A$ is finitely generated if its components are finitely generated. A free factor system $\mathcal A$ is {\bf proper} if some component is proper, i.e., $\mathcal A \neq \{ F \}$. Let $\mathcal A$ and $\mathcal B$ be subgroup systems of $F$. We shall say {\bf $\boldsymbol{\mathcal A}$  carries $\boldsymbol{\mathcal B}$} if for every component $B \in \mathcal B$, there is a component $A \in \mathcal A$ and element $x \in F$ such that $B \le x A x^{-1}$ and, when $\mathcal A$ and $\mathcal B$ are free factor systems, we shall denoted it by $\mathcal B \preceq \mathcal A$. For a conjugacy class of elements $[g]$ in $F$, we shall also say {\bf $\boldsymbol{\mathcal A}$ carries $\boldsymbol{[g]}$} if there is a component $A \in \mathcal A$  and element $x \in F$ such that $g \in x A x^{-1}$. If $\mathcal A=\{ A \}$ (or $\mathcal B = \{ B \}$) is a singleton, then we will write, ``$A$ carries $[g]\,/\,\mathcal B$ (or $B$).'' One can easily verify that the $\preceq$-relation on free factor systems is a preorder, i.e., it is reflexive and transitive. So it determines an equivalence relation on the set of free factor systems and a partial order on the set of equivalence classes. Free factor systems will always be considered up to this equivalence relation. In particular, we can replace components in a system with conjugates whenever convenient.

\begin{rmk} Suppose $F=\langle a, b \rangle$ be the free group of rank 2. With our definition, $\{ \langle a \rangle, \langle b \rangle \}$ and $\{ \langle\, bab^{-1} \,\rangle, \langle\, (ba)b(ba)^{-1} \,\rangle \}$ are equivalent free factor systems but $\{ \langle a \rangle, \langle\, (ba)b(ba)^{-1} \,\rangle \}$ is not a free factor system. Thus, when we replace components in a free factor system with conjugates, we need to ensure the resulting subgroup system is still a free factor system. \end{rmk}

Two group homomorphisms $h_1, h_2:A \to B$ are equivalent if there is an inner automorphism $i_b: B \to B$ such that $h_2 = i_b \circ h_1$ and this is denoted by $[h_1] = [h_2]$. Outer endomorphisms of $F$ are the equivalence classes on the set of endomorphisms of $F$. For instance, properties such as being irreducible are not just properties of an endomorphism $\phi$ but also its outer class $[\phi]$. 

Let $\phi: F \to F$ be an endomorphism. We say a subgroup system $\mathcal A = \{A_1, \ldots, A_l \}$ is {\bf $\boldsymbol{[\phi]}$-invariant} if $\mathcal A$ carries the subgroup system $\phi(\mathcal A) = \{ \phi(A_1), \ldots, \phi(A_l) \}$, i.e, there exists a set of elements $\{x_1, \ldots, x_l\} \subset F$ and a function $\sigma: \{1, \ldots, l\} \to \{1, \ldots, l\}$ such that $\phi(A_i) \le x_i A_{\sigma(i)} x_i^{-1}$ for all $i$. A $[\phi]$-invariant subgroup system $\mathcal A$ is {\bf $\boldsymbol{[\phi]}$-fixed} if its subgroups are permuted up to conjugacy, i.e., $\sigma$ is a permutation and $[\phi(A_i)] = [A_{\sigma(i)}]$ for all $i$. 
When $\phi$ is an automorphism, all $[\phi]$-invariant free factor systems are $[\phi]$-fixed. When $\phi$ is injective, then, for any $k \ge 1$  and free factor system $\mathcal A$ of $F$, $\phi^k(\mathcal A)$ is a free factor system of $\phi^k(F)$; conversely, any free factor system of $\phi^k(F)$ gives us a free factor system of $F$ via the isomorphism $\phi^k:F \to \phi^k(F)$. Finally, note that when $\mathcal A$ is a $[\phi]$-fixed free factor system but $\phi$ is not surjective, $\phi^k(\mathcal A)$ will typically not be a free factor system of $F$ (See the remark above). A subgroup $A \le F$ is {\bf eventually $\boldsymbol{[\phi]}$-periodic} if %there is an element $x \in F$ and integers $m > n \ge 1$ such that 
$[\phi^m(A)] = [\phi^n(A)]$ for some $m > n \ge 1$, and it is {\bf $\boldsymbol{[\phi]}$-periodic} if $[\phi^m(A)] = [A]$ for some $m \ge 1$.

The endomorphism $\phi$ is reducible if it has a nontrivial proper invariant free factor system and {\bf irreducible} otherwise. 
%It is {\bf fully irreducible} if all its iterates are irreducible; equivalently, $\phi$ is fully irreducible if there is no proper free factor $A \le F$, element $x \in F$, and integer $k \ge 1$ such that $\phi^k(A) \le x A x^{-1}$. 
One immediate consequence of Stallings folds~\cite{St83} is the injectivity of irreducible endomorphisms (Observation below). So we can drop the injectivity hypothesis when specializing results to the irreducible case.
%{\color{red}An (outer) endomorphism $\phi:F \to F$ has {\bf finite order} if there is an integer $n \ge 1$ such that $[\phi^n] =[id_F]$, where $id_F$ is the identity automorphism on $F$.}
\medskip

For the topological perspective, graphs are 1-dimensional CW-complexes and a graph map $f:X \to X'$ will be a continuous map of graphs that sends vertices to vertices and any edge to a vertex or immersed path. 
%For simplicity, we shall require that the vertices of $\Gamma$ be at least trivalent. 
%Since a graph map $f: \Gamma \to \Gamma'$ is allowed to send edges to vertices, a
An edge $e$ of $X$ is {\bf pretrivial} if $f(e)$ is a vertex. %If $\Gamma' = \Gamma$, then an edge $e$ is pretrivial if $f^n(e)$ is a vertex for some $n \ge 1$.
%Typically, we will reserve {\it graphs} for (finite) noncontractible graphs and {\it trees} for (infinite) simply-connected graphs.
For a graph map $f:\Gamma \to \Gamma'$ of finite graphs, let $K$ be the maximum of the (combinatorial) length of the edge-path $f(e)$ as $e$ varies over all the edges of $\Gamma$. Then $f$ is $K$-Lipschitz, a fact that will be used throughout the \paper. Generally, $K(f)$ will denote some convenient Lipschitz constant for $f$ rather than the infimum. For the moment, $X$ is used for arbitrary graphs but, for most of the paper, $\Gamma$ will be used for finite connected noncontractible graphs and $T$ for infinite simply-connected graphs (simplicial trees). A {\bf core graph} is a graph with no proper deformation retract.

A {\bf direction at a vertex} $v \in X$ is a {\it half-edge} attached to the vertex. A vertex is {\bf bivalent} if it has exactly two directions. The set of directions at vertex $v$ is denoted by $T_vX$. {\bf branch points} are vertices with at least three directions and {\bf natural edges} are maximal edge-paths whose interior vertices are bivalent. We will say a graph map is {\bf natural} if it maps branch points to branch points and any natural edge to a branch point or immersed path. If the graph map $f:X \to X'$ has no pretrivial edges, then the restriction to initial segments induces the {\it derivative map} at $v$, $df_v: T_vX \to T_{f(v)}X'$. %We can define equivalence on $T_v\Gamma$ by $e \sim e'$ if $df_v(e) = df_v(e')$. The equivalence classes will be known as {\bf gates}. 
The graph map $f$ is an {\bf immersion} if it is locally injective, i.e., it has no pretrivial edges and the derivative maps $df_v$ are injective for all vertices $v$; note that immersions are natural. An immersion $f$ is {\bf expanding} if the length of $f^n(e)$ is unbounded as $n\to \infty$ for every edge $e$ of $X$. 

With immersions defined, we preface the observation with a summary of Stallings' {\it folding theorem}. Let $R$ be a rose whose edges are indexed by a basis $\{a_1, \ldots, a_n\}$ of $F$ and let $f:R  \to R$ be the map where $f(a_i)$ is the immersed edge-path in $R$ labelled by $\phi(a_i)$. Stallings \cite{St83} showed that $f$ factors as $\iota \circ f_l \circ \cdots \circ f_1$ where each $f_i$ is a {\it fold} and $\iota$ is an immersion. We will use this factorization again to prove bounded cancellation in Lemma~\ref{bcl} and Proposition~\ref{relbcl}.

\begin{obs*} If $\phi: F \to F$ is irreducible, then it is injective.\end{obs*}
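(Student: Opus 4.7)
The plan is to prove the contrapositive: if $\phi:F\to F$ is not injective, then it has a nontrivial proper $[\phi]$-invariant free factor system. I would realize $\phi$ by the natural graph map $f:R\to R$ on the rose $R$ whose petals are indexed by a basis $\{a_1,\dots,a_n\}$ of $F$, sending each petal to the tight edge path spelling its $\phi$-image. If some $\phi(a_i)=1$, then $\langle a_i\rangle$ is a rank-one free factor of $F$ contained in $\ker\phi$; the system $\{\langle a_i\rangle\}$ is trivially $[\phi]$-invariant (the trivial subgroup is carried by anything) and we are done.

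Otherwise $f$ has no pretrivial edges, so after subdividing I would apply Stallings's folding theorem to write $f=\iota\circ f_l\circ\cdots\circ f_1$ with each $f_k$ a fold and $\iota$ an immersion. Each fold either is a homotopy equivalence (identifying two edges with distinct far endpoints) or is rank-reducing (identifying two edges sharing both endpoints, killing one loop). Immersions are $\pi_1$-injective, so non-injectivity of $\phi$ forces some fold to be rank-reducing; let $j$ be the smallest such index. Since $f_1,\dots,f_{j-1}$ are homotopy equivalences, the composite $f_{j-1}\circ\cdots\circ f_1:R\to R_{j-1}$ induces an isomorphism $F\cong\pi_1(R_{j-1})$ that identifies bases.

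The key step is to extract a primitive element of $F$ in $\ker\phi$ from the fold $f_j$, which identifies two edges $e_1,e_2\subset R_{j-1}$ sharing both endpoints. Choosing a spanning tree $T$ of $R_{j-1}$, at most one of $e_1,e_2$ lies in $T$; the loop $e_1\bar e_2$ then represents either a single basis element of $\pi_1(R_{j-1})$ (when exactly one of $e_1,e_2$ is in $T$) or a product $xy^{-1}$ of two distinct basis elements (when neither is), and both are primitive, the latter via one Nielsen transformation. Pulling this loop back via the $\pi_1$-isomorphism above yields a primitive $w\in F$ with $(f_j)_*(w)=1$, and injectivity of $\iota_*$ gives $\phi(w)=1$. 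Hence $\{\langle w\rangle\}$ is a nontrivial proper $[\phi]$-invariant free factor system, contradicting irreducibility. The only non-bookkeeping point is the primitivity of $e_1\bar e_2$ in $\pi_1(R_{j-1})\cong F$, which is where I would be most careful, though it reduces to the elementary Nielsen observation above.
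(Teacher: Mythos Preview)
Your proof is correct and follows the same approach as the paper: both argue via Stallings' factorization that non-injectivity forces a rank-reducing fold, yielding a proper free factor contained in $\ker\phi$, which is then trivially $[\phi]$-invariant. Your version is more explicit about extracting a primitive element from the first rank-reducing fold, whereas the paper's two-line proof simply asserts that such a fold ``collapses a subgraph of the domain'' so that $\ker\phi$ contains a proper free factor $A$ with $\phi(A)=\{1\}\le A$.
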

\begin{proof}If $\phi$ is not injective, then at least one of the folds in Stallings' factorization of $\phi$ {\it collapses} a subgraph of the domain. In particular, the kernel of $\phi$ contains a proper free factor $A\le F$; therefore, $\phi(A) = \{ 1 \} \le A$ and $[\phi]$ is reducible.
\end{proof}

Let $\mathcal A = \{A_1, \ldots, A_l\}$ be a nontrivial subgroup system of $F$, then an {\bf $\boldsymbol{\mathcal A}$-marked graph} $(\Gamma_{*}, \alpha_{*})$ is a collection of graphs $\Gamma_{*} = \{ \Gamma_1, \ldots, \Gamma_l \}$ where the finite connected core graphs $\Gamma_i$ are indexed by {\bf markings}, i.e., isomorphisms $\alpha_i:A_i\to \pi_1(\Gamma_i)$. 
When $\mathcal A = \{ A \}$ is a singleton, we may write $A$-marked graph in place of $\mathcal A$-marked graph. A {\bf marked graph} $(\Gamma, \alpha)$ is an $F$-marked graph.
For any marked graph $(\Gamma, \alpha)$ and any conjugacy class of a nontrivial element $g \in F$, denoted by $[g]$, we define its {\bf length with respect to $\boldsymbol{\alpha}$}, $\lVert g \rVert_\alpha$, to be the (combinatorial) length of the immersed loop in $\Gamma$ representing $[g]$. 

More generally, we want to consider pairs $\mathcal A \preceq \mathcal B$ of free factor systems of $F$. For each component $B_i \in \mathcal B$, let $\mathcal A_i$ be either the nonempty maximal subset of $\mathcal A$ carried by $B_i$ or the trivial system if no such subset exists. Typically, we shall replace components of $\mathcal A$ with conjugates so that $\mathcal A$ is also a free factor system of $\mathcal B$, i.e., each $\mathcal A_i$ is a free factor system of $B_i$ (next remark below). A {\bf $\boldsymbol{(\mathcal B, \mathcal A)}$-forest} $T_{*}$ is a simplicial forest of $(B_i, \mathcal A_i)$-trees $T_i$, i.e., a collection of simplicial trees $T_* = \{ T_1, \ldots, T_k \}$ where each tree $T_i$ is equipped with a minimal simplicial $B_i$-action whose edge stabilizers are trivial and point stabilizers are trivial or conjugates (in $B_i$) of $\mathcal A_i$-components. We note that a $(B, \{B\})$-tree is a point also known as a {\bf degenerate tree}. 

%An $(F, \mathcal A)$-tree is a cellular tree $T$ with no bivalent vertices along with a minimal $F$-action by isometries whose edge stabilizers are trivial and point stabilizers are trivial or conjugates of the free factors in $\mathcal A$. 
When $\mathcal A$ is the trivial system, an $(F, \mathcal A)$-tree is a free minimal $F$-tree $T$. In that case, the quotient $F\backslash T$ is a marked graph. When $\mathcal A$ is a nontrivial proper free factor system, the quotient of an $(F, \mathcal A)$-tree is a {\it graph of groups} decomposition of $F$ with trivial edge groups and $\mathcal A$ as the nontrivial vertex groups \cite{Ser77}; such decompositions are sometimes known as {\bf free splittings of $\boldsymbol F$} and they will be our relative analogues for marked graphs. 
Any given $(F, \mathcal A)$-tree endowed with the combinatorial metric has an associated length function $l_T:F \to \mathbb R$. Precisely, any isometry of a simplicial tree is either {\bf elliptic} (fixes a point) or {\bf loxodromic} (preserves an {\it axis} of least displacement). If $g \in F$ is elliptic, then $l_T(g)=0$; otherwise, $l_T(g) > 0$ is the translation distance of $g$ acting on its axis. When $\mathcal A$ is trivial and $F\backslash T$ is the marked graph $(\Gamma, \alpha)$, then $l_T(\cdot) = \|\cdot \|_\alpha$ as functions $F \to \mathbb R$.
The {\bf minimal subtree} $T(H) \subset T$ of a nontrivial subgroup $H \le F$ is the smallest subtree with a minimal induced $H$-action, i.e., the union of all fixed points and axes of nontrivial elements in $H$.

\begin{rmk} Let $\mathcal A \preceq \mathcal B$ be free factor systems of $F$, $B_i \in \mathcal B$ be a free factor with nontrivial $\mathcal A_i$, and $T$ be an $(F, \mathcal A)$-tree. Choose a connected fundamental domain of $B_i$ acting on $T(B_i)$ and the nontrivial stabilizers in $B_i$ of vertices in the domain form a free factor system $\mathcal A_i'$ of $B_i$. Then $\mathcal A_i'$ and $\mathcal A_i$ are equivalent as free factor systems of $F$ since $\mathcal A_i \preceq \{ B_i \}$.
\end{rmk}

Suppose $\psi:F \to F'$ is an injective homomorphism. Then it determines a contravariant preimage function $\psi^{-1}$ from the poset of free factor systems of $F'$ to the poset of free factor systems of $F$. One way to define this function is through trees. Let $\mathcal A'$ be a free factor system of $F'$, $T$ be any $(F', \mathcal A')$-tree, and $T(\psi(F)) \subset T$ be the {minimal subtree} of $\psi(F) \le F'$. In particular, the quotient $\psi(F)\backslash T(\psi(F))$ is a free splitting of $\psi(F)$ whose nontrivial vertex groups form a nontrivial free factor system $\mathcal A''$ of $\psi(F)$. If $T(\psi(F))$ has a free $\psi(F)$-action, then this condition is independent of the choice of $(F', \mathcal A')$-tree $T$ and we set $\psi^{-1}\cdot \mathcal A'$ to be the trivial system. Otherwise, we have a nontrivial free factor system $\mathcal A''$ of $\psi(F)$ and a corresponding nontrivial free factor system of $F$ via the isomorphism $\psi:F \to \psi(F)$; the equivalence class of the latter free factor system is independent of the $(F', \mathcal A')$-tree $T$ and we will denote it by $\psi^{-1}\cdot \mathcal A'$. %Note that if $F' = F$ and $\mathcal A'$ is $[\psi]$-invariant, then $\psi^{-1}\cdot \mathcal A' \succeq \mathcal A'$.

Let $T$ and $T'$ be $(F, \mathcal A)$- and $(F',\mathcal A')$-trees respectively and $\psi: F \to F'$ be an injective homomorphism such that $\mathcal A'$ carries $\psi(\mathcal A)$ or, equivalently, $\psi^{-1} \cdot \mathcal A' \succeq \mathcal A$. This {\it carrying condition} ensures that elliptic elements in $F$ (with respect to $T$) have elliptic $\psi$-images (with respect to $T'$). A tree map $f: T \to T'$ is {\bf $\boldsymbol{\psi}$-equivariant} if $f(g \cdot x) = \psi(g) \cdot f(x)$ for all $g \in F$ and $x \in T$. If we require $\psi^{-1}\cdot \mathcal A' = \mathcal A$, then loxodromic elements in $F$ have loxodromic $\psi$-images; this is the relative analogue of a $\pi_1$-injective graph map.
\medskip

Suppose $\mathcal A$ is a nontrivial $[\phi]$-invariant free factor system of $F$ for some injective endomorphism $\phi:F \to F$; so there is a function $\sigma:\{1, \ldots, l\} \to \{1, \ldots, l\}$ and inner automorphisms $i_{x_i}: F \to F$ such that the restrictions $\phi_i = \left.\left(i_{x_i} \circ \phi\right)\right|_{A_i}$ are homomorphisms $\phi_i: A_i \to A_{\sigma(i)}$. The collection $ \{ \phi_i \}$ is a {\bf restriction of $\boldsymbol{\phi}$ to $\boldsymbol{\mathcal A}$}, which we denote by $\left.\phi\right|_{\mathcal A}:\mathcal A \to \mathcal A$.
A {\bf weak representative for $\boldsymbol{[\phi_i]}$} is a graph map $f_i : \Gamma_i \to \Gamma_{\sigma(i)}$ from an $A_i\,$-marked graph $(\Gamma_i, \alpha_i)$ to an $A_{\sigma(i)}\,$-marked graph $(\Gamma_{\sigma(i)}, \alpha_{\sigma(i)})$ such that $[\pi_1(f_i)\circ \alpha_i] = [\alpha_{\sigma(i)} \circ \phi_i]$ for all $i \in \{1, \ldots, l\}$. A {\bf weak representative for $\boldsymbol{ \left[\left.\phi\right|_{\mathcal A}\,\right]}$} is a graph map  $f_{*}:\Gamma_{*} \to \Gamma_{*}$ of an $\mathcal A$-marked graph $(\Gamma_{*}, \alpha_{*})$ that is a disjoint union of weak representatives for $[\phi_i]$. A {\bf natural representative} is a weak representative that is also natural. A {\bf topological representative} is a weak representative that has no pretrivial edges and whose underlying graph $\Gamma_{*}$ has no bivalent vertices.
%{\color{red}A topological representative $f_{*}$ is said to be {\bf minimal} if it has no invariant subforests (disjoint union of trees/simply-connected graphs).}

%Choosing a marked graph $(R, \alpha)$ where $R$ is a rose is equivalent to choosing a basis $\mathcal B$ for $F$ determined by the pre-images of the (oriented) petals over the marking $\alpha$.

%When $(R, \alpha)$ is a marked rose corresponding to a basis $\mathcal B$, we shall denote the respective length of $[g]$ by $\lVert g \rVert_{\mathcal B}$.

Recall $\mathcal A \preceq \mathcal B$. Suppose $\phi^{-1}\cdot \mathcal A = \mathcal A$, $\mathcal B$ is $[\phi]$-invariant, and let $\sigma':\{1, \ldots k\} \to \{1, \ldots k\}$ be the function corresponding to the $[\phi]$-invariance of $\mathcal B$ and $\left.\phi\right|_{\mathcal B} = \{ \phi_i:B_i \to B_{\sigma'(i)}\}$ be a restriction of $\phi$. %A {\bf $\boldsymbol{\left.\phi\right|_{\mathcal B}}$-equivariant graph map} $f_{*}: T_{*} \to T_{*}$ is a collection of $\phi_i$-equivariant maps $f_i : T_i \to T_{\sigma(i)}$. A {\bf relative (topological) representative} for $\psi$ is a $\psi$-equivariant graph map $f: T \to T'$ with no pretrivial edges.
An {\bf $\boldsymbol{\mathcal A}$-relative weak representative for $\boldsymbol{\left.\phi\right|_{\mathcal B}}$} is a forest map $f_{*} : T_{*} \to T_{*}$ of a $(\mathcal B, \mathcal A)$-forest $T_*$ that is a disjoint union of $\phi_i$-equivariant tree maps $f_i:T_i \to T_{\sigma'(i)}$. An {\bf $\boldsymbol{\mathcal A}$-relative natural representative} is an ${\mathcal A}$-relative weak representative that is also natural. An {\bf $\boldsymbol{\mathcal A}$-relative representative} is an ${\mathcal A}$-relative weak representative with no pretrivial edges and whose underlying forest has no bivalent vertices. A relative representative is {\bf minimal} if it has no {\it orbit-closed} invariant subforests whose components are bounded. An {\bf (expanding resp.) $\boldsymbol{\mathcal A}$-relative immersion for $\boldsymbol{\left.\phi\right|_{\mathcal B}}$} is an $\mathcal A$-relative representative for $\left.\phi\right|_{\mathcal B}$ that is also an (expanding resp.) immersion. %An $\mathcal A$-relative immersion $f_*$ for $\left.\phi\right|_{\mathcal B}$ is {\bf expanding} if every edge expands under $f_{*}$-iteration.
\medskip

In the first part of the paper, we show that any injective endomorphism of $F$ has a canonical invariant free factor system $\mathcal A$ and an expanding $\mathcal A$-relative immersion $f$ such that $\mathcal A$ is eventually mapped into a canonical fixed free factor system.
In the second part of the paper, we use the canonical systems and the expanding relative immersion to prove the main theorem.

%%%%%%%%%%%%%%%%%%%%%%%%%%%%%%%%%%%%%%%%%%%%%%%%%%%%%%%%%%
\nonumsec{Dynamics of free group endomorphisms}
\subsection{Stallings graphs and bounded cancellation}\label{secStallBCC}
%%%%%%%%%%%%%%%%%%%%%%%%%%%%%%%%%%%%%%%%%%%%%%%%%%%%%%%%%%

Let $(\Gamma, \alpha)$ be a marked graph with no bivalent vertices. For any nontrivial subgroup system $\mathcal H$ of $F$, the {\bf Stallings (subgroup) graph} for $\mathcal H$ with respect to a marked graph $(\Gamma, \alpha)$ with is the smallest $\mathcal H$-marked graph $(S(\mathcal H), \beta_*)$ along with an immersion $\iota_*: S(\mathcal H) \to \Gamma$ such that $[\pi_1(\iota_i) \circ \beta_i] = \left[\left. \alpha\right|_{H_i}\right]$ for every $H_i \in \mathcal H$. 
Alternatively, $S(\mathcal H)$ is the collection of {\bf cores} $S(H_i)$ of the covers $\hat \Gamma_{H_i}$ of $\Gamma$ corresponding to $\alpha(H_i)$, i.e., the smallest deformation retract of $\hat \Gamma_{H_i}$, and $\iota_i$ is the restriction to $S(H_i)$ of the covering map $\hat \Gamma_{H_i} \to \Gamma$. We may sometimes refer to the marked graph $(\Gamma, \alpha)$ as the {\bf ambient graph}. In the notation, the marking and immersion for Stallings graphs will usually be omitted. If $H$ and $H'$ are in the same conjugacy class, $[H]$, then there is a homeomorphism $h: S(H) \to S(H')$ such that $\iota = \iota' \circ h$. The converse holds as well. So the Stallings graph $S[H]$ is uniquely determined by the conjugacy class $[H]$.  Furthermore, the Stallings graph $S[\mathcal H]$ is a finite graph if and only if $\mathcal H$ is finitely generated.
Suppose $\phi:F \to F$ is injective, $\mathcal A$ is a nontrivial free factor system of $F$. We will be studying the {\it (iterated)} Stallings graphs $(S[\phi^k(\mathcal A)], \beta_{*,k})$ for $k \ge 1$. %We define $S[\phi^k(\mathcal A)]$ to be the disjoint union of the Stallings graphs of the free factors of $\phi^k(F)$ in $\phi^k(\mathcal A)$. 
%By definition, $S[\phi^k(\mathcal A)]$ is a $\phi^k(\mathcal A)$-marked graph with the markings
%\(\displaystyle \beta_{*} = \{ \beta_i: A_i \to \pi_1(S_i) ~|~ S_i= S(\phi^k(A_i)) \text{ is a component of } S[\phi^k(\mathcal A)]\, \}. \)
%Identify the fundamental group $\pi_1(S_i)$ of a component $S_i \subset S[\phi^k(\mathcal A)]$ with $\alpha(\phi^k(A_i))$ so that the corresponding immersion $\iota_i: S_i \to \Gamma$ induces the inclusion homomorphism on fundamental groups. By injectivity of $\phi$, $\left(S[\phi^k(\mathcal A)], \left.\alpha \circ \phi^k\right|_{A_{*}}\right)$ is an $\mathcal A$-marked graph.

\begin{rmk}For any free factor system $\mathcal F$ of $F$ and nontrivial subgroup system $\mathcal H$ carried by $\mathcal F$, we can similarly define the Stallings graph for $\mathcal H$ with respect to $\mathcal F$-marked graphs $(\Gamma_*, \alpha_*)$. If $\phi_*:\mathcal F \to \mathcal F$ is an injective endomorphism, i.e., a collection of injective endomorphism $\{\phi_i: F_i \to F_{\sigma(i)}\}$ that need not be a restriction of an endomorphism of $F$, then we can still consider the Stallings graphs $S[\phi^k_*(\mathcal A)]$ for $k \ge 1$ and $\mathcal A \preceq \mathcal F$. The point of this remark is that the results of this paper hold when $\phi:F \to F$ is replaced with $\phi_*:\mathcal F \to \mathcal F$. In other words, the ambient graph $\Gamma$ need not be connected for our results.
\end{rmk}

Unlike the  ambient graph $\Gamma$, we allow Stallings graphs $S =  S[\mathcal H]$ to have bivalent vertices. More precisely, we subdivide $S$ so that the immersion $\iota_* :S \to \Gamma$ is simplicial, i.e., maps edges to edges.
With this subdivision, we get a combinatorial metric on $(S, \beta_*)$ that is compatible with $(\Gamma, \alpha)$, i.e., for any nontrivial element $g$ in $H_i$, $\lVert g \rVert_\alpha = \lVert g \rVert_{\beta_i}$. 

\begin{lem}\label{expand}Let $\phi:F \to F$ be injective and $H$ be a finitely generated nontrivial subgroup of $F$. If $H$ is not eventually $[\phi]$-periodic, then the length of the longest natural edge in $S[\phi^k(H)]$ is unbounded as $k \to \infty$.
\end{lem}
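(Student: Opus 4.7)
My plan is to argue by contradiction. Suppose the lengths of all natural edges in $S[\phi^k(H)]$ are uniformly bounded by some constant $L$ for all $k \ge 1$. Since $\phi$ is injective, each $\phi^k(H)$ is isomorphic to $H$ and in particular has the same (finite) rank $r$, so each Stallings graph $S[\phi^k(H)]$ is a connected core graph of rank $r$. A standard Euler-characteristic count shows that a core graph of rank $r$ has at most $2(r-1)$ vertices of valence $\ge 3$, and hence at most $3(r-1)$ natural edges when $r \ge 2$; when $r = 1$ the Stallings graph is a single cyclic natural edge. Combined with the length bound $L$, this produces a uniform bound $M = M(r,L)$ on the total number of simplicial edges in $S[\phi^k(H)]$, independent of $k$.

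Next, I would exploit the finiteness of the ambient graph $\Gamma$. There are only finitely many isomorphism classes of pairs $(S, \iota)$ where $S$ is a finite graph with at most $M$ edges and $\iota: S \to \Gamma$ is a simplicial immersion. By the pigeonhole principle, there must exist $m > n \ge 1$ with $(S[\phi^m(H)], \iota_{*,m})$ isomorphic to $(S[\phi^n(H)], \iota_{*,n})$ as objects over $\Gamma$. The definitional material for Stallings graphs given just above the statement records that such an isomorphism forces the underlying subgroups to be conjugate; hence $[\phi^m(H)] = [\phi^n(H)]$, so $H$ is eventually $[\phi]$-periodic, contradicting the hypothesis.

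The main obstacle is really just a technical point in the first step: one must confirm that the natural-edge count is controlled by the topological rank and is not inflated by the simplicial subdivision used to make $\iota_*$ simplicial over $\Gamma$. Branch points are a topological feature of the core graph and are therefore unaffected by subdivision, so the Euler-characteristic bound applies to the unsubdivided core and the natural edges inherit the count; the subdivision only affects how many simplicial edges appear \emph{within} each natural edge, and this is where the hypothesized bound $L$ is used. Once these complexity bounds are in hand, the proof reduces to the familiar ``bounded complexity forces eventual repetition'' pigeonhole principle.
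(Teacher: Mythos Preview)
Your proposal is correct and follows essentially the same approach as the paper: bound the number of natural edges by $3(\mathrm{rank}(H)-1)$ via an Euler-characteristic count, combine with the assumed length bound to get a uniform volume bound, then use pigeonhole on the finitely many possible Stallings graphs over $\Gamma$ and the fact that a Stallings graph determines the conjugacy class of its defining subgroup. Your additional remarks on the rank-one case and on why subdivision does not affect the natural-edge count are careful elaborations, but the core argument is the same.
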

\begin{proof} 
Suppose the length of the longest natural edge in $S[\phi^k(H)]$ with respect to some marked graph $(\Gamma, \alpha)$ was uniformly bounded as $k \to \infty$. We want to show that $H$ is eventually $[\phi]$-periodic. The number of natural edges in $S[\phi^k(H)]$ is bounded above by $3 \cdot \mathrm{rank}(H)-3$. Our assumption implies there is a bound on the volume of (number of edges in) the graphs $S[\phi^k(H)]$ as $k\to \infty$. So the sequence $S[\phi^k(H)]$ is eventually periodic, i.e., there are integers $m > n \ge 1$ and an isometry $h: S[\phi^m(H)] \to S[\phi^n(H)]$ such that $\iota_m  = \iota_n \circ h$. Since a Stallings graph determines the conjugacy class of its defining subgroup, we have $[\phi^m(H)] = [\phi^n(H)]$, i.e., $H$ is eventually $[\phi]$-periodic.
\end{proof}

This lemma will be invoked on invariant free factor systems containing a component that is not eventually $[\phi]$-periodic. Conversely, the next lemma handles the case when an invariant free factor system consists entirely of eventually periodic free factors.

\begin{lem}\label{nonexpand}Let $\phi:F \to F$ be injective and $\mathcal A$ be a nontrivial $[\phi]$-invariant free factor system. If all components in $\mathcal A$ are eventually $[\phi]$-periodic, then some nonempty subset $\mathcal B \subset \mathcal A$ is a $[\phi]$-fixed free factor system and $\phi^k(\mathcal A)$ is carried by $\mathcal B$ for some $k \ge 0$.
\end{lem}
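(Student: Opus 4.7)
The plan is to let $\sigma : \{1,\ldots,l\} \to \{1,\ldots,l\}$ be the function witnessing $[\phi]$-invariance of $\mathcal A$, set $I := \bigcap_{k \ge 0} \sigma^k(\{1,\ldots,l\})$ (on which $\sigma$ restricts to a permutation), and take $\mathcal B := \{A_i : i \in I\}$. This is a nonempty subsystem of $\mathcal A$, hence a free factor system, and because $\sigma^k(\{1,\ldots,l\}) = I$ for all sufficiently large $k$, the carrying statement $\phi^k(\mathcal A) \preceq \mathcal B$ follows immediately.

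The substantive claim is that $\mathcal B$ is $[\phi]$-fixed, i.e., $[\phi(A_j)] = [A_{\sigma(j)}]$ for every $j \in I$. Fix such a $j$, let $p$ be its cycle length under $\sigma|_I$, and pick $g \in F$ with $\phi^p(A_j) \le g A_j g^{-1}$, so that $\psi : A_j \to A_j$ defined by $\psi(x) := g^{-1}\phi^p(x)g$ is an injective endomorphism of the free group $A_j$. If $\psi$ is an automorphism, then $[\phi^p(A_j)] = [A_j]$, so the composition $\phi_{\sigma^{p-1}(j)} \circ \cdots \circ \phi_j$ is surjective; since each factor is an injective map between free groups of equal rank (ranks being constant along the $\sigma$-cycle through $j$), each factor must itself be surjective up to conjugation, which yields $[\phi(A_j)] = [A_{\sigma(j)}]$.

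To see that $\psi$ is surjective, use the hypothesis to pick $M > N \ge 1$ with $[\phi^M(A_j)]_F = [\phi^N(A_j)]_F$. In a free factor system, a nontrivial subgroup cannot lie in conjugates of two distinct components (a standard consequence of the associated free splitting), so the nontrivial $\phi^M(A_j) = h\phi^N(A_j)h^{-1}$, which is carried both by $A_{\sigma^M(j)}$ and $A_{\sigma^N(j)}$, forces $\sigma^M(j) = \sigma^N(j)$; hence $p \mid M-N$, and after shifting, both $M$ and $N$ can be taken as multiples of $p$. Then $\psi^{M/p}(A_j)$ and $\psi^{N/p}(A_j)$ are $F$-conjugate subgroups of $A_j$, and malnormality of the free factor $A_j$ in $F$ promotes this to an $A_j$-conjugacy $\psi^{M/p}(A_j) = a\psi^{N/p}(A_j)a^{-1}$ with $a \in A_j$. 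Combined with the descending-chain inclusion $\psi^{M/p}(A_j) \subseteq \psi^{N/p}(A_j)$, writing $B := \psi^{N/p}(A_j)$ gives $aBa^{-1} \subseteq B$.

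The main obstacle is promoting $aBa^{-1} \subseteq B$ to equality. My plan is to invoke Takahasi's theorem: the ascending chain $B \subseteq a^{-1}Ba \subseteq a^{-2}Ba^2 \subseteq \cdots$ consists of finitely generated subgroups of $A_j$ all of rank $\mathrm{rank}(A_j)$, so as a bounded-rank ascending chain in a free group it stabilizes; conjugating back gives $aBa^{-1} = B$, whence $\psi^{M/p}(A_j) = \psi^{N/p}(A_j)$, and injectivity of $\psi^{N/p}$ yields $\psi^{(M-N)/p}(A_j) = A_j$. Since a power of the injective endomorphism $\psi$ is surjective, $\psi$ itself is an automorphism of $A_j$, completing the proof.
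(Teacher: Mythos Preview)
Your proof is correct and follows essentially the same architecture as the paper's: identify the eventual image $I$ of $\sigma$, set $\mathcal B = \{A_i : i \in I\}$, and for each component show that a suitable power of $\phi$ acts as an automorphism by invoking the fact that no finitely generated subgroup of a free group is conjugate to a proper subgroup of itself.

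The one noteworthy difference is the tool you use for that last fact. The paper isolates it as Lemma~\ref{fgconj} and proves it via subgroup separability (LERF), applied directly in $F$: from $y\psi^n(B)y^{-1} = \psi^m(B) \le \psi^n(B)$ one concludes equality. You instead pass through malnormality of $A_j$ to reduce to $A_j$-conjugacy and then invoke Takahasi's ascending chain theorem. Both routes are standard and equally valid; the paper's is slightly more direct since it avoids the malnormality step and the final ``peeling off'' argument (the paper works with the order of the full permutation $\sigma|_I$ rather than individual cycle lengths, so it concludes $[\phi]$-fixedness of $\mathcal B$ in one stroke from $[\phi]$-invariance plus $[\phi^j(B)] = [B]$ for all $B \in \mathcal B$). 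Your peeling argument is fine, though the phrase ``injective map between free groups of equal rank'' alone does not force surjectivity; what makes it work is that the composition is surjective, so you strip off isomorphisms from the right.
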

\noindent E.g., if $\phi:F \to F$ is injective and $F$ is eventually $[\phi]$-periodic, then $\phi$ is an automorphism.
\begin{proof}
Let $\sigma:\{1, \ldots, l\} \to \{1, \ldots, l\}$ be the function used to define the $[\phi]$-invariance of $\mathcal A = \{A_1, \ldots, A_l\}$. Then there is a nonempty subset $J \subset \{1, \ldots, l\}$ on which $\sigma$ acts as a bijection and $\sigma^l(\{1, \ldots, l\}) = J$. Let $\mathcal B \preceq \mathcal A$ by the nontrivial $[\phi]$-invariant free factor system corresponding to $J$. Then $\phi^l(\mathcal A)$ is carried by $\mathcal B$ since the image of ${\sigma\,}^l$ is $J$. It remains to show that $\mathcal B$ is $[\phi]$-fixed. Set $j$ to be the order of $\left. \sigma\right|_J$, fix $B \in \mathcal B$, and let $i_x:F \to F$ be the inner automorphism such that $i_x \circ \phi^j(B) \le B$. Define $\psi = i_x \circ \phi^j$. As $B$ is eventually $[\phi]$-periodic and hence eventually $\phi^j$-periodic, there are integers $m > n \ge 1 $ such that $\displaystyle [\psi^m(B)] = [\phi^{jm}(B)] = [\phi^{jn}(B)] = [\psi^n(B)].$ Therefore, there is an element $y \in F$ such that $y \psi^n(B) y^{-1} = \psi^m(B) \le \psi^n(B)$. 
But no finitely generated subgroup of $F$ is conjugate to a proper subgroup of itself (Lemma~\ref{fgconj} below). So $\psi^m(B) = \psi^n(B)$ and, by injectivity of $\phi$, $\psi(B) = B$. Since $B \in \mathcal B$ was arbitrary, $\phi^j$ fixes the free factors of $\mathcal B$ up to conjugation; as $\mathcal B$ is $[\phi]$-invariant, it must be $[\phi]$-fixed.
\end{proof}

The following fact will be used again in the proof of Proposition~\ref{canonical}.

\begin{lem}\label{fgconj} No finitely generated subgroup of $F$ is conjugate to a proper subgroup of itself.
\end{lem}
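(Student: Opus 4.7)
The plan is to assume $H \le F$ is finitely generated with $xHx^{-1}\le H$ for some $x\in F$, $x\ne 1$, and to show the containment is actually an equality. It is enough to find some nonzero power $x^b\in H$: then iterating the hypothesis gives the chain
\[
H \;\ge\; xHx^{-1} \;\ge\; x^{2}Hx^{-2} \;\ge\; \cdots \;\ge\; x^{b}Hx^{-b},
\]
while $x^b\in H$ forces $x^{b}Hx^{-b}=H$, so every intermediate containment collapses and $xHx^{-1}=H$. I will locate such a power of $x$ by working with the action of $F$ on its Cayley tree $T$, using that the minimal $H$-invariant subtree $T(H)\subset T$ has finite quotient $H\backslash T(H)$ because $H$ is finitely generated (this is essentially the Stallings graph described just above).

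The first step is to check that $x\cdot T(H)\subseteq T(H)$. Since $xHx^{-1}\le H$, the subtree $T(H)$ is also $xHx^{-1}$-invariant, while $x\cdot T(H)$ is the minimal $xHx^{-1}$-invariant subtree (any proper $xHx^{-1}$-invariant subtree of $x\cdot T(H)$ would translate back under $x^{-1}$ to a proper $H$-invariant subtree of $T(H)$). Minimality then forces the inclusion. Iterating, the forward orbit $\{x^{n}\cdot p : n\ge 0\}$ of any $p\in T(H)$ lies in $T(H)$. Because $F$ acts freely on $T$, the element $x$ is loxodromic with an axis $\ell_x$; letting $p_0$ denote the nearest-point projection of $p$ onto $\ell_x$, standard tree geometry shows that the geodesic $[x^{n}p,\,x^{n+1}p]$ passes through the segment $[x^{n}p_0,\,x^{n+1}p_0]$ of $\ell_x$, and convexity of $T(H)$ then guarantees that an entire half-ray $R$ of $\ell_x$ sits inside $T(H)$.

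Since $H\backslash T(H)$ is a finite graph, pigeonhole among the infinitely many edges of $R$ produces two distinct edges $e_i,e_j$ on $R$ together with $h\in H$ satisfying $h\cdot e_i = e_j$. Such an $h$ is loxodromic, and its axis, containing two distinct edges of $\ell_x$, must equal $\ell_x$. In a free group, two elements sharing an axis commute and therefore lie in a common cyclic subgroup $\langle y\rangle$ generated by the primitive element with that axis; writing $x=y^a$ and $h=y^b$ with $a,b\ne 0$ yields $x^{b}=h^{a}\in H$, which is the nonzero power of $x$ needed to collapse the chain above.

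The main technical obstacle is the production of the half-ray of $\ell_x$ inside $T(H)$; once this is in hand, the cocompactness of the $H$-action on $T(H)$ together with the cyclicity of centralizers in $F$ closes things out essentially for free. The remaining verifications (minimality of $x\cdot T(H)$ as an $xHx^{-1}$-invariant tree, the synchronization of $[x^{n}p,x^{n+1}p]$ with $\ell_x$, loxodromicity of $h$, and the final collapse of the containment chain) are routine tree-isometry manipulations.
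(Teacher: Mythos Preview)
There is a gap in the step you flag as routine. The claim ``its axis, containing two distinct edges of $\ell_x$, must equal $\ell_x$'' is false: in $F(a,b)$ with $x = aba$ and $h = ab$, the axes $\ell_x$ and $\ell_h$ share the segment through $1, a, ab, aba$ --- so $\ell_h$ contains the edges $e_i = [1,a]$ and $e_j = [ab, aba]$ of $\ell_x$, and indeed $h \cdot e_i = e_j$ --- yet $\ell_h \ne \ell_x$ (they diverge past $aba$). Pigeonholing over arbitrary oriented edges of $R$ can thus produce an $h$ whose axis meets $\ell_x$ only in a bounded segment, and then the commutation step fails.

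The fix is to sharpen the pigeonhole: restrict to the $x$-translates $\{x^n e : n \ge 0\}$ of a single oriented edge $e$ on $R$. These all lie in $R \subset T(H)$, and cocompactness still gives $h \cdot x^m e = x^n e$ for some $h \in H$ and $0 \le m < n$; freeness of the action on oriented edges then forces $h = x^{n-m}$ directly, with no axis comparison needed. With this correction your argument goes through. For comparison, the paper's proof is entirely different and shorter: it invokes subgroup separability (Marshall Hall's theorem) to pass to a finite quotient, where the inclusion $yHy^{-1} \le H$ becomes an equality of finite subgroups of the same order, contradicting the separation of any $g \in H \setminus yHy^{-1}$. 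Your geometric route, once patched, has the virtue of staying inside the Stallings-graph framework the paper is built on.
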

\begin{proof}By Marshall Hall's theorem, free groups are {\it subgroup separable/locally extended residual finiteness (LERF)}, i.e., for any finitely generated subgroup $H \le F$ and element $g \in F \setminus H$, there is a finite group $G$ and a surjective homomorphism $\varphi: F \to G$ such that $\varphi(g) \notin \varphi(H)$. See \cite{St83} for a proof due to Stallings.

For a contradiction, suppose there is an element $y \in F$ such that $y H y^{-1} \le H$ and $g \in H \setminus y H y^{-1}$. By subgroup separability, there is a finite group $G$ and homomorphism $\varphi: F \to G$ such that $\varphi(g) \notin \varphi(y H y^{-1})$. But $g \in H$ implies $\varphi(g) \in \varphi(H)$ and, by finiteness of $G$, $y H y^{-1} \leq H$ implies $\varphi(y H y^{-1}) = \varphi(H)$ --- a contradiction.
\end{proof}

The next lemma, also known as the {\bf Bounded Cancellation Lemma}, will be used extensively in this \paper. At the risk of overloading notation, for an edge-path $p$ in a graph $\Gamma$, $[p]$ denotes the immersed edge-path that is homotopic to $p$ rel. endpoints; for a loop $\rho$ in $\Gamma$, $[\rho]$ will be the immersed loop that is freely homotopic to $\rho$. 

\begin{lem}[Bounded Cancellation]\label{bcl} Let $g:\Gamma \to \Gamma'$ be a $\pi_1$-injective graph map. Then there is a constant $C(g)$ such that, for any natural edge-path decomposition $p_1 \cdot p_2$ of an immersed path in the universal cover $\tilde \Gamma$, the edge-path $[\tilde g(p_1)] \cdot [\tilde g(p_2)]$ is contained in the $C(g)$-neighborhood of $[\tilde g(p_1) \cdot \tilde g(p_2)]$.
\end{lem}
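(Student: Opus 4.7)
My plan is to apply Stallings' folding factorization from the paragraph preceding the Observation, writing $g = \iota \circ f_k \circ \cdots \circ f_1$ where each $f_i$ is a fold and $\iota$ is an immersion, and then establishing bounded cancellation for each factor and combining via composition. The first step is to verify that $\pi_1$-injectivity of $g$ forces each $f_i$ to be $\pi_1$-injective: immersions are always $\pi_1$-injective and folds always induce surjections on $\pi_1$, so the composition being injective means the $\pi_1$-rank cannot drop at any stage, and a rank-preserving surjection of free groups is an isomorphism. Consequently each $f_i$ identifies two edges $e_1, e_2$ sharing an initial vertex $v$ but with distinct terminal vertices $w_1 \ne w_2$ (the ``proper'' or Type I Stallings folds).

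Next I would handle each piece separately. The lift $\tilde \iota$ of the immersion to universal covers is a locally injective map of trees, hence an isometric embedding; so $\tilde \iota$ preserves immersed concatenations and $C(\iota) = 0$. For a single proper fold $f$, I would show $C(f) \le 1$ by direct inspection: any cancellation at the join $\tilde f(\tilde v)$ requires $p_1$ to end with (the translate of) $\tilde e_1$ and $p_2$ to begin with (the translate of) $\tilde e_2$, producing exactly one cancelled edge and landing at the identified vertex $\tilde w \in \tilde{f(\Gamma)}$. Any further propagation at $\tilde w$ would require the next edge of $p_1$ (ending at a lift of $w_1$) and the next edge of $p_2$ (starting at a lift of $w_2$) to be identified under $f$; since $f$ identifies only $e_1, e_2$, this would force the two next edges to already be reverses of one another in $\Gamma$, which is impossible because they are based at distinct vertices $w_1 \ne w_2$.

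Finally, the pieces combine via the standard composition inequality $C(h_2 \circ h_1) \le K(h_2) \cdot C(h_1) + C(h_2)$, obtained by tracking how the $\le C(h_1)$-sized bump at the midpoint is magnified under the $K(h_2)$-Lipschitz $\tilde h_2$ and allowing for $\le C(h_2)$ additional cancellation. Iterating through the factorization yields an explicit finite bound for $C(g)$. The main obstacle is the single-fold step: the key subtlety is ruling out cancellation from propagating past the first folded edges, which uses the distinct-terminal-vertex property (equivalently, $\pi_1$-injectivity of the fold) together with the immersion hypothesis on $p_1 \cdot p_2$.
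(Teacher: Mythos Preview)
Your proposal is correct and follows essentially the same approach as the paper's proof: factor via Stallings into folds and an immersion, observe that $\pi_1$-injectivity forces each fold to be a proper (Type~I) fold with cancellation constant~$1$, and that the immersion has cancellation constant~$0$. The paper additionally inserts an initial pretrivial-edge collapse and subdivision step $g_0$ (also with cancellation constant~$0$) before the folds, and since folds and the simplicial immersion are $1$-Lipschitz your composition inequality then yields exactly $C(g)=r$, matching the paper's bound.
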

\noindent The following proof is due to Bestvina-Feighn-Handel \cite[Lemma~3.1]{BFH97}.
\begin{proof} Any graph map $g:\Gamma \to \Gamma'$ factors as a pretrivial edge collapse and edge subdivision $g_0$, a composition of $r \ge 0$ folds $g_r \circ \cdots \circ g_1$, and an simplicial immersion $g_{r+1}$. The collapse, subdivision, and immersion have cancellation constants $0$ while each fold has cancellation constant $1$ by $\pi_1$-injectivity. Thus we may choose $C(g) = r$.
\end{proof}

%If $g: \Gamma \to \Gamma'$ is a $\pi_1$-injective graph map that induces $\psi: \pi_1(\Gamma) \to \pi_1(\Gamma')$, $\mathcal A$ is a nontrivial free factor system of $F$, and $\hat \Gamma_k, \hat \Gamma_k'$ are the disjoint union of covers of $\Gamma, \Gamma'$ corresponding to $\mathcal A, \psi^k(\mathcal A)$ respectively for some $k \ge 1$, then $g$ lifts to a map $\hat g_k: \hat \Gamma_k \to \hat \Gamma_k'$ and the deformation retractions $\hat \Gamma_k \to S(\mathcal A)$ and $\hat \Gamma_k' \to S(\psi^k(\mathcal A))$ induces a map $\bar g_k: S(\mathcal A) \to S(\psi^k(\mathcal A))$ with $K(\bar g_k) = K(g)$ and $C(\bar g_k) = C(g)$. We shall call $\bar g_k$ the {\bf ($\boldsymbol k$-th) homotopy lift of $\boldsymbol g$}.

Let $f:\Gamma \to \Gamma$ be a topological representative for an injective endomorphism $\phi:F \to F$, $\mathcal A$ be a nontrivial $[\phi]$-invariant free factor system of $F$, and $\hat \Gamma_k$ be the disjoint union of covers of $\Gamma$ corresponding to $\phi^k(\mathcal A)$ for some $k \ge 1$. Then $f$ lifts to a map $\hat f_k: \hat \Gamma_k \to \hat \Gamma_k$ and the deformation retraction $\hat \Gamma_k \to S[\phi^k(\mathcal A)]$ induces a map $\bar f_k: S[\phi^k(\mathcal A)] \to S[\phi^k(\mathcal A)]$ with $K(\bar f_k) = K(f)$ and $C(\bar f_k) = C(f)$. We shall call $\bar f_k$ the {\bf ($\boldsymbol k$-th) homotopy lift of $\boldsymbol f$}. 

Stallings graphs $S[\phi^k(\mathcal A)]$ are $\phi^k(\mathcal A)$-marked graphs by definition and the maps $\bar f_k$ are weak representatives for $[\left.\phi\right|_{\phi^k(\mathcal A)}\,]$ that might map branch points to bivalent vertices. Note that injectivity of $\phi$ allows us to also consider the graphs as $\mathcal A$-marked graphs and $\bar f_k$ as weak representatives for $[\left.\phi\right|_{\mathcal A}\,]$. %In the latter view, the maps $\bar f_k$ are weak representatives for $\left[\left.\phi\right|_{\mathcal A}\,\right]$ that might map branch points to bivalent vertices. 
We hope to replace the weak representatives $\bar f_k$ with homotopic natural representatives while maintaining uniform control on the Lipschitz and cancellation constants. 
The next lemma allows us to measure how close the homotopy lift $\bar f_k$ is to mapping branch points to branch points.

\begin{lem}\label{branching} Suppose $g: \Gamma \to \Gamma'$ be a $\pi_1$-injective graph map with cancellation constant $C = C(g)$. Then $g$ maps branch points to the $C$-neighborhood of branch points.
%Let $\phi: F \to F$ be injective, $\mathcal A$ be a nontrivial $[\phi]$-invariant free factor system, and $f: \Gamma \to \Gamma$ be a topological representative for $[\phi]$. If $\bar f_k: S[\phi^k(\mathcal A)] \to S[\phi^k(\mathcal A)]$ is a homotopy lift of $f$ with $C(\bar f_k) = C(f)$, where $k \ge 1$, then $\bar f_k$ maps branch points to the $C(f)$-neighborhood of branch points.
\end{lem}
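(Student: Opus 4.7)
The plan is to lift $v$ to a vertex $\tilde v$ of the universal cover $\tilde \Gamma$, produce three immersed paths from $\tilde v$ in distinct directions with nontrivial reduced $\tilde g$-images, and apply Lemma~\ref{bcl} to a pair of them to locate a nearby branch point of $\tilde \Gamma'$.

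First, since $v$ has at least three directions, I pick three distinct edges $\tilde e_1, \tilde e_2, \tilde e_3$ at $\tilde v$, and extend each $\tilde e_i$ to an immersed path $\rho_i$ in $\tilde \Gamma$ from $\tilde v$ such that $\alpha_i := [\tilde g(\rho_i)]$ is a nontrivial reduced path starting at $\tilde g(\tilde v)$. This extension is possible because the connected component $T$ of the pretrivial subforest $\tilde g^{-1}(\tilde g(\tilde v))$ containing $\tilde v$ is a finite subtree --- the preimage of a vertex by a $\pi_1$-injective map is a forest, which lifts homeomorphically to $\tilde \Gamma$ --- whereas each branch at $\tilde v$ is infinite in $\tilde \Gamma$ since $\Gamma$ is a core graph, so each branch contains a non-pretrivial edge incident to $T$.

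If the three initial directions of $\alpha_1, \alpha_2, \alpha_3$ at $\tilde g(\tilde v)$ are all distinct, then $\tilde g(\tilde v)$ itself has at least three directions and is a branch point, and the lemma holds with distance $0$. Otherwise, after reordering I may assume $\alpha_1$ and $\alpha_2$ share a positive-length initial segment and diverge at a vertex $b \neq \tilde g(\tilde v)$; since $b$ has three distinct directions (one back along the shared segment toward $\tilde g(\tilde v)$, and one along each subsequent portion of $\alpha_1$ and $\alpha_2$), it is a branch point. Now $\rho_1^{-1} \cdot \rho_2$ is an immersed path with natural decomposition at the branch point $\tilde v$, and Lemma~\ref{bcl} says the concatenation $\alpha_1^{-1} \cdot \alpha_2$ lies in the $C$-neighborhood of the reduced path $[\tilde g(\rho_1^{-1} \cdot \rho_2)]$. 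Since this reduced path is the geodesic in the tree $\tilde \Gamma'$ from the endpoint of $\alpha_1$ to the endpoint of $\alpha_2$ --- which turns at $b$ and does not revisit $\tilde g(\tilde v)$ --- the point $b$ is the nearest point of the reduced path to $\tilde g(\tilde v)$, so $d(\tilde g(\tilde v), b) \le C$. Projecting to $\Gamma'$ via the covering map (which is distance nonincreasing and sends branch points to branch points) yields a branch point within $C$ of $g(v)$.

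The main technical point I expect to require care is arranging the three rays $\rho_i$ so that each has nontrivial reduced image simultaneously; this is precisely where $\pi_1$-injectivity of $g$ is used, since otherwise an entire branch at $\tilde v$ could be collapsed and leave fewer than three usable directions to feed into the bounded cancellation argument.
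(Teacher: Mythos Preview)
Your approach via the universal cover is different from the paper's and is conceptually clean, but one step needs more care. You assert that $\alpha_1$ and $\alpha_2$ ``diverge at a vertex $b \neq \tilde g(\tilde v)$,'' yet as written nothing prevents one from being a prefix of the other, in which case no tripod point $b$ with three directions is produced. To close this gap, extend each $\rho_i$ along an infinite ray $R_i$ in $\tilde\Gamma$: since $\Gamma$ is a finite core graph, $R_i$ is eventually periodic over some nontrivial loop, so by $\pi_1$-injectivity the reduced image $[\tilde g(R_i)]$ is an infinite ray in $\tilde\Gamma'$. If $[\tilde g(R_1)]$ and $[\tilde g(R_2)]$ converged to the same end of $\tilde\Gamma'$, then for long enough $\rho_1,\rho_2$ the geodesic between the endpoints $x_1,x_2$ would lie arbitrarily far from $\tilde g(\tilde v)$, contradicting Lemma~\ref{bcl} applied to $\rho_1^{-1}\cdot\rho_2$. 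Hence the two image rays genuinely diverge at some finite vertex $b$, and once $\rho_1,\rho_2$ are taken long enough that both $\alpha_i$ extend past $b$, your argument goes through verbatim. Your closing remark correctly identifies $\pi_1$-injectivity as the crux, but it is needed here for divergence, not only for nontriviality.

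For comparison, the paper stays downstairs in $\Gamma$ and $\Gamma'$ and argues the contrapositive: assuming $\nu = g(v)$ is bivalent and farther than $C$ from any branch point, it builds immersed \emph{loops} $p_{12}, p_{23}$ starting and ending in the prescribed directions $e_i,\bar e_j$, then performs a short case analysis on which of the two directions $\epsilon_1,\epsilon_2$ at $\nu$ the tightened images $[g(p_{ij})]$ start and end with. Each case is eliminated by exhibiting an immersed concatenation (such as $p_{12}\cdot p_{12}$ or $[p_{12}\cdot p_{23}]$) whose tightened image would be forced to cancel a subpath of length exceeding $C$. Your tree argument trades this case analysis for a single median picture in $\tilde\Gamma'$, at the cost of the extra work above to guarantee the image rays actually branch; the paper's loop argument avoids that issue because loops automatically give long immersed concatenations without any prefix ambiguity.
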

\begin{proof} Set $C = C(g)$. If $\Gamma'$ is the $C$-neighborhood of its branch points, then there is nothing to prove. Suppose $\nu \in \Gamma'$ is a bivalent vertex whose distance to the nearest branch point is greater than $C$. We need to show that $\nu$ is not the $g$-image of any branch point. Set $\epsilon_1$ and $\epsilon_2$ to be the distinct oriented directions originating from $\nu$ and $\bar \epsilon_1, \bar \epsilon_2$ are the same directions with opposite orientation.

Let $v \in \Gamma$ be a branch point and $g(v) = \nu$. As $v$ is a branch point, there are at least three distinct oriented directions originating from $v$: $e_1$, $e_2$, and $e_3$. Let $p_{12}$ be an immersed path that starts and ends with $e_1$ and $\bar e_2$ respectively and define $p_{23}$ similarly. Set $p_{13} = [p_{12} \cdot p_{23}]$ and $p'_{13} = p_{12} \cdot \bar p_{23}$, where $\bar p_{23}$ is the reversal of the path $p_{23}$. See Figure~\ref{figbcc} for an illustration. Although the paths are loops, we  still treat them as paths, i.e., tightening is done rel. the endpoints. Without loss of generality, assume $[g( p_{12})]$ starts with $\epsilon_1$.

\begin{figure}[ht]
 \centering 
 \includegraphics[scale=1.5]{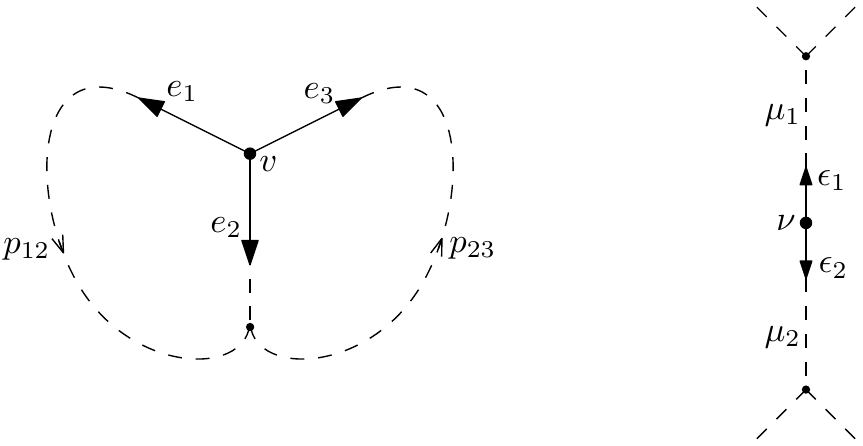}
 \caption{Schematic for paths $p_{12}, p_{23}, p_{13},$ and $p_{13}'$. The path $p_{13}$ starts with $e_1$ follows the dashed path and ends with $\bar e_3$. The path $p_{13}'$ is the ``figure 8'' path traced by $p_{12}$ then $\bar p_{23}$.}
 \label{figbcc}
\end{figure}

If $[g( p_{12})]$ ends with $\bar\epsilon_1$, then $[g( p_{12})] = \mu_1 \cdot \rho \cdot \bar \mu_1$ , where $\mu_1$ is an extension of $\epsilon_1$ to an embedded path from $\nu$ to a branch point and $\rho$ is an immersed nontrivial loop. 
By hypothesis, $\mu_1$ is longer than $C$. Since $p_{12}$ starts and ends with $e_1$ and $\bar e_2$ respectively, the concatenation $p_{12} \cdot p_{12}$ is an immersed path such that $[g(p_{12})] \cdot [g(p_{12})]$ has $\bar \mu_1 \cdot \mu_1$ as a subpath, violating bounded cancellation.
So we may assume $[g( p_{12})]$ starts and ends with $\epsilon_1$ and $\bar \epsilon_2$.

If $[g( p_{23})]$ starts and ends with $\epsilon_2$ and $\bar \epsilon_1$, then $[g( p_{13})] = [g(p_{12}) \cdot g(p_{23})]$ starts and ends with $\epsilon_1$ and $\bar \epsilon_1$ respectively, which violates bounded cancellation for the same reason given in the previous paragraph.
Similarly, if $[g( p_{23})]$ starts and ends with $\epsilon_1$ and $\bar \epsilon_2$, we rule out this possibility by considering $[g(p'_{13})]$. We have ruled out all cases, and therefore, no branch point $v$ of $\Gamma$ is mapped to $\nu$.
\end{proof}

\begin{cor}\label{natural} Let $g: \Gamma \to \Gamma'$ be a $K$-Lipschitz $\pi_1$-injective graph map with cancellation constant $C$. Then $g$ is homotopic to a $(K+C)$-Lipschitz natural graph map with cancellation constant $2C$.
\end{cor}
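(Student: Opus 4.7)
The plan is to homotope $g$ near each branch point so that its image becomes an actual branch point, producing the required natural map. By Lemma~\ref{branching}, for each branch point $v$ of $\Gamma$ there is a branch point $v' \in \Gamma'$ within distance $C$ of $g(v)$; I would choose such a $v'$ together with a geodesic segment $\alpha_v \subset \Gamma'$ of length at most $C$ from $v'$ to $g(v)$. Define $g'(v) = v'$ at each branch point and extend via the linear homotopy that slides $g(v)$ back along $\alpha_v$, leaving $g$ unchanged outside a small neighborhood of the branch points. On each natural edge $\varepsilon$ with endpoints $v_1, v_2$, the new image is the tightening $[\alpha_{v_1} \cdot g(\varepsilon) \cdot \bar{\alpha}_{v_2}]$, which is a point or an immersed path in $\Gamma'$; hence $g'$ is natural.

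For the Lipschitz bound, observe $|g'(\varepsilon)| \leq |g(\varepsilon)| + 2C \leq K|\varepsilon| + 2C$. Distribute the image path across the constituent edges of $\varepsilon$, first subdividing any length-one natural edge once so that every edge of $\Gamma$ has at most one branch-point endpoint. Each edge then absorbs at most one boundary correction of size $\leq C$, giving per-edge image length at most $K + C$.

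For the cancellation bound, consider an immersed natural edge-path decomposition $p_1 \cdot p_2$ in the universal cover $\tilde \Gamma$ meeting at a lift $\tilde v$. Away from branch points, $g'$ agrees with $g$, and the cancellation of $[\tilde g'(p_1)] \cdot [\tilde g'(p_2)]$ against $[\tilde g'(p_1 \cdot p_2)]$ matches that of $g$, bounded by $C$. When $v$ is a branch point, the lifts of $\alpha_v$ and $\bar{\alpha}_v$ introduced at the junction (as the endpoints of $[\tilde g'(p_i)]$ are shifted from $\tilde g(\tilde v)$ to $\tilde v'$) contribute at most an additional $|\alpha_v| \leq C$ to the cancellation, yielding the total bound of $2C$.

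The main technical point is achieving the Lipschitz bound $K + C$ rather than the raw $K + 2C$: the harmless subdivision of length-one natural edges is exactly what ensures no single edge is charged two endpoint corrections at once. Everything else—the homotopy class, naturality, and the factor of $2$ in the cancellation—follows immediately from the construction together with Lemma~\ref{branching} and the bounded cancellation already known for $g$.
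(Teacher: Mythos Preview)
Your approach is essentially the paper's: invoke Lemma~\ref{branching}, slide each branch-point image along an arc of length at most $C$ to a genuine branch point, tighten on natural edges, and read off the new Lipschitz and cancellation constants. The paper's proof is terser and does not isolate the length-one natural-edge issue you address with subdivision; note also that in the paper's convention the bounded cancellation lemma concerns \emph{natural} edge-path decompositions, so the junction is always a branch point and your bivalent-junction case is vacuous.
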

\begin{proof} By Lemma~\ref{branching}, $g$ maps branch points to the $C$-neighborhood of branch points. So we can find a graph map $g'$ homotopic to $g$ that maps branch points to branch points. Since the homotopy is moving images of branch points a distance at most $C$, we can use $K(g')=K+C$ and $C(g') = 2C$.

The bounded cancellation lemma only considers natural edge-paths, so homotopies that are supported in the interior of natural edges will not affect the cancellation constant. Using a tightening homotopy supported in the interior of natural edges in $\Gamma$, we may assume $g'$ is a natural graph map. The homotopy will not affect the Lipschitz and cancellation constants. 
\end{proof}

So we can replace the homotopy lift $\bar f_k$ with a homotopic natural representative that has Lipschitz and cancellation constants $K(f) + C(f)$ and $2 C(f)$ respectively. One would usually collapse the pretrivial edges and forget bivalent vertices to get a topological representative but we will not since we want to preserve compatibility: $\lVert \cdot \rVert_{\beta_*}$ is the restriction of $\lVert \cdot \rVert_{\alpha}$ to $\phi^k(\mathcal A)$. 
To summarize the properties of homotopy lifts that will be used in the next sections:

\begin{prop}\label{homotopylift}Suppose $\phi:F \to F$ is injective, $f: \Gamma \to \Gamma$ is a topological representative for $[\phi]$, and $\mathcal A$ is a nontrivial $[\phi]$-invariant free factor system. For any $k \ge 1$, there is a natural representative $\bar f_k: S[\phi^k(\mathcal A)] \to S[\phi^k(\mathcal A)]$  for $[\left.\phi\right|_{\phi^k(\mathcal A)}\,]$ with Lipschitz and cancellation constants $K(\bar f_k) = K(f) + C(f)$ and $C(\bar f_k) = 2 C(f)$ respectively.
%\begin{enumerate}
%\item it is a homotopy lift of $f$, i.e., $\iota_k \circ \bar f_k \simeq f \circ \iota_k$;
%\item it maps branch points to  branch points and any natural edge to a branch point or an immersed path; in particular, forgetting bivalent vertices induces a weak representative for $\left[\left.\phi\right|_{\mathcal A}\,\right]$.
%\item $\left.\bar f_k\right|_E$ is either a constant map or an immersion for all natural edges $E$;
%\item $K(\bar f_k) = K(f) + C(f)$ and $C(\bar f_k) = 2 C(f)$.
%\end{enumerate}
\end{prop}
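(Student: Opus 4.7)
The plan is to combine the homotopy lift construction sketched in the paragraph preceding Lemma~\ref{branching} with Corollary~\ref{natural}. The bulk of the work is already in place, so what remains is mostly bookkeeping to track how the constants propagate.

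First I would recall the construction: lift $f$ to $\hat f_k : \hat\Gamma_k \to \hat\Gamma_k$ on the disjoint union of covers corresponding to $\phi^k(\mathcal A)$, and compose with the deformation retraction $\hat\Gamma_k \to S[\phi^k(\mathcal A)]$ to obtain a weak representative $\bar f_k$ of $[\left.\phi\right|_{\phi^k(\mathcal A)}\,]$ with $K(\bar f_k) = K(f)$ and $C(\bar f_k) = C(f)$. The retraction does not inflate these constants because it is a simplicial deformation retraction onto the core that does not fold; any cancellation along a natural edge-path in the universal cover of $S[\phi^k(\mathcal A)]$ under $\bar f_k$ is already witnessed by the corresponding cancellation of $f$ through $\hat f_k$. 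However, $\bar f_k$ need not be natural, since the retraction can send branch points of $S[\phi^k(\mathcal A)]$ to bivalent vertices.

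Next I would apply Corollary~\ref{natural} to $\bar f_k$ with input $K = K(f)$ and $C = C(f)$. The hypothesis of $\pi_1$-injectivity holds because, under the markings, $\bar f_k$ induces on each component (up to inner automorphism) the restriction $\phi^k(A_i) \to \phi^k(A_{\sigma(i)})$ of $\phi$, which is injective. The corollary outputs a homotopic natural graph map with Lipschitz constant $K(f) + C(f)$ and cancellation constant $2C(f)$, which are exactly the advertised bounds. Since homotopic graph maps induce the same outer homomorphism on $\pi_1$, the new map is still a weak representative for $[\left.\phi\right|_{\phi^k(\mathcal A)}\,]$ in the sense of the definition.

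The main obstacle, insofar as there is one, is the bookkeeping in the first step: convincing oneself that the deformation retraction does preserve both $K(f)$ and $C(f)$. This is routine since the retraction neither folds edges nor introduces new identifications, so any natural-edge cancellation produced by $\bar f_k$ is bounded by the cancellation of $f$ on the corresponding $\hat f_k$-image path. With this observed, the proposition reduces to a single invocation of Corollary~\ref{natural}, and the substantive work has been relocated into Lemma~\ref{branching}.
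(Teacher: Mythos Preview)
Your proposal is correct and follows exactly the approach the paper takes: the proposition is stated as a summary of the preceding discussion, which constructs the homotopy lift $\bar f_k$ with constants $K(f)$ and $C(f)$ and then invokes Corollary~\ref{natural} to upgrade it to a natural representative with constants $K(f)+C(f)$ and $2C(f)$. Your identification of the only nontrivial bookkeeping---that the deformation retraction preserves the Lipschitz and cancellation constants---matches what the paper asserts without further elaboration.
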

\noindent The crucial point is that the Lipschitz and cancellation constants are independent of $k$. %Condition 2 will be referred to as {\it weak representative with respect to natural edges}.

%%%%%%%%%%%%%%%%%%%%%%%%%%%%%%%%%%%%%%%%%%%%%%%%%%%%%%%%%%
\subsection{Canonical and elliptic free factor systems}\label{secElliptic}
%%%%%%%%%%%%%%%%%%%%%%%%%%%%%%%%%%%%%%%%%%%%%%%%%%%%%%%%%%

In this section, we will construct a canonical invariant free factor system for any given injective endomorphism of $F$. This free factor system, called the {\it elliptic free factor system}, is crucial for the construction of {\it (expanding) relative immersions} later in the \paper.

%{\color{red}\begin{defn} Suppose $\phi:F \to F$ is an endomorphism. We shall say $\phi$ is expansive with respect to a marked graph $(\Gamma, \alpha)$ if for all $L \ge 1$, there exists $k \ge 1$ such that $\lVert \phi^k(g) \rVert_\alpha > L$ for all nontrivial elements $g \in F$. When $\phi$ is expansive with respect to some marked graph, it is expansive with respect all marked graphs; so the marked graph will usually be omitted and we say $\phi$ is {\bf expansive}.
%\end{defn}}

%Automorphisms are not expansive and, roughly speaking, injective nonexpansive endomorphisms are those built from automorphisms of free factor systems. The next definition will be needed for the proof of this characterization.

%\begin{defn}
Suppose $\mathcal A$ is a free factor system and $\phi:F \to F$ is an injective endomorphism. We shall say that a conjugacy class $[g]$ in $F$ {\bf has an infinite $\boldsymbol{[\phi]}$-tail} if for every $n \ge 1$, there is a conjugacy class $[g_n]$ in $F$ such that $[\phi^{n}(g_n)] = [g]$. The system $\mathcal A$ {\bf carries an infinite $\boldsymbol{[\phi]}$-tail of a conjugacy class} $[g]$ in $F$ if for every $n \ge 1$, there is a conjugacy class $[g_n]$ carried by $\mathcal A$ such that $[\phi^{n}(g_n)] = [g]$.
%\end{defn}
We now state and prove the main technical result of this section.

\begin{thm}\label{expequiv1} If $\phi: F \to F$ is injective, then $[\phi]$ has a nontrivial fixed free factor system if and only if some nontrivial conjugacy class has an infinite $[\phi]$-tail.
\end{thm}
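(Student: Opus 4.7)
The forward implication ($\Rightarrow$) is straightforward. Suppose $\mathcal{A}=\{A_1,\ldots,A_l\}$ is a nontrivial $[\phi]$-fixed free factor system with associated permutation $\sigma$ of order $k$. For each $i$, the condition $[\phi^k(A_i)]=[A_i]$ together with injectivity of $\phi$ forces $\phi^k$ to restrict to an isomorphism $A_i \to W_i A_i W_i^{-1}$, so that $c_{W_i^{-1}}\circ\phi^k$ is an automorphism of $A_i$. Picking any nontrivial $g\in A_1$ and iteratively inverting this automorphism produces $g^{(m)}\in A_1$ with $[\phi^{km}(g^{(m)})]=[g]$ for every $m\ge 1$; for general $n$, setting $m=\lceil n/k\rceil$ and $g_n:=\phi^{km-n}(g^{(m)})$ gives $[\phi^n(g_n)]=[g]$, so $[g]$ has an infinite $[\phi]$-tail.

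For the converse ($\Leftarrow$), suppose $[g]$ has an infinite $[\phi]$-tail. If $\phi$ is an automorphism then $\{F\}$ itself is a nontrivial $[\phi]$-fixed free factor system, so assume $\phi$ is not an automorphism. The plan is to produce a nontrivial $[\phi]$-invariant free factor system $\mathcal{A}$ all of whose components are eventually $[\phi]$-periodic, so that Lemma~\ref{nonexpand} delivers a nontrivial $[\phi]$-fixed free factor system.

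First I would translate the tail condition into a bounded combinatorial statement about the Stallings graphs $S_n:=S[\phi^n(F)]$. For each $n$, the element $g^{(n)}:=\phi^n(g_n)$ is a conjugate of $g$ lying in $\phi^n(F)$, hence is represented in $S_n$ by a closed immersed loop $\ell_n$ of combinatorial length $L:=\lVert g\rVert$ (using the compatibility of the Stallings metric with that of the ambient rose). Since the interior of any natural edge of $S_n$ has only bivalent vertices and $\ell_n$ is immersed, every natural edge visited by $\ell_n$ is traversed in full from one branch-point endpoint to the other. Therefore the subgraph $\Sigma_n\subset S_n$ carrying $\ell_n$ has total combinatorial volume at most $L$, and each of its natural edges has length at most $L$. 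Because $S_n$ has at most $3\,\mathrm{rank}(F)-3$ natural edges, the marked isomorphism type of $(\Sigma_n,\ell_n)$ ranges over a finite set; pass to an infinite subsequence on which this type is constant.

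The final step is to promote this stable combinatorial data into an honest invariant free factor system. Pulling $\pi_1(\Sigma_n)\le\phi^n(F)$ back via $\phi^{-n}$ yields uniformly bounded-rank subgroups $B_n\le F$ each carrying $[g_n]$, and the uniform Lipschitz and cancellation bounds for the homotopy lifts $\bar f_n:S_n\to S_n$ supplied by Proposition~\ref{homotopylift} control how $\phi$ relates these across iterations. From these ingredients one assembles a nontrivial $[\phi]$-invariant free factor system $\mathcal{A}$ carrying $[g]$ for which the Stallings graphs $S[\phi^k(A_i)]$ have uniformly bounded natural edges; the contrapositive of Lemma~\ref{expand} then forces every component of $\mathcal{A}$ to be eventually $[\phi]$-periodic, and Lemma~\ref{nonexpand} finishes. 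The main obstacle lies precisely in this extraction: the $B_n$ are only coherent modulo the automorphisms of the stabilized pattern $(\Sigma_n,\ell_n)$, and converting their combinatorial stability into an actual $[\phi]$-invariant free factor system of $F$ requires carefully threading the bounded cancellation estimate through all iterations $\bar f_n$ simultaneously.
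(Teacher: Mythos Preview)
Your forward direction is fine and matches the paper's one-line observation.

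The reverse direction, however, has a genuine gap at exactly the point you flag yourself. You produce, for each $n$, a subgraph $\Sigma_n\subset S_n$ of bounded volume carrying a conjugate of $[g]$, and hence a free factor $B_n\le F$ of bounded rank. But you never establish any $[\phi]$-relation among the $B_n$: there is no reason that $\phi(B_n)$ lies in a conjugate of $B_{n-1}$ (or of any fixed $B_m$), because the loops $\ell_n$ and $\ell_{n-1}$ represent $\phi^n(g_n)$ and $\phi^{n-1}(g_{n-1})$ for \emph{different} tail elements $g_n,g_{n-1}$, and the homotopy lift $\bar f_n$ does not send one to the other. Passing to a subsequence on which $(\Sigma_n,\ell_n)$ has constant isomorphism type gives you no $\phi$-equivariance whatsoever; it only tells you that infinitely many abstractly isomorphic free factors of $F$ carry conjugates of $[g]$, which is vacuous information. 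The sentence ``From these ingredients one assembles a nontrivial $[\phi]$-invariant free factor system $\mathcal{A}$'' is precisely the content of the theorem and is not supplied.

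The paper avoids this difficulty entirely by working at a \emph{single} level $k$ rather than across all levels. One starts with an already $[\phi]$-invariant free factor system $\mathcal{D}$ (initially $\{F\}$) that carries an infinite tail of $[g]$. If $\mathcal{D}$ has a component that is not eventually periodic, Lemma~\ref{expand} gives arbitrarily long natural edges in $\Delta_k=S[\phi^k(\mathcal{D})]$ for some large $k$. One then partitions the natural edges of $\Delta_k$ into ``long'' and ``short'' using the Lipschitz constant, so that the short edges form an $\bar f_k$-\emph{invariant} proper subgraph $\Delta'$. Invariance under $\bar f_k$ is exactly what yields a $[\phi]$-invariant proper free factor system $\mathcal{D}'\prec\mathcal{D}$; bounded cancellation then shows $\mathcal{D}'$ still carries an infinite tail of $[g]$. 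Descending in the poset terminates at a system with all components eventually periodic, and Lemma~\ref{nonexpand} finishes. The key point you are missing is that invariance must be built in at each step via $\bar f_k$-invariance of a subgraph, not extracted after the fact from stability of combinatorial types.
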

\begin{rmk} Reynolds defined {\it expansive} endomorphisms \cite[Definition~3.8]{Rey11} and $[\phi]$ is expansive in this sense exactly when only the trivial conjugacy class has an infinite $[\phi]$-tail. Under this equivalence, Reynolds' Remark 3.12 in \cite{Rey11} is a weaker form of this theorem.
\end{rmk}
\begin{proof}
Let $\phi:F \to F$ be an injective endomorphism. The forward direction is obvious: if $[\phi]$ has a nontrivial fixed free factor system $\mathcal A$, then any nontrivial conjugacy class $[g]$ carried by $\mathcal A$ has an infinite $[\phi]$-tail. The main content of the theorem is in the reverse direction.
Fix a nontrivial conjugacy class $[g]$ in $F$ with an infinite $[\phi]$-tail. %If $\phi$ is surjective, then $\{ F \}$ is a nontrivial $[\phi]$-fixed free factor system and we are done. So we assume $\phi$ is not surjective and 
We proceed by descending down the poset of free factor systems. The following claim is the key idea: 
\begin{claim}[Descent] Let $\mathcal D$ be a $[\phi]$-invariant free factor system that carries an infinite $[\phi]$-tail of $[g]$. If $\mathcal D$ contains a free factor that is not eventually $[\phi]$-periodic, then some $[\phi]$-invariant free factor system $\mathcal D' \prec \mathcal D$ carries an infinite $[\phi]$-tail of $[g]$.\end{claim}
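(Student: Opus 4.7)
The proof naturally splits along the combinatorial dynamics $\sigma:\mathcal{D}\to\mathcal{D}$ that records which component of $\mathcal{D}$ carries $[\phi(D_i)]$. The easy case is when $\sigma$ is not surjective: then $\mathcal{D}':=\sigma(\mathcal{D})\subsetneq\mathcal{D}$ is a proper free factor system of $F$, it is $[\phi]$-invariant because $\sigma(\mathcal{D}')\subseteq\sigma(\mathcal{D})=\mathcal{D}'$, and it still carries the tail since for each $n\ge 1$ the class $[\phi(g_{n+1})]$ is a level-$n$ preimage of $[g]$ carried by $\phi(\mathcal{D})\preceq\mathcal{D}'$. So assume $\sigma$ is a permutation, and let $J_\infty\subseteq\mathcal{D}$ be the set of components that carry some preimage of $[g]$ at infinitely many levels; this $J_\infty$ is $\sigma$-invariant (if $D_i$ carries $[g_n]$ then $\sigma(D_i)$ carries $[\phi(g_n)]=[g_{n-1}]$) and still carries the full tail (for each $n$ pick $N\gg n$ with the level-$N$ preimage already in $J_\infty$ and push forward by $\phi^{N-n}$). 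So if $J_\infty\subsetneq\mathcal{D}$ we are again done, leaving only the case $J_\infty=\mathcal{D}$.

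Pick a compatible tail ($[\phi(g_{n+1})]=[g_n]$) whose sequence of carrying components rotates through the $\sigma$-cycle of $D$ of period $p$, which is possible precisely because $J_\infty=\mathcal{D}$. Then $[g_n]\in D$ for $n$ in a full arithmetic progression $\{n_0+kp\}_{k\ge 0}$. Malnormality of free factors in free groups ensures that the identity $\phi^p(g_{n_0+kp})=g_{n_0+(k-1)p}$ forces $\phi^p(D)\subseteq D$ (no outer conjugation needed), so $\psi:=\phi^p|_D:D\to D$ is a well-defined injective endomorphism. Since $D$ is not eventually $[\phi]$-periodic, the same malnormality argument yields that $\psi$ is not surjective and that $D$ is not eventually $[\psi]$-periodic in $D$, while the identities above make $\{[g_{n_0+kp}]_D\}_k$ an infinite $[\psi]$-tail of the nontrivial class $[g_{n_0}]_D$ inside $D$.

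I would finish by induction on $\mathrm{rank}(F)$. When $\mathcal{D}\neq\{F\}$ the rank of $D$ is strictly smaller, so the inductive version of Theorem~\ref{expequiv1} applied to $(D,\psi)$ produces a nontrivial $[\psi]$-fixed free factor system $\mathcal{F}_D$ of $D$, necessarily proper because $D$ itself is not $[\psi]$-periodic. I then extend $\mathcal{F}_D$ to $\mathcal{D}'\prec\mathcal{D}$ by simultaneously refining the $\sigma$-cycle of $D$: set $\mathcal{E}_0:=\mathcal{F}_D$ and let $\mathcal{E}_{i+1}$ be the smallest free factor system of $D_{i+1}$ carrying $\phi(\mathcal{E}_i)$. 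The $[\psi]$-fixedness of $\mathcal{F}_D$ closes the cycle ($\mathcal{E}_p=\mathcal{E}_0$), so $\mathcal{D}'$ is $[\phi]$-invariant, and $\phi^j$ carries each $[\psi]$-tail element inside $\mathcal{F}_D$ to a $[\phi]$-tail element inside $\mathcal{E}_j$, so $\mathcal{D}'$ carries the full $[\phi]$-tail of $[g]$. The hard part will be the base case $\mathcal{D}=\{F\}$, where $D=F$ and there is no proper free factor to induct into; here I would instead apply Lemma~\ref{expand} directly to $F$ to find, for large $n$, a natural edge of $S[\phi^n(F)]$ longer than $\|g\|_\alpha$ which the bounded-length loop for $[g]$ cannot traverse, cut it to obtain a proper free factor system $\mathcal{E}_n$ of $F$ carrying the level-$n$ preimage $[g_n]$, pigeonhole to a single $\mathcal{E}$ appearing at infinitely many $n$, and take the smallest free factor system of $F$ carrying $\bigcup_{k\ge 0}\phi^k(\mathcal{E})$. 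Verifying that this invariant closure is a proper free factor system rather than collapsing back to $\{F\}$ is the delicate point, and I expect it to require a uniform version of the long-edge analysis combined with bounded cancellation applied at all levels simultaneously.
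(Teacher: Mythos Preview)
Your approach is genuinely different from the paper's, and the difference is instructive. You attempt an induction on $\mathrm{rank}(F)$: reduce along the combinatorics of $\sigma$, then for the surviving component $D$ invoke Theorem~\ref{expequiv1} at smaller rank on $\psi=\left.\phi^p\right|_D$, and finally treat $\mathcal D=\{F\}$ as a separate base case. The paper instead gives a single direct argument that works uniformly for every $\mathcal D$: it looks at the Stallings graph $\Delta_k=S[\phi^k(\mathcal D)]$ for $k\gg 0$, declares a natural edge \emph{long} if it eventually maps over an edge longer than $L\cdot K^{N-1}$ (so the complement is automatically $\bar f_k$-invariant), and uses bounded cancellation to show that the tail never crosses a long edge lying on a directed cycle. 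The invariance of the short subgraph is built into the definition, which is exactly the step you flag as ``delicate.''

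There are two genuine gaps in your proposal. First, the base case $\mathcal D=\{F\}$ is not a residual technicality but the entire content of the claim, and your sketch does not close it. Cutting a single long edge of $S[\phi^n(F)]$ gives a free factor system $\mathcal E_n$ of $F$ (via $\phi^n$), but there are infinitely many free factor systems of $F$, so ``pigeonhole to a single $\mathcal E$'' has no meaning as stated; and even granting some $\mathcal E$, the free factor closure of $\bigcup_{k\ge 0}\phi^k(\mathcal E)$ has no reason to be proper. What is missing is precisely the paper's idea: choose the long edges so that their complement is already $\bar f_k$-invariant, and use bounded cancellation (long edges exceed $C$) to trap the tail there. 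Second, in the inductive step the assertion that malnormality forces $\phi^p(D)\subseteq D$ ``with no outer conjugation'' is false: $[\phi]$-invariance only gives $\phi^p(D)\le xDx^{-1}$, and knowing that one element $\phi^p(g_{n_0+kp})$ lands in $D$ does not pin down $x$. This is fixable by passing to $\psi=i_{x^{-1}}\circ\phi^p$, but then the compatibility of the tail with $\psi$ must be re-argued. Once you have the paper's direct argument for $\mathcal D=\{F\}$, the inductive superstructure becomes unnecessary: the same Stallings-graph argument applies verbatim to any $\mathcal D$.
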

Since there are no infinite chains in the poset of free factor systems, the descent (proven below) starts with the free factor system $\{ F \}$ %and nonsurjectivity of $\phi$
 and then finds a $[\phi]$-invariant free factor sytem $\mathcal A$ that carries an infinite $[\phi]$-tail of $[g]$ and whose free factors are eventually $[\phi]$-periodic; such a free factor system contains a nontrivial $[\phi]$-fixed free factor system by Lemma~\ref{nonexpand} and we are done.\end{proof}

\noindent Note that the assumption ``$\mathcal D$ contains a free factor that is not eventually $[\phi]$-periodic'' in the descent claim is more general than we really need for our conclusion. We could have worked with a more natural assumption ``$\mathcal D$ has no periodic free factors'' and reached the same conclusion. However, we give the general argument instead since we will invoke a variation of it in the next proposition.

\begin{proof}[Proof of descent] Let $(\Gamma, \alpha)$ be a marked graph, $f:\Gamma \to \Gamma$ be a topological representative for $[\phi]$, and set $K = K(f)+C(f)$ and $C = 2C(f)$. Suppose $\mathcal D$ is a $[\phi]$-invariant free factor system that carries an infinite $[\phi]$-tail $[g_{n}]_{n\ge 1}$ of $[g]$ and contains a free factor that is not eventually $[\phi]$-periodic. Then, for all $k \ge 1$ and $n \ge k$, there is an immersed loop $\rho_k(g_n)$ in $\Delta_k = S[\phi^k(\mathcal D)]$ corresponding to $[\phi^{k}(g_{n})]$, where  $\Delta_k = S[\phi^k(\mathcal D)]$ is the Stallings graph with respect to $(\Gamma, \alpha)$. %Note that $\rho_k(g_k)$  has length $\lVert g \rVert_\alpha$ since $[\phi^{k}(g_{k})] = [g]$.
 Set $L = \max\{\lVert g \rVert_\alpha, C\}$.

For all $k \ge 1$, let $\bar f_k:\Delta_k \to \Delta_k$ be the natural representatives for $[\left.\phi\right|_{\phi^k(\mathcal D)}\,]$  given by Proposition~\ref{homotopylift}. In particular, these representatives have Lipschitz and cancellation constants $K$ and $C$ respectively. As $[g_n]_{n\ge 1}$ is an infinite $[\phi]$-tail of $[g]$ carried by $\mathcal D$, we get,  for any fixed $k \ge 1$, an infinite sequence of immersed loops $(\rho_k(g_{n}))_{n \ge k}$ in $\Delta_k$ such that the free homotopy classes of $\bar f_k^{n-k}(\rho_k(g_{n}))$ have length $\lVert g \rVert_\alpha$ for all $n \ge k$. %but the classes need not contain $\rho_k(g_k)$ in $\Delta_k$ as $\phi^{n}(g_{n})$ need not be conjugate to $\phi^k(g_k)$ in $\phi^k(F)$.

Form a directed graph $\mathbb G_k$ whose vertices are the natural edges of $\Delta_k$ and there is a directed edge $E_i \to E_j$ if $\bar f_k$ maps natural edge $E_i$ over $E_j$. Note that the number of natural edges of $\Delta_k$ is at most $N = 3\cdot\mathrm{rank}(F) - 3$ and so $\mathbb G_k$ has at most $N$ vertices.

Since $\mathcal D$ contains a free factor that is not eventually $[\phi]$-periodic, the length of natural edges in $\Delta_k$ is unbounded as $k \to \infty$ by Lemma~\ref{expand}.
Fix $k\gg 0$ such that the longest natural edge in $\Delta_k$ is longer than $L \cdot K^{N-1}$. 
Let $\mathcal L_0$ be the natural edges of $\Delta_k$ longer than $L \cdot K^{N-1}$ and $\mathcal L$ be the union of $\mathcal L_0$ and all natural edges on a directed path to $\mathcal L_0$ in $\mathbb G_k$. Since $\bar f_k$ is $K$-Lipschitz and the shortest directed path in $\mathbb G_k$ from a natural edge in $\mathcal L$ to $\mathcal L_0$ has at most $N$ natural edges on it, every natural edge in $\mathcal L$ is longer than $L$. The natural edges in $\mathcal L$ will be referred to as the {\it long} natural edges and the remaining natural edges as ${\it short}$.

 Set $\Delta' \subset \Delta_k$ to be the union of short natural edges, which will be a proper subgraph since long natural edges exist. The subgraph is automatically $\bar f_k$-invariant by the construction of $\mathcal L$. Since $\rho_k(g_k)$ is an immersed loop in $\Delta_k$ with length $\lVert g \rVert_\alpha \le L$, its natural edges are short and $\Delta'$ is a nonempty, noncontractible proper subgraph of $\Delta_k$.
Therefore, $\Delta'$ determines a nontrivial $[\phi]$-invariant proper free factor system $\mathcal D' \prec \mathcal D$. 
Technically, it determines an invariant free factor system of $\phi^k(F)$ but, as $\phi$ is injective, this corresponds to an invariant free factor system of $F$.
It remains to show that $\mathcal D'$ carries an infinite $[\phi]$-tail of $[g]$.

Let $\mathbb L \subset \mathbb G_k$ be the full subgraph generated by the long natural edges $\mathcal L$. If there are no directed cycles in $\mathbb L$, then $\bar f_k^N(\Delta_k) \subset \Delta'$; in this case, the sequence of nontrivial loops $(\bar f_k^N(\rho_k(g_n)))_{n \ge k+N}$  in $\Delta'$ determines an infinite $[\phi]$-tail of $[g]$ carried by $\mathcal D'$ and we are done.
Now suppose there are directed cycles in $\mathbb L$ and let $\rho$ be an immersed loop in $\Delta_k$ that contains a long natural edge in such a cycle. Then, by bounded cancellation and the fact long natural edges are longer than $L \ge C$, $[\bar f_k^m(\rho)]$ contains a long natural edge in the same directed cycle in $\mathbb L$ for all $m \ge 1$. 
Consequently, none of the immersed loops $\rho_k(g_{n})$ in $\Delta_k$ contain a long natural edge that is in a directed cycle of $\mathbb L$. Therefore, as far as the sequence of loops $(\rho_k(g_n))_{n \ge k}$ is concerned, we may assume there are no directed cycles in $\mathbb L$ and, as before, the sequence $(\bar f_k^N(\rho_k(g_{n})))_{n \ge k+N}$ determines an infinite $[\phi]$-tail of $[g]$ carried by $\mathcal D'$.
\end{proof}

The following dichotomy is (equivalent to) a result in Reynolds' thesis.

\begin{cor}[{\cite[Proposition~3.11]{Rey11}}] If $\phi:F \to F$ is irreducible, then either $\phi$ is an automorphism or only the trivial conjugacy class has an infinite  $[\phi]$-tail.
\end{cor}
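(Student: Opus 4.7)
The plan is to derive this dichotomy directly from Theorem~\ref{expequiv1} combined with the observation that irreducibility forces injectivity. Since the conclusion is a simple logical consequence, the main work is just chasing the definitions carefully.

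First I would note that by the Observation, any irreducible $\phi$ is automatically injective, so Theorem~\ref{expequiv1} applies. Next, assume for contrapositive purposes that some nontrivial conjugacy class $[g]$ in $F$ has an infinite $[\phi]$-tail; the goal is to show $\phi$ is an automorphism. Theorem~\ref{expequiv1} hands us a nontrivial $[\phi]$-fixed free factor system $\mathcal A$ of $F$. Because $\phi$ is irreducible, $\mathcal A$ cannot be a nontrivial \emph{proper} free factor system, so $\mathcal A = \{F\}$ (the only remaining option, since by definition ``nontrivial'' excludes the trivial system).

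Then I would extract surjectivity from the fact that $\mathcal A = \{F\}$ is $[\phi]$-fixed: this means $[\phi(F)] = [F]$, i.e., $\phi(F) = xFx^{-1}$ for some $x \in F$. But every conjugate of $F$ inside itself is $F$, so $\phi(F) = F$. Combined with injectivity from the Observation, $\phi$ is an automorphism. This completes the dichotomy.

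I do not expect any real obstacle here; the statement is essentially a translation of Theorem~\ref{expequiv1} using the fact that, in the irreducible case, a nontrivial $[\phi]$-invariant free factor system must equal $\{F\}$. The only mild subtlety is keeping track of the convention that the ``trivial system'' (consisting of the trivial subgroup) is excluded when we speak of a nontrivial free factor system, so that the only available $\mathcal A$ produced by the theorem forces $\phi(F) = F$.
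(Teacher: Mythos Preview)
Your proof is correct and follows essentially the same approach as the paper's own proof: assume a nontrivial conjugacy class has an infinite $[\phi]$-tail, invoke Theorem~\ref{expequiv1} to obtain a nontrivial $[\phi]$-fixed free factor system, and use irreducibility to force $\mathcal A = \{F\}$ and hence that $\phi$ is an automorphism. You spell out a few steps (injectivity from the Observation, and why $\mathcal A = \{F\}$ being $[\phi]$-fixed implies surjectivity) that the paper leaves implicit, but the argument is the same.
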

\begin{proof} Suppose $\phi$ is irreducible and there is a nontrivial conjugacy class with an infinite $[\phi]$-tail. By Theorem~\ref{expequiv1}, there is a nontrivial $[\phi]$-fixed free factor system $\mathcal A$. Since $\phi$ is irreducible, $\mathcal A = \{ F \}$ and $\phi$ is an automorphism.
\end{proof}

The fixed free factor system given by Theorem~\ref{expequiv1} may depend on the chosen conjugacy class $[g]$ with an infinite tail or the marked graphs $(\Gamma, \alpha)$ chosen in the descent. The next proposition constructs a canonical fixed free factor system for $[\phi]$; this system carries all conjugacy classes with an infinite tail as well as all finitely generated fixed subgroup system. The proof will use both descent and ascent (like a losing game of Tetris) in the poset of free factor systems!

\begin{prop}\label{maxfixed} If $\phi:F \to F$ is injective, then there is a unique maximal $[\phi]$-fixed free factor system $\mathcal A$. Precisely, $\mathcal A$ carries every conjugacy class with an infinite $[\phi]$-tail and every finitely generated $[\phi]$-fixed subgroup system.
\end{prop}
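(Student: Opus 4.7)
The plan is to show that the set $\mathfrak{F}$ of $[\phi]$-fixed free factor systems (which always contains the trivial system) is closed under an appropriate join operation, and then combine this with the ACC in the poset of free factor systems (bounded by $\mathrm{rank}(F)$) to obtain a unique maximum $\mathcal A$. The carrying properties will then follow from the join operation together with maximality.

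For join closure, suppose $\mathcal A_1, \mathcal A_2 \in \mathfrak{F}$; I will construct a $[\phi]$-fixed $\mathcal B$ with $\mathcal A_1, \mathcal A_2 \preceq \mathcal B$. Starting from any $[\phi]$-invariant system $\mathcal D$ carrying both (e.g., $\mathcal D = \{F\}$), I apply the descent argument in the proof of Theorem~\ref{expequiv1}, using a conjugacy class $[g]$ inside a component of $\mathcal A_1$. The critical modification is to enlarge the threshold $L$ in the descent so that it exceeds the natural-edge lengths of the Stallings graphs $S[\phi^k(\mathcal A_1)]$ and $S[\phi^k(\mathcal A_2)]$ embedded in $\Delta_k$; these lengths are uniformly bounded in $k$ by the contrapositive of Lemma~\ref{expand}, since the components of $\mathcal A_i$ are $[\phi]$-periodic. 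With this choice, the embedded Stallings graphs land among the short edges, and since they are $\bar f_k$-invariant (via $\phi|_{\mathcal A_i}$), no forward path in $\mathbb G_k$ from them reaches a long edge. Hence the short subgraph $\Delta' \subset \Delta_k$ contains them, so the resulting $\mathcal D' \prec \mathcal D$ still carries $\mathcal A_1, \mathcal A_2$. Iterating descent until every component of $\mathcal D$ is eventually $[\phi]$-periodic and applying Lemma~\ref{nonexpand} produces a $[\phi]$-fixed $\mathcal B \preceq \mathcal D$ with $\phi^k(\mathcal D) \preceq \mathcal B$ for some $k$; choosing $k$ a common multiple of the periods of $\mathcal A_1, \mathcal A_2$ gives $\mathcal A_1, \mathcal A_2 \preceq \mathcal B$.

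With join closure established, the ACC produces a maximal $\mathcal A \in \mathfrak{F}$, and join closure forces its uniqueness (any other maximal element would share a common upper bound in $\mathfrak{F}$ with $\mathcal A$, forcing equality). For carrying conjugacy classes: given $[g]$ with infinite $[\phi]$-tail, Theorem~\ref{expequiv1}'s construction produces a $[\phi]$-fixed $\mathcal A_g$, and inspecting the proof shows it carries $[g]$ itself (the loop $\rho_k(g_k)$ representing $[g]$ lies in the short subgraph); joining with $\mathcal A$ and using maximality forces $\mathcal A_g \preceq \mathcal A$. For a finitely generated $[\phi]$-fixed subgroup system $\mathcal H$, I take its free factor hull $\mathcal F(\mathcal H)$, run an analogous relative descent to obtain a $[\phi]$-fixed free factor system carrying $\mathcal H$, and again join with $\mathcal A$.

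The main obstacle is the relative descent step: showing the short subgraph $\Delta'$ still carries $\mathcal A_1, \mathcal A_2$. This requires the uniform boundedness of natural-edge lengths in Stallings graphs of iterated $[\phi]$-fixed components (via Lemma~\ref{expand} contrapositively), combined with the $\bar f_k$-invariance of their images in $\Delta_k$ to ensure no forward path from these subgraphs in $\mathbb G_k$ reaches a long edge. Once this is secured, the rest of the argument is a straightforward application of the existing descent machinery and the ACC.
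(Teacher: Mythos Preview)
Your proposal is correct and follows essentially the same approach as the paper: both arguments hinge on a modified descent (from Theorem~\ref{expequiv1}) that additionally tracks finitely generated $[\phi]$-fixed subgroup systems by enlarging the threshold $L$ to bound their Stallings-graph loops, then invokes Lemma~\ref{nonexpand}. The paper packages this as an ``ascent with embedded descent'' rather than your ``join-closure plus ACC,'' but the technical content is identical. Two small corrections: the bound on natural-edge lengths for periodic $\mathcal A_i$ is the easy \emph{converse} of Lemma~\ref{expand} (finitely many isomorphism types of $S[\phi^k(\mathcal A_i)]$), not its contrapositive; and the $\bar f_k$-invariance remark is both unnecessary and not literally true (the immersed image of $S[\phi^k(\mathcal A_i)]$ in $\Delta_k$ need not be a strictly $\bar f_k$-invariant subgraph before tightening)---the length bound alone already forces the image into the short edges, exactly as in the paper.
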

\begin{proof}
Let $\phi:F\to F$ be an injective endomorphism. %If $\phi$ is an automorphism, then let $\mathcal A = \{ F\}$ and we are done. 
If the trivial conjugacy class is the only conjugacy class with an infinite $[\phi]$-tail, then the trivial system is the only $[\phi]$-fixed subgroup system. In this case, the trivial system is the unique maximal $[\phi]$-fixed free factor system and we are done as it vacuously carries all conjugacy classes with infinite $[\phi]$-tails and all $[\phi]$-fixed subgroup systems. We can now assume some nontrivial conjugacy class has an infinite $[\phi]$-tail. By Theorem~\ref{expequiv1}, $\phi$ %is not surjective and 
 has a nontrivial fixed free factor system $\mathcal D_0$. We proceed by ascending up the poset of free factor systems: 

\begin{claim}[Ascent] Let $\mathcal B$ be a finitely generated $[\phi]$-fixed subgroup system, $[g]$ a conjugacy class with an infinite $[\phi]$-tail, and $\mathcal D$ a nontrivial $[\phi]$-fixed free factor system. If $\mathcal D$ does not carry both $\mathcal B$ and $[g]$, then some $[\phi]$-fixed free factor system $\mathcal D' \succ \mathcal D$ carries both $\mathcal B$ and $[g]$.\end{claim}
Once again, as there are no infinite chains in the poset of free factor systems, the ascent (proven below) starts with the nontrivial $[\phi]$-fixed proper free factor $\mathcal D_0$ and stops at a necessarily unique maximal $[\phi]$-fixed free factor system $\mathcal A$ that carries all finitely generated $[\phi]$-fixed subgroup systems and all conjugacy classes with infinite $[\phi]$-tails.\end{proof}

\begin{proof}[Proof of ascent]Let $\mathcal B$ be a finitely generated nontrivial $[\phi]$-fixed subgroup system of $F$, $[g]$ be a nontrivial conjugacy class in $F$ with an infinite $[\phi]$-tail, and $\mathcal D$ be a nontrivial $[\phi]$-fixed free factor system of $F$ that does not carry both $\mathcal B$ and $[g]$. We now describe the descent:
 
\begin{claim}[Descent] Let $\mathcal D''$ be a $[\phi]$-invariant free factor system that carries $\mathcal D$, $\mathcal B$, and an infinite $[\phi]$-tail of $[g]$. If $\mathcal D''$ contains a free factor that is not eventually $[\phi]$-periodic, then some $[\phi]$-invariant free factor system $\mathcal D''' \prec \mathcal D''$ carries $\mathcal D$, $\mathcal B$, and an infinite $[\phi]$-tail of $[g]$.\end{claim}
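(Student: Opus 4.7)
The plan is to rerun the descent from the proof of Theorem~\ref{expequiv1}, but with a threshold constant $L$ enlarged to also dominate the combinatorial volumes of the Stallings subgraphs of $\mathcal D$ and $\mathcal B$ inside $\Delta_k = S[\phi^k(\mathcal D'')]$.

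First I would fix a topological representative $f: \Gamma \to \Gamma$ for $[\phi]$ with constants $K = K(f) + C(f)$ and $C = 2C(f)$, and let $\bar f_k: \Delta_k \to \Delta_k$ be the natural homotopy lifts from Proposition~\ref{homotopylift}. Since $\mathcal D$ and $\mathcal B$ are $[\phi]$-fixed, $\phi^k(\mathcal D) \sim \mathcal D$ and $\phi^k(\mathcal B) \sim \mathcal B$ for every $k$ (up to conjugacy in $F$), so combined with the hypotheses $\mathcal D \preceq \mathcal D''$ and $\mathcal B \preceq \mathcal D''$ we obtain $\mathcal D, \mathcal B \preceq \phi^k(\mathcal D'')$ for all $k$. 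Thus $\Delta_k$ contains Stallings subgraphs $S[\mathcal D]$ and $S[\mathcal B]$ whose combinatorial volumes $V_{\mathcal D}$ and $V_{\mathcal B}$ depend only on the fixed subgroup systems $\mathcal D$ and $\mathcal B$, and in particular not on $k$.

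Set $L = \max\{\lVert g \rVert_\alpha, C, V_{\mathcal D}, V_{\mathcal B}\}$ and $N = 3\cdot\mathrm{rank}(F) - 3$. Applying Lemma~\ref{expand} to a free factor of $\mathcal D''$ that is not eventually $[\phi]$-periodic, for all sufficiently large $k$ the graph $\Delta_k$ has a natural edge of length greater than $L \cdot K^{N-1}$. Fix such a $k$ and construct $\mathcal L_0$, the directed graph $\mathbb G_k$, the set $\mathcal L$, and the $\bar f_k$-invariant proper subgraph $\Delta' \subset \Delta_k$ spanned by short (i.e., non-$\mathcal L$) natural edges exactly as in the original descent; in particular every natural edge in $\mathcal L$ has length greater than $L$.

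The key new observation is that $\Delta'$ contains $S[\mathcal D]$, $S[\mathcal B]$, and $\rho_k(g_k)$. The loop $\rho_k(g_k)$ has length $\lVert g \rVert_\alpha \le L$, so it traverses only short natural edges. For the Stallings subgraphs I would use the core property: if a natural edge $E$ of $\Delta_k$ shares a topological edge with $S[\mathcal D]$, then each interior vertex of $E$ lies in $S[\mathcal D]$ and is bivalent in $\Delta_k$, so by the core condition it has valence exactly two in $S[\mathcal D]$; this propagates along $E$ and forces $E \subseteq S[\mathcal D]$. Consequently $E$ is a subpath of a single natural edge of $S[\mathcal D]$, hence $|E| \le V_{\mathcal D} \le L$, so $E \notin \mathcal L$; the identical argument applies to $S[\mathcal B]$. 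Therefore $\Delta'$ determines a nontrivial proper $[\phi]$-invariant free factor system $\mathcal D''' \prec \mathcal D''$ of $F$ that carries $\mathcal D$ and $\mathcal B$. To upgrade this to carrying an infinite $[\phi]$-tail of $[g]$, the case analysis at the end of the original descent (splitting on whether the induced subgraph $\mathbb L \subset \mathbb G_k$ on $\mathcal L$ contains a directed cycle, and using bounded cancellation together with the uniform length bound $\lVert g \rVert_\alpha \le L$ on $[\bar f_k^{n-k}(\rho_k(g_n))]$ to rule out the cyclic case) applies verbatim. The main obstacle I anticipate is the core-property propagation above; once $S[\mathcal D]$ and $S[\mathcal B]$ are shown to be trapped inside $\Delta'$, the remainder of the proof is formally identical to the earlier descent.
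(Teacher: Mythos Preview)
Your proposal is correct and follows essentially the same approach as the paper: enlarge the threshold $L$ so that the Stallings graphs of the fixed systems $\mathcal D$ and $\mathcal B$ are forced into the short-edge subgraph $\Delta'$, then invoke the tail argument verbatim. The only cosmetic difference is that the paper bounds $L$ by the length of immersed loops in $S[\phi^k(\mathcal D)], S[\phi^k(\mathcal B)]$ covering each edge at most twice, whereas you bound it by the total volume and use a core/valence propagation; your propagation argument tacitly identifies $S[\mathcal D]$ with its immersed image in $\Delta_k$, but once this is made precise (any bivalent vertex of $\Delta_k$ in the image has a bivalent preimage, so the image absorbs entire natural edges and hence each such edge has length at most $V_{\mathcal D}$), the conclusion is the same.
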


Starting with $\{ F \}$, the descent (proven below) will find a nontrivial $[\phi]$-invariant free factor system $\mathcal D^*$ that carries $\mathcal D$, $
\mathcal B$, and an infinite $[\phi]$-tail of $[g]$ and whose free factors are eventually $[\phi]$-periodic. By Lemma~\ref{nonexpand}, $\mathcal D^*$ contains a $[\phi]$-fixed free factor subsystem $\mathcal D' \subset \mathcal D^*$ such that $\phi^k(\mathcal D^*)$ is carried by $\mathcal D'$ for some $k \ge 0$. As $\mathcal D$ and $\mathcal B$ are $[\phi]$-fixed, they are carried by $\mathcal D'$. Similarly, $\mathcal D'$ carries $[g]$ since $\mathcal D^*$ carries an infinite $[\phi]$-tail of $[g]$. So $\mathcal D'$ is a $[\phi]$-fixed free factor system that carries $\mathcal D$, $\mathcal B$, and $[g]$ as needed for ascent.\end{proof}

\begin{proof}[Proof of descent] Let $(\Gamma, \alpha)$ be a marked graph, $f:\Gamma \to \Gamma$ be a topological representative for $[\phi]$, and set $K = K(f)+C(f)$ and $C = 2C(f)$. Suppose $\mathcal D''$ is a $[\phi]$-invariant free factor system that carries $\mathcal D$, $\mathcal B$, and an infinite $[\phi]$-tail of $[g]$. Let $S[\phi^k(\mathcal D)]$ and $S[\phi^k(\mathcal B)]$ be the Stallings graphs with respect to $(\Gamma, \alpha)$. Since $\mathcal D$ and $\mathcal B$ are finitely generated and $[\phi]$-fixed, the length of the longest immersed loop in $S[\phi^k(\mathcal D)]$ or $S[\phi^k(\mathcal B)]$ that covers any edge at most twice is uniformly bounded by some $L_0$ for all $k \ge 1$. Set $L = \max\{ L_0, \lVert g \rVert_\alpha, C\}$.
The proof now mimics that of the descent in Theorem~\ref{expequiv1} and we only give a sketch. 

For all $k \ge 1$, let $\Delta_{k}'' = S[\phi^{k}(\mathcal D'')]$ and, by Proposition~\ref{homotopylift}, there is a $K$-Lipschitz natural representative for $\left[\left.\phi\right|_{\mathcal D''}\,\right]$, $\bar f_{k}: \Delta_{k}'' \to \Delta_{k}''$, that has cancellation constant $C$.
As some free factor in $\mathcal D''$ is not eventually $[\phi]$-periodic, we can fix $k \gg 0$ so that the longest natural edge in $\Delta_{k}''$ is longer than $L \cdot K^{N-1}$ by Lemma~\ref{expand}. Define the long and short natural edges as before and deduce long natural edges are longer than $L$. 
Set $\Delta''' \subset \Delta_{k}''$ to be the union of the short natural edges, which is necessarily proper and $\bar f_k$-invariant. Recall that immersed loops of $S[\phi^{k}(\mathcal D)]$ and $S[\phi^{k}(\mathcal B)]$ that cover any edge at most twice have length bounded by $L_0 \le L$ and these Stallings graphs have simplicial immersions into $\Delta_{k}''$. Hence the images of these immersions lie in the subgraph of short natural edges $\Delta'''$. So $\Delta'''$ is neither empty nor contractible. The subgraph $\Delta'''$ determines a $[\phi]$-invariant proper free factor system $\mathcal D''' \prec \mathcal D''$ that carries both $\mathcal D$ and $\mathcal B$ since both $S[\phi^k(\mathcal D)]$ and  $S[\phi^k(\mathcal B)]$ have immersions into $\Delta'''$. From the proof of descent in Theorem~\ref{expequiv1}, $L \ge C$ implies $\mathcal D'''$ carries an infinite $[\phi]$-tail of $[g]$.
\end{proof}

Although this proposition produces a canonical fixed free factor system for an injective endomorphism, we shall enlarge the system again to get a better $[\phi]$-invariant free factor system that gives us some control of the {\it relative dynamics} of $[\phi]$.
We do this by taking iterated preimages of the maximal fixed free factor system. We then show that the resulting invariant free factor system is a disjoint union of the maximal fixed free factor system with a free factor system that eventually gets mapped into the fixed system.

\begin{prop}\label{canonical} If $\phi: F\to F$ is injective and $\mathcal A$ is the maximal $[\phi]$-fixed free factor system, then there is a unique maximal $[\phi]$-invariant free factor system $\mathcal A^* \succeq \mathcal A$ such that $\phi^k(\mathcal A^*)$ is carried by $\mathcal A$ for some $k \ge 0$. After replacing the free factors of $\mathcal A$ with conjugates if necessary, we can assume $\mathcal A \subset \mathcal A^*$.
\end{prop}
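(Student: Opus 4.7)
The plan is to define $\mathcal A^*$ as the stable value of the ascending chain $\mathcal A_n := (\phi^n)^{-1}\cdot \mathcal A$, $n \ge 0$, using the preimage operation on free factor systems defined in the conventions section. Because the poset of free factor systems of $F$ has finite height (bounded in terms of $\mathrm{rank}(F)$), any such chain stabilizes, so I would set $\mathcal A^* := \mathcal A_N$ for any $N$ with $\mathcal A_N = \mathcal A_{N+1}$. Two elementary ingredients will drive the proof. First, $\phi^n(\mathcal A_n) \preceq \mathcal A$ is immediate from the definition of $\mathcal A_n$. Second, I would establish the following \textbf{elliptic lemma}: if $C \le F$ is a finitely generated nontrivial subgroup with $\phi^n(C) \preceq \mathcal A$, then $C$ is carried by a component of $\mathcal A_n$. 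For this, fix an $(F,\mathcal A)$-tree $T$; every element of $\phi^n(C) \le \phi^n(F)$ is elliptic on $T$ and hence on the invariant subtree $T(\phi^n(F))$ (translation length is preserved under restriction), so as a finitely generated group of elliptic isometries of a simplicial tree, $\phi^n(C)$ fixes some vertex $w \in T(\phi^n(F))$; then $\phi^n(C) \le \mathrm{Stab}_{\phi^n(F)}(w)$ is contained in a component of the $\phi^n(F)$-vertex-stabilizer system of $T(\phi^n(F))$, and pulling back via $(\phi^n)^{-1}$ shows $C$ is carried by a component of $\mathcal A_n$.

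Next I would verify the ascending property and the advertised conclusions. For $C \in \mathcal A_n$, $\phi^{n+1}(C) \preceq \phi(\mathcal A) \preceq \mathcal A$ (using $[\phi]$-invariance of $\mathcal A$), so the elliptic lemma places $C$ in $\mathcal A_{n+1}$. The same reasoning with $\phi(C)$ in place of $C$ (for $C \in \mathcal A^*$) yields $\phi(\mathcal A^*) \preceq \mathcal A^*$, so $\mathcal A^*$ is $[\phi]$-invariant; and $\phi^N(\mathcal A^*) \preceq \mathcal A$ comes directly from the first ingredient. For the sharpening $\mathcal A \subset \mathcal A^*$, I would use that $\mathcal A$ is $[\phi^N]$-fixed to write $\phi^N(A) = hA'h^{-1}$ for each $A \in \mathcal A$, which upgrades the inclusion $\phi^N(A) \le \phi^N(F) \cap hA'h^{-1}$ to the equality
\[
  \phi^N(F) \cap hA'h^{-1} = \phi^N(F) \cap \phi^N(A) = \phi^N(A),
\]
so $\phi^N(A)$ is exactly the $\phi^N(F)$-stabilizer of the vertex $h\cdot v_{A'}$ (which lies in $T(\phi^N(F))$ by the Helly argument applied to $\phi^N(A)$). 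Pulling back via $(\phi^N)^{-1}$ gives $A$ as a literal component of $\mathcal A^*$, and distinct $A \in \mathcal A$ correspond to distinct $\phi^N(F)$-orbits of vertices (by injectivity of $\phi^N$ and non-conjugacy in $F$ of the components of $\mathcal A$), so consistent choices of conjugacy representatives yield $\mathcal A \subset \mathcal A^*$.

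Finally, for maximality, let $\mathcal A''$ be any $[\phi]$-invariant free factor system with $\mathcal A \preceq \mathcal A''$ and $\phi^k(\mathcal A'') \preceq \mathcal A$. The elliptic lemma applied componentwise gives $\mathcal A'' \preceq \mathcal A_k$, which is $\preceq \mathcal A^*$ either by the ascending property (if $k \le N$) or by stabilization (if $k > N$); hence $\mathcal A'' \preceq \mathcal A^*$, and uniqueness follows automatically. The hard part will be the elliptic lemma together with its sharpening to literal containment $\mathcal A \subset \mathcal A^*$: both depend on the Helly-type property that a finitely generated group of elliptic isometries of a simplicial tree has a common fixed vertex, and the sharpening relies crucially on $\mathcal A$ being $[\phi]$-fixed (not merely invariant), which is exactly what forces the equality $\phi^N(F) \cap hA'h^{-1} = \phi^N(A)$.
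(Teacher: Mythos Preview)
Your construction of $\mathcal A^*$ as the stable value of the chain $\mathcal A_n=(\phi^n)^{-1}\cdot\mathcal A$ is exactly what the paper does, and your verification of $[\phi]$-invariance and maximality matches the paper's (you simply spell out, via your elliptic lemma, the monotonicity $\mathcal A_k\preceq\mathcal A_{k+1}$ that the paper asserts in one line).

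The genuine difference is in the last step, showing $\mathcal A\subset\mathcal A^*$. The paper argues indirectly: it looks at the function $\sigma$ on the index set of $\mathcal A^*$ coming from $[\phi]$-invariance, isolates the subset $\mathcal A_J\subset\mathcal A^*$ on which $\sigma$ is a bijection, observes $\mathcal A\preceq\mathcal A_J$, and then for each $A\le A_j$ uses the fact that no finitely generated subgroup of $F$ is conjugate to a proper subgroup of itself (Lemma~\ref{fgconj}, proved via subgroup separability) to force $A=A_j$. Your argument is more direct and avoids Lemma~\ref{fgconj} entirely: because $\mathcal A$ is $[\phi]$-\emph{fixed} rather than merely invariant, you get the exact equality $\phi^N(A)=hA'h^{-1}$, which immediately identifies $\phi^N(A)$ with the full $\phi^N(F)$-stabilizer of the vertex $h\cdot v_{A'}$ in $T(\phi^N(F))$; pulling back then exhibits $A$ itself as a component of $\mathcal A_N=\mathcal A^*$. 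Both arguments need that distinct components of $\mathcal A$ are non-conjugate in $F$, and both need that a vertex with nontrivial $\phi^N(F)$-stabilizer lies in the minimal subtree (which you correctly deduce from triviality of edge stabilizers). Your route is shorter and exposes more transparently why the \emph{fixed} hypothesis on $\mathcal A$, rather than mere invariance, is what makes the containment literal.
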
 
\noindent We shall call the free factor system given by this proposition the {\bf $\boldsymbol{[\phi]}$-elliptic free factor system}. We call it {\it elliptic} since it will be carried by the vertex groups in free splittings of $F$ in the subsequent sections.
\begin{proof} Let $\phi:F \to F$ be an injective endomorphism and $\mathcal A$ be the maximal $[\phi]$-fixed free factor system of $F$ given by Proposition~\ref{maxfixed}. If $\phi$ is surjective or has no nontrivial fixed free factor systems, then $\mathcal A^* = \mathcal A$ is $ \{ F \}$ or trivial respectively and we are done. Thus, we assume that $\mathcal A$ is a nontrivial proper free factor system. For all $k \ge 1$, define $\mathcal A_k = \phi^{-k} \cdot \mathcal A$.

Since $\mathcal A$ is $[\phi]$-invariant, we get that $\mathcal A \preceq \mathcal A_k \preceq \mathcal A_{k+1}$ for all $k \ge 1$ and, consequently, all $\mathcal A_k$ are $[\phi]$-invariant. As there are no infinite chains in the poset of free factor systems of $F$, the chain of $[\phi]$-invariant free factor systems $\mathcal A_k~(k \ge 1)$ stabilizes and we can set $\mathcal A^*$ to be the maximal free factor system in the chain. By construction, $\mathcal A^*$ carries a subgroup system $\mathcal B$ if and only if $\phi^k(\mathcal B)$ is carried by $\mathcal A$ for some $k \ge 0$ and this implies the uniqueness of $\mathcal A^*$ amongst $[\phi]$-invariant free factor system that have $\phi$-iterates carried by $\mathcal A$. It remains to show that $\mathcal A \subset \mathcal A^*$ after replacing the free factors of $\mathcal A$ with conjugates if necessary.

As in the proof of Lemma~\ref{nonexpand}, suppose $\sigma: \{1, \ldots l\} \to \{ 1, \ldots, l\}$ is the function associated to the $[\phi]$-invariance of $\mathcal A^* = \{ A_1, \ldots A_l \}$. Then there is a maximal nonempty subset $J \subset \{ 1, \ldots, l \}$ on which $\sigma$ is a bijection. Let $\mathcal A_J = \{\,A_j \in \mathcal A^*\,:\, j\in J\,\}$. Since $\mathcal A^*$ carries the maximal $[\phi]$-fixed free factor system $\mathcal A$, it follows that $\mathcal A \preceq \mathcal A_J$. Replace components of $\mathcal A$ with conjugates if necessary and assume $\mathcal A$ is a free factor system of $\mathcal A_J$; in particular, each $A \in \mathcal A$ is a subgroup of some $A_j \in \mathcal A_J$. We want to show that  $\mathcal A \subset \mathcal A_J$. Choose a component $A \in \mathcal A$ and let $A_j \in \mathcal A_J$ be the component such that $A \le A_j$. Furthermore, fix an inner automorphism $i_x: F \to F$ such that $i_x \circ \phi^s(A_j) \le A_j$ for some $s \ge 0$. Set $\psi = i_x \circ \phi^s$. %and note that $\psi$ and $\phi$ must have the same maximal fixed and elliptic free factor systems by construction. %We claim that $A_j$ is conjugate to some $A \in \mathcal A$.

By construction, $A_j \in \mathcal A^* = \phi^{-ks}\cdot \mathcal A$ implies $\psi^k(A_j)$ is conjugate to a subgroup of a $[\phi]$-periodic, and hence $[\psi]$-periodic, free factor $A' \in \mathcal A$ for some $k \ge 0$. We must have $ A' \le A_j$ since $A' \le A_{j\,'} \in \mathcal A_J$, $\{ A' \}$ carries $\{\psi^k(A_j)\}$, $\psi(A_j) \le A_j$, and $\left. \sigma \right|_J$ is a bijection.  %Assume $A \le A_j$ after replacing $A$ with a conjugate if necessary. 
So $i_y \circ \psi^k(A_j) \le A' \le A_j$ for some inner automorphism $i_y:F \to F$. The $[\psi]$-periodicity of $A'$ implies $(i_y \circ \psi^k)^m(A') \le A' $ is conjugate to $A'$ for some $m \ge 1$. But Lemma~\ref{fgconj} says no finitely generated subgroup of $F$ is conjugate to a proper subgroup of itself. Therefore, $(i_y \circ \psi^k)^m(A') = A'$ and, by injectivity of $\psi$, $i_y \circ \psi^k(A_j) = A' = A_j$. In particular, $A = A' = A_j$. As this holds for arbitrary free factors $A \in \mathcal A$, we get $\mathcal A \subset \mathcal A_J$.
\end{proof}

 %As a corollary of this lemma, we get that the maximal fixed free factor system carries all conjugacy classes $[g]$ with infnite tails.

%\begin{lem}\label{maxandelliptic} Let $\phi : F \to F$ be an injective endomorphism, $\mathcal A$ be the maximal $[\phi]$-fixed free factor system, and $\mathcal A^*$ be the $[\phi]$-elliptic free factor system. If $A^* \in \mathcal A^*$ and $\phi^k(A^*)$ is conjugate to a subgroup of $A^*$ for some $k \ge 1$, then $A^* \in \mathcal A$. In particular, $\mathcal A \subset \mathcal A^*$.\end{lem}
%\begin{lem}\label{maxandelliptic} If $\phi : F \to F$ is an injective endomorphism, $\mathcal A$ is the maximal $[\phi]$-fixed free factor system, and $\mathcal A^*$ is the $[\phi]$-elliptic free factor system, then $\mathcal A \subset \mathcal A^*$ after replacing the free factors of $\mathcal A$ with conjugates if necessary.\end{lem}
%\begin{proof}
%\end{proof}

It is obvious that the maximal $[\phi]$-fixed free factor system is proper exactly when $\phi$ is not surjective. The same holds for the $[\phi]$-elliptic free factor system:

\begin{obs*} Let $\phi:F \to F$ be an injective endomorphism. The $[\phi]$-elliptic free factor system is \begin{enumerate}
\item proper exactly when $\phi$ is not surjective.
\item trivial exactly when only the trivial conjugacy class has an infinite $[\phi]$-tail;
\end{enumerate}
\end{obs*}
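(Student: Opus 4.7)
The plan is to derive both equivalences from the relationship $\mathcal A \preceq \mathcal A^*$ given by Proposition~\ref{canonical}, combined with Proposition~\ref{maxfixed} and Theorem~\ref{expequiv1}. We prove (2) first since it feeds directly into (1).

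For (2), the key intermediate claim is that $\mathcal A^*$ is trivial if and only if $\mathcal A$ is trivial. One direction is immediate from $\mathcal A \preceq \mathcal A^*$: the trivial system carries only the trivial subgroup, so $\mathcal A^*$ trivial forces $\mathcal A$ trivial. For the converse, observe that the preimage operation sends the trivial system to the trivial system under any injective homomorphism $\psi$: by convention, an $(F', \{\{1\}\})$-tree is a free minimal $F'$-tree, so the minimal subtree $T(\psi(F))$ carries a free $\psi(F)$-action and $\psi^{-1}\cdot\{\{1\}\}$ is trivial by definition. Iterating, if $\mathcal A$ is trivial then every $\mathcal A_k = \phi^{-k}\cdot\mathcal A$ in the construction of $\mathcal A^*$ remains trivial, so $\mathcal A^*$ itself is trivial. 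To finish (2), combine this with Proposition~\ref{maxfixed}: if $\mathcal A$ is trivial, only the trivial conjugacy class can have an infinite $[\phi]$-tail; conversely, if only the trivial class has an infinite $[\phi]$-tail, Theorem~\ref{expequiv1} rules out any nontrivial $[\phi]$-fixed free factor system, forcing $\mathcal A$ to be trivial.

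For (1), the forward direction uses the Hopfian property of finitely generated free groups: a surjective injective endomorphism is an automorphism. Then $\{F\}$ is a $[\phi]$-fixed free factor system, so by the maximality of $\mathcal A$ we get $\mathcal A = \{F\}$; since $\{F\}$ is $[\phi]$-invariant with $\phi^0(F) \preceq \mathcal A$, the uniqueness clause of Proposition~\ref{canonical} yields $\mathcal A^* = \{F\}$, which is not proper. For the converse, suppose $\mathcal A^* = \{F\}$. By (2), $\mathcal A^*$ being nontrivial forces $\mathcal A$ nontrivial, so the inclusion $\mathcal A \subset \mathcal A^*$ furnished by Proposition~\ref{canonical} gives $\mathcal A = \{F\}$. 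The $[\phi]$-fixedness of $\{F\}$ then reads $[\phi(F)] = [F]$, i.e., $\phi(F) = xFx^{-1} = F$ for some $x \in F$, so $\phi$ is surjective.

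The proofs are largely bookkeeping, and the only obstacle worth flagging is the convention-juggling around the trivial system: because $\{\{1\}\}$ is treated separately from nontrivial systems, the literal set inclusion $\mathcal A \subset \mathcal A^*$ in Proposition~\ref{canonical} only applies when $\mathcal A$ is nontrivial, so the trivial case has to be handled by the separate tree-theoretic observation invoked in (2) before (1) can be cleanly deduced.
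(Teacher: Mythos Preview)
Your proof is correct. The paper states this as an Observation without proof, prefaced only by the remark that the analogous statement for the maximal $[\phi]$-fixed system $\mathcal A$ is obvious; your argument fills in exactly the details one would expect, reducing both claims about $\mathcal A^*$ to the corresponding facts about $\mathcal A$ via the construction $\mathcal A^* = \varinjlim \phi^{-k}\cdot\mathcal A$ in Proposition~\ref{canonical} (and the edge cases of that proof already record that $\mathcal A^* = \mathcal A$ when $\phi$ is surjective or $\mathcal A$ is trivial). One minor simplification: in part~(1) you do not actually need the Hopfian property, since surjectivity alone gives $\phi(F)=F$ and hence $\{F\}$ is $[\phi]$-fixed directly.
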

%\begin{proof} Let $\mathcal A^*$ be the $[\phi]$-elliptic free factor system. Then, by construction and injectivity of $\phi$, $\mathcal A^*$ is nontrivial if and only if $\phi$ has a nontrivial fixed free factor system. The first equivalence then follows from Theorem~\ref{expequiv1}.

%The forward direction in the second equivalence is the obvious fact that the elliptic free factor system of an automorphism is $\{ F \}$. For the backward direction, suppose $\mathcal A^* = \{ F \}$. By construction of $\mathcal A^*$ and injectivity of $\phi$, the maximal $[\phi]$-fixed free factor system $\mathcal A$ is nontrivial. So $\mathcal A = \{ F \}$ by Proposition~\ref{canonical} and $\phi$ is an automorphism.\end{proof}

%In the next two sections, we will show that nonsurjective endomorphisms have expansive dynamics relative to their elliptic free factor systems.

%%%%%%%%%%%%%%%%%%%%%%%%%%%%%%%%%%%%%%%%%%%%%%%%%%%%%%%%%%
\subsection{Relative representatives}\label{secRelRep}
%%%%%%%%%%%%%%%%%%%%%%%%%%%%%%%%%%%%%%%%%%%%%%%%%%%%%%%%%%

We will now use relative (weak) representatives as the basis for inductively studying dynamical properties of free group endomorphisms.
For the whole section, we suppose $\phi: F \to F$ is an injective endomorphism and $\phi^{-1} \cdot \mathcal A = \mathcal A$, i.e, $\mathcal A$ is a $[\phi]$-invariant proper free factor system such that there is no free factor system $\mathcal A' \succ \mathcal A$ such that $\mathcal A$ carries $\phi(\mathcal A')$; e.g., consider an injective nonsurjective $\phi$ and its elliptic free factor system.

The first step is to establish the relative version of the bounded cancellation.

\begin{lem}[Bounded Cancellation]\label{relbcl} Let $T$ and $T'$ be $(F, \mathcal A)$- and $(F', \mathcal A')$-trees respectively and $\psi:F \to F'$ be an injective homomorphism such that $\psi^{-1}\cdot \mathcal A' = \mathcal A$. If $g: T \to T'$ is a $\psi$-equivariant tree map, then there is a constant $C(g)$ such that for every natural edge-path decomposition $p_1 \cdot p_2$ of an immersed path in $T$, the edge-path $[g(p_1)] \cdot [g(p_2)]$ is contained in the $C(g)$-neighborhood of $[g(p_1) \cdot g(p_2)]$.
\end{lem}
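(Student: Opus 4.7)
The plan is to mimic the proof of Lemma~\ref{bcl} by factoring $g$ into a sequence of elementary equivariant moves and summing their cancellation contributions. After a preliminary subdivision of $T$ and $T'$, I may assume $g$ is simplicial. Using the equivariant version of Stallings' folding theorem for simplicial group actions on trees (Bestvina--Feighn), I factor $g$ as
\[ g \;=\; g_{r+1} \circ g_r \circ \cdots \circ g_1 \circ g_0, \]
where $g_0$ is an equivariant collapse of pretrivial edges together with any necessary subdivision, each $g_i$ for $1 \le i \le r$ is an equivariant fold (identifying two arcs emanating from a common vertex along an $F$-orbit), and $g_{r+1}$ is an equivariant simplicial immersion. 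Each intermediate object is an $(F, \mathcal{A}_i)$-tree for some free factor system $\mathcal{A}_i$ interpolating between $\mathcal A$ and $\mathcal A'$.

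The hypothesis $\psi^{-1}\cdot \mathcal A' = \mathcal A$ is precisely the relative analog of $\pi_1$-injectivity used in Lemma~\ref{bcl}: it says loxodromic elements of $F$ on $T$ have loxodromic $\psi$-images on $T'$, so no fold in the factorization collapses an axis or turns a loxodromic element into an elliptic one. With this guarantee, each fold $g_i$ identifies two arcs sharing a common initial vertex $\tilde v$ without annihilating any loxodromic axis, and the local straightening argument from the absolute case applies verbatim: a natural edge-path crossing the folded pair picks up at most one unit of new backtracking at $\tilde v$ when tightened. Hence each $g_i$ has cancellation constant $1$, while the collapse $g_0$, the subdivision, and the final immersion $g_{r+1}$ each have cancellation constant $0$. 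Concatenating, I may take $C(g) = r$, which depends only on $g$.

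The main obstacle I expect is verifying that equivariant folds behave as cleanly as ordinary Stallings folds in the presence of nontrivial vertex stabilizers coming from $\mathcal A$ and $\mathcal A'$. In particular, I need to confirm that Bestvina--Feighn Type II and III folds (those that act nontrivially on a vertex with nontrivial stabilizer) still contribute only one unit of cancellation, which requires checking that the natural edge-path decomposition $p_1 \cdot p_2$ is broken at an honest branch point of the intermediate tree and not inside a subdivided natural edge created by the folds. Once this bookkeeping is in place, the argument reduces entirely to the local analysis already done in Lemma~\ref{bcl}.
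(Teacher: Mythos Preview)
Your proposal is correct and follows essentially the same route as the paper: factor $g$ equivariantly as collapse/subdivision, a finite string of equivariant folds, and a simplicial embedding, then take $C(g)=r$. The paper's proof phrases the role of the hypothesis slightly differently---it observes that injectivity of $\psi$ together with $\psi^{-1}\cdot\mathcal A'=\mathcal A$ ensures no fold identifies two vertices in the same $F$-orbit, which is what forces each fold to have cancellation constant~$1$---so your worry about Bestvina--Feighn Type~II/III folds is unnecessary once you note this orbit condition.
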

\begin{proof} The proof is the same as before. Since the trees are simplicial, the map $g$ factors as an equivariant pretrivial edge collapse and subdivision, a composition of $r \ge 0$ equivariant folds, and an equivariant simplicial embedding. As $\psi$ is injective and $\psi^{-1}\cdot \mathcal A' = \mathcal A$, no fold identifies vertices in the same orbit and, hence, each fold has cancellation constant 1. We may choose $C(g) = r$.
\end{proof}

Let $\mathcal A \preceq \mathcal B$ be a chain of $[\phi]$-invariant free factor systems, $T_{*}$ be a $(\mathcal B, \mathcal A)$-forest, and $f_{*}: T_{*} \to T_{*}$ be a $\mathcal A$-relative representative for $\left.\phi\right|_{\mathcal B}$. For all $k \ge 1$, set $T_{*}(\phi^k(\mathcal B)) \subset T_{*}$ to be the minimal subforest for $\phi^k(\mathcal B)$; minimal subforests are the relative analogues of iterated Stallings graphs. We will assume the minimal subforests $T_{*}(\phi^k(\mathcal B))$ inherit their simplicial structure from the {\it ambient forest }$T_{*}$ and so they might have bivalent vertices unlike $T_{*}$. 
For a graph of groups decomposition with bivalent vertices, {\bf branch points} are vertices that are images of branch points of the {\it Bass-Serre} tree and natural edges are images of natural edges of the tree. 
 
For any $k \ge 1$, let $f_{*, k}: T_{*}(\phi^k(\mathcal B)) \to T_{*}$  be the restriction of $f_{*}$ to $T_{*, k}$ and then replace it with an equivariantly homotopic map $T_{*}(\phi^k(\mathcal B)) \to T_{*}(\phi^k(\mathcal B))$ that is induced by the deformation retraction of $f_{*, k}(T_{*}(\phi^k(\mathcal B)))$ to $T_{*}(\phi^k(\mathcal B))$, which we call the {\bf ($\boldsymbol k$-th) homotopy restriction of $\boldsymbol{f_{*}}$}. Note that if $X \subset T_*(\phi^k(\mathcal B))$ is an axis such that $\left.f_*\right|_X$ is an immersion, then $\left.f_{*, k}\right|_X$ is still an immersion.

These homotopy restrictions are the relative analogues of homotopy lifts. 
%As before, using an equivariant (tightening) homotopy supported in the interior of natural edges, we  replace $f_{*, k}: T_{*}(\phi^k(\mathcal B)) \to T_{*}(\phi^k(\mathcal B))$ with a homotopic map that maps any natural edge to either a vertex or an immersed path and has Lipschitz and cancellation constants $K(f_{*})$ and $C(f_{*})$. Once again, if $X \subset T_*(\phi^k(\mathcal B))$ is an axis such that $\left.f_{*,k}\right|_X$ is an immersion before tightening, then it is still an immersion after tightening.
The proof of the following lemma is also almost the same as that of Lemma~\ref{branching}.

\begin{lem}\label{relbranching} Let $T$ and $T'$ be $(F, \mathcal A)$- and $(F', \mathcal A')$-trees respectively and $\psi:F \to F'$ be an injective homomorphism such that $\psi^{-1}\cdot \mathcal A' = \mathcal A$. If $g: T \to T'$ is a $\psi$-equivariant tree map with cancellation constant $C = C(g)$, then $g$ maps branch points to the $C$-neighborhood of branch points.
%Let $f_{*}:T_{*} \to T_{*}$ be an $\mathcal A$-relative representative for $\left.\phi\right|_{\mathcal B}$. Then for any $k \ge 1$, if $f_{*, k}: T_{*}(\phi^k(\mathcal B)) \to T_{*}(\phi^k(\mathcal B))$ is a homotopy restriction of $f_{*}$ with $C(f_{*, k}) = C(f_{*})$, then $f_{*, k}$ maps branch points to the $C(f_{*})$-neighborhood of branch points.
\end{lem}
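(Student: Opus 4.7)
The plan is to transcribe the proof of Lemma~\ref{branching} into the tree setting, replacing the immersed loops used there with immersed paths between distinct vertices of $T$ and invoking Lemma~\ref{relbcl} in place of Lemma~\ref{bcl}. The tree structure actually makes the contradiction cleaner because backtracking in a tree has an unambiguous geometric interpretation.

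First I would set $C = C(g)$ and reduce to the nontrivial case in which some bivalent $\nu \in T'$ has distance greater than $C$ to the nearest branch point of $T'$. Let $\epsilon_1, \epsilon_2$ be the two oriented directions at $\nu$ and let $\mu_i$ denote the natural edge of $T'$ issuing from $\nu$ in direction $\epsilon_i$, so $|\mu_i| > C$. Assume for contradiction that some branch point $v \in T$ satisfies $g(v) = \nu$, and pick three distinct directions $e_1, e_2, e_3$ at $v$. For each $i$, extend $e_i$ to a natural edge-path $E_i$ starting at $v$ that is long enough for $[g(E_i)]$ to be a nontrivial immersed path terminating at some vertex $w_i \neq v$; the initial direction of $[g(E_i)]$ at $\nu$ is then either $\epsilon_1$ or $\epsilon_2$. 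By the pigeonhole principle, two of these initial directions coincide, so, without loss of generality, $[g(E_1)]$ and $[g(E_2)]$ both begin with $\epsilon_1$.

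Now consider the immersed path $p = \bar E_1 \cdot E_2$ in $T$, whose natural-edge decomposition is joined at $v$. The concatenation $[g(\bar E_1)] \cdot [g(E_2)]$ passes through $\nu$, with $[g(\bar E_1)]$ ending in direction $\bar\epsilon_1$ and $[g(E_2)]$ beginning in direction $\epsilon_1$; hence both traverse the natural edge $\mu_1$ in opposite orientations at $\nu$, creating a backtrack of length at least $|\mu_1| > C$. Because $T'$ is a tree, the tightened path $[g(\bar E_1 \cdot E_2)]$ is the unique geodesic from $g(w_1)$ to $g(w_2)$, and the common initial segment of $[g(E_1)]$ and $[g(E_2)]$ forces this geodesic to avoid $\nu$ by distance at least $|\mu_1|$. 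Since $\nu$ is a point of $[g(\bar E_1)] \cdot [g(E_2)]$, this contradicts Lemma~\ref{relbcl}, which guarantees the concatenation lies in the $C$-neighborhood of the tightened path.

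The main obstacle is the pretrivial-edge issue: I need each direction $e_i$ at $v$ to extend to an $E_i$ whose $g$-image is nontrivial so the pigeonhole applies. This is where injectivity of $\psi$ and the hypothesis $\psi^{-1}\cdot \mathcal A' = \mathcal A$ enter; together with minimality of the $F$-action on $T$ they forbid an entire ray out of $v$ from collapsing to $\nu$, since that would force $\psi$ of a nontrivial subgroup into the stabilizer of $\nu$ in a way incompatible with $\psi^{-1}\cdot \mathcal A' = \mathcal A$. Once nontrivial extensions are in hand, the remainder is a verbatim translation of the argument for Lemma~\ref{branching}, with the rigidity of the tree $T'$ simplifying the case analysis rather than complicating it.
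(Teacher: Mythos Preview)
Your pigeonhole route is essentially correct and, once completed, is cleaner than the paper's case analysis, but there is one imprecision. You arrange only for $[g(E_i)]$ to be \emph{nontrivial}; that does not yet give a backtrack of length $|\mu_1|$ as you claim, since the common initial segment of $[g(E_1)]$ and $[g(E_2)]$ has length $\min\bigl(|[g(E_1)]|,\,|[g(E_2)]|,\,|\mu_1|\bigr)$, which could be a single edge. What you actually need is $d(g(w_i),\nu)=|[g(E_i)]|>C$ for each $i$; then, because both immersed paths leave the bivalent vertex $\nu$ in the same direction and there is no branching for distance $|\mu_1|>C$, their common prefix exceeds $C$ and the contradiction with Lemma~\ref{relbcl} goes through. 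The minimality argument you sketch already delivers this stronger conclusion: any half-tree at $v$ contains a ray of the axis of some loxodromic $x$, and $\psi(x)$ is loxodromic in $T'$ by the hypothesis $\psi^{-1}\cdot\mathcal A'=\mathcal A$, so $g(x^n\cdot p)=\psi(x)^n\cdot g(p)$ escapes every bounded set and one may choose $w_i$ with $d(g(w_i),\nu)$ as large as desired.

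The paper takes a different route that sidesteps the extension issue entirely. It first notes that $\nu$, being bivalent and far from branch points, has trivial stabilizer, hence so does $v$ by injectivity of $\psi$. Then for each pair of directions $e_i,e_j$ it chooses an embedded path $p_{ij}$ in $T$ from $v$ to a translate $x_{ij}\cdot v$, starting with $e_i$ and ending with $x_{ij}\cdot\bar e_j$; triviality of the stabilizer makes $x_{ij}$ loxodromic with $p_{ij}$ a fundamental domain of its axis, so $[g(p_{ij})]$ is automatically a nondegenerate path from $\nu$ to $\psi(x_{ij})\cdot\nu$. The contradiction is then obtained by a short case analysis on the initial and terminal directions of $[g(p_{12})]$ and $[g(p_{23})]$, exactly parallel to Lemma~\ref{branching}. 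Your approach trades that case analysis for a single pigeonhole step at the price of the extension lemma; the paper's approach builds loxodromic elements into the construction so that the required length comes for free.
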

\begin{proof}
Set $C = C(g)$ and let $\nu$ be a bivalent vertex in $T'$ whose distance to the nearest branch point is greater than $ C$. In particular, $\nu$ has a trivial stabilizer. We denote by $\epsilon_1, \epsilon_2$  the 2 distinct directions at $\nu$ oriented away from the vertex. Suppose $v$ is a branch point of $T$  such that $g(v) = \nu$. As $\psi$ is injective, $v$ has a trivial stabilizer under the action of $F$. Choose 3 distinct directions at $v$: $e_1, e_2,$ and $e_3$. Let $p_{12}$ be an embedded path in $T$ that starts with $e_1$ and ends with a translate of $\bar e_2$. Since $v$ has a trivial stabilizer, the path determines a unique loxodromic element $x_{12}$ in $F$ with axis $a_{12}$ such that $p_{12}$ is a fundamental domain of the axis under the translation action of $x_{12}$. Without loss of generality, $[g(p_{12})]$ starts with $\epsilon_1$.

If $[g(p_{12})]$ ends with the translate $\phi(x_{12})\bar \epsilon_1$, then $[g(p_{12})] = \mu \cdot \rho \cdot (\phi(x_{12})\bar \mu)$, where $\mu$ is an extension of $\epsilon_1$ to an embedded path from $\nu$ to the axis of $\phi(x_{12})$ and $\rho$ is a fundamental domain of the axis of loxodromic element $\phi(x_{12})$. By assumption, $\mu$ is longer than $ C$. Decompose the axis $a_{12} = a_{-} \cdot a_{+}$ at $v$, then $[g(a_{-})] \cdot [g(a_{+})]$ has $\bar \mu \cdot  \mu$  as a subpath, violating bounded cancellation. The remaining cases are handled similarly. Upon ruling out all cases, we conclude that no branch point $v$ of $T$ is mapped to $\nu$.
\end{proof}
As before, we get a corollary whose proof is essentially the same as that of Corollary~\ref{natural}.
\begin{cor}\label{relnatural} Let $T$ and $T'$ be $(F, \mathcal A)$- and $(F', \mathcal A')$-trees respectively and $\psi:F \to F'$ be an injective homomorphism such that $\psi^{-1}\cdot \mathcal A' = \mathcal A$. If $g: T \to T'$ is a $\psi$-equivariant $K$-Lipschitz tree map with cancellation constant $C$, then $g$ is equivariantly homotopic to a $\psi$-equivariant $(K+C)$-Lipschitz natural tree map with cancellation constant $2C$.
\end{cor}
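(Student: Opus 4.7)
The plan is to adapt the proof of Corollary~\ref{natural} to the equivariant, relative setting. By Lemma~\ref{relbranching}, the map $g$ sends every branch point of $T$ to within distance $C$ of a branch point of $T'$. The next step is to equivariantly homotope $g$ to a tree map $g'$ that sends branch points to branch points; after that, tightening within natural edges will produce the desired natural map.

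To build $g'$, I work one $F$-orbit of branch points at a time (finitely many, since $F\backslash T$ has finitely many vertices). Pick a representative $v$ with stabilizer $H \le F$; by equivariance, $g(v)$ lies in the fixed subtree $\mathrm{Fix}(\psi(H)) \subseteq T'$, which is nonempty and connected because the carrying hypothesis $\psi^{-1}\cdot \mathcal A' = \mathcal A$ forces $\psi(H)$ to be elliptic in $T'$. The key new subtlety beyond Corollary~\ref{natural} is that I need a branch point of $T'$ within distance $C$ of $g(v)$ that also lies in $\mathrm{Fix}(\psi(H))$, so that setting $g'(v) = u$ and extending $\psi$-equivariantly across the orbit of $v$ is well-defined. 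This should follow by rerunning the argument of Lemma~\ref{relbranching} inside the subtree $\mathrm{Fix}(\psi(H))$, which itself is simplicial with its own notion of branch points. After defining $g'$ on each orbit of branch points, extend $\psi$-equivariantly over all branch points and then interpolate along the edges of $T$.

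The map $g'$ so obtained is $\psi$-equivariant and equivariantly homotopic to $g$, with each branch-point image moved a distance at most $C$. Hence the new Lipschitz constant is at most $K+C$; and by Lemma~\ref{relbcl} applied along natural edge-paths, the new cancellation constant is at most $2C$. Finally, I tighten within each natural edge equivariantly: since edge stabilizers in an $(F, \mathcal A)$-tree are trivial, pick one representative per $F$-orbit of natural edges, replace $g'$ on it by the unique immersion homotopic rel endpoints, and extend by $\psi$-equivariance. This tightening is supported in the interior of natural edges, so it does not affect the Lipschitz or cancellation constants, since Lemma~\ref{relbcl} only sees natural edge-paths. The result is the desired $\psi$-equivariant natural tree map.

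The main obstacle, as noted, is the equivariant choice of a nearby branch point at vertices with nontrivial stabilizer: everything else is a mechanical transcription of Corollary~\ref{natural}, with Lemma~\ref{relbcl} and Lemma~\ref{relbranching} in place of their absolute counterparts, plus some bookkeeping to preserve $\psi$-equivariance throughout.
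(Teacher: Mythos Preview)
Your approach matches the paper's: it simply says the proof is essentially the same as that of Corollary~\ref{natural}, using Lemma~\ref{relbranching} in place of Lemma~\ref{branching}. Your outline---move branch point images to nearby branch points, then tighten on natural edges---is exactly this.

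The concern you flag about vertices with nontrivial stabilizer is real to raise but has a much simpler resolution than rerunning Lemma~\ref{relbranching} inside $\mathrm{Fix}(\psi(H))$. In an $(F',\mathcal A')$-tree the edge stabilizers are trivial, so if $H$ is nontrivial then $\psi(H)$ is nontrivial and $\mathrm{Fix}(\psi(H))$ is a \emph{single vertex}, not a subtree in which one would search for branch points. That vertex is $g(v)$ itself, and since it has nontrivial stabilizer (namely a group containing $\psi(H)$) acting with trivial edge stabilizers, it has infinitely many incident edges and is therefore already a branch point. So no homotopy is needed at orbits of branch points with nontrivial stabilizer; the only adjustments occur at orbits with trivial stabilizer, where equivariance imposes no constraint beyond choosing one representative per orbit. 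With this observation the ``main obstacle'' disappears and the rest of your argument goes through exactly as in Corollary~\ref{natural}.
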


The corollary allows us to replace $f_{*, k}$ with an equivariantly homotopic %({\it bivalent homotopy}) 
$\phi^k(\mathcal A)$-relative natural representative that has Lipschitz and cancellation constants $ K(f_{*, k}) = K(f_{*}) + C(f_{*})$ and $C(f_{*, k}) = 2 C(f_{*})$ respectively. If $X \subset T_{*}(\phi^k(\mathcal B))$ is an axis and $\left.f_{*, k}\right|_X$ is an immersion before the homotopy, then $f_{*, k}(X)$ is an immersed path after the homotopy; however, the restriction $\left.f_{*, k}\right|_X$ may fail to be an immersion due to pretrivial edges.
%To preserve combinatorial lengths, the bivalent homotopy here does not remove bivalent vertices.
%\begin{lem}\label{hrest2} Let $f_{*}: T_{*} \to T_{*}$ be an $\mathcal A$-relative representative for  $\left.\phi\right|_{\mathcal B}$. Then for any $k \ge 1$, there is a homotopy restriction $f_{*, k}: T_{*}(\phi^k(\mathcal B)) \to T_{*}(\phi^k(\mathcal B))$ of $f_{*}$ such that
%\begin{enumerate}\item $f_{*, k}$ maps branch points to branch points and any natural edge to a branch point or an immersed path;
%\item $f_{*, k}$ induced by folding $f_{*}$ and then applying equivariant univalent, tightening, and bivalent homotopies;
%\item $K(f_{*, k}) = K(f_{*})+C(f_{*})$, and $C(f_{*, k}) = 2C(f_{*})$.\end{enumerate}\end{lem}
The following summary is a relative analogue of Proposition~\ref{homotopylift}.
\begin{prop}\label{homotopyrest} Let $f_{*}:T_{*} \to T_{*}$ be an $\mathcal A$-relative representative for $\left.\phi\right|_{\mathcal B}$. For any $k \ge 1$, there is an $\phi^k(\mathcal A)$-relative natural representative $f_{*, k}: T_{*}(\phi^k(\mathcal B)) \to T_{*}(\phi^k(\mathcal B))$  for $\left.\phi\right|_{\phi^k(\mathcal B)}$ such that:
\begin{enumerate}
\item $K(f_{*, k}) = K(f_{*}) + C(f_{*})$ and $C(f_{*, k}) = 2 C(f_{*})$.
\item If $f_{*}$ restricted to the axes of some conjugacy class $[b]$ in $\phi^k(\mathcal B)$ is an immersion, then $f_{*, k}$ restricted to the axes of $[b]$ is also an immersion modulo pretrivial edges.
\end{enumerate}
\end{prop}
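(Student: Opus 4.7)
The plan is to mirror the proof of Proposition~\ref{homotopylift}, now using the relative tools (Lemma~\ref{relbcl}, Lemma~\ref{relbranching}, Corollary~\ref{relnatural}) developed earlier in this section. The candidate for $f_{*,k}$ has already been described informally in the paragraph preceding the statement: first restrict $f_*$ to the minimal subforest $T_*(\phi^k(\mathcal B)) \subset T_*$ to obtain a $\phi|_{\phi^k(\mathcal B)}$-equivariant map into $T_*$, then post-compose with the equivariant deformation retraction of its image back onto $T_*(\phi^k(\mathcal B))$. Because the deformation retraction is a $1$-Lipschitz equivariant projection onto a convex invariant subforest that can only collapse excursions lying outside $T_*(\phi^k(\mathcal B))$, the resulting map $f_{*,k}: T_*(\phi^k(\mathcal B)) \to T_*(\phi^k(\mathcal B))$ inherits Lipschitz constant at most $K(f_*)$ and cancellation constant at most $C(f_*)$.

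Next I would invoke Corollary~\ref{relnatural} on this preliminary $f_{*,k}$ regarded as a $\phi|_{\phi^k(\mathcal B)}$-equivariant map between $(\phi^k(\mathcal B), \phi^k(\mathcal A))$-forests. To do so one must verify the hypothesis $(\phi|_{\phi^k(\mathcal B)})^{-1}\cdot \phi^k(\mathcal A) = \phi^k(\mathcal A)$; this is the transport of the standing assumption $\phi^{-1}\cdot \mathcal A = \mathcal A$ across the isomorphism $\phi^k: \mathcal B \to \phi^k(\mathcal B)$ which carries $\mathcal A$-components to $\phi^k(\mathcal A)$-components. The corollary then supplies an equivariantly homotopic $\phi^k(\mathcal A)$-relative natural representative with $K(f_{*,k}) = K(f_*) + C(f_*)$ and $C(f_{*,k}) = 2 C(f_*)$, establishing part (1).

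For part (2), I would unpack the two-stage homotopy provided by Corollary~\ref{relnatural}, as was done for Corollary~\ref{natural}: first a small homotopy that drags images of branch points by at most $C(f_*)$ onto actual branch points of $T_*(\phi^k(\mathcal B))$, then a tightening supported in the interiors of natural edges. Assume $\left.f_*\right|_X$ is an immersion for $X$ an axis of some conjugacy class $[b]$ in $\phi^k(\mathcal B)$. The tightening homotopy preserves local injectivity along $X$ because it only shrinks back-and-forth excursions that never occurred in an immersed image. The first homotopy can destroy local injectivity only when an edge mapping to a short segment ending at a non-branch-point has its image pulled to a single branch point, and this manifests precisely as a pretrivial edge; at any other vertex the derivative survives intact. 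Hence $\left.f_{*,k}\right|_X$ is an immersion modulo pretrivial edges, as claimed.

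The main obstacle I expect is the bookkeeping in the first step: verifying that the equivariant deformation retraction onto the minimal subforest does not inflate the cancellation constant so that the subsequent application of Corollary~\ref{relnatural} yields the sharp bounds $K(f_*)+C(f_*)$ and $2C(f_*)$ independent of $k$. This independence of $k$ is what makes the proposition usable throughout the rest of the paper, so it must be tracked carefully; fortunately it drops out of the facts that projection to a subforest is $1$-Lipschitz and the only folds it can effect are cancellative collapses, neither of which increases $K$ or $C$ before the natural-replacement step is applied.
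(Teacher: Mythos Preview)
Your proposal is correct and follows essentially the same approach as the paper: the paper does not give a separate proof of this proposition but rather states it as a summary of the preceding discussion, which constructs the homotopy restriction exactly as you describe (restrict, then deformation retract, then apply Corollary~\ref{relnatural}) and tracks the immersion-on-axes property through the two stages in the same way. Your explicit verification of the hypothesis $(\phi|_{\phi^k(\mathcal B)})^{-1}\cdot \phi^k(\mathcal A) = \phi^k(\mathcal A)$ and your concern about the deformation retraction not inflating the constants are both addressed implicitly in the paper's discussion, and your resolution of them is correct.
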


Collapsing a maximal (orbit-closed) $f_{*, k}$-invariant subforest of $ T_{*}(\phi^k(\mathcal B))$ with bounded components and forgetting the bivalent vertices induces a minimal $\phi^k(\mathcal A)$-relative representative $g_{*, k} : Y_{*, k} \to Y_{*, k}$ for $\left.\phi\right|_{\phi^k(\mathcal B)}$ defined on a $(\phi^k(\mathcal B), \phi^k(\mathcal A))$-forest $Y_{*, k}$. Note that the collapsed maximal subforest contains the pretrivial edges as $\phi^{-1}\cdot \mathcal A = \mathcal A$. Since $g_{*,k}$ is induced by equivariantly collapsing a forest and forgetting bivalent vertices, we have $K(g_{*, k}) = K(f_{*})+C(f_{*})$, $C(g_{*,k }) = 2C(f_{*})$, and $l_{Y_{*, k}} \le \left.l_{T_{*}}\right|_{\phi^k(\mathcal B)}$. If $X$ is an axis of $b$ in $T_{*}(\phi^k(\mathcal B))$ and $\left.f_{*, k}\right|_X$ is an immersion modulo pretrivial edges, then $\left.g_{*, k}\right|_{X'}$ is an immersion, where $X'$ is the axis of $b$ in $Y_{*,k}$. 

\begin{prop}\label{homotopyrest2} Let $f_{*}:T_{*} \to T_{*}$ be an $\mathcal A$-relative representative for $\left.\phi\right|_{\mathcal B}$. For any $k \ge 1$, there is a $(\phi^k(\mathcal B), \phi^k(\mathcal A))$-forest $Y_{*, k}$ and a minimal $\phi^k(\mathcal A)$-relative representative $g_{*, k} : Y_{*, k} \to Y_{*, k}$ for $\left.\phi\right|_{\phi^k(\mathcal B)}$ such that:
\begin{enumerate}
\item $K(g_{*, k}) = K(f_{*}) + C(f_{*})$ and $C(g_{*, k}) = 2 C(f_{*})$.
\item $l_{Y_{*, k}}:\phi^k(\mathcal B) \to \mathbb R$ is dominated by ($\le$) the restriction $\left.l_{T_{*}}\right|_{\phi^k(\mathcal B)} = l_{ T_{*}(\phi^k(\mathcal B))}$;
\item If $f_{*}$ restricted to the axes of some conjugacy class $[b]$ in $\phi^k(\mathcal B)$ is an immersion, then $g_{*, k}$ restricted to the axes of $[b]$ is also an immersion.
\end{enumerate}
\end{prop}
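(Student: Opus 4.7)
The plan is to combine Proposition~\ref{homotopyrest} with an equivariant collapse, so that all of the quantitative data is inherited from the natural $\phi^k(\mathcal A)$-relative representative produced there. First I would apply Proposition~\ref{homotopyrest} to obtain the $k$-th homotopy restriction $f_{*,k}: T_*(\phi^k(\mathcal B)) \to T_*(\phi^k(\mathcal B))$ with $K(f_{*,k}) = K(f_*) + C(f_*)$, $C(f_{*,k}) = 2C(f_*)$, and with immersions on individual axes preserved modulo pretrivial edges. The remaining work is to get from the natural (but possibly non-minimal, possibly with pretrivial edges) map $f_{*,k}$ to a minimal relative \emph{representative} $g_{*,k}$.

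Next, I would choose a maximal orbit-closed $f_{*,k}$-invariant subforest $W_{*,k} \subset T_*(\phi^k(\mathcal B))$ whose components are bounded, and then equivariantly collapse $W_{*,k}$ and forget bivalent vertices to define $Y_{*,k}$. Such a maximal $W_{*,k}$ exists since the union of orbit-closed invariant subforests with bounded components again has bounded components (any translate of a bounded component lies in only finitely many other components it meets, and invariance propagates boundedness). The map $f_{*,k}$ descends to $g_{*,k}: Y_{*,k} \to Y_{*,k}$ because $W_{*,k}$ is $f_{*,k}$-invariant. Maximality of $W_{*,k}$ gives that $g_{*,k}$ is minimal, and there are no pretrivial edges left in $Y_{*,k}$ since pretrivial edges of $f_{*,k}$ form an invariant subforest with bounded (single-edge) components and hence are automatically contained in $W_{*,k}$.

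The key subtlety, which I expect to be the main obstacle, is checking that $Y_{*,k}$ really is a $(\phi^k(\mathcal B), \phi^k(\mathcal A))$-forest, i.e., that collapsing $W_{*,k}$ does not enlarge the point-stabilizer system beyond conjugates of $\phi^k(\mathcal A)$-components. This is exactly where $\phi^{-1}\cdot \mathcal A = \mathcal A$ enters: for any pretrivial edge $e$ of $f_{*,k}$, the stabilizers of the endpoints of $e$ and of $f_{*,k}(e)$ are related by $\phi$, and the hypothesis $\phi^{-1}\cdot \mathcal A = \mathcal A$ forces the merged stabilizer of the collapsed edge to be carried by $\phi^k(\mathcal A)$ rather than by some strictly larger free factor system. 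More generally, the same argument applied component-by-component shows that collapsing any bounded $f_{*,k}$-invariant subforest produces only $\phi^k(\mathcal A)$-stabilizers, so $Y_{*,k}$ is indeed a $(\phi^k(\mathcal B), \phi^k(\mathcal A))$-forest and $g_{*,k}$ is a $\phi^k(\mathcal A)$-relative representative for $\left.\phi\right|_{\phi^k(\mathcal B)}$.

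Finally, I would verify the three listed properties. For (1), equivariantly collapsing an invariant subforest and forgetting bivalent vertices cannot increase the combinatorial Lipschitz constant or the cancellation constant, so the estimates are inherited directly from Proposition~\ref{homotopyrest}. For (2), any loxodromic $b \in \phi^k(\mathcal B)$ has an axis in $T_*(\phi^k(\mathcal B))$ whose image under the collapse is either a shorter axis in $Y_{*,k}$ or (if the axis is collapsed) a fixed point, giving $l_{Y_{*,k}}(b) \le l_{T_*(\phi^k(\mathcal B))}(b) = l_{T_*}(b)$. For (3), if $\left.f_*\right|_X$ is an immersion on the axes of $[b]$, then by Proposition~\ref{homotopyrest}(2) the restriction $\left.f_{*,k}\right|_X$ is an immersion modulo pretrivial edges; since the pretrivial edges along $X$ are collapsed into $W_{*,k}$, the induced restriction $\left.g_{*,k}\right|_{X'}$ on the image axis $X'$ in $Y_{*,k}$ is a genuine immersion.
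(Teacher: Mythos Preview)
Your proposal is correct and follows essentially the same approach as the paper: apply Proposition~\ref{homotopyrest} to get $f_{*,k}$, then equivariantly collapse a maximal orbit-closed $f_{*,k}$-invariant subforest with bounded components and forget bivalent vertices to obtain $g_{*,k}$ on $Y_{*,k}$, with the three properties inherited directly.

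One small correction on attribution: the reason $Y_{*,k}$ remains a $(\phi^k(\mathcal B),\phi^k(\mathcal A))$-forest is not really the hypothesis $\phi^{-1}\cdot\mathcal A=\mathcal A$. Rather, since edge stabilizers are trivial and $F$ is torsion-free, the setwise stabilizer of any bounded component $C$ of $W_{*,k}$ fixes a vertex $v\in C$ and hence lies in (and by orbit-closedness equals) the vertex stabilizer at $v$. The hypothesis $\phi^{-1}\cdot\mathcal A=\mathcal A$ is instead what guarantees that the pretrivial edges sit inside a bounded-component invariant subforest (an unbounded component of pretrivial edges would contain an axis of a loxodromic $b$ with $\phi(b)$ elliptic, contradicting $\phi^{-1}\cdot\mathcal A=\mathcal A$); this is how the paper uses it. Also, pretrivial edges need not have ``single-edge'' components, but this does not affect your argument.
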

For an $\mathcal A$-relative weak representative $f_{*}$ for $\left.\phi\right|_{\mathcal B}$, we define the {\bf transition matrix} $A(f_*)$. Let $A(f_{*})$ be a nonnegative integer square matrix whose rows and columns are indexed by the orbits of edges in $T_{*}$; the entry of $A(f_{*})$ in row-$i$ and column-$j$, $A(f_{*})(i,j)$, is given by the number of translates of $e_i$ that are contained in the immersed edge-path $f_{*}(e_j)$, where $e_i$ is a orbit representative for the $i$-th orbit of edges. 
%By local injectivity in the interior of edges, $A(f_{*})$ has at least one positive entry in each column. 
An $\mathcal A$-relative weak representative $f_{*}$ is {\bf irreducible} if the matrix $A(f_{*})$ is irreducible, i.e., for any pair $(i,j)$, there is a positive integer $n_{ij}$ such that $A(f_{*})^{n_{ij}}(i,j) > 0$. In this case, the {\bf stretch factor} of $f_{*}$, $\lambda(f_{*}) \ge 1$, is the {\it Perron-Frobenius eigenvalue} of $A(f_{*})$. An irreducible $\mathcal A$-relative weak representative is {\bf expanding} if $\lambda(f_*) > 1$. Note that irreducible $\mathcal A$-relative representatives are minimal. 

%An $\mathcal A$-relative representative $f_{*}: T_{*} \to T_{*}$ for $\left.\phi\right|_{\mathcal B}$ is {\bf irreducible} if, for every orbit of edges $B_i \cdot e_i$ and $B_j \cdot e_j$, there is an integer $n_{ij}$ such that $B_i \cdot e_i$ has components contained in $f^{n_{ij}}(B_j \cdot e_j)$, where $B_i, B_j \in \mathcal B$ and $e_i, e_j$ are edges in $T_i, T_j \in T_{*}$ respectively. 
%Using {\it Perron-Frobenius theory}, we can define the {\it stretch factor} or {\it growth rate} of an irreducible $\mathcal A$-relative representative $f_{*}$, denoted by $\lambda(f_{*})$ (Details in Appendix~\ref{appreltt}). An irreducible $\mathcal A$-relative representative $f_{*}$ is {\bf expanding} if $\lambda(f_{*}) > 1$. %We shall say that an $\mathcal A$-relative representative $f_{*}: T_{*} \to T_{*}$ of a $(\mathcal B, \mathcal A)$-forest $T_{*}$ is a {\bf $\boldsymbol{\mathcal A}$-relative train track} if, for all edges $e$ in $T_{*}$ and integers $n \ge 1$, the edge-path $f^n_{*}(e)$ is immersed. If $\mathcal A$ is trivial and $\mathcal B = \{ F \}$, then such a map $f_{*} = f:T \to T$ induces a {\it train track} on the quotient graph $F \backslash T$ as defined by Bestvina-Handel \cite{BH92}. 

%We now give the set up for constructing relative train tracks. 
We say $\left.\phi\right|_{\mathcal B}$ is {\bf irreducible relative to $\boldsymbol{\mathcal A}$} if there is no $[\phi]$-invariant free factor system $\mathcal C$ such that $\mathcal A \prec \mathcal C \prec \mathcal B$. %By definition, a restriction $\left.\phi\right|_{\mathcal B}$ being irreducible relative to $\mathcal A$ implies that the associated function $\sigma: \{1, \ldots l\} \to \{ 1, \ldots, l\}$ is a bijection. 
If $\mathcal A$ is trivial and $\mathcal B = \{ F \}$, then we recover the definition of $\phi$'s irreducibility. The next lemma is the most useful property of an irreducible restriction for our purposes.
%Bestvina-Handel's algorithm can used to construct relative train tracks for relatively irreducible endomorphisms; the construction/algorithm is adapted for our relative setting in Appendix~\ref{relalgo}.
\begin{lem}\label{irredequiv} If $\left.\phi\right|_{\mathcal B}$ is irreducible relative to $\mathcal A$, then every minimal $\mathcal A$-relative representative for $\left.\phi\right|_{\mathcal B}$ is irreducible.
\end{lem}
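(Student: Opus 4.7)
I will argue by contradiction. Suppose $f_{*} : T_{*} \to T_{*}$ is a minimal $\mathcal A$-relative representative for $\left.\phi\right|_{\mathcal B}$ whose transition matrix $A(f_{*})$ is reducible. Then, by definition of reducibility, there exists a proper nonempty subset $S$ of edge orbits of $T_{*}$ such that for every $j \in S$, the immersed edge-path $f_{*}(e_j)$ only crosses translates of orbit representatives $e_i$ with $i \in S$. Let $T'_{*} \subset T_{*}$ be the orbit-closed subforest obtained as the union of all translates of edges in orbits belonging to $S$; by construction $T'_{*}$ is nonempty, is a proper subforest of $T_{*}$, and is $f_{*}$-invariant.

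Since $f_{*}$ is minimal, the components of $T'_{*}$ cannot all be bounded, so at least one component is an unbounded subtree in some $T_j \subset T_{*}$ and its stabilizer $H_j \le B_j$ contains loxodromic elements (with respect to the $B_j$-action on $T_j$). In particular $H_j$ is nontrivial and is not conjugate (in $B_j$) to any component of $\mathcal A_j$, since $\mathcal A_j$-components are elliptic. I now collapse each component of $T'_{*}$ to a point, obtaining quotient trees $T''_i = T_i/T'_i$ on which the $B_i$-actions are still minimal and the edge stabilizers are still trivial; the new point stabilizers are exactly the old $\mathcal A_i$-components together with the stabilizers of the collapsed components of $T'_i$. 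Because in a graph of groups with trivial edge groups every subgraph of groups determines a free factor, the stabilizers of components of $T'_i$ are free factors of $B_i$, so the collection $\mathcal C_i$ of conjugacy classes of nontrivial point stabilizers of $T''_i$ is a free factor system of $B_i$; setting $\mathcal C = \bigsqcup_i \mathcal C_i$ yields a free factor system of $F$ with $\mathcal C \preceq \mathcal B$.

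The comparison $\mathcal A \preceq \mathcal C$ is immediate: every $\mathcal A_i$-component is elliptic in $T_i$ and therefore elliptic in the collapse $T''_i$, hence carried by $\mathcal C_i$. The strict inequalities follow from the two basic properties of $T'_{*}$: because $T'_{*}$ is a proper subforest some $T''_i$ is not a single point, so $\mathcal C_i$ is a proper free factor system of $B_i$ and $\mathcal C \prec \mathcal B$; because $T'_{*}$ has an unbounded component the stabilizer $H_j$ above is a component of $\mathcal C_j$ that is not an $\mathcal A_j$-component, so $\mathcal A \prec \mathcal C$. Finally, since $T'_{*}$ is $f_{*}$-invariant, $f_{*}$ descends to a $\phi_i$-equivariant map $T''_i \to T''_{\sigma'(i)}$; equivariance forces the image of an elliptic element to be elliptic, so $\mathcal C$ is $[\phi]$-invariant. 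This produces a $[\phi]$-invariant free factor system strictly between $\mathcal A$ and $\mathcal B$, contradicting the hypothesis that $\left.\phi\right|_{\mathcal B}$ is irreducible relative to $\mathcal A$.

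The main obstacle I anticipate is the step where I must read off a genuine free factor system from the collapse: I have to confirm that stabilizers of collapsed components are free factors of the ambient $B_i$ (not merely subgroups) and that the resulting conjugacy classes assemble into a valid free factor system of $F$. This is precisely where the hypothesis that $T_{*}$ is a $(\mathcal B, \mathcal A)$-forest with trivial edge stabilizers—so that Bass–Serre theory gives a free product structure from any subgraph of groups—does the work.
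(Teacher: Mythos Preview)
Your proof is correct and follows essentially the same approach as the paper's proof: both argue the contrapositive by extracting from a reducible transition matrix an orbit-closed $f_*$-invariant proper subforest whose unbounded components (guaranteed by minimality) determine a $[\phi]$-invariant free factor system strictly between $\mathcal A$ and $\mathcal B$. The paper compresses this into a single sentence, while you have carefully unpacked the Bass--Serre step (collapsing $T'_*$ and reading off point stabilizers) and the two strictness inequalities; your concern about that step is well-placed but fully resolved by the trivial-edge-stabilizer hypothesis, exactly as you note.
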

\begin{proof} Suppose some minimal $\mathcal A$-relative representative for $\left.\phi\right|_{\mathcal B}$ has a reducible transition matrix; in particular, it has an invariant $\mathcal B$-equivariant proper subforest (with unbounded components) that determines a $[\phi]$-invariant free factor system $\mathcal C$ such that $\mathcal A \prec \mathcal C \prec \mathcal B$. So $\left.\phi\right|_{\mathcal B}$ is not irreducible relative to $\mathcal A$.
\end{proof}

\begin{rmk}Bestvina-Handel give the absolute version of this property as the definition of irreducibility and then prove that it is equivalent to the definition of irreducibility given in this {\paper} \cite[Lemma~1.16]{BH92}. The relative version of this equivalence holds as well but we will not prove it as it is not needed.
\end{rmk}

Bestvina-Handel used the next proposition to construct {\it train tracks} \cite[Theorem~1.7]{BH92}. 

\begin{prop}\label{reltt} If $\left.\phi\right|_{\mathcal B}$ is irreducible relative to $\mathcal A$, then there is an irreducible $\mathcal A$-relative representative $f_{*}:T_{*} \to T_{*}$ for $\left.\phi\right|_{\mathcal B}$ with the minimal stretch factor, i.e., if $f_{*}':T_{*}' \to T_{*}'$ is an irreducible $\mathcal A$-relative representative for $\left.\phi\right|_{\mathcal B}$, then $\lambda(f_{*}') \ge \lambda(f_{*})$.\end{prop}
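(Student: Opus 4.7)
The plan is to adapt the Bestvina-Handel proof of \cite[Theorem~1.7]{BH92}, which establishes the analogous statement when $\mathcal A$ is trivial and $\mathcal B = \{F\}$, to the relative setting. The goal is to show that $\lambda_0 := \inf \lambda(f_*)$, taken over all irreducible $\mathcal A$-relative representatives of $\left.\phi\right|_{\mathcal B}$, is actually attained.

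First I would verify existence and bound the combinatorial complexity. Starting from a topological representative of $[\phi]$, the sequence of constructions preceding Proposition~\ref{homotopyrest2} (lift to covers corresponding to the $\mathcal B$-components, restrict to the minimal subforest, collapse subforests carried by $\mathcal A$, collapse maximal invariant subforests with bounded components, and forget bivalent vertices) produces at least one minimal $\mathcal A$-relative representative of $\left.\phi\right|_{\mathcal B}$. By the relative irreducibility hypothesis and Lemma~\ref{irredequiv}, every such minimal representative has an irreducible transition matrix, so $\lambda(f_*)$ is well-defined. Moreover, the number of edge orbits in any $(\mathcal B, \mathcal A)$-forest supporting a minimal representative is bounded above by some $N = N(\mathcal B, \mathcal A)$ depending only on the ranks of the components of $\mathcal B$ and on $\mathcal A$ (because of the absence of bivalent vertices and the controlled relative Euler characteristic). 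Hence all transition matrices under consideration are irreducible non-negative integer matrices of size at most $N$.

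The crucial step is showing that $\lambda_0$ is attained. Here I would use a discreteness argument: each stretch factor $\lambda(f_*)$ is a Perron number arising as the largest real root of the characteristic polynomial of an irreducible non-negative integer matrix of size at most $N$. Fixing any $c > \lambda_0$, the degrees of these polynomials are bounded by $N$, and the traces $\mathrm{tr}(A^k)$ are bounded above by $N c^k$, so the power sums of the eigenvalues (and hence the coefficients via Newton's identities) are uniformly bounded. Only finitely many monic integer polynomials of bounded degree and bounded coefficients exist, so only finitely many values of $\lambda(f_*)$ can occur in $[1,c]$. Choosing $c$ sufficiently close to $\lambda_0$, we conclude that $\lambda_0$ is itself attained by some minimal irreducible $\mathcal A$-relative representative.

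The main obstacle is the relative bookkeeping. Even if the ambient argument is essentially Bestvina-Handel's, one must check that the combinatorial class under consideration is preserved throughout: folds and collapses must be carried out $F$-equivariantly, the resulting forest must remain a $(\mathcal B, \mathcal A)$-forest (edge stabilizers trivial, vertex stabilizers trivial or conjugate to $\mathcal A$-components), and the representative must remain an $\mathcal A$-relative representative with no pretrivial edges and no bivalent vertices. The hypothesis $\phi^{-1}\cdot \mathcal A = \mathcal A$ is what prevents folds from merging distinct $\mathcal A$-orbits and hence preserves edge stabilizers, while relative irreducibility together with Lemma~\ref{irredequiv} prevents the minimizing sequence from escaping to a non-irreducible representative. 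An alternative to the discreteness argument is to mimic Bestvina-Handel's algorithm directly, performing fold-and-collapse moves at illegal turns that strictly reduce $\lambda$ and arguing termination from the same finiteness of possible stretch factors.
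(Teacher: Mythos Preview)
Your proposal is correct and follows essentially the same route as the paper: establish existence of one irreducible $\mathcal A$-relative representative via Lemma~\ref{irredequiv}, bound the size of the transition matrix using the absence of bivalent vertices, and then argue that only finitely many Perron--Frobenius eigenvalues can occur below any fixed bound, so the infimum is attained. The only real difference is cosmetic: the paper bounds the matrix \emph{entries} directly (using that the minimum row-sum of $B^k$ is at most $\lambda(B)^k$, and that the largest entry of $B$ is bounded by the minimum row-sum of $B^{N!}$), whereas you bound the \emph{characteristic polynomial coefficients} via trace bounds and Newton's identities; both yield the same finiteness. Your final paragraph on equivariant folds, collapses, and preservation of the $(\mathcal B,\mathcal A)$-forest structure is not needed here --- those considerations belong to the proof that the minimizer is a train track (Theorem~\ref{appreltt} in the appendix), not to the existence of a minimizer.
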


\noindent The minimal stretch factor  will be denoted by $\lambda([\phi], \mathcal B, \mathcal A)$.

\begin{proof} Let $g_*: Y_* \to Y_*$ be  a minimal $\mathcal A$-relative representative for $\left.\phi\right|_{\mathcal B}$ and suppose $\left.\phi\right|_{\mathcal B}$ is irreducible relative to $\mathcal A$. Then $g_*$ is an irreducible $\mathcal A$-relative representative by Lemma~\ref{irredequiv} with stretch factor $\lambda(g_*)$.
By the lack on bivalent vertices, any irreducible $\mathcal A$-relative representative has a transition matrix of size $\le N = 3 \cdot \mathrm{rank}(F) - 3$. Suppose $B$ is an irreducible integer square matrix with Perron-Frobenius eigenvalue $\lambda(B) \le \lambda(g_*)$. Then $B$ has a positive right eigenvector $\vec v$ associated with $\lambda(B)$. So for all $k \ge 1$, $B^k$ has right eigenvector $\vec v$ associated with eigenvalue $\lambda(B)^k$. Assuming the smallest entry of $\vec v$ is $1$ (rescale if necessary), we get that the minimum row-sum of $B^k$ is at most $\lambda(B)^k$ for any $k \ge 1$. If $B$ has no more than $N$ rows, then the largest entry of $B$ is at most the minimum row-sum of $B^{N!}$, which we know is at most $\lambda(B)^{N!} \le \lambda(g_*)^{N!}$. So there are finitely many irreducible integer square matrices with size $\le N$ and Perron-Frobenius eigenvalue $\le \lambda(g_*)$. Thus, there is a finite set of stretch factors $\le \lambda(g_*)$ for irreducible $\mathcal A$-relative representatives for $\left.\phi\right|_{\mathcal B}$. In particular, there is an irreducible $\mathcal A$-relative representative $f_*: T_* \to T_*$ for $\left.\phi\right|_{\mathcal B}$ with the minimal stretch factor.
\end{proof}

Bestvina-Handel's work \cite{BH92} can be adapted to show that an irreducible $\mathcal A$-relative representative for $\left.\phi\right|_{\mathcal B}$ with the minimal stretch factor is an {\it $\mathcal A$-relative train track} (Appendix~\ref{relalgo}) and, conversely, bounded cancellation implies all irreducible $\mathcal A$-relative train tracks for $\left.\phi\right|_{\mathcal B}$ have the minimal stretch factor. We do not prove this converse as it is not needed. The next lemma is an application of train track theory that will be invoked once, in the second half of the proof of Proposition~\ref{relexpand}.

\begin{lem}[Train Track Theory]\label{legal} If $\left.\phi\right|_{\mathcal B}$ is irreducible relative to $\mathcal A$ and $f_{*}:T_{*} \to T_{*}$ is an irreducible $\mathcal A$-relative representative for $\left.\phi\right|_{\mathcal B}$ with the minimal stretch factor, then there is an element $g$ in $\mathcal B$ with an axis $a_g$ such that the restriction of $f_*^k$ to $a_g$ is an immersion for all $k \ge 1$.\end{lem}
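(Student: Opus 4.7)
The plan is to promote $f_*$ to an $\mathcal A$-relative train track map using the minimality of its stretch factor, and then combine the finite dynamics of the derivative $df_*$ on directions with the irreducibility of $A(f_*)$ to produce a loxodromic element $g \in \mathcal B$ whose axis stays legal under every iterate of $f_*$.

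First, I would appeal to the relative adaptation of Bestvina--Handel's algorithm (Appendix~\ref{relalgo}) to assume $f_*$ is an $\mathcal A$-relative train track: for every edge $e$ of $T_*$ and every $k \ge 1$, $f_*^k(e)$ is an immersed edge-path. Declare a turn $\{d_1,d_2\}$ at a vertex \emph{legal} if $df_*^n(d_1) \ne df_*^n(d_2)$ for all $n \ge 1$, and \emph{illegal} otherwise. The train track property is equivalent to the assertion that every turn crossed by any $f_*^k(e)$ is legal; in particular $f_*$ sends legal edge-paths to legal immersed edge-paths, and hence legal bi-infinite lines to legal bi-infinite lines.

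Next, I would locate a periodic direction. Because $T_*$ has only finitely many $F$-orbits of directions at vertices, the $\phi$-equivariant map $df_*$ descends to a self-map on a finite set, which admits a periodic orbit: there exist a direction $d$ at a vertex $v$, an integer $p \ge 1$, and an element $h \in F$ with $df_*^p(d) = h \cdot d$ (so that $f_*^p(v) = hv$). Letting $e$ be the initial edge of $d$, the train track property guarantees that $f_*^p(e)$ is legal and begins with the edge $h \cdot e$. Iterating using $\phi^p$-equivariance, $f_*^{np}(e)$ begins with $h_n \cdot e$ where $h_n = h\cdot \phi^p(h) \cdots \phi^{(n-1)p}(h)$, and the suffix after $h_n\cdot e$ provides a legal extension of the previous iterate's extension; in the direct limit we obtain a legal ray $\rho^+$ starting at $v$ in direction $d$. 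Applying the same construction to a periodic direction pointing into $v$---obtained from another periodic orbit of $df_*$ carrying a direction opposite to $d$, whose existence follows from the irreducibility of $A(f_*)$ (every edge orbit, in particular one whose direction at $v$ is opposite $d$, appears in high iterates of $e$)---one extends $\rho^+$ to a bi-infinite legal line $\ell \subset T_*$.

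Finally, I would identify the loxodromic element. The line $\ell$ is setwise preserved by a loxodromic isometry of $T_*$, which is an element $g \in F$; since $\ell$ lies in a single connected component $T_i$ of the forest $T_*$, we have $g \in B_i \subset \mathcal B$. Taking $a_g := \ell$, every turn of $a_g$ is legal, and so the train track property implies $f_*^k(a_g)$ is a legal immersed line in $T_*$ for every $k \ge 1$; equivalently, the restriction of $f_*^k$ to $a_g$ is an immersion, as required.

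The main obstacle will be the construction of $\ell$: we need to argue that the legal rays $\rho^{\pm}$ assemble, via the $\phi$-equivariance of $f_*$ and the irreducibility of $A(f_*)$, into a bi-infinite line translated by a genuine loxodromic element of $\mathcal B$ rather than an elliptic one. This is essentially a pigeonhole argument on orbits of legal turns combined with the recurrence supplied by irreducibility, reminiscent of the way one extracts eigenrays from Perron--Frobenius dynamics in the absolute case.
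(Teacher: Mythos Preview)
Your reduction to an $\mathcal A$-relative train track via Appendix~\ref{relalgo} is correct, and your definition of legal turns via $df_*$ would suffice for the final implication (a line all of whose turns are legal in your sense is immersed under every $f_*^k$). The genuine gap is in your third step: there is no reason for the line $\ell$ you build to be the axis of any element of $\mathcal B$. The eigenray $\rho^+ = \varinjlim (h^{-1} f_*^p)^n(e)$ is invariant under the \emph{map} $h^{-1} f_*^p$, not under any group element, and such rays are typically aperiodic (e.g.\ the eigenray of $a\mapsto ab,\ b\mapsto a$ is the aperiodic Fibonacci word). Gluing two aperiodic eigenrays at $v$ yields a legal line, but not an axis; your assertion ``the line $\ell$ is setwise preserved by a loxodromic isometry'' is exactly the missing step, and the pigeonhole-on-legal-turns idea you mention at the end is not what the eigenray construction supplies.

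The paper's route bypasses eigenrays entirely and builds periodicity in from the start. After disposing of the case $\lambda(f_*)=1$ (where $f_*$ is a simplicial embedding and any axis works), it calls a 2-edge path \emph{$f_*$-legal} if it is a $\mathcal B$-translate of a subpath of some $f_*^k(e)$. Irreducibility together with $\lambda(f_*)>1$ then gives every oriented edge an $f_*$-legal predecessor and successor. Since there are only finitely many $\mathcal B$-orbits of oriented edges, repeatedly choosing legal successors produces a periodic sequence of edge-orbits; lifting this legal loop from $\mathcal B\backslash T_*$ yields (using translation-invariance of $f_*$-legality and triviality of edge stabilizers) an axis $a_g$ of a loxodromic $g\in\mathcal B$ with all 2-edge subpaths $f_*$-legal, and the train track property finishes. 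If you want to salvage your write-up, replace the eigenray paragraph with this pigeonhole on oriented edge-orbits.
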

\noindent Such an axis will be known as an {\bf $\boldsymbol{f_*}$-legal axis}.
\begin{proof} If $\lambda(f_*) = 1$, then $f_*$ is a simplicial embedding and we are done. So we may assume $\lambda(f_*) > 1$. By minimality of its stretch factor, $f_*$ is an expanding irreducible $\mathcal A$-relative {\it train track} for $\left.\phi\right|_{\mathcal B}$ (Theorem~\ref{appreltt}), i.e., for any edge $e$ in $T_*$, $f_*^k(e)$ is an expanding immersed path for all $k \ge 1$. A 2-edge path $e_1 \cdot e_2$ is {\it $f_*$-legal} if it is a translate of a subpath of $f_*^k(e)$ for some edge $e$ and integer $k  \ge 1$. By irreducibility of $f_*$, every edge $e$ is contained in a 3-edge path $e_{-} \cdot e \cdot e_{+}$ whose 2-edge subpaths are both $f_*$-legal. This means we can form an axis $a_g$ whose 2-edge subpaths are all $f_*$-legal. By the train track property, the restriction of $f_*^k$ to $a_g$ is an immersion for all $k \ge 1$.
\end{proof}

The main tools from Section~\ref{secStallBCC} that were used in the previous section were Lemma~\ref{expand}, bounded cancellation, and Proposition~\ref{homotopylift}. The relative analogues of the latter two have already been established in this section. We now state the main technical result of this section, an analogue of Lemma~\ref{expand} --- analogous in the sense that both give sufficient conditions for iterated subgroup graphs to have arbitrarily long natural edges.

\begin{prop}\label{relexpand} Let $\mathcal A \prec \mathcal B$ be a chain of $[\phi]$-invariant free factor systems with $\left.\phi\right|_{\mathcal B}$ irreducible relative to $\mathcal A$ and $\lambda([\phi], \mathcal B, \mathcal A) > 1$. If $\mathcal A$ carries the maximal $[\phi]$-fixed free factor system, then the length of the longest natural edge in $T_{*}(\phi^k(\mathcal B))$ is unbounded as $k \to \infty$.
\end{prop}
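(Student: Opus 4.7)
The plan is to argue by contradiction, paralleling the proof of Lemma~\ref{expand} in the absolute case. Assume the longest natural edge in $T_*(\phi^k(\mathcal B))$ has length at most $L$, independent of $k$. The goal will be to deduce $\mathcal B \preceq \mathcal A$, which contradicts $\mathcal A \prec \mathcal B$.

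For the first half, I would bound the combinatorial complexity of the quotient $\phi^k(\mathcal B) \backslash T_*(\phi^k(\mathcal B))$. This is a graph of groups whose vertex groups are conjugate into $\mathcal A$ and whose fundamental-group rank is uniformly controlled (by the rank of $\mathcal B$, via the isomorphism $\phi^k: F \to \phi^k(F)$), so an Euler-characteristic count gives a uniform bound $N$ on the number of $\phi^k(\mathcal B)$-orbits of natural edges. Under the assumption $L < \infty$, each quotient then has volume at most $NL$. Since each quotient carries a canonical immersion into $\mathcal B \backslash T_*$, and bounded-volume immersions with vertex groups drawn from the finite list of conjugacy classes of $\mathcal A$-components form only finitely many isomorphism types, pigeonhole yields $m > n \ge 1$ with matching immersions. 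The relative analogue of Stallings-graph rigidity --- that a subgroup system is determined up to conjugation by its immersed relative Stallings cover --- then gives $[\phi^m(\mathcal B)] = [\phi^n(\mathcal B)]$. Iterating this equation across the induced permutation of components shows every component of $\mathcal B$ is eventually $[\phi]$-periodic. Lemma~\ref{nonexpand} then produces a $[\phi]$-fixed nonempty subsystem $\mathcal B' \subseteq \mathcal B$ with $\phi^j(\mathcal B) \preceq \mathcal B'$, and by hypothesis $\mathcal B' \preceq \mathcal A$, so $\phi^j(\mathcal B) \preceq \mathcal A$. Using the standing assumption $\phi^{-1}\cdot \mathcal A = \mathcal A$ to pull the carrying relation back one iteration at a time yields $\mathcal B \preceq \mathcal A$, completing the contradiction.

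The step I expect to be most subtle is the relative Stallings rigidity, since equivalence of free factor systems requires component-wise conjugations that fit together into a compatible system, not merely separate conjugacies. The second half of the argument would invoke Lemma~\ref{legal} as a cleaner geometric witness that sidesteps this bookkeeping. Taking $f_*: T_* \to T_*$ (as supplied by Proposition~\ref{reltt}) to be an irreducible minimal-stretch $\mathcal A$-relative representative, Lemma~\ref{legal} supplies an $f_*$-legal axis $a_g$ for some $g \in \mathcal B$. Because $\lambda > 1$, the image $f_*^k(a_g)$ is the axis of $\phi^k(g)$ in both $T_*$ and $T_*(\phi^k(\mathcal B))$, with translation length growing like $\lambda^k$. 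This exhibits an explicit conjugacy class in $\phi^k(\mathcal B)$ whose translation length in $T_*(\phi^k(\mathcal B))$ is unbounded, providing a purely geometric obstruction to the bounded-edge-length scenario that complements (and, modulo some counting, replaces) the combinatorial Stallings-rigidity step in the first half.
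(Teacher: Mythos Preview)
Both halves of your proposal have genuine gaps. In the first half, the relative Stallings rigidity you invoke is false. The quotient $\phi^k(\mathcal B)\backslash T_*(\phi^k(\mathcal B))$ is a graph of groups whose nontrivial vertex groups are conjugates of $\phi^k(\mathcal A)$-components, not $\mathcal A$-components; these shrink as $k$ grows whenever $\phi$ is not surjective on $\mathcal A$, so they are not drawn from a fixed finite list. If you record only the underlying graph and its immersion into $\mathcal B\backslash T_*$, pigeonhole does give finitely many types, but that data does not determine the subgroup system: distinct subgroups can have identical minimal subtrees in an $(F,\mathcal A)$-tree when they differ only inside vertex stabilizers. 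So matching immersions for $m>n$ does not yield $[\phi^m(\mathcal B)]=[\phi^n(\mathcal B)]$. (A warning sign: your first half never uses the hypothesis $\lambda>1$.) In the second half, a legal axis with translation length $\sim\lambda^k$ does not force long natural edges --- an axis of arbitrarily large translation length can wind through a graph of groups with uniformly short natural edges. Bounded natural edges plus the bound on orbits gives a \emph{short} loxodromic element, not an upper bound on the longest one.

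The paper's argument is structurally different and worth contrasting. It passes to an absolute vertex blow-up $S[\phi^k(\mathcal B)]$ and shows: (i) bounded relative natural edges force the top-stratum subpaths in $S[\phi^k(\mathcal B)]$ to be uniformly bounded (otherwise bounded cancellation plus irreducibility would make $g_{*,k}$ an expanding immersion, contradicting the short loxodromic elements that bounded edges guarantee); (ii) the legal axis from Lemma~\ref{legal} is then used to bound the lower-stratum ``turns'' between top-stratum pieces. Together these assemble, for every $k$, a \emph{loxodromic} conjugacy class in $\phi^k(\mathcal B)$ of uniformly bounded $\alpha_{\mathcal B}$-length; pigeonhole gives a loxodromic class with an infinite $[\phi]$-tail, which by Proposition~\ref{maxfixed} must be carried by $\mathcal A$ --- contradiction. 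The role of the legal axis is not to exhibit something long, but to supply enough legal turns to build something short.
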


Before starting the proof, we will first describe the {\it (absolute) vertex blow-up} construction. Let $\mathcal A \prec \mathcal B$ be a chain of free factor systems and $T_*$ be some $(\mathcal B, \mathcal A)$-forest.
Recall that we assume $\mathcal A_i \subset \mathcal A$ is   also a free factor system of $B_i \in \mathcal B$.
Fix some $\mathcal A$-marked roses $(R_{\mathcal A}, \alpha_{\mathcal A})$ . 
Define $\Gamma_{\mathcal B}$ to be the graph formed by identifying the appropriate vertices of the graph of groups $\mathcal B \backslash T_{*}$ with the basepoints of roses $(R_{\mathcal A}, \alpha_{\mathcal A})$. If $c:R_{\mathcal A} \to \Gamma_{\mathcal B}$ is the inclusion map, then {\it Bass-Serre theory} gives markings $\alpha_{\mathcal B} = \{ \alpha_i: B_i \to \pi_1(\Gamma_i) \}$ such that $[\,\pi_1(c) \circ \alpha_{\mathcal A}\,] = [\,\left. (\alpha_{\mathcal B})\right|_{\mathcal A}\,]$. Thus, $(\Gamma_{\mathcal B}, \alpha_{\mathcal B})$ is a $\mathcal B$-marked graph. This construction and, in general any pair of graphs $\Gamma_{\mathcal A}' \subset \Gamma_{\mathcal B}'$ with collections of markings $\alpha_{\mathcal A}', \alpha_{\mathcal B}'$ such that $\pi_1(c') \circ \alpha_{\mathcal A}' = \left.(\alpha_{\mathcal B}')\right|_{\mathcal A}$ will be referred to as {\bf vertex blow-up}. 

We note that the Stallings graph $S[\phi^k(\mathcal B)]$ with respect to $(\Gamma_{\mathcal B}, \alpha_{\mathcal B})$, as a $\phi^k(\mathcal B)$-marked graph, is a vertex blow-up of $\phi^k(\mathcal B) \backslash T_{*}(\phi^k(\mathcal B))$: let $\iota: S[\phi^k(\mathcal B)] \to \Gamma_{\mathcal B}$ be the Stallings graph's immersion and $S_{\mathcal A} \subset S[\phi^k(\mathcal B)]$ be the core of the  subgraph $\iota_{\mathcal B}^{-1}(R_{\mathcal A})$. Since $\phi^{-1}\cdot \mathcal A = \mathcal A$, $S_{\mathcal A} = S[\phi^k(\mathcal A)]$ is marked by an isomorphism $\alpha_{\mathcal A}':\phi^k(\mathcal A) \to \pi_1(S_{\mathcal A})$ and $\alpha_{\mathcal A}'$ is the restriction of the marking $\alpha_{\mathcal B}':\phi^k(\mathcal B) \to \pi_1(S[\phi^k(\mathcal B)])$ to $\phi^k(\mathcal A)$ with respect to the inclusion $S_{\mathcal A} \subset S[\phi^k(\mathcal B)]$. Therefore, $S[\phi^k(\mathcal B)]$ is also a vertex blow-up of $\phi^k(\mathcal B)\backslash Y_{*, k}$. The noncontractible components of the subgraph $\iota_{\mathcal B}^{-1}(R_{\mathcal A})$ will be known as the {\it lower stratum} and the rest of the graph as the {\it top stratum}.

Now suppose $\mathcal A \prec \mathcal B$ are also $[\phi]$-invariant and let $f_{*}: T_{*} \to T_{*}$ be a $\mathcal A$-relative representative for $\left.\phi\right|_{\mathcal B}$ defined on some $(\mathcal B, \mathcal A)$-forest $T_{*}$ and $f_{\mathcal A}: R_{\mathcal A} \to R_{\mathcal A}$ be a topological representative for $\left.\phi\right|_{\mathcal A}$.
Construct a topological representative $f_{\mathcal B}: \Gamma_{\mathcal B} \to \Gamma_{\mathcal B}$ for $[\left.\phi\right|_{\mathcal B}\,]$  that agrees with $f_{\mathcal A}$ on the $\mathcal A$-marked roses $R_{\mathcal A}$ and induces $f_{*}$ on the Bass-Serre forest $T_{*}$ upon collapsing the roses $R_{\mathcal A}$.

For any $k \ge 1$, we let $g_{*, k}:Y_{*, k} \to Y_{*, k}$ be the minimal $\phi^k(\mathcal A)$-relative representative for $\left.\phi\right|_{\phi^k(\mathcal B)}$ given by Proposition~\ref{homotopyrest2} using $f_*$ and $\bar f_k:S[\phi^k(\mathcal B)] \to S[\phi^k(\mathcal B)]$ be the natural representative for $[\left.\phi\right|_{\phi^k(\mathcal B)}\,]$ given by Proposition~\ref{homotopylift} using $f_{\mathcal B}$.
By Proposition~\ref{homotopyrest2}(3), if an element $b$ in $\mathcal B$ has an $f_*$-legal axis, then $\phi^k(b)$ has a $g_{*, k}$-legal axis. 
It can be arranged for $S[\phi^k(\mathcal A)]\subset S[\phi^k(\mathcal B)]$ to be $\bar f_k$-invariant and $\bar f_k$ to induce $g_{*,k }$ on the $(\phi^k(\mathcal B), \phi^k(\mathcal A))$-forest $Y_{*, k}$ upon collapsing a maximal invariant proper subgraph of $S[\phi^k(\mathcal B)]$ containing $S[\phi^k(\mathcal A)]$ and forgetting bivalent vertices.
\medskip 

Here is the idea behind the proof. By irreducibility of the restriction $\left.\phi\right|_{\mathcal B}$, we may assume the map $g_{*, k}$ is an expanding irreducible representative for $\left.\phi\right|_{\phi^k(\mathcal B)}$. For the contrapositive, suppose the forests $T_{*}(\phi^k(\mathcal B))$ had uniformly bounded natural edges. There is a sequence of loxodromic elements $b_k$ in $\phi^k(\mathcal B)$ with uniformly bounded translation lengths $l_{T_*}(g_k)$. Now suppose that the vertex blow-up $S[\phi^k(\mathcal B)]$ had natural edges with aribtrarily long top stratum subpaths. Bounded cancellation, the fact $\bar f_k$ induces $g_{*, k}$, and the irreducibility of $g_{*, k}$ imply $g_{*, k}$ is an expanding irreducible {\it immersion}. However, this contradicts the first assumption since $l_{Y_{*, k}} \le l_{T_{*}(\phi^k(\mathcal B))}$. So the second supposition is false and the natural edges of $S[\phi^k(\mathcal B)]$ have top stratum subpaths with uniformly bounded length. Using the Lipschitz property, expanding irreducibility of $g_{*, k}$, and existence of a $g_{*, k}$-legal axis (train track theory), we find uniformly bounded lower stratum paths in $S[\phi^k(\mathcal B)]$ connecting the origin of any oriented top stratum subpath of a natural edge to another top stratum subpath of a natural edge. Consequently, we are able to build uniformly bounded immersed loops in $S[\phi^k(\mathcal B)]$ that contain top stratum subpaths. This implies some loxodromic conjugacy class in $\mathcal B$ has an infinite $[\phi]$-tail. By Proposition~\ref{maxfixed}, any conjugacy class in $F$ with an infinite $[\phi]$-tail is carried by the maximal $[\phi]$-fixed free factor system. Thus the maximal $[\phi]$-fixed free factor system cannot be carried by $\mathcal A$ as it carries a loxodromic conjugacy class.

\begin{proof}[Proof of Proposition~\ref{relexpand}]
Suppose $\mathcal A \prec \mathcal B$ are $[\phi]$-invariant free factor systems,  $\left.\phi\right|_{\mathcal B}$ is irreducible relative to $\mathcal A$, $f_{\mathcal A}: R_{\mathcal A} \to R_{\mathcal A}$ is a topological representative for $\left.\phi\right|_{\mathcal A}$ defined on $\mathcal A$-marked roses $(R_{\mathcal A}, \alpha_{\mathcal A})$, and $f_{*}: T_{*} \to T_{*}$ is an expanding irreducible $\mathcal A$-relative representative  for $\left.\phi\right|_{\mathcal B}$ with the minimal stretch factor $\lambda(f_*) > 1$ (Proposition~\ref{reltt}). By Lemma~\ref{legal}, there is an element $b$ in $\mathcal B$ with an $f_*$-legal axis. Set $(\Gamma_{\mathcal B}, \alpha_{\mathcal B})$ to be the vertex blow-up of $\mathcal B \backslash T_{*}$ with respect to the $\mathcal A$-marked roses $(R_{\mathcal A}, \alpha_{\mathcal A})$. 
%Assume $T_{*}$ has the Perron-Frobenius metric with unit volume and the roses have the usual combinatorial metric. 
The discussion preceding the proof gives minimal $\phi^k(\mathcal A)$-relative representatives $g_{*, k} : Y_{*, k} \to Y_{*, k}$ for $\left.\phi\right|_{\phi^k(\mathcal B)}$ and natural representatives $\bar f_k:S[\phi^k(\mathcal B)] \to S[\phi^k(\mathcal B)]$ for $[\left.\phi\right|_{\phi^k(\mathcal B)}\,]$ that have these properties: for all $k \ge 1$,
\begin{enumerate}
\item $\bar f_k$ induces $g_{*,k }$ on $Y_{*, k}$  upon collapsing the $f_{\mathcal B}$-invariant subgraph $R_{\mathcal A} \subset \Gamma_{\mathcal B}$;
\item  $K = K(\bar f_k) = K(f_{\mathcal B})+C(f_{\mathcal B})$  and $C = C(\bar f_k) = 2 C(f_{\mathcal B})$;
\item $l_{Y_{*, k}}:\phi^k(\mathcal B) \to \mathbb R$ is dominated by the restrictions $\left.l_{T_{*}}\right|_{\phi^k(\mathcal B)} = l_{ T_{*}(\phi^k(\mathcal B))}$; and
\item $\phi^k(b)$ has a $g_{*,k}$-legal axis.
\end{enumerate}

The collection $\left.\phi\right|_{\phi^k(\mathcal B)}$ is conjugate to $\left.\phi\right|_{\mathcal B}$ by injectivity of $\phi$. So $\left.\phi\right|_{\phi^k(\mathcal B)}$ is irreducible relative to $\phi^k(\mathcal A)$ and $\lambda(f_{*})$ is the minimal stretch factor for $\left.\phi\right|_{\phi^k(\mathcal B)}$ relative to $\phi^k(\mathcal A)$. Furthermore, the minimal $\phi^k(\mathcal A)$-relative representatives $g_{*, k}$ are irreducible  (Lemma~\ref{irredequiv}) and $\lambda(g_{*, k}) \ge \lambda(f_{*}) > 1$ by the minimality of $\lambda(f_{*})$. 

Suppose for the contrapositive that there is a bound $L \ge 1$ such that all natural edges in $T_{*}(\phi^k(\mathcal B))$ are shorter than $L$ for all $k \ge 1$. Then, for all $k \ge 1$, there is a loxodromic element $b_k$ in $\phi^k(\mathcal B)$ such that $l_{T_{*}}(b_k) \le (3N-3)L$, where $N = 3\cdot \mathrm{rank}(F)-3$. Every edge $E$ in $\Gamma_{*, k} = \phi^k(\mathcal B)\backslash Y_{*, k}$ lifts to a $\phi^k(\mathcal B)$-orbit of a natural edge $E'$ in $\phi^k(\mathcal B)\backslash T_{*}(\phi^k(\mathcal B))$, which corresponds to a top stratum subpath $\bar E$ of a natural edge in $S[\phi^k(\mathcal B)]$.

\begin{claim}The subpath $\bar E$ in $S[\phi^k(\mathcal B)]$ has length $ \le C {\cdot} K^{N-1}$  for all edges $E$ in $\Gamma_{*, k}$ and $k \ge 1$.\end{claim}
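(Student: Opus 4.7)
The plan is to argue by contradiction. Suppose $|\bar E_0| > C \cdot K^{N-1}$ for some edge $E_0$ of $\Gamma_{*, k}$. The first step is to propagate this across all orbits via irreducibility. By Lemma~\ref{irredequiv}, since $\left.\phi\right|_{\phi^k(\mathcal B)}$ is irreducible relative to $\phi^k(\mathcal A)$ and $g_{*, k}$ is minimal, the transition matrix $A(g_{*, k})$ is irreducible with at most $N$ orbits. So for each edge orbit $E_i$ of $Y_{*, k}$ there is a directed path of length $m_i \le N - 1$ in the transition digraph from $E_i$ to $E_0$, meaning $g_{*, k}^{m_i}(E_i)$ covers a translate of $E_0$. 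Pulling back to $S[\phi^k(\mathcal B)]$, the path $\bar f_k^{m_i}(\bar E_i)$ contains a translate of $\bar E_0$ as a top-stratum sub-sub-path; by the $K$-Lipschitz property we obtain $|\bar E_0| \le K^{m_i} \cdot |\bar E_i| \le K^{N-1} \cdot |\bar E_i|$, so $|\bar E_i| > C$ for every orbit $E_i$.

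The second step is to show that $g_{*, k}$ is then an immersion. Let $E_1, E_2$ be distinct edges out of a common vertex $v$ in $Y_{*, k}$, with top stratum subpaths $\bar E_1, \bar E_2$ in $S[\phi^k(\mathcal B)]$ starting at lower stratum vertices $v_1, v_2$ in the lower stratum component representing $v$. If $g_{*, k}$ were to identify the initial directions of $E_1, E_2$ at $v$, then $\bar f_k(\bar E_1)$ and $\bar f_k(\bar E_2)$ would exit the lower stratum component for $g_{*, k}(v)$ via the same top stratum edge. Choose an immersed lower stratum path $p$ from $v_1$ to $v_2$ and consider the immersed path $\bar E_1^{-1} \cdot p \cdot \bar E_2$ in $S[\phi^k(\mathcal B)]$. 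Applying Lemma~\ref{bcl} (Bounded Cancellation) at the two natural subpath junctions bounds each cancellation by $C$; yet the shared exit direction would force cancellation of magnitude at least $\min(|\bar E_1|, |\bar E_2|) > C$, a contradiction. Hence $g_{*, k}$ is an immersion, and since $\lambda(g_{*, k}) \ge \lambda(f_*) > 1$, it is an expanding irreducible immersion.

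For the final contradiction, Lemma~\ref{legal} gives a $g_{*, k}$-legal axis $a$ for some element $g \in \phi^k(\mathcal B)$, and the train track property implies $l_{Y_{*, k}}(g^n) = \lambda(g_{*, k})^n \cdot l_{Y_{*, k}}(a) \to \infty$ as $n \to \infty$. But $l_{Y_{*, k}}$ is dominated by $l_{T_*(\phi^k(\mathcal B))}$ by Proposition~\ref{homotopyrest2}, and since natural edges of $T_*(\phi^k(\mathcal B))$ are uniformly bounded by $L$ under the contrapositive assumption (and a fundamental domain of such an axis uses a bounded number of natural edges), the translation length in $T_*(\phi^k(\mathcal B))$ is uniformly bounded for the orbit of $a$ in $\phi^k(\mathcal B)$, contradicting unbounded $l_{Y_{*, k}}$. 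The main obstacle will be the second step: carefully handling the case $v_1 \ne v_2$ within the same lower stratum component and precisely quantifying how a would-be folding at $v$ translates, through the chosen path $p$, into a $\bar f_k$-cancellation exceeding $C$ at one of the natural junctions of $\bar E_1^{-1} \cdot p \cdot \bar E_2$.
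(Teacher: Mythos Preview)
Your first two steps follow the paper's approach closely: propagate the length lower bound to all edge orbits via irreducibility and the Lipschitz constant, then argue that a fold in $g_{*,k}$ would force cancellation in $\bar f_k$ exceeding $C$. The subtlety you flag (that $v_1\neq v_2$ inside the lower-stratum component) is real and the paper's phrasing glosses over it, but it is not an obstacle: work in the universal cover, where the lower-stratum component for $v$ is a subtree, and observe that after applying $\bar f_k$ to $\bar E_1^{-1}\cdot p\cdot\bar E_2$ the entire lower-stratum portion between the two copies of $\tilde E'$ tightens to a point (it is a tree path from $\tilde w$ back to $\tilde w$). Hence the cancellation at the junction $v_1$ in the decomposition $\bar E_1^{-1}\,\vert\,(p\cdot\bar E_2)$ is at least $|\bar E'|>C$, contradicting bounded cancellation. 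Also, once $g_{*,k}$ is an immersion you do not need Lemma~\ref{legal} at all: every axis is automatically legal.

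The genuine gap is your final step. The expression $l_{Y_{*,k}}(g^n)=\lambda(g_{*,k})^n\cdot l_{Y_{*,k}}(a)$ is wrong: $l_{Y_{*,k}}(g^n)=n\cdot l_{Y_{*,k}}(g)$, and presumably you mean $l_{Y_{*,k}}(\phi^n(g))$. More seriously, the assertion that ``the translation length in $T_*(\phi^k(\mathcal B))$ is uniformly bounded for the orbit of $a$'' is not justified. The contrapositive hypothesis bounds the lengths of \emph{natural edges} of $T_*(\phi^k(\mathcal B))$, not the translation lengths of arbitrary elements; as $n$ grows the fundamental domain of $\phi^n(g)$ can cross arbitrarily many natural edges, so $l_{T_*}(\phi^n(g))$ is not bounded and there is no contradiction along this route. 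The paper obtains the contradiction differently: from the bound on natural edges it first extracts, for \emph{every} $k'\ge 1$, a loxodromic $b_{k'}\in\phi^{k'}(\mathcal B)$ with $l_{T_*}(b_{k'})\le (3N-3)L$. The expanding immersion $g_{*,k}$ then forces $l_{Y_{*,k}}(\phi^m(b))>(3N-3)L$ for all loxodromic $b\in\phi^k(\mathcal B)$ once $m$ is large, hence $l_{T_*}(b')>(3N-3)L$ for every loxodromic $b'\in\phi^{k+m}(\mathcal B)$; this contradicts the existence of the short element $b_{k+m}$. The tension is between the fixed $k$ (where the immersion lives) and the later index $k+m$ (where the short element lives), and your argument never sets up that comparison.
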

\noindent Suppose, the graph $\Gamma_{*, k}$ has an edge $E_0$ whose corresponding subpath $\bar E_0$ in $S[\phi^k(\mathcal B)]$ is longer than $\displaystyle C \cdot K^{N-1}$ for some $k \ge 1$. As we did in the proof of Theorem~\ref{expequiv1}, we construct the set of {\it long} edges $\mathcal E$ by looking at all the edges of $\Gamma_{*, k}$ that are eventually mapped over $E_0$. Here, an edge $E_1$ in $\Gamma_{*, k}$ mapped over $E_0$ if there are lifts $E_1'$ and $E_0'$ in $Y_{*, k}$ such that $g_{*, k}$ maps $E_1'$ over $E_0'$.
Since $\bar f_k: S[\phi^k(\mathcal B)] \to S[\phi^k(\mathcal B)]$ is $K$-Lipschitz and it induces $g_{*, k}$ on $Y_{*, k}$, each long edge in $\Gamma_{*, k}$ corresponds to a top statrum subpath in  $S[\phi^k(\mathcal B)]$ longer than $C$. Since $g_{*, k}$ is an irreducible $\phi^k(\mathcal A)$-relative representative, all edges eventually map over $E_0$ and hence are long. The long natural edges of $S[\phi^k(\mathcal B)]$ will be the natural edges in $S[\phi^k(\mathcal B)]$ containing top stratum subpaths.

Suppose an edge $E$ of $\Gamma_{*, k}$ had a lift $E'$ in $Y_{*, k}$ that is the initial segment of the $g_{*, k}$-image of two edges that share an initial vertex. Then the top stratum subpath $\bar E$ is in a long natural edge of $S[\phi^k(\mathcal B)]$  that is the initial segment of $\bar f_k$-images of natural edges that share an initial vertex; this violates bounded cancellation since long natural edges of $S[\phi^k(\mathcal B)]$ longer than $C = C(\bar f_k)$. 
Hence, there is no folding in $g_{*, k}$, i.e., $g_{*, k}$ is an expanding irreducible $\phi^k(\mathcal A)$-relative {\it immersion}. We may now find an $m \ge 1$ such that all loxodromic elements $b$ in $\phi^k(\mathcal B)$ have $l_{Y_{*, k}}(\phi^m(b)) > (3N-3)L$. Since $l_{Y_{*, k}}$ is dominated by $\left.l_{T_{*}}\right|_{\phi^k(\mathcal B)}$, we get that $l_{T_{*}}(b') > (3N-3)L$ for all loxodromic elements $b'$ in $\phi^{k+m}(\mathcal B)$. This contradicts the assumption that $l_{T_{*}}(b_{k+m}) \le (3N-3)L$ for some loxodromic $b_{k+m}$ in $\phi^{k+m}(\mathcal B)$. So the top stratum subpath $\bar E$ in $S[\phi^k(\mathcal B)]$ has length $\le C \cdot K^{N-1}$ for all natural edges $E$ of $\Gamma_{*, k}$ and $k \ge 1$. This ends the proof of the claim.
\medskip

Next, we prove the existence of paths in the lower stratum of $S[\phi^k(\mathcal B)]$ with uniformly bounded lengths connecting top stratum paths.
Suppose $E_0, E_1,$ and $E_2$ are edges of $\Gamma_{*, k}$ with lifts $E_0', E_1',$ and $E_2'$ in $Y_{*, k}$ such that $E_1' \cdot E_2'$ is a subpath of the immersed path $g_{*, k}(E_0')$. 
Then $\bar E_1 \cdot P_{12} \cdot \bar E_2$ is a subpath of immersed path $\bar f_k(\bar E_0)$ for some lower stratum path $P_{12}$ in $S[\phi^k(\mathcal B)]$. 
Since $\bar E_0$ has length bounded by $C \cdot K^{N-1}$ and $\bar f_k$ is $K$-Lipschitz, the path $P_{12}$ has length bounded by $C \cdot K^N$. 
We say the 2-edge path $E_1 \cdot E_2$ in $\Gamma_{*, k}$ has a {\it nondegenerate turn bounded by $C \cdot K^N$}. 

As $g_{*, k}$ is an expanding irreducible relative representative that has a legal axis (this is where the argument invokes train track theory), every edge $E'$ in $Y_{*, k}$ can be extended to an immersed 3-edge path $E_{-}' \cdot E' \cdot E_{+}'$ that is a translate of a subpath of $g_{*, k}^{n}(E')$ and $n \le 2{\cdot}N!$. In particular, any edge in $\Gamma_{*, k}$ can be extended to a 3-edge path whose 2-edge subpaths both have nondegenerate turns bounded by $C \cdot K^{N-1} \cdot K^{2{\cdot}N!}$, i.e., every top stratum subpath $\bar E$ can be extended to an immersed path $\bar E_{-}{\cdot}P_{-}{\cdot}\bar E{\cdot}P_{+}{\cdot}\bar E_{+}$ with top stratum subpaths $\bar E_{-}$, $\bar E_{+}$ and lower stratum paths $P_{-}$, $P_{+}$ with length bounded by $C \cdot K^{N-1} \cdot K^{2{\cdot}N!}$.

Using this bound on lower stratum paths and the bound on top stratum subpaths given by the claim, we can now form an immersed loop $\rho_k$ in $\Gamma_{*, k}$ with the properties:
 
\begin{enumerate}
\item $\rho_k$ lifts to an axis in $Y_{*, k}$ for some loxodromic conjugacy class $[b_k']$;
\item $\rho_k$ passes any edge of $\Gamma_{*, k}$ at most twice and only takes short turns (including the turn at the endpoint), which implies it has at most $2N$ edges and (short) turns; and
\item $\rho_k$ represents a loop in $S[\phi^k(\mathcal B)]$ with length bounded by $2N{\cdot}C (1+ K^{2{\cdot}N!})K^{N-1}$.
\end{enumerate} 
In summary, for each $k \ge 1$, we construct a loxodromic conjugacy class $[b_k']$ in $\phi^k(\mathcal B)$ whose $\alpha_{\mathcal B}$-length is bounded by a constant independent of $k$. As there are finitely many conjugacy classes with $\alpha_{\mathcal B}$-length bounded by any given constant, the sequence of conjugacy classes $[b_k']_{k=1}^\infty$ has a constant infinite subsequence. Thus, some loxodromic conjugacy class $[b']$ has an infinite $[\phi]$-tail carried by $\mathcal B$. Recall that the maximal $[\phi]$-fixed free factor system carries all conjugacy classes with an infinite $[\phi]$-tail (Proposition~\ref{maxfixed}); on the other hand, $\mathcal A$ does not carry loxodromic conjugacy classes in $\mathcal B$ by definition. Therefore, $\mathcal A$ cannot carry the maximal $[\phi]$-fixed free factor system.
\end{proof}

%%%%%%%%%%%%%%%%%%%%%%%%%%%%%%%%%%%%%%%%%%%%%%%%%%%%%%%%%%
\subsection{Canonical expanding relative immersions}\label{secRelImm}

The main result of this section is the existence of expanding immersions for nonsurjective endomorphisms relative to their elliptic free factor systems.

%\begin{defn}
Let $\phi:F  \to F$ be an injective endomorphism, $\mathcal A \prec \mathcal B$ be a pair of $[\phi]$-invariant free factor systems such that $\phi^{-1}\cdot \mathcal A = \mathcal A$, and $\left.\phi\right|_{\mathcal B}:\mathcal B \to \mathcal B$ be a restriction of $\phi$ to $\mathcal B$. 
Recall that a relative representative is minimal if it has no orbit-closed invariant subforests with bounded components and an expanding {${\mathcal A}$-relative immersion for $\boldsymbol{\left.\phi\right|_{\mathcal B}}$} is an $\mathcal A$-relative representative $f_{*}:T_* \to T_* $  for $\left.\phi\right|_{\mathcal B}$ that is a minimal immersion whose edges expand under $f_{*}$-iteration.
%\end{defn}

%\begin{rmk}Although immersions of trees are also embeddings, the term ``immersion'' is used since these maps can be thought of as inducing immersions on the quotient graph of groups.\end{rmk} 

There will be two possible ways of obtaining a relative immersion from a relatively irreducible restriction with a minimal stretch factor $\lambda$. If $\lambda = 1$, then an irreducible representative with stretch factor $\lambda$ is automatically a simplicial immersion. The next proposition shows how to construct an immersion when $\lambda > 1$. This construction is unique to nonsurjective endomorphisms because we require that the restriction be irreducible relative to a free factor system that carries the $[\phi]$-elliptic free factor system --- when $\phi$ is an automorphism, the $[\phi]$-elliptic free factor system is $\{ F \}$ and no such restriction exists.

\begin{prop}\label{tt2immersion} Let $\phi: F \to F$ be injective and $\mathcal A \prec \mathcal B$ be a chain of $[\phi]$-invariant free factor systems that carry the $[\phi]$-elliptic free factor system. If $\left.\phi\right|_{\mathcal B}$ is irreducible relative to $\mathcal A$ and $\lambda([\phi], \mathcal B, \mathcal A) > 1$, then there is an expanding irreducible $\mathcal A$-relative immersion for $\left.\phi\right|_{\mathcal B}$.
\end{prop}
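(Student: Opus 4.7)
The plan is to start with the stretch-factor-minimizing relative representative from Proposition~\ref{reltt}, pass to a sufficiently high iterate, and use bounded cancellation to argue that the induced representative on the iterated minimal subforest is an immersion; the immersion then transports back via the isomorphism $\phi^k:F \to \phi^k(F)$. In detail, I would apply Proposition~\ref{reltt} to obtain an irreducible $\mathcal A$-relative representative $f_*:T_* \to T_*$ for $\left.\phi\right|_{\mathcal B}$ with minimal stretch factor $\lambda > 1$, fix $\mathcal A$-marked roses and a topological representative of $\left.\phi\right|_{\mathcal A}$, form the vertex blow-up $\Gamma_{\mathcal B}$ of $\mathcal B\backslash T_*$, and choose a topological representative $f_{\mathcal B}$ inducing $f_*$ after collapsing the lower stratum. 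For each $k \geq 1$, let $g_{*,k}:Y_{*,k} \to Y_{*,k}$ be the minimal $\phi^k(\mathcal A)$-relative representative given by Proposition~\ref{homotopyrest2} and $\bar f_k:S[\phi^k(\mathcal B)] \to S[\phi^k(\mathcal B)]$ the natural representative given by Proposition~\ref{homotopylift}, arranged so that $\bar f_k$ induces $g_{*,k}$ upon collapsing the lower stratum. The Lipschitz and cancellation constants $K = K(\bar f_k)$ and $C = C(\bar f_k)$ are bounded independently of $k$, and $g_{*,k}$ is irreducible with $\lambda(g_{*,k}) \geq \lambda > 1$ by Lemma~\ref{irredequiv}.

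The key step invokes Proposition~\ref{relexpand}: since $\mathcal A$ carries the $[\phi]$-elliptic free factor system, it also carries the maximal $[\phi]$-fixed free factor system, so the longest natural edge in $T_*(\phi^k(\mathcal B))$ is unbounded as $k \to \infty$. Each natural edge of $T_*(\phi^k(\mathcal B))$ corresponds to a top stratum subpath of a natural edge in $S[\phi^k(\mathcal B)]$ of the same length, so for $k$ large enough there is a top stratum subpath $\bar E_0$ of length greater than $C \cdot K^{N-1}$, where $N = 3\cdot\mathrm{rank}(F) - 3$. I would then reuse the argument from the internal claim in the proof of Proposition~\ref{relexpand}: by irreducibility of $g_{*,k}$, every edge of $Y_{*,k}$ has some $g_{*,k}^m$-iterate covering the edge $E_0$ corresponding to $\bar E_0$ within $m \leq N-1$ steps, and the $K$-Lipschitz bound then forces every top stratum subpath of a natural edge in $S[\phi^k(\mathcal B)]$ to have length greater than $C$. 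If $g_{*,k}$ were not an immersion, two edges of $Y_{*,k}$ sharing an initial vertex would have overlapping $g_{*,k}$-images; lifting to $S[\phi^k(\mathcal B)]$, this produces two natural edges sharing an initial vertex whose $\bar f_k$-images share an initial segment longer than $C$, contradicting bounded cancellation. Hence $g_{*,k}$ is an immersion, and combined with irreducibility and $\lambda(g_{*,k}) > 1$ it is an expanding irreducible $\phi^k(\mathcal A)$-relative immersion for $\left.\phi\right|_{\phi^k(\mathcal B)}$. Via the isomorphism $\phi^k$, the forest $Y_{*,k}$ becomes a $(\mathcal B, \mathcal A)$-forest and $g_{*,k}$ becomes the desired expanding irreducible $\mathcal A$-relative immersion for $\left.\phi\right|_{\mathcal B}$.

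The main obstacle I foresee is the careful translation between folds in $g_{*,k}$ on $Y_{*,k}$ and violations of bounded cancellation for $\bar f_k$ on $S[\phi^k(\mathcal B)]$: one must be precise about how the collapse $S[\phi^k(\mathcal B)] \to Y_{*,k}$ interacts with the top/lower stratum decomposition and with natural-edge structure, particularly at vertices where the strata meet. Once this translation is secured, the essential content of the argument has already appeared (with the direction of implication reversed) in the proof of Proposition~\ref{relexpand}, and the present proof is essentially a constructive reuse of that argument.
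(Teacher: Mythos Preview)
Your overall strategy is sound and aligns with the paper's, but you take an unnecessary detour through the Stallings-graph machinery of Proposition~\ref{relexpand}, and this detour introduces a genuine gap. The paper's proof stays entirely in the tree picture: it applies Proposition~\ref{homotopyrest} (not~\ref{homotopyrest2}) to obtain the natural representative $f_{*,k}$ on $T_*(\phi^k(\mathcal B))$, invokes Proposition~\ref{relexpand} to get a natural edge longer than $C\cdot K^{N-1}$, explicitly partitions natural edges into \emph{long} and \emph{short} (long = those eventually mapping over $\mathcal L_0$), observes the short edges form an invariant subforest with bounded components by relative irreducibility, and collapses a maximal invariant bounded-component subforest \emph{containing the short edges}. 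The resulting $g_{*,k}$ then has every edge lifting to a long ($> C$) natural edge of $T_*(\phi^k(\mathcal B))$, so a fold in $g_{*,k}$ would force $f_{*,k}$ to identify initial segments longer than its cancellation constant, contradicting Lemma~\ref{relbcl} directly. No Stallings graphs, no $\bar f_k$.

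Your gap is at the step ``the edge $E_0$ corresponding to $\bar E_0$'': you obtain a long natural edge in $T_*(\phi^k(\mathcal B))$ and the corresponding long top-stratum subpath $\bar E_0$ in $S[\phi^k(\mathcal B)]$, but then assert there is an edge $E_0$ of $Y_{*,k}$ corresponding to it. Proposition~\ref{homotopyrest2} only collapses \emph{some} maximal invariant bounded-component subforest; nothing prevents that subforest from containing your long natural edge (a bounded component can have arbitrarily large finite diameter). The correspondence set up before Proposition~\ref{relexpand} runs from edges of $Y_{*,k}$ to surviving natural edges of $T_*(\phi^k(\mathcal B))$, not the other way. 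The fix is exactly what the paper does: don't take $Y_{*,k}$ off the shelf from Proposition~\ref{homotopyrest2}, but build the collapse yourself so that the short edges are included in what is collapsed; then every surviving edge is long by construction, and the bounded-cancellation argument goes through without translation to Stallings graphs.
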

\begin{proof}
Suppose $\phi:F \to F$ is injective, $\mathcal A \prec \mathcal B$ are $[\phi]$-invariant free factor systems that carry the $[\phi]$-elliptic free factor system,  $\left.\phi\right|_{\mathcal B}$ is irreducible relative to $\mathcal A$, and $f_{*}: T_{*} \to T_{*}$ is an expanding irreducible $(\mathcal B, \mathcal A)$-relative representative  for $\left.\phi\right|_{\mathcal B}$ with minimal stretch factor $\lambda(f_*) > 1$. Set $K = K(f_{*}) + C(f_{*})$ and $C = 2C(f_{*})$. %For any $k \ge 1$, we let $f_{*, k}: T_{*}(\phi^k(\mathcal B)) \to T_{*}(\phi^k(\mathcal B))$ be the homotopy restriction given by Lemma~\ref{hrest2}. In particular,
%\begin{enumerate}\item $f_{*, k}$ maps branch points to branch points and any natural edge to a branch point or immersed path;
%\item $f_{*, k}$ is induced by folding $f_{*}$ and then applying univalent/tightening/bivalent homotopies; and
%\item $K(f_{*, k}) = K$ and $C(f_{*, k}) = C$.\end{enumerate}
By Proposition~\ref{homotopyrest}, there is a natural representative $f_{*, k}:T_*(\phi^k(\mathcal B)) \to T_*(\phi^k(\mathcal B))$ for $\left. \phi\right|_{\phi^k(B)}$ with Lipschitz and cancellation constants $K(f_{*, k}) = K$ and $C(f_{*,k})=C$ respectively for all $k \ge 1$. 

The first part of the proof proceeds as a relativized version of the proof of Theorem~\ref{expequiv1}. By Proposition~\ref{relexpand}, we may fix $k \gg 0$ such that the set of natural edges $\mathcal L_0$ in $T_*(\phi^k(\mathcal B))$ longer than $C \cdot K^{N-1}$ is not empty, where $N = 3 \cdot \mathrm{rank}(F) - 3$. Choose $\mathcal L$ to be the set of all natural edges that eventually get mapped over those in $\mathcal L_0$ by $f_{*, k}$ and call $\mathcal L$ the long natural edges. As $f_{*, k}$ is $K$-Lipschitz and there are at most $N$ orbits of natural edges in $T_*(\phi^k(\mathcal B))$, the long natural edges are longer that $C$. %By irreducibility of the relative representative $g_{*, k}$, the set $\mathcal L$ contains all natural edges of $Y_{*, k}$. 
Injectivity of $\phi$ implies $\left.\phi\right|_{\phi^k(\mathcal B)}$ is conjugate to $\left.\phi\right|_{\mathcal B}$. So $\left.\phi\right|_{\phi^k(\mathcal B)}$ is irreducible relative to $\phi^k(\mathcal A)$, $\lambda(f_{*})$ is the minimal stretch factor for $\left.\phi\right|_{\phi^k(\mathcal B)}$ relative to $\phi^k(\mathcal A)$, 
and the short natural edges of $T_*(\phi^k(\mathcal B))$ form an orbit-closed $f_{*, k}$-invariant subforest with bounded components. 

Collapse a maximal $f_{*,k}$-invariant subforest of $T_*(\phi^k(\mathcal B))$ that has bounded components and contains the short natural edges then forget the bivalent vertices; this induces a minimal $\phi^k(\mathcal A)$-relative representative $g_{*, k}: Y_{*, k} \to Y_{*, k}$ for $\left.\phi\right|_{\phi^k(\mathcal B)}$. The map $g_{*, k}$ is an irreducible $\phi^k(\mathcal A)$-relative representative for $\left.\phi\right|_{\phi^k(\mathcal B)}$ (Lemma~\ref{irredequiv}) and $\lambda(g_{*, k}) \ge \lambda(f_{*})$ by the minimality of $\lambda(f_{*})$. So $g_{*, k}$ is an expanding irreducible $\phi^k(\mathcal A)$-relative representative.

Since the lifts in $T_*(\phi^k(\mathcal B))$ of
all edges in $Y_{*,k}$  are longer than the cancellation constant $C$, there is no folding in $g_{*, k}$ --- otherwise, there would be folding in $f_{*, k}$ identifying paths longer than its cancellation constant, absurd. Thus, $g_{*, k}$ is an immersion. By injectivity of $\phi$, we can view $Y_{*, k}$ as a $(\mathcal B, \mathcal A)$-forest and $g_{*, k}$ as an expanding irreducible $\mathcal A$-relative immersion for $\left.\phi\right|_{\mathcal B}$.
\end{proof}

We are now ready to state and prove our base case for the construction. In light of the previous proposition, the point is that a restriction $\left.\phi\right|_{\mathcal B}$ that is irreducible relative to the $[\phi]$-elliptic free factor system $\mathcal A$ will satisfy $\lambda([\phi], \mathcal B, \mathcal A) > 1$.

\begin{prop}\label{irredrelimm} Let $\phi: F \to F$ be injective and $\mathcal A \prec \mathcal B$ be a chain of $[\phi]$-invariant free factor systems where $\mathcal A$ is the $[\phi]$-elliptic free factor system. If $\left.\phi\right|_{\mathcal B}$ is irreducible relative to $\mathcal A$, then there is an expanding irreducible $\mathcal A$-relative immersion for $\left.\phi\right|_{\mathcal B}$.
\end{prop}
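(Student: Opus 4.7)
The plan is to apply Proposition~\ref{tt2immersion}, for which it suffices to verify $\lambda:=\lambda([\phi],\mathcal{B},\mathcal{A})>1$: the other hypothesis of Proposition~\ref{tt2immersion}, that both $\mathcal{A}$ and $\mathcal{B}$ carry the $[\phi]$-elliptic free factor system, is immediate since $\mathcal{A}=\mathcal{A}^*$ is itself the elliptic free factor system and $\mathcal{A}\preceq\mathcal{B}$. By Proposition~\ref{reltt}, fix an irreducible $\mathcal{A}$-relative representative $f_*:T_*\to T_*$ attaining the minimal stretch factor $\lambda\ge 1$, and argue by contradiction, supposing $\lambda=1$.

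A short Perron--Frobenius argument on the irreducible nonnegative integer matrix $A(f_*)$ with spectral radius $1$ forces it to be a permutation matrix: a row of minimum $\vec v$-entry has row-sum $\le 1$, so row-sum $=1$; irreducibility propagates this to every row, and a final application of irreducibility (every column must be hit) forces every column to have exactly one $1$. Hence $f_*$ sends each edge of $T_*$ to a single edge. Folding at a vertex $v$ is then impossible: two distinct edges at $v$ with the same $f_*$-image either lie in a common $\mathcal{B}$-orbit, forcing $\phi(h)=1$ for a nontrivial stabilizer element $h$ (contradicting injectivity of $\phi$ and trivial edge stabilizers), or lie in different orbits, contradicting the permutation property. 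So $f_*$ is a $\phi$-equivariant simplicial immersion; since simplicial immersions of trees are embeddings, $f_*^k(T_*)=T_*(\phi^k(\mathcal{B}))$ is $\phi^k$-equivariantly isomorphic to $T_*$ for every $k\ge 1$.

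Now form the vertex blow-up $\Gamma_\mathcal{B}$ of $\mathcal{B}\backslash T_*$ using $\mathcal{A}$-marked roses, as in Section~\ref{secRelRep}, and consider the Stallings graphs $S[\phi^k(\mathcal{B})]$ with respect to $\Gamma_\mathcal{B}$. Each such Stallings graph is a vertex blow-up of $\phi^k(\mathcal{B})\backslash T_*(\phi^k(\mathcal{B}))$, whose top stratum is isomorphic to the (constant) graph $\mathcal{B}\backslash T_*$ by the above, and whose lower stratum consists of $S[\phi^k(\mathcal{A})]$. Because $\mathcal{A}$ is the elliptic free factor system, its components are eventually $[\phi]$-periodic (Proposition~\ref{canonical}), and the eventual periodicity argument inside the proof of Lemma~\ref{expand} yields uniform bounds on the number and lengths of natural edges of $S[\phi^k(\mathcal{A})]$. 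Since basepoints of the blown-up roses are branch points separating top and lower strata, every natural edge of $S[\phi^k(\mathcal{B})]$ lies in one stratum, so its length is bounded independently of $k$. The contrapositive of Lemma~\ref{expand}, applied component-wise, gives that every $B\in\mathcal{B}$ is eventually $[\phi]$-periodic, and Lemma~\ref{nonexpand} then produces a nonempty $[\phi]$-fixed subsystem $\mathcal{B}'\subseteq\mathcal{B}$ with $\phi^k(\mathcal{B})$ carried by $\mathcal{B}'$ for some $k\ge 0$. Since $\mathcal{B}'\preceq$ (maximal fixed) $\preceq\mathcal{A}$, we get $\phi^k(\mathcal{B})\preceq\mathcal{A}$; iterating the identity $\phi^{-1}\cdot\mathcal{A}=\mathcal{A}$ gives $\mathcal{B}\preceq\mathcal{A}$, contradicting $\mathcal{A}\prec\mathcal{B}$.

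Therefore $\lambda>1$, and Proposition~\ref{tt2immersion} immediately delivers the desired expanding irreducible $\mathcal{A}$-relative immersion for $\left.\phi\right|_{\mathcal{B}}$. The main delicate point is the stratum-level bookkeeping in the third paragraph: one must verify that the top stratum of $S[\phi^k(\mathcal{B})]$ really is constant under the identification $f_*^k(T_*)\cong T_*(\phi^k(\mathcal{B}))$, and that the branch-point structure of the vertex blow-up cleanly localizes natural edges within a single stratum, so that the contrapositive of Lemma~\ref{expand} applies to $\mathcal{B}$-components.
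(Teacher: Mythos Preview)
Your overall strategy matches the paper's: reduce to showing $\lambda([\phi],\mathcal B,\mathcal A)>1$ and then invoke Proposition~\ref{tt2immersion}. Your Perron--Frobenius analysis that $\lambda=1$ forces $A(f_*)$ to be a permutation matrix and $f_*$ to be a simplicial embedding is fine and agrees with the paper's starting point.

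Where your argument diverges is in deriving the contradiction from $\lambda=1$. The paper argues directly at the level of the quotient graph of groups: since $A(f_*)$ is a permutation, the induced self-map of $\mathcal B\backslash T_*$ is a bijection on edges, hence (being a graph self-map on a finite graph) a graph isomorphism on the loxodromic components. Vertex groups in these components are carried by $\mathcal A$, and by Proposition~\ref{canonical} the $\mathcal A$-components appearing there form a $[\phi]$-\emph{fixed} (not merely invariant) subsystem. A graph-of-groups isomorphism with $[\phi]$-fixed vertex groups forces the loxodromic components of $\mathcal B$ themselves to be $[\phi]$-fixed, contradicting that all $[\phi]$-fixed free factors are carried by $\mathcal A$ (Propositions~\ref{maxfixed} and~\ref{canonical}).

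Your route via Stallings graphs and Lemma~\ref{expand} is more circuitous, and the step you flag as ``delicate'' is a genuine gap. The isomorphism $\phi^k(\mathcal B)\backslash T_*(\phi^k(\mathcal B))\cong\mathcal B\backslash T_*$ controls only the number of edges mapping to the top stratum of $\Gamma_{\mathcal B}$; it does not by itself bound the lower-stratum portions of $S[\phi^k(\mathcal B)]$. In particular, at each nontrivially-stabilized vertex the component of $\iota^{-1}(R_{\mathcal A})$ consists of the core $S[\phi^k(A_j)]$ \emph{together with connecting arcs} from this core out to the attachment points of top-stratum edges, and nothing in your argument bounds those arc lengths as $k$ grows. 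Your claim that ``every natural edge lies in one stratum'' uses the paper's definition of lower stratum (the noncontractible core of $\iota^{-1}(R_{\mathcal A})$), but the complementary ``top stratum'' in that sense includes exactly these arcs, so is not isomorphic to $\mathcal B\backslash T_*$. To bound the arcs you would effectively need to know that $\phi$ maps each vertex group isomorphically onto another---which is precisely what the paper's direct argument establishes. Once you have that, the Stallings graphs are literally constant and your Lemma~\ref{expand}/\ref{nonexpand} machinery becomes superfluous; the contradiction follows immediately from the existence of a $[\phi]$-fixed loxodromic free factor.
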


\begin{proof}
Let $\phi: F\to F$ be injective and $\left.\phi\right|_{\mathcal B}$ be irreducible relative to the $[\phi]$-elliptic free factor system $\mathcal A$. Then there is an irreducible $\mathcal A$-relative representative $f_{*}:T_{*} \to T_{*}$  for $\left.\phi\right|_{\mathcal B}$ with stretch factor $\lambda(f_*) = \lambda([\phi], \mathcal B, \mathcal A) \ge 1$ (Proposition~\ref{reltt}). We say $B_i \in \mathcal B$ is loxodromic if $T_i \in T_*$ is not a point, i.e., $B_i$ contains a loxodromic element; similarly, the component $B_i\backslash T_i$ of the graph of groups $\mathcal B \backslash T_*$ is loxodromic if $B_i$ is loxodromic. If $\lambda(f_{*}) = 1$, then the induced map on the loxodromic components of $\mathcal B \backslash T_{*}$ is a graph isomorphism. So for some $k \ge 1$, if $A \in \mathcal A$ is carried by a loxodromic $B \in \mathcal B$, then $A$ is $\phi^k$-invariant. By Proposition~\ref{canonical}, the subset of all $A \in \mathcal A$ carried by the loxodromic component of $\mathcal B$ form a $[\phi]$-fixed free factor subsystem. As $f_{*}$ induces a graph isomorphism on the loxodromic components of $\mathcal B \backslash T_{*}$ and these components' vertex groups form a $[\phi]$-fixed free factor system, we get that $f_{*}$ is surjective when restricted to the unbounded components of the forest $T_{*}$ and the loxodromic components of $\mathcal B$ form a $[\phi]$-fixed free factor system. This is a contradiction since $[\phi]$-periodic free factors are elliptic (Propositions~\ref{maxfixed} and \ref{canonical}).
Therefore, $\lambda(f_{*}) > 1$ and the result follows from Proposition~\ref{tt2immersion}.
\end{proof}

Specializing this proposition to the case where $\phi$ is irreducible and nonsurjective yields an alternate proof to a result due to Reynolds.

\begin{cor}[{\cite[Corollary~3.23]{Rey11}}]\label{irrednonsurj} If $\phi: F \to F$ is irreducible but not surjective, then $\phi$ is induced by an expanding irreducible graph immersion.
\end{cor}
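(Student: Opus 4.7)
The plan is to deduce this corollary directly from Proposition~\ref{irredrelimm} by showing that, under the irreducibility and nonsurjectivity hypotheses, the relative setup collapses to an absolute one: the $[\phi]$-elliptic free factor system is trivial and $\mathcal{B} = \{F\}$, so an $\mathcal{A}$-relative immersion is literally a graph immersion in the usual sense.

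First I would record that $\phi$ is injective by the Observation in the definitions section, so the preceding theory applies. Next I would identify the $[\phi]$-elliptic free factor system $\mathcal{A}$. By the Corollary to Theorem~\ref{expequiv1}, since $\phi$ is irreducible and not an automorphism (because it is not surjective), only the trivial conjugacy class has an infinite $[\phi]$-tail. Combined with part (2) of the Observation following Proposition~\ref{canonical}, this forces $\mathcal{A}$ to be the trivial system.

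Now I would set $\mathcal{B} = \{F\}$. The irreducibility of $\phi$ says precisely that there is no nontrivial proper $[\phi]$-invariant free factor system, which, with $\mathcal{A}$ trivial, is exactly the statement that $\left.\phi\right|_{\mathcal{B}}$ is irreducible relative to $\mathcal{A}$. Thus the hypotheses of Proposition~\ref{irredrelimm} hold, and we obtain an expanding irreducible $\mathcal{A}$-relative immersion $f_*\colon T_* \to T_*$ for $\left.\phi\right|_{\mathcal{B}}$.

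Finally I would translate this back to the absolute setting. Since $\mathcal{A}$ is trivial and $\mathcal{B} = \{F\}$, the forest $T_*$ is a single free minimal $F$-tree, the quotient $F\backslash T_*$ is a marked graph $\Gamma$, and $f_*$ descends to a graph map $f\colon \Gamma \to \Gamma$ with $[\pi_1(f)] = [\phi]$. Being a relative immersion with no pretrivial edges and injective derivative maps equivariantly is the same as being an immersion downstairs, and the expansion condition (unbounded length of $f_*^n$-images of edges) descends identically to $f$. So $f$ is the desired expanding irreducible graph immersion inducing $\phi$. I do not anticipate any serious obstacle: the corollary is essentially a bookkeeping consequence of Proposition~\ref{irredrelimm} once one verifies the elliptic system is trivial, which is the only step that uses nonsurjectivity in a nontrivial way.
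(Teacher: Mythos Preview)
Your proposal is correct and takes essentially the same approach as the paper, which simply states that the corollary follows by specializing Proposition~\ref{irredrelimm}. One small simplification: to see that the $[\phi]$-elliptic free factor system $\mathcal{A}^*$ is trivial you need not route through infinite tails---since $\mathcal{A}^*$ is $[\phi]$-invariant and $\phi$ is irreducible, $\mathcal{A}^*$ is either trivial or $\{F\}$, and nonsurjectivity plus part~(1) of the Observation after Proposition~\ref{canonical} rules out $\{F\}$.
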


\begin{rmk}This proof of Reynolds' result is a variation of our previous proof \cite[Theorem~4.5]{JPMb} with two crucial differences: 1) it makes no use of {\it limit trees} in the {\it compactification of outer space}; 2) the specialization of Proposition~\ref{tt2immersion} to irreducible nonsurjective endomorphisms need not invoke train track theory since we can use Lemma~\ref{expand} in place of Proposition~\ref{relexpand}.\end{rmk}

The next proposition is the induction step for our construction.

%{\color{blue}\begin{prop}\label{relexpansive2} If $\phi:F \to F$ is irreducible relative to $\mathcal B$ and there is an expanding $\phi$-immersion of $\mathcal B$ rel. $\mathcal A$, the elliptic free factor system for $\phi$, then $\phi$ is expansive relative to $\mathcal A$.\end{prop}}

\begin{prop}\label{steprelimm} Let $\phi:F \to F$ be injective, $\mathcal A$ be the $[\phi]$-elliptic free factor system, and $\mathcal A \prec \mathcal B \prec \mathcal C$ be a chain of $[\phi]$-invariant free factor systems. If there is an expanding $\mathcal A$-relative immersion for $\left.\phi\right|_{\mathcal B}$ and a $\mathcal B$-relative immersion for $\left.\phi\right|_{\mathcal C}$, then there is an expanding $\mathcal A$-relative immersion for $\left.\phi\right|_{\mathcal C}$.
\end{prop}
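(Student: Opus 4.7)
The strategy is a \emph{blow-up} construction. Write $h_*: T_* \to T_*$ for the given expanding $\mathcal A$-relative immersion for $\left.\phi\right|_{\mathcal B}$ and $k_*: S_* \to S_*$ for the given $\mathcal B$-relative immersion for $\left.\phi\right|_{\mathcal C}$. I would form a $(\mathcal C, \mathcal A)$-forest $U_*$ by equivariantly replacing each vertex of $S_*$ whose stabilizer is conjugate to some $B_j \in \mathcal B$ with a copy of the corresponding tree $T_j$; by Bass--Serre theory the result is a bona fide $(\mathcal C, \mathcal A)$-forest. The edges of $U_*$ partition into a \emph{bottom stratum} (the inserted copies of $T_*$) and a \emph{top stratum} (the edges of $S_*$, now connecting specified \emph{attachment points} inside the blown-up trees).

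I would then define the combined map $f_*: U_* \to U_*$ to act as $h_*$ on each blown-up copy of $T_j$ and to lift $k_*$ on the top stratum: for each top edge $e$ with attachment endpoints $p_e^-, p_e^+$, set $f_*(e)$ to be the immersed path in $U_*$ from $h_*(p_e^-)$ to $h_*(p_e^+)$ that projects under the collapse $U_* \to S_*$ to $k_*(e)$. Equivariance of $f_*$ under $\left.\phi\right|_{\mathcal C}$ follows from equivariance of $h_*$ and $k_*$ together with an equivariant choice of attachment points.

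The main obstacle is checking that $f_*$ is an immersion at the attachment points; at interior points of top or bottom edges this is immediate from $k_*$ or $h_*$ respectively being an immersion. At an attachment point $p = p_e$, the directions in $U_*$ consist of the bottom directions of the blown-up tree together with the top direction $\tau_e$, and $f_*$ must map them injectively into the directions at $h_*(p)$. A direction count shows this forces $h_*(p)$ to itself be an attachment point --- specifically, the one for the first edge of $k_*(e)$. I would thus pick the attachment points $\{p_e\}$ to be \emph{$h_*$-coherent}: $h_*(p_e^-) = p_{e_1}^-$ whenever $e_1$ is the first edge of $k_*(e)$, and similarly at the other endpoint. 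Realizing such a coherent equivariant family reduces to a finite combinatorial check over orbits of top edges, and is made feasible by the abundance of branch points in the trees $T_j$, controlled by bounded cancellation (Lemma~\ref{relbcl}) and the expansion of $h_*$.

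Once $f_*$ is an immersion, expansion and minimality are straightforward. Bottom-stratum edges expand because $h_*$ does. For a top edge $e$, the iterate $f_*^n(e)$ projects to $k_*^n(e)$, and at each interior $\mathcal B$-vertex crossed by this projection, $f_*^n(e)$ traverses a nontrivial bridging segment inside a blown-up tree whose length grows under $h_*$-iteration. A top edge whose projections $k_*^n(e)$ never cross a $\mathcal B$-vertex would determine an orbit-closed $k_*$-invariant subforest of $S_*$ with bounded (indeed single-edge) components, ruled out by minimality of $k_*$ (which we may assume after replacing $k_*$ by a minimal equivalent). Finally, minimality of $f_*$ is inherited from minimality of $k_*$ (via the collapse $U_* \to S_*$) and of $h_*$ (on each blown-up piece).
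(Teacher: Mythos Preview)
Your blow-up construction matches the paper's, and the paper likewise builds a map $f_*$ on the blown-up forest that agrees with $h_*$ on the bottom stratum and induces $k_*$ after collapsing. The divergence is in how you get an immersion.

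The paper does \emph{not} try to choose attachment points so that $f_*$ is an immersion outright. Your ``$h_*$-coherent'' condition $h_*(p_e^-) = p_{e_1}^-$ (where $e_1$ is the first edge of $k_*(e)$) is a system of constraints indexed by orbits of top edges, and you have not shown it is consistent. Chasing the constraints around a $k_*$-cycle of top edges (e.g.\ when $k_*(e)$ begins with $e$ itself, or more generally when the first-edge map on top-edge orbits has a cycle) forces the attachment point to be $h_*$-periodic, and an expanding equivariant immersion need not have any such periodic vertex with the required stabilizer. The appeal to ``abundance of branch points'' and Lemma~\ref{relbcl} does not address this; bounded cancellation says nothing about the existence of $h_*$-periodic vertices.

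What the paper does instead is exactly the trick you gesture at but do not carry out: it accepts that $f_*$ may fold, then passes to the minimal subforest for $\phi^k(\mathcal C)$ with $k \gg 0$. Because $h_*$ is expanding, the lower-stratum natural edges in this subforest can be made longer than the cancellation constant $C$ of (the homotopy restriction of) $f_*$; since $f_*$ already induces the immersion $k_*$ after collapsing the lower stratum, any fold would have to identify a top and a bottom initial segment whose common image is a lower-stratum edge of length $>C$, contradicting bounded cancellation. Collapsing short edges then yields the immersion. This is the missing idea.

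Your expansion argument also has a gap: if $k_*^n(e)$ is a single edge for all $n$, the orbit-closure of such edges is $k_*$-invariant but its components need not be bounded, so minimality of $k_*$ does not immediately rule this out. The paper instead argues that any non-expanding edge would yield a $[\phi]$-fixed free factor system carrying a loxodromic element, contradicting the maximality of $\mathcal A$ as the $[\phi]$-elliptic system (Propositions~\ref{maxfixed} and~\ref{canonical}); this is where the hypothesis that $\mathcal A$ is the elliptic system, not just any invariant system, is actually used.
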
 

Although the proof gets a bit technical, the idea is rather simple: a $\mathcal B$-relative immersion for $\left.\phi\right|_{\mathcal C}$ (top stratum) and an expanding $\mathcal A$-relative immersion for $\left.\phi\right|_{\mathcal B}$ (lower stratum) can be patched together via a {\it (relative) vertex blow-up} to get a minimal $\mathcal A$-relative representative $g_*:Y_* \to Y_*$ whose only possible folds would have to happen between a top and lower stratum edge of $Y_*$. As the restriction of $g_*$ to the lower stratum is an expanding immersion, we may assume the edges in the lower stratum are longer than the cancellation constant. This means no lower stratum edge is identified by a fold and so no folding in $g_*$ is possible. Thus, $g_*$ is a minimal $\mathcal A$-relative immersion for $\left.\phi\right|_{\mathcal C}$, which will be expanding if $\mathcal A$ is the $[\phi]$-elliptic free factor system.
\medskip 

Let $T_{\mathcal C}$ be a $(\mathcal C, \mathcal B)$-forest  and $T_{\mathcal B}$ be a $(\mathcal B, \mathcal A)$-forest. For any free factor $C_i\in \mathcal C$, let $\mathcal B_i$ the maximal subset of $\mathcal B$ that is carried by $C_i$. Replace the free factors of $\mathcal B$ with conjugates if necessary and assume $\mathcal B$ is also a free factor system of $\mathcal C$. In particular, the free factors $B \in \mathcal B_i$ are subgroups of the free factors $C_i \in \mathcal C$. Identifying the appropriate vertices of the graph of groups $\mathcal C \backslash T_{\mathcal C}$ with basepoints on the graph of groups $\mathcal B \backslash T_{\mathcal B}$ results in a graph of groups decomposition for $\mathcal C$ whose Bass-Serre forest $T_{*}$ is a $(\mathcal C, \mathcal A)$-forest that contains $T_{\mathcal B}$.  
We call $T_{*}$ the vertex blow-up of $T_{\mathcal C}$ with respect to $T_{\mathcal B}$. 

\begin{proof}[Proof of Proposition~\ref{steprelimm}] Let $\phi:F \to F$ be injective, $\mathcal B \prec \mathcal C$ be a chain of $[\phi]$-invariant free factor systems that carry the $[\phi]$-elliptic free factor system $\mathcal A$. 
Suppose $f_{\mathcal B}: T_{\mathcal B} \to T_{\mathcal B}$  is an expanding $\mathcal A$-relative immersion for $\left.\phi\right|_{\mathcal B}$
and $f_{\mathcal C}: T_{\mathcal C} \to T_{\mathcal C}$ is a $\mathcal B$-relative immersion  for $\left.\phi\right|_{\mathcal C}$ then define $T_{*}$ to be the vertex blow-up of $T_{\mathcal C}$ with respect to $T_{\mathcal B}$. 
The edges of $T_{*}$ are of two types: the lower stratum, which are edges that are contained in the $\mathcal C$-orbit of $T_{\mathcal B}$, and the top stratum, which are the remaining edges. %By construction, collapsing the lower stratum recovers $T_{\mathcal C}$.

%, {\color{red} a restriction $\left.\phi\right|_{\mathcal C}$ be irreducible relative to $\mathcal B$,} %By Proposition~\ref{reltt}, there is an irreducible $\mathcal B$-relative train track $f_{*}: T_{*} \to T_{*}$ for $\left.\phi\right|_{\mathcal C}$. If $\lambda(f_{*}) = 1$, then $f_\infty = f_{*}$ is a $\mathcal B$-relative immersion for $\left.\phi\right|_{\mathcal C}$. If $\lambda(f_{*}) > 1$, then there is an expanding $\mathcal B$-relative immersion $f_\infty$ for $\left.\phi\right|_{\mathcal C}$ by Proposition~\ref{tt2immersion}. In either case, we get a $\mathcal B$-relative immersion $f_{\infty}$ for $\left.\phi\right|_{\mathcal C}$ defined on a $(\mathcal C,\mathcal B)$-forest $T_\infty$. 

Let $f_{*}: T_{*} \to T_{*}$ be  a minimal $\mathcal A$-relative representative  for $\left.\phi\right|_{\mathcal C}$ such that $T_{\mathcal B}$ is an $f_*$-invariant subforest, the restriction of $f_{*}$ to $T_{\mathcal B}$ agrees with $f_{\mathcal B}$, and $f_{*}$ induces $f_{\mathcal C}$ upon collapsing the lower stratum. For all $k \ge 1$, set $T_{*}(\phi^k(\mathcal C))$ and $T_{*}(\phi^k(\mathcal B))$ to be the minimal subforests of $T_{*}$ for $\phi^k(\mathcal C)$ and $\phi^k(\mathcal B)$ respectively. Similarly, define the minimal subforest $T_{\mathcal B}(\phi^k(\mathcal B)) \subset T_{\mathcal B}$. By the inclusion of $T_{\mathcal B}$ in $T_{*}$, we get a simplicial identification of $T_{*}(\phi^k(\mathcal B))$ with $T_{\mathcal B}(\phi^k(\mathcal B))$. However, we want to consider these two forests differently with respect to their branch points and natural edges. In particular, there may be branch points of $T_{*}(\phi^k(\mathcal C))$ that are bivalent when considered as points on the subforest $T_{*}(\phi^k(\mathcal B))$. So by {\it ``natural edges of $T_{*}(\phi^k(\mathcal B))$''}, we mean those inherited from the parent forest $T_{*}(\phi^k(\mathcal C))$; on the other hand, by {\it ``natural edges of $T_{\mathcal B}(\phi^k(\mathcal B))$''}, we do mean exactly that. Under the identification of the two forests, the natural edges of $T_{*}(\phi^k(\mathcal B))$ partition any natural edge of $T_{\mathcal B}(\phi^k(\mathcal B))$ into at most $2 \mathcal X$ segments, where $\mathcal X = \mathrm{rank}(F)-1$.

Since $f_{\mathcal C}:T_{\mathcal C} \to T_{\mathcal C}$ is a $\mathcal B$-relative immersion for $\left.\phi\right|_{\mathcal C}$, the restrictions of $f_{\mathcal C}$ to $T_{\mathcal C}(\phi^k(\mathcal C))$ are $\phi^k(\mathcal B)$-relative immersions $f_{\mathcal C, k}:T_{\mathcal C}(\phi^k(\mathcal C)) \to T_{\mathcal C}(\phi^k(\mathcal C))$ for $\left.\phi\right|_{\phi^k(\mathcal C)}$ that are conjugate to $f_{\mathcal C}$. 
%As $f_*$ agrees with $f_{\mathcal B}$ on $T_{\mathcal B}$, the restriction of $f_*$ to $T_{*}(\phi^k(\mathcal B))$ is exactly $f_{\mathcal B, k}$ for all $k \ge 1$. Similarly define $\phi^k(\mathcal B)$-relative immersions $f_{\mathcal C, k}:T_{\mathcal C}(\phi^k(\mathcal C)) \to T_{\mathcal C}(\phi^k(\mathcal C))$ for $\left.\phi\right|_{\phi^k(\mathcal C)}$ that are conjugate to $f_{\mathcal C}$.
As $f_*$ induces $f_{\mathcal C}$ upon collapsing the lower stratum, any edges in $f_{*}(T_*(\phi^k(\mathcal C)))$ but not  $T_*(\phi^k(\mathcal C))$ must be in the lower stratum and the restriction of $f_*$ to $T_{*}(\phi^k(\mathcal C))$ induces $f_{\mathcal C, k}$ upon collapsing the lower stratum, i.e., the $\phi^k(\mathcal C)$-orbit of $T_*(\phi^k(\mathcal B))$. By Proposition~\ref{homotopyrest}, there is an $\phi^k(\mathcal A)$-relative natural representative $f_{*,k}:T_*(\phi^k(\mathcal C)) \to T_*(\phi^k(\mathcal C))$ for $\left.\phi\right|_{\phi^k(\mathcal C)}$ with Lipschitz and cancellation constants $K = K(f_*) + C(f_*)$ and $C = 2C(f_*)$ respectively. Furthermore, $T_*(\phi^k(\mathcal B))$ is an $f_{*,k}$-invariant subforest and $f_{*,k}$ still induces $f_{\mathcal C, k}$ upon collapsing the lower stratum. As $f_*$ agrees with the immersion $f_{\mathcal B}$ on $T_{\mathcal B}$,  $f_{*,k}$ differs from $f_{\mathcal B}$ on $T_{*}(\phi^k(\mathcal B))$ by a homotopy supported in the natural edges of $T_{\mathcal B}(\phi^k(\mathcal B))$. 

Since $f_{\mathcal B}:T_{\mathcal B} \to T_{\mathcal B}$ is an expanding $\mathcal A$-relative immersion for $\left.\phi\right|_{\mathcal B}$, the minimal subforest $T_{\mathcal B}(\phi^k(\mathcal B))$ has natural edges whose lengths are all exponential in $k$.
%We pointed out earlier (third paragraph of proof) that the lengths of natural edges in $T_{\mathcal B}(\phi^k(\mathcal B))$ are expanding exponentially. 
Fix $k \gg 0$ such that all natural edges of $T_{\mathcal B}(\phi^k(\mathcal B))$ are longer than $ 2\mathcal X \cdot C \cdot K^{3\mathcal X-1}$. By the pigeonhole principle, each natural edge of $T_{\mathcal B}(\phi^k(\mathcal B))$ contains a natural edge of $T_{*}(\phi^k(\mathcal B))$ longer than $C \cdot K^{3 \mathcal X}$. Let $\mathbb G_k$ be the directed graph of natural edges of the $f_{*,k}$-invariant subforest $T_{*}(\phi^k(\mathcal B))$ where a directed edge $E_i \to E_j$ corresponds to $f_{*, k}$ mapping $E_i$ over $E_j$. Set $\mathcal S_0$ to be those natural edges with length at most $C$ and $\mathcal S$ to be those natural edges with directed path from $\mathcal S_0$ in $\mathbb G_k$ and their $\phi^k(\mathcal C)$-translates; these lower stratum natural edges will be the {\it short} natural edges of $T_*(\phi^k(\mathcal C))$. Since $f_{*, k}$ is $K$-Lipschitz and the shortest path between any two natural edges in $\mathbb G_k$ has $3\mathcal X$ natural edges, all the short natural edges have length at most $ C \cdot K^{3\mathcal X-1}$. So the short natural edges $\mathcal S$ form an orbit-closed $f_{*, k}$-invariant lower stratum subforest of $T_*(\phi^k(\mathcal C))$ with bounded components as $\mathcal S$ does not cover any natural edge of $T_{\mathcal B}(\phi^k(\mathcal B))$.
\medskip

Collapsing the short natural edges of $T_{*}(\phi^k(\mathcal B))$ induces a map $g_{*, k}': Y_{*, k}' \to Y_{*, k}'$ with the same cancellation constant $C(g_{*, k}') = C$. Now iteratively collapse pretrivial edges until the induced map $g_{*, k}:Y_{*, k} \to Y_{*, k}$ has none. As the collapses are supported in the lower stratum, the new map $g_{*, k}$ still induces the immersion $f_{\mathcal C, k}$ upon collapsing the rest of the lower stratum and, as a result, folding in $g_{*, k}$ may only occur between initial segments of natural edges of $Y_{*, k}$ whose $g_{*, k}$-images are a lower stratum natural edge. However, all natural edges in the lower stratum of $Y_{*, k}$ are longer than $C$ by construction and so no folding in $g_{*, k}$ is possible by bounded cancellation, i.e., $g_{*, k}$ is an immersion.

Collapsing a maximal invariant subforest with bounded components and forgetting bivalent vertices if necessary, we may assume $g_{*, k}:Y_{*, k}\to Y_{*, k}$ is a minimal $\phi^k(\mathcal A)$-relative immersion for $\left.\phi\right|_{\phi^k(\mathcal C)}$. By the injectivity of $\phi$, we can view $g_{*, k}$ as a minimal $\mathcal A$-relative immersion for $\left.\phi\right|_{\mathcal C}$. It remains to show that every edge of $Y_{*, k}$ expands under $g_{*, k}$-iteration. 

For a contradiction, suppose there is an edge of $Y_{*, k}$ whose $g_{*, k}$-iterates have uniformly bounded length. Since $g_{*, k}$ is minimal, the non-expanding edges in the graph of groups $\mathcal C \backslash Y_{*, k}$ contain a setwise fixed subgraph $\mathcal F$ that carries a loxodromic element. The subgraph $\mathcal F$ is a free splitting of a $[\phi]$-invariant free factor system. Recall that the $[\phi]$-elliptic free factor system $\mathcal A$  decomposes as a union of the maximal $[\phi]$-fixed free factor system and free factors that eventually get mapped into this fixed system (Proposition~\ref{canonical}). 
By construction, the point stabilizers of $Y_{*, k}$ are conjugates of $\mathcal A$. So any vertex of the fixed graph $\mathcal F$ is labelled by either the trivial group or a free factor of the maximal $[\phi]$-fixed free factor system. Thus $\mathcal F$ is a free splitting of a $[\phi]$-fixed free factor system that carries some loxodromic element. However, Propositions~\ref{maxfixed} and \ref{canonical} imply that all $[\phi]$-fixed free factors systems are elliptic --- a contradiction.
%So $g_{*, k}$ is an expanding $\mathcal A$-relative immersion for $\left.\phi\right|_{\mathcal C}$.
% as an $f_{\mathcal C}$-invariant, induces $f_\infty$ on $T_{\infty}$, and is the restriction of 
%{\color{red}the folding in the induced map $f_{\mathcal C}:T_{\mathcal C} \to T_{\mathcal C}$ only happens between edges of different kind}. \color{blue} [Collapse some maximal invariant forest to get (expanding?) immersion...]
%{\color{red} Define ``unstable'' (partial folds only) and argue that $f_{*}$ is homotopic to immersion?}
%\color{blue} Since $f_{\mathcal B}$ is expanding and the images of the initial segments of edges of $T_\infty$ are predetermined and fixed, the folding in all iterates of $f'_{*}$ is contained a forest properly contained in a fundamental domain of the $(\mathcal B', \mathcal A)$-forest $T_{*}'$; in other words, the folding is contained in a forest of $\mathcal B' \backslash T_{*}'$. Just as in the proof Theorem~\ref{irredrelimm}, $(\left.\phi\right|_{\mathcal B'})^n$ has an expanding immersion defined on the tree $T'_{(\left.\phi\right|_{\mathcal B'})^m}$ for some $m, n \ge 1$.
\end{proof}

We are now ready to inductively construct expanding relative immersions.

\begin{thm}\label{expimmersion} If $\phi:F \to F$ is injective but not surjective, then there is an expanding $\mathcal A$-relative immersion for $\phi$, where $\mathcal A$ is the $[\phi]$-elliptic free factor system.
\end{thm}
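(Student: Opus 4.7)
The plan is to induct up a finite chain of $[\phi]$-invariant free factor systems running from $\mathcal A$ up to $\{F\}$. Since $\phi$ is not surjective, the observation after Proposition~\ref{canonical} gives $\mathcal A \prec \{F\}$, and because the poset of free factor systems of $F$ has no infinite chains, a maximal chain of $[\phi]$-invariant free factor systems between them,
\[ \mathcal A = \mathcal A_0 \prec \mathcal A_1 \prec \cdots \prec \mathcal A_n = \{F\}, \]
has finite length. Maximality says no $[\phi]$-invariant free factor system sits strictly between $\mathcal A_i$ and $\mathcal A_{i+1}$, i.e., $\left.\phi\right|_{\mathcal A_{i+1}}$ is irreducible relative to $\mathcal A_i$ for every $i$.

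I will construct an expanding $\mathcal A$-relative immersion for $\left.\phi\right|_{\mathcal A_i}$ by induction on $i$. The base case $i = 1$ is exactly Proposition~\ref{irredrelimm}, since $\mathcal A_0 = \mathcal A$ is the elliptic free factor system. For the inductive step ($i \ge 1$), I first produce an $\mathcal A_i$-relative immersion for $\left.\phi\right|_{\mathcal A_{i+1}}$. Proposition~\ref{reltt} supplies an irreducible $\mathcal A_i$-relative representative $f_*$ for $\left.\phi\right|_{\mathcal A_{i+1}}$ with minimal stretch factor $\lambda$; since $\mathcal A \preceq \mathcal A_i \prec \mathcal A_{i+1}$, both $\mathcal A_i$ and $\mathcal A_{i+1}$ carry the elliptic system, so when $\lambda > 1$ Proposition~\ref{tt2immersion} delivers an expanding irreducible $\mathcal A_i$-relative immersion; when $\lambda = 1$, irreducibility forces the transition matrix of $f_*$ to be a permutation matrix, making $f_*$ a simplicial immersion. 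Feeding this $\mathcal A_i$-relative immersion (as the ``top stratum'') together with the inductive expanding $\mathcal A$-relative immersion for $\left.\phi\right|_{\mathcal A_i}$ (as the ``lower stratum'') into Proposition~\ref{steprelimm}, applied to the chain $\mathcal A \prec \mathcal A_i \prec \mathcal A_{i+1}$, yields an expanding $\mathcal A$-relative immersion for $\left.\phi\right|_{\mathcal A_{i+1}}$. After $n$ steps we obtain the desired expanding $\mathcal A$-relative immersion for $\left.\phi\right|_{\mathcal A_n} = \phi$.

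The subtle point is that one cannot rule out $\lambda = 1$ at intermediate chain steps: the argument inside Proposition~\ref{irredrelimm} that forces $\lambda > 1$ crucially uses that $\mathcal A$ is the elliptic system and breaks down when the ``bottom'' of the step is a larger invariant system $\mathcal A_i$. The permutation-matrix observation above is precisely what patches this gap, and expansion of the resulting immersion need not come from the top stratum: Proposition~\ref{steprelimm} itself guarantees expansion relative to the elliptic system, regardless of whether the top stratum is expanding.
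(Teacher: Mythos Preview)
Your argument is precisely the ``naive approach'' that the paper itself describes and then rejects in the first paragraph of its proof. The gap is the hidden hypothesis $\phi^{-1}\cdot\mathcal A_i = \mathcal A_i$. All of the machinery you invoke at the inductive step --- the very notion of an $\mathcal A_i$-relative representative, Proposition~\ref{reltt}, Proposition~\ref{tt2immersion}, and the $\mathcal B$-relative immersion input to Proposition~\ref{steprelimm} --- is set up in Section~\ref{secRelRep} under the standing assumption that the bottom free factor system is \emph{fixed under backward iteration}. (This is what makes relative bounded cancellation, Lemma~\ref{relbcl}, work.) Maximality of your chain only guarantees that no $[\phi]$-invariant free factor system lies strictly between $\mathcal A_i$ and $\mathcal A_{i+1}$; it does \emph{not} guarantee $\phi^{-1}\cdot\mathcal A_i = \mathcal A_i$. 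Indeed, $\phi^{-1}\cdot\mathcal A_i$ is a $[\phi]$-invariant free factor system containing $\mathcal A_i$, but it need not be comparable to $\mathcal A_{i+1}$ at all, so maximality of the chain places no constraint on it.

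The paper fixes this by building a more delicate chain: whenever $\phi^{-1}\cdot\mathcal B_m \neq \mathcal B_m$, it interpolates the entire (finite) chain $\mathcal B_m \prec \phi^{-1}\cdot\mathcal B_m \prec \phi^{-2}\cdot\mathcal B_m \prec \cdots$ until it stabilizes. Steps of this second kind satisfy $\phi(\mathcal B_{m+1}) \preceq \mathcal B_m$ rather than relative irreducibility, and the paper handles them by a separate direct argument (Case~2) using minimal subforests rather than Proposition~\ref{steprelimm}. Your proof would be repaired by adopting exactly this two-case chain construction.
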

\begin{proof}
Suppose $\phi: F \to F$ is injective but not surjective. By Proposition~\ref{canonical}, the $[\phi]$-elliptic free factor system $\mathcal A$ is proper. The naive approach is to assume there exists a chain $\mathcal A = \mathcal B_0 \prec \cdots \prec \mathcal B_n = \{ F \}$ in the poset of $[\phi]$-invariant free factor system such that the restrictions $\left.\phi\right|_{\mathcal B_{m+1}}$ are irreducible relative to $\mathcal B_m$ for all $m \ge 1$. This assumption is typical when working with automorphisms. For each restriction $\left.\phi\right|_{\mathcal B_{m+1}}$, if the minimal stretch factor $\lambda([\phi], \mathcal B_{m+1}, \mathcal B_m) = 1$, then there is automatically a $\mathcal B_m$-relative immersion for $\left.\phi\right|_{\mathcal B_{m+1}}$; and if $\lambda([\phi], \mathcal B_{m+1}, \mathcal B_m) > 1$, then there is an expanding $\mathcal B_m$-relative immersion for $\left.\phi\right|_{\mathcal B_{m+1}}$ by Proposition~\ref{tt2immersion}. In either case, there is a $\mathcal B_m$-relative immersion for $\left.\phi\right|_{\mathcal B_{m+1}}$. By Proposition~\ref{irredrelimm}, $\lambda([\phi], \mathcal B_{1}, \mathcal B_0) > 1$ and there is an expanding $\mathcal B_0$-relative immersion for $\left.\phi\right|_{\mathcal B_{1}}$. By inductively patching these immersions together using Proposition~\ref{steprelimm}, we get an expanding $\mathcal B_0$-relative immersion for $\phi$ and we are done. Unfortunately, since $\phi$ is not surjective, it could be that no chain $\mathcal A = \mathcal B_0 \prec \cdots \prec \mathcal B_n = \{ F \}$ satisfies the naive assumption we made at the start. Recall that our definition of  $\left.\phi\right|_{\mathcal B_{m+1}}$ being irreducible relative to $\mathcal B_m$ presupposed $\phi^{-1}\cdot \mathcal B_m = \mathcal B_m$. Fortunately, this is a minor complication that can be easily addressed. The proof follows the approach described above closely but uses a chain with slightly weaker conditions on it.
\medskip

We first construct a chain $\mathcal A = \mathcal B_0 \prec \cdots \prec \mathcal B_n = \{ F \}$ in the poset of $[\phi]$-invariant free factor system that we will induct on. Let $\mathcal A  \prec \mathcal B_1$ be a chain of $[\phi]$-invariant free factor systems such that $\left.\phi\right|_{\mathcal B_1}$ is irreducible relative to $\mathcal A$. Suppose $\mathcal B_m$ has been constructed for some $m \ge 1$ and let $\mathcal C \succeq \mathcal B_m$ be the maximal free factor system in the chain $\mathcal B_m \preceq \phi^{-1} \cdot \mathcal B_m \preceq \phi^{-2} \cdot \mathcal B_m \preceq \cdots$ of $[\phi]$-invariant free factor systems. If $\phi^{-1} \cdot \mathcal B_m = \mathcal B_m = \mathcal C$, then let $\mathcal B_m \prec \mathcal B_{m+1}$ be a chain of $[\phi]$-invariant free factor systems  such that $\left.\phi\right|_{\mathcal B_{m+1}}$ is irreducible relative to $\mathcal B_m$. If $\mathcal B_m \prec \mathcal C$, then let $\mathcal B_m \prec \cdots \prec \mathcal B_{m+k} = \mathcal C$ be the chain of $[\phi]$-invariant free factor systems such that $\mathcal B_{m+i} = \phi^{-i}\cdot \mathcal B_m$ for $1 \le i \le k$. %The chain is constructed this way since our definition of irreducibility relative to a $[\phi]$-invariant free factor system $\mathcal B$ presupposed $\phi^{-1} \cdot \mathcal B = \mathcal B$.

We proceed by inducting on the resulting chain between $\mathcal A$ and $\{ F \}$. 
For the base case, $\left.\phi\right|_{\mathcal B_1}$ is irreducible relative to $\mathcal A$; therefore, there is an expanding $\mathcal A$-relative immersion for $\left.\phi\right|_{\mathcal B_1}$ by Proposition~\ref{irredrelimm}.
For our induction hypothesis, suppose that there is an expanding $\mathcal A$-relative immersion $f_{\mathcal B_m}: T_{\mathcal B_m} \to T_{\mathcal B_m}$ for $\left.\phi\right|_{\mathcal B_m}$ for some $m \ge 1$. By our construction of the chain, either $\left.\phi\right|_{\mathcal B_{m+1}}$ is irreducible relative to $\mathcal B$ or $\phi(\mathcal B_{m+1})$ is carried by $\mathcal B_m$. We deal with these two cases separately.

{Case 1.} Suppose $\left.\phi\right|_{\mathcal B_{m+1}}$ is irreducible relative to $\mathcal B_m$. 
By Proposition~\ref{reltt}, there is an irreducible $\mathcal B_m$-relative representative $f_{*}: T_{*} \to T_{*}$ for $\left.\phi\right|_{\mathcal B_{m+1}}$ with minimal stretch factor. If $\lambda(f_{*}) = 1$, then $f_{\mathcal B_{m+1}} = f_*$ is a $\mathcal B_m$-relative simplicial immersion for $\left.\phi\right|_{\mathcal B_{m+1}}$. If $\lambda(f_{*}) > 1$, then there is an expanding $\mathcal B$-relative immersion $f_{\mathcal B_{m+1}}$ for $\left.\phi\right|_{\mathcal B_{m+1}}$ by Proposition~\ref{tt2immersion}. In either case, we get a $\mathcal B_m$-relative immersion $f_{\mathcal B_{m+1}}:T_{\mathcal B_{m+1}}\to T_{\mathcal B_{m+1}}$ for $\left.\phi\right|_{\mathcal B_{m+1}}$ defined on a $(\mathcal B_{m+1},\mathcal B_m)$-forest $T_{\mathcal B_{m+1}}$. Thus, there is an expanding $\mathcal A$-relative immersion for $\left.\phi\right|_{\mathcal B_{m+1}}$ by the induction hypothesis and Proposition~\ref{steprelimm}.

{Case 2.} Now suppose $\phi(\mathcal B_{m+1})$ is carried by $\mathcal B_m$. Let $T_{\mathcal B_m}(\phi(\mathcal B_{m+1})) \subset T_{\mathcal B_m}$ be the minimal subforest of $\phi(\mathcal B_{m+1})$ and $T_{\mathcal B_{m+1}} = T_{\mathcal B_m}(\phi(\mathcal B_{m+1}))$ be the same forest after forgetting bivalent vertices. By injectivity of $\phi$, we may consider $T_{\mathcal B_{m+1}}$ as a $(\mathcal B_{m+1}, \mathcal A)$-forest that comes with a natural $\left.\phi\right|_{\mathcal B_{m+1}}$-equivariant immersion $g: T_{\mathcal B_{m+1}} \to T_{\mathcal B_m}$. 
Since  $f_{\mathcal B_m}: T_{\mathcal B_m} \to T_{\mathcal B_m}$ is an immersion, we can identify a subdivision of $T_{\mathcal B_m}$ with the minimal subforest $T_{\mathcal B_{m+1}}(\mathcal B_m) \subset T_{\mathcal B_{m+1}}$ of $\mathcal B_m$. Composing $g$ with the subdivision and inclusion $T_{\mathcal B_{m+1}}(\mathcal B_m) \subset T_{\mathcal B_{m+1}}$ gives an $\mathcal A$-relative immersion $f_{\mathcal B_{m+1}}: T_{\mathcal B_{m+1}} \to T_{\mathcal B_{m+1}}$ for $\left.\phi\right|_{\mathcal B_{m+1}}$, which is expanding since its image lies in $T_{\mathcal B_{m+1}}(\mathcal B_m)$ and its restriction to $T_{\mathcal B_{m+1}}(\mathcal B_m)$ is the expanding $\mathcal A$-relative immersion $f_{\mathcal B_{m}}$ after forgetting bivalent vertices.
\end{proof}

It will follow from bounded cancellation that expanding relative immersion from the theorem is in fact canonical.

\begin{prop}\label{unique} Suppose $\phi:F \to F$ is injective and there is an expanding $\mathcal A$-relative immersion for $\phi$, where $\mathcal A$ is a $[\phi]$-invariant free factor system. Then there is a unique expanding $\mathcal A$-relative immersion for $\phi$.
\end{prop}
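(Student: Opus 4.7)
To prove uniqueness, let $f: T \to T$ and $g: S \to S$ be two expanding $\mathcal{A}$-relative immersions for $\phi$. My plan is to produce an $F$-equivariant simplicial isometry $T \cong S$ intertwining $f$ with $g$, using Lemma~\ref{relbcl} in an essential way.

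First I would construct $F$-equivariant tree maps $\alpha: T \to S$ and $\beta: S \to T$: since both $T$ and $S$ are $(F, \mathcal{A})$-forests whose collections of conjugacy classes of nontrivial point stabilizers are exactly the components of $\mathcal{A}$, such maps are built by fixing orbit correspondences on vertices with matching stabilizers and extending equivariantly on edges. By Lemma~\ref{relbcl} (applied with $\psi = \mathrm{id}_F$) each has a finite cancellation constant. Any two $\phi$-equivariant tree maps $T \to S$ are equivariantly homotopic, so $\alpha \circ f \simeq g \circ \alpha$ and symmetrically $\beta \circ g \simeq f \circ \beta$; consequently $\beta \circ \alpha$ and $\alpha \circ \beta$ are $F$-equivariantly homotopic to the identities on $T$ and $S$ respectively.

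The central claim is that $\alpha$ is in fact a simplicial isometry. The idea is to compare $\alpha \circ f^n$ and $g^n \circ \alpha$ as $\phi^n$-equivariant maps $T \to S$. Since $f$ is an immersion, $f^n$ maps any axis $A_\gamma^T$ of a loxodromic $\gamma$ to a genuinely immersed line covering $A_{\phi^n(\gamma)}^T$; likewise for $g^n$ on the target side. The equivariant homotopy $\alpha \circ f^n \simeq g^n \circ \alpha$ forces the tightenings $[\alpha(A_{\phi^n(\gamma)}^T)]$ and $[g^n(\alpha(A_\gamma^T))]$ to coincide as $\phi^n(\gamma)$-axes in $S$. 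Suppose $\alpha$ had a fold or a pretrivial edge: iterating Lemma~\ref{relbcl} along the natural-edge decomposition shows this would introduce a per-natural-edge cancellation bounded by the fixed constant $C(\alpha)$, giving total cancellation only linear in the number of natural edges of $f^n(A_\gamma^T)$---which itself grows only linearly in $n$. Meanwhile the axis lengths grow exponentially in $n$ by the expanding hypotheses on $f$ and $g$, and matching stretch factors are forced by the bi-Lipschitz comparison between $l_T$ and $l_S$. The incompatibility of these growth rates with the exact identification above forces $\alpha$ to have no folds and no pretrivial edges. Symmetrically $\beta$ is a simplicial immersion, so $\beta \circ \alpha: T \to T$ is an $F$-equivariant simplicial immersion equivariantly homotopic to $\mathrm{id}_T$; since the minimal $F$-action on $T$ admits no proper equivariant simplicial self-immersion of this form, $\beta \circ \alpha$ must be the identity up to subdivision, and $\alpha$ is an equivariant simplicial isomorphism.

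Under the identification $\alpha: T \cong S$, the intertwining $\alpha \circ f \simeq g \circ \alpha$ becomes $f \simeq g$; both being natural immersions with no pretrivial edges that agree on vertex orbits up to equivariant homotopy, the homotopy must be trivial and $f = g$. The main obstacle is ruling out folds in $\alpha$: quantifying the cancellation discrepancy produced by a hypothetical fold demands careful bookkeeping under iteration, and crucially requires the immersion hypothesis on both $f$ and $g$---on $f$ so that axes genuinely elongate under iteration and a fold cannot hide in the dynamics of the source, and on $g$ so that no new cancellation is introduced on the target side of the comparison.
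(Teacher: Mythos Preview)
Your argument has a genuine gap at the ``central claim.'' The identity you set up,
\[
[\alpha(A_{\phi^n(\gamma)}^T)] \;=\; A_{\phi^n(\gamma)}^S \;=\; [g^n(\alpha(A_\gamma^T))],
\]
is automatic: both sides are simply the axis of $\phi^n(\gamma)$ in $S$, and this equality holds for \emph{any} equivariant map $\alpha$, folded or not. So there is no discrepancy to compare against a growth rate, and the ``incompatibility'' you invoke does not exist. Bounded cancellation for $\alpha$ only says that cancellation at each natural-edge junction is at most $C(\alpha)$; it does not say that a fold forces some growing amount of cancellation that would eventually contradict something. A map with a single fold can perfectly well satisfy $l_S \asymp l_T$ and all the asymptotic comparisons you mention. (Also, your claim that the number of natural edges in $f^n(A_\gamma^T)$ grows only linearly in $n$ is incorrect: a fundamental domain of $A_{\phi^n(\gamma)}^T$ has length $l_T(\phi^n(\gamma))$, which grows exponentially, and the number of natural edges in it is not bounded independently of $n$.)

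The missing idea is the one the paper uses: rather than trying to rule out folds in $\alpha$ on all of $T$, restrict $\alpha$ to the minimal subtrees $T(\phi^k(F)) \to T'(\phi^k(F))$ and make it natural (Corollary~\ref{relnatural}), keeping a cancellation constant $2C(\alpha)$ independent of $k$. The point is that because the \emph{target} immersion is expanding, every natural edge of $T'(\phi^k(F))$ is longer than $2C(\alpha)$ once $k$ is large. Now bounded cancellation directly forbids any fold in the restricted map $g_k$, so $g_k$ is a forest collapse; the symmetric argument shows it is a homeomorphism. Finally, since $f$ and $f'$ are immersions, $T$ and $T'$ are recovered from their $\phi^k(F)$-minimal subtrees via $\phi^k$, giving the equivariant homeomorphism $T \cong T'$. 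Your growth-rate heuristic is pointing in the right direction, but the mechanism that actually converts ``expanding'' into ``no folds'' is this passage to minimal subtrees where the target edges become long.
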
 
\begin{proof} Let $\phi:F \to F$ be injective, $\mathcal A$ a $[\phi]$-invariant free factor system. Suppose $f:T \to T$ and $f':T' \to T'$ are expanding $\mathcal A$-relative immersions for $\phi$. The goal is to show that $T$ and $T'$ are equivariantly homeomorphic. Let $g:T \to T'$ be an equivariant tree map, i.e., $\psi$-equivariant tree map with $\psi:F \to F$ being the identity automorphism; such a map always exists between $(F, \mathcal A)$-trees. By taking restrictions to the $\phi^k(F)$-minimal subtrees $T(\phi^k(F))$ and $T'(\phi^k(F))$ and applying deformation retractions, we get a $\phi^k(F)$-equivariant tree map $g_k:T(\phi^k(F)) \to T'(\phi^k(F))$ with cancellation constant $C(g)$. By Corollary~\ref{relnatural}, we assume $g_k$ is an equivariant natural tree map with cancellation constant $2C(g)$. 

Recall that $f'$ is an expanding $\mathcal A$-relative immersion for $\phi$, so all the natural edges of $T'(\phi^k(F))$ are longer than $2C(g)$ for large enough $k$. Fix $k\gg 0$, then no folding occurs in $g_k$ by bounded cancellation. So $g_k$ is a forest collapse and $T'(\phi^k(F))$ has at most the same number of orbits of branch points as $T(\phi^k(F))$. By the same argument, $T(\phi^k(F))$ is an equivariant forest collapse of $T'(\phi^k(F))$ and has at most the same number of orbits of branch points as $T'(\phi^k(F))$. This implies the two minimal subtrees have the same number of orbits of branch points and $g_k$ is an equivariant homeomorphism. Since $f$ and $f'$ are immersions, $T$ and $T'$ are $\phi^k$-equivariantly homeomorphic to $T(\phi^k(F))$ and $T'(\phi^k(F))$ respectively. Therefore, $T$ and $T'$ are equivariantly homeomorphic by injectivity of $\phi$. So $f$ and $f'$ are homotopic immersions and hence agree up to isotopy.
\end{proof}

\begin{cor}\label{uniqueexp} If $\phi:F \to F$ is injective but not surjective, then there is a unique expanding $\mathcal A$-relative immersion for $\phi$, where $\mathcal A$ is the $[\phi]$-elliptic free factor system.
\end{cor}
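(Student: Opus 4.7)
The plan is immediate: the corollary is just the conjunction of the two preceding results, so the proof amounts to verifying that their hypotheses are both satisfied and then concatenating their conclusions.

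First I would apply Theorem~\ref{expimmersion} directly: since $\phi$ is injective but not surjective, there exists an expanding $\mathcal{A}$-relative immersion for $\phi$, where $\mathcal{A}$ is the $[\phi]$-elliptic free factor system (which is $[\phi]$-invariant by Proposition~\ref{canonical}, and proper by the observation following that proposition). This produces at least one candidate $f:T \to T$.

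Second, I would appeal to Proposition~\ref{unique} to upgrade existence to unique existence. Its hypothesis requires only that $\mathcal{A}$ be a $[\phi]$-invariant free factor system and that some expanding $\mathcal{A}$-relative immersion for $\phi$ exists, both of which are in hand. Its conclusion then yields that the $T$ produced above is the unique such immersion (up to equivariant homeomorphism, hence up to isotopy). There is no genuine obstacle here; the work has already been done in establishing existence (Theorem~\ref{expimmersion}, which rests on the chain-building argument via Propositions~\ref{irredrelimm}, \ref{tt2immersion}, \ref{steprelimm}) and in the bounded-cancellation argument for uniqueness (Proposition~\ref{unique}).
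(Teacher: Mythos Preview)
Your proposal is correct and matches the paper's approach exactly: the paper states this corollary immediately after Proposition~\ref{unique} with no proof, treating it as the evident combination of Theorem~\ref{expimmersion} (existence) and Proposition~\ref{unique} (uniqueness).
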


This gives us a complete characterization of when an injective endomorphism is induced by a unique expanding graph immersion.

\begin{cor}\label{expequiv2} Let $\phi:F \to F$ be an injective endomorphism. Then the following conditions are equivalent:\begin{enumerate}
\item $[\phi]$ is induced by a unique expanding graph immersion;
\item $[\phi]$ is induced by an expanding graph immersion;
\item the trivial conjugacy class is the only conjugacy class with an infinite $[\phi]$-tail;
\item the trivial system is the only $[\phi]$-fixed free factor system.
\end{enumerate}\end{cor}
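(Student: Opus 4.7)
My plan is to prove the four conditions are equivalent via the cyclic chain $(1) \Rightarrow (2) \Rightarrow (3) \Leftrightarrow (4) \Rightarrow (1)$. The implication $(1) \Rightarrow (2)$ is immediate, and the equivalence $(3) \Leftrightarrow (4)$ is precisely the contrapositive of Theorem~\ref{expequiv1}: a nontrivial $[\phi]$-fixed free factor system exists iff some nontrivial conjugacy class has an infinite $[\phi]$-tail. So the substance lies in $(2) \Rightarrow (3)$ and $(4) \Rightarrow (1)$.

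For $(2) \Rightarrow (3)$, suppose $f:\Gamma \to \Gamma$ is an expanding graph immersion inducing $\phi$ under some marking $\alpha:F \to \pi_1(\Gamma)$, and let $[g]$ be a conjugacy class with infinite $[\phi]$-tail $\{[g_n]\}_{n \ge 1}$, so that $[\phi^n(g_n)] = [g]$. Represent each $[g_n]$ by the unique immersed loop $\rho_n$ in $\Gamma$. Since $f^n$ is an immersion, no cancellation occurs in $f^n(\rho_n)$, and thus
\[
\|g\|_\alpha \;=\; \|\phi^n(g_n)\|_\alpha \;=\; |f^n(\rho_n)| \;=\; \sum_{e \subset \rho_n} |f^n(e)|.
\]
Setting $L(n) = \min_{e} |f^n(e)|$ over the finitely many edges of $\Gamma$, expansion of $f$ forces $L(n) \to \infty$. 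Once $L(n) > \|g\|_\alpha$, the loop $\rho_n$ must be empty; hence $g_n$ is trivial and so is $g = \phi^n(g_n)$.

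For $(4) \Rightarrow (1)$, assume the trivial system is the only $[\phi]$-fixed free factor system. Since $\{F\}$ is $[\phi]$-fixed exactly when $\phi$ is surjective, $\phi$ must be injective but not surjective. Let $\mathcal A$ denote the (trivial) maximal $[\phi]$-fixed free factor system of Proposition~\ref{maxfixed} and $\mathcal A^*$ the $[\phi]$-elliptic free factor system of Proposition~\ref{canonical}, so that $\phi^k(\mathcal A^*)$ is carried by $\mathcal A$ for some $k \ge 0$. Being carried by the trivial system means every component of $\phi^k(\mathcal A^*)$ is trivial, and then injectivity of $\phi$ forces every component of $\mathcal A^*$ itself to be trivial. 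Thus $\mathcal A^*$ is trivial, and Corollary~\ref{uniqueexp} yields a unique expanding $\mathcal A^*$-relative immersion for $\phi$. An $\mathcal A^*$-relative immersion over the trivial system is precisely a graph immersion in the usual sense, giving $(1)$.

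The only delicate step is $(2) \Rightarrow (3)$, and its subtlety is benign: the crux is that an immersion introduces no cancellation under iteration, so length is computed edge-by-edge and the uniform growth of $L(n)$ eventually outstrips the fixed budget $\|g\|_\alpha$. The remaining implications are bookkeeping built on Theorem~\ref{expequiv1}, Proposition~\ref{canonical}, and Corollary~\ref{uniqueexp}; no new technique is required.
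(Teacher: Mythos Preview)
Your proof is correct and follows essentially the same route as the paper's: the paper leaves $(2)\Rightarrow(3)$ as an exercise (your argument via $\|g\|_\alpha = \sum_{e\subset\rho_n}|f^n(e)| \ge L(n)$ is the intended one), invokes Theorem~\ref{expequiv1} for $(3)\Leftrightarrow(4)$, and then proves $(3)\Rightarrow(1)$ by observing that the elliptic free factor system is trivial and applying Corollary~\ref{uniqueexp}. Your choice to phrase the last implication as $(4)\Rightarrow(1)$ rather than $(3)\Rightarrow(1)$ is an immaterial reordering since you establish $(3)\Leftrightarrow(4)$ first.
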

\begin{proof}
We leave the implication $(2) \implies (3)$ as an exercise. By Theorem~\ref{expequiv1}, we have $(3) \iff (4)$. It remains to show $(3) \implies (1)$. Suppose the trivial conjugacy class is the only conjugacy class with an infinite $[\phi]$-tail. So $\phi$ is not surjective and the $[\phi]$-elliptic free factor system is trivial. By Corollary~\ref{uniqueexp}, there is a unique expanding $\phi$-equivariant immersion $f:T \to T$ defined on a free $F$-tree $T$, i.e., $[\phi]$ is induced by a unique expanding immersion on the marked graph $F \backslash T$.
\end{proof}

\nonumsec{Interlude}
%%%%%%%%%%%%%%%%%%%%%%%%%%%%%%%%%%%%%%%%%%%%%%%%%%%%%%%%%%
Let us summarize the main results of the first part of the paper.

\begin{sum*}If $\phi:F \to F$ is injective but not surjective, then there is:
\begin{enumerate}
\item a unique maximal proper $[\phi]$-fixed free factor system $\mathcal A$; (Proposition~\ref{maxfixed})
\item a unique maximal proper $[\phi]$-invariant free factor system $\mathcal A^* \succeq \mathcal A$ such that $\phi^k(\mathcal A^*)$ is carried by $\mathcal A$ for some $k \ge 0$; after replacing the free factors of $\mathcal A$ with conjugates if necessary, we can assume $\mathcal A \subset \mathcal A^*$; (Proposition~\ref{canonical})
\item a unique expanding $\mathcal A^*$-relative immersion for $\phi$. (Corollary~\ref{uniqueexp})
\end{enumerate}
\end{sum*}
We call the free factor system $\mathcal A^*$ the $[\phi]$-elliptic free factor system. By {\it expanding} immersion, we mean %there are no orbit-closed invariant subforest with bounded components and 
every edge of the tree grows under iteration. 
\medskip

As a corollary of this summary, we have the following characterization: $[\phi]$ is induced by an expanding graph immersion if and only if
the trivial system is the only $[\phi]$-fixed free factor system (Corollary~\ref{expequiv2}). We will now contextualize these results and, especially, this corollary.
%\medskip
In our previous work \cite{JPMa}, we determined exactly when the mapping torus of an expanding graph immersion has a word-hyperbolic fundamental group.
\begin{thm*}[{\cite[Theorem~6.3]{JPMa}}]Let $\phi:F \to F$ be induced by an expanding graph immersion. $F*_\phi$ is word-hyperbolic if and only if it has no $BS(1,d)$ subgroups for $d \ge 2$.
\end{thm*}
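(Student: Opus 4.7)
The forward direction is essentially formal: the groups $BS(1,d)$ for $d \ge 1$ are never word-hyperbolic (solvable and non-elementary for $d \ge 2$, and isomorphic to $\mathbb Z^2$ for $d = 1$), so they cannot embed in a hyperbolic group. Moreover, Corollary~\ref{expequiv2} guarantees that when $\phi$ is induced by an expanding graph immersion, no nontrivial conjugacy class is $[\phi]$-periodic, and hence $F*_\phi$ already contains no $\mathbb Z^2 = BS(1,1)$; this explains why the statement restricts to $d \ge 2$.

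For the reverse direction, my plan is to model $F*_\phi$ as the fundamental group of the mapping torus $M_f$ of the expanding immersion $f:\Gamma\to\Gamma$, viewed as a graph of spaces with a single vertex space $\Gamma$ and a single edge space glued by the identity on one side and $f$ on the other. I would then appeal to the Bestvina-Feighn combination theorem: since $F$ is hyperbolic, $f$ is $\pi_1$-injective, and both $\Gamma$ and $f(\Gamma)$ are finite graphs, the edge-to-vertex inclusions are quasi-isometric, so the only substantive hypothesis to verify is the \emph{annuli flare} condition on the Bass-Serre tree of spaces $\widetilde{M_f}$. The expanding immersion hypothesis supplies the flare in the forward $t$-direction essentially for free: for any immersed loop $\rho$ in $\Gamma$, the iterates $[f^n(\rho)]$ have length growing exponentially, since any tightening at the finitely many breakpoints is controlled uniformly by the bounded cancellation constant $C(f)$ from Lemma~\ref{bcl} while each individual edge expands under iteration.

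The main obstacle, and where the no-$BS(1,d)$ hypothesis must enter, is backward flare: since $\phi$ is not invertible, moving backward along the $t$-direction corresponds to lifting loops through $\phi$, which is only possible for loops carried by the descending chain $\phi^k(F)$. I would organize this using the iterated Stallings graphs $S(\phi^k(F))$ together with the uniform Lipschitz and cancellation constants provided by Proposition~\ref{homotopylift}. A failure of backward flare would yield, for each $k$, an immersed loop $\rho_k$ in $S(\phi^k(F))$ whose $\alpha$-length remains bounded while its bi-infinite $\phi$-orbit fails to expand in the backward direction; by finiteness of conjugacy classes of bounded length, one extracts a constant subsequence giving a conjugacy class $[w]$ with an infinite $[\phi]$-tail along which lengths scale in a controlled, non-expanding way. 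Combining this with Lemma~\ref{fgconj}, which forbids proper self-conjugation of finitely generated subgroups, I would force a relation of the form $[\phi^k(w)] = [\,g w^d g^{-1}\,]$ for some $d \ge 2$, $k \ge 1$, and $g \in F$; this relation produces a $BS(1,d)$ subgroup in $F*_\phi$, contradicting the hypothesis. The technical heart of the argument is precisely the extraction of this $BS(1,d)$ relation from any putative failure of backward flare, and here the expanding immersion structure is essential: it provides the uniform bounded cancellation and Lipschitz control on Stallings graphs that makes the pigeonhole-and-limit extraction rigorous.
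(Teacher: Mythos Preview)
Your big-picture plan (model $F*_\phi$ via the mapping torus, apply the Bestvina--Feighn combination theorem, and show that any failure of annuli flaring forces a $BS(1,d)$ subgroup) matches the paper's framework. However, the middle of your argument misidentifies \emph{which} annuli fail to flare and therefore invokes the wrong mechanism.

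Under an expanding immersion, unidirectional annuli always flare: this is your ``forward'' direction and you handle it correctly. What you call ``backward flare'' is not the issue. Your description---lifting loops backward through $\phi$ and extracting, via pigeonhole, a nontrivial class $[w]$ with an infinite $[\phi]$-tail---cannot succeed here: by Corollary~\ref{expequiv2}, an expanding immersion has \emph{no} nontrivial class with an infinite $[\phi]$-tail. So your argument would terminate in a contradiction before ever touching $BS(1,d)$, which should signal that you have bypassed the actual obstruction.

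The genuine obstruction is the \emph{bidirectional} annuli: paths in the Bass--Serre tree that ascend and then descend. These are governed not by infinite tails but by the iterated pullbacks $\hat\Lambda_k[\phi]$ (Section~\ref{secPullbacks}): a strictly bidirectional annulus of length $2L$ exists precisely when $\hat\Lambda_L \neq \emptyset$ (Lemma~\ref{biannulibound}). The paper's route is: if $\hat\Lambda_k \neq \emptyset$ for all $k$, then Proposition~\ref{cycloinv} (whose proof uses the expanding immersion on \emph{axes}, together with W.~Neumann's pullback bound and a pigeonhole on subpaths of $\bar\alpha$) produces an invariant cyclic subgroup of index $d \ge 2$, hence a $BS(1,d)$. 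Contrapositively, no $BS(1,d)$ implies strictly bidirectional annuli are uniformly short; combined with based-hyperbolicity, Theorem~\ref{blueprint} then gives word-hyperbolicity. Your proposed use of Lemma~\ref{fgconj} and Stallings graphs $S(\phi^k(F))$ is targeting the tail phenomenon, not the pullback phenomenon, and does not lead to the required relation $[\phi^k(c)] = [c^d]$.
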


By Reynolds' result (Corollary~\ref{irrednonsurj}), we knew this theorem applied to all irreducible nonsurjective endomorphisms but, at the time, there was no known algebraic characterization of the general class of endomorphisms induced by expanding graph immersions. It is clear that nontrivial $[\phi]$-periodic subgroups are an obstruction to $\phi$ being induced by an expanding graph immersion. Corollary~\ref{expequiv2} means nontrivial fixed free factor systems are the only (essential) obstruction. So the above theorem can be restated as:

\begin{thm*}Let $\phi:F \to F$ be an injective endomorphism with no nontrivial fixed free factor system. $F*_\phi$ is word-hyperbolic if and only if it has no $BS(1,d)$ subgroups for $d \ge 2$.
\end{thm*}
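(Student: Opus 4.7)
The plan is that this statement is essentially a restatement of the cited Theorem~6.3 of \cite{JPMa} (with the hypothesis recast through the equivalence from Corollary~\ref{expequiv2}), so the proof should be a one-step translation followed by an appeal to the cited result. In particular I do not expect to reprove anything substantive here; the real work has already been done, on one side in building the elliptic free factor system and the expanding relative immersion (Sections~\ref{secElliptic}--\ref{secRelImm}), and on the other side in Theorem~6.3 of \cite{JPMa}.

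Concretely, assume $\phi:F\to F$ is injective with no nontrivial $[\phi]$-fixed free factor system. First I would note that $\phi$ cannot be surjective: if $\phi$ were an automorphism, then $\{F\}$ itself would be a nontrivial $[\phi]$-fixed free factor system, contradicting the hypothesis. So $\phi$ is an injective nonsurjective endomorphism to which the machinery of the first part of the paper applies. Next, I would invoke the implication $(4)\Rightarrow(2)$ of Corollary~\ref{expequiv2}: since the trivial system is the only $[\phi]$-fixed free factor system, $[\phi]$ is induced by an (in fact unique) expanding graph immersion $f:\Gamma \to \Gamma$. This places $\phi$ exactly in the hypothesis of the cited Theorem~6.3 of \cite{JPMa}.

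Finally, I would directly apply that theorem to conclude the equivalence: $F*_\phi$ is word-hyperbolic if and only if it contains no $BS(1,d)$ subgroup for $d \ge 2$. For the ``only if'' direction one can also argue directly that the solvable non-virtually-cyclic group $BS(1,d)$ cannot embed in any word-hyperbolic group, but this is standard and not specific to our setting; the content is in the ``if'' direction, which is the input from \cite{JPMa}.

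Thus the main obstacle is not in this theorem per se but in providing the two ingredients it stitches together. The nontrivial input internal to this paper is Corollary~\ref{expequiv2}, which is itself an immediate consequence of the uniqueness of the expanding $\mathcal A^*$-relative immersion (Corollary~\ref{uniqueexp}) specialized to the case where the elliptic free factor system is trivial; in that situation the relative tree becomes a free $F$-tree and the relative immersion degenerates to an honest graph immersion on the quotient. Once Corollary~\ref{expequiv2} is in hand, the only remaining conceptual step is recognizing that ``no nontrivial fixed free factor system'' is an algebraic reformulation of the dynamical-topological hypothesis of Theorem~6.3, after which the word-hyperbolicity characterization transfers verbatim.
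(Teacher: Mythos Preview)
Your proposal is correct and matches the paper's own treatment exactly: the paper presents this theorem as a direct restatement of \cite[Theorem~6.3]{JPMa} obtained by translating the topological hypothesis ``induced by an expanding graph immersion'' into the algebraic hypothesis ``no nontrivial $[\phi]$-fixed free factor system'' via Corollary~\ref{expequiv2}. Your observation that $\phi$ must be nonsurjective is a harmless addition already implicit in the proof of Corollary~\ref{expequiv2}.
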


The interesting thing about the restatement is that it is purely algebraic, i.e., there is no mention of topological maps. Since fixed free factor systems correspond to {\it free-by-cyclic} subgroups $F' \rtimes \mathbb Z$ of $F*_\phi$, the restatement also suggests how to extend our previous work to all injective nonsurjective  endomorphisms.

 In the next sections, we use expanding relative immersions to relativize our previous work in \cite{JPMa}. For instance, the sequence of lemmas/propositions in the next section is essentially identical to the sequence in \cite[Section~3]{JPMa}. In Section~\ref{secHypEnds}, we extend our previous theorem to all injective endomorphisms. However, this will {\it not} constitute an alternate proof of Brinkmann's theorem \cite{Bri00}: $F\rtimes \mathbb Z$ is word-hyperbolic if and only if it has no $\mathbb Z^2$ subgroups. We assume Brinkmann's theorem as the base case of our generalization.
\medskip
 
Before diving into the details, we give an illustration of the consideration in the second part of this \paper. To prove that $BS(1,d)$ subgroups are the only obstruction to word-hyperbolicity, the main tool we use is the Bestvina-Feighn combination theorem \cite{BF92}: as long as the group in question satisfies the {\it annuli flaring condition}, it will be word-hyperbolic. See Section~\ref{secHNNExts} for the exact statement. For non-examples, we now explain why/how $BS(1,d)$ groups fail the annuli flaring condition.
\medskip

Consider two Baumslag-Solitar groups $BS(1,d)$ with $d = 1,2$. Note that $BS(1,1) = \mathbb Z^2$. Fix the presentation $BS(1,d) = \langle a, t~|~t^{-1}at = a^d \rangle$ and notice that the group is a mapping torus of an endomorphism $\psi$ of the cyclic group $\langle a \rangle$. So $BS(1,d)$ acts on a simplicial tree $\mathcal T$ with point and edge stabilizers conjugate to $\langle a \rangle$ and the quotient graph $BS(1,d)\backslash \mathcal T$ has one vertex and edge. The edge has a natural orientation inherited from the nontrivial loop corresponding to $t$ and we can lift the orientation to an equivariant orientation of $\mathcal T$. Under this orientation, $\mathcal T$ is like a rooted tree: every vertex has exactly $d$ incoming edges and one outgoing edge. Choose a vertex $\star$ on the axis of $t$ whose stabilizer by $\langle a \rangle$ and define 
$\mathcal T_{\langle a \rangle}$ to be the family tree of $\star$-descendants, i.e., union of directed edge-paths that terminate on $\star$.

\begin{figure}[ht]
 \centering 
 \includegraphics[scale=1.5]{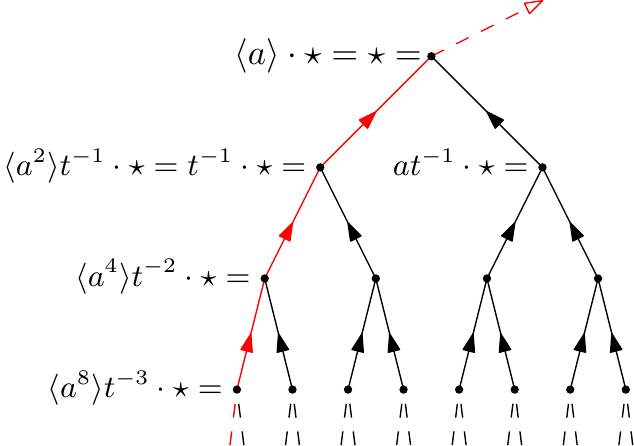}
 \caption{Portion of the trees $\mathcal T_{\langle a \rangle} \subset \mathcal T$ when $d=2$. The line on the left is the axis of $t$.}
 \label{figbstree}
\end{figure}

A {\it class of annuli} of length $n$ in $BS(1,d)$ is an ordered pair $(g, p_g)$ where $g \in BS(1,d)$ is nontrivial and $p_g$ is a geodesic of length $n$ in $\mathcal T$ fixed by $g$. Since $g$ is elliptic, then (up to conjugacy) we may assume $g \in \langle a \rangle$, $\star  \in p_g$, and $p_g \subset \mathcal T_{\langle a \rangle}$. 
The orientation on $\mathcal T$ implies there are two types of annuli: 1) unidirectional --- those whose geodesics are directed paths terminating on $\star$; 2) bidirectional --- those whose geodesics are a wedge of two directed paths terminating on $\star$. When $d=1$, $\mathcal T$ is a line and all annuli are unidirectional. Loosely speaking, flaring means expanding exponentially towards some end. It should be intuitive that annuli will not flare because the associated endomorphism $\psi$ is the identity map. When $d = 2$, $\mathcal T$ is a regular trivalent tree and annuli can be unidirectional or bidirectional. Since the associated endomorphism $\psi$ is represented by a degree 2 map of the circle, the unidirectional annuli will flare. However, due to the branching in $\mathcal T$, it is possible to wedge two flaring unidirectional annuli along their expanding ends to get a bidirectional annulus that shrinks exponentially towards both its ends.

The objective is to rule out both of these behaviors in the setting of a mapping torus $F*_\phi$ that has no $BS(1,d)$ subgroups with $d \ge 1$ and thus conclude that the mapping torus satisfies the annuli flaring condition. Here is an outline of the second part of the paper: \begin{enumerate}
\item In Section~\ref{secPullbacks}, we prove that if $F*_\phi$ has no $BS(1,d)$ subgroups with $d \ge 2$, then there is a uniform bound on one of the two directed pieces of any bidirectional annulus. Thus long bidirectional annuli are unidirectional up to bounded error. However, since annuli are not defined yet in this section, everything is done in terms of {\it pullbacks}, which are special classes of bidirectional annuli. 
\item In Section~\ref{secHypEnds}, we define annuli and give the correspondence with pullbacks. We then use expanding relative immersions and Brinkmann's theorem to prove the section's main result: if $F*_\phi$ has no $\mathbb Z^2$ subgroups, then long unidirectional annuli flare. Thus, together with the previous section, $F*_\phi$ satisfies the annuli flaring condition if it has no $BS(1,d)$ subgroups with $d \ge 1$. 
\item Finally, in Section~\ref{secHNNExts}, we extend the result to HNN extensions of free groups over free factors $F*_A$ by showing that there is a canonical finite collection of mapping tori $F'*_{\phi'} \le F*_A$ that carry all long annuli of $F*_A$. Intuitively speaking, the action of $F*_A$ on its Bass-Serre tree is {\it acylindrical} relative to $\{ F_i'*_{\phi_i'}\}$. So $F*_A$ satisfies the annuli flaring condition if and only if the subgroups $\{ F_i'*_{\phi_i'}\}$ satisfy the condition.
\end{enumerate}

%%%%%%%%%%%%%%%%%%%%%%%%%%%%%%%%%%%%%%%%%%%%%%%%%%%%%%%%%%
\nonumsec{Geometry of ascending HNN extensions}
\subsection{Iterated pullbacks}\label{secPullbacks}
%%%%%%%%%%%%%%%%%%%%%%%%%%%%%%%%%%%%%%%%%%%%%%%%%%%%%%%%%%

In our previous work \cite[Sections~3]{JPMa}, topological pullbacks for a graph immersion $f: \Gamma \to \Gamma$ were used to give sufficient conditions for $\pi_1(f) = \phi:F \to F$ to have an invariant nonfixed {\it cyclic subgroup system}. The goal of this section is to drop the immersion hypothesis and give sufficient conditions that apply to all injective endomorphisms of $F$.

Suppose immersions $f_1: \Gamma_1 \to \Gamma$ and $f_2: \Gamma_2 \to \Gamma$ induce inclusions of free groups $H_1, H_2 \le F$ respectively. Then components of the core {\it (topological) pullback} of $(f_1, f_2)$, also known as {\it fibered products} are in one-to-one correspondence with nontrivial intersection $H_1 \cap g H_2 g^{-1}$ as $g$ ranges over $(H_1, H_2)$-double coset representatives of $H_1 \backslash F / H_2$. For a graph immersion $f: \Gamma \to \Gamma$ that induces an endomorphism $\phi: F \to F$, we get a one-to-one correspondence between components of the core pullback of $(f^k, f^k)$ and nontrivial conjugacy classes $[\phi^k(F) \cap g \phi^k(F) g^{-1}]$ as $[[g]]$ ranges over $\phi^k(F)$-double cosets for all $k \ge 1$. We will not define pullbacks topological maps since we will be working (semi-)algebraically in this section.

%We need to introduce some terminalogy and notation to facilitate this discussion. 
Given subgroups $H_1, H_2 \le F$, we define the {\bf (algebraic) pullback} of $(H_1, H_2)$, denoted by $H_1 \wedge H_2$, to be the set of all nontrivial {\bf components} $[H_1 \cap g H_2 g^{-1}]$ as $[[g]]$ ranges over the $(H_1, H_2)$-double cosets in $F$. Note that pullbacks are {\it not} subgroup systems since they are sets of conjugacy classes of subgroups. When $H_1$ and $H_2$ are finitely generated, then their pullback is a finite set. %The empty set is the {\bf trivial  pullback}.
For an injective endomorphism $\phi: F \to F$ and $k \ge 1$, define the {\bf iterated (algebraic) pullbacks of $\boldsymbol \phi$} to be $\Lambda_k(\phi) = \phi^k(F) \wedge \phi^k(F)$. Furthermore, the iterated pullback depends only on the outer class $[\phi]$ and will be denoted later by $\Lambda_k[\phi]$. In this section, we will simply write $\Lambda_k$ for brevity.

The {\bf (algebraic) mapping torus} of an injective endomorphism $\phi$ is the ascending HNN extension of $F$ with the presentation \( F*_\phi = \langle \, F, t ~|~ t^{-1}xt = \phi(x), \forall x \in F\, \rangle.\)
We now give a third description of $\Lambda_k$ that will actually be the working description. 
The mapping torus $F*_\phi$ is the fundamental group of a circle of groups with one vertex group $F$ and edge group $F$. The edge monomorphisms for this circle of groups are the identity map $id_F:F \to F$ and endomorphism $\phi: F \to F$ and the corresponding Bass-Serre tree $\mathcal T$ has one orbit of edges and vertices. The tree also comes with a natural orientation where each vertex has exactly one outgoing oriented edge and the {\bf stable letter} $t \in F*_\phi$ acts on its directed axis with positive translation. 

By construction, there is a vertex $\star$ of $\mathcal T$ whose stabilizer in $F*_\phi$ is exactly $F$. Let $\mathcal T_F$ be the full subtree of $\mathcal T$ rooted at $\star$. Colloquially, $\mathcal T_F$ is the family tree of $\star$-descendants. We will use the action of $F*_\phi$ on $\mathcal T$ to index the vertices of $\mathcal T_F$ by cosets in $F$. The root $\star$ is indexed $gF$, where $gF=F$ is the coset of $F$ in $F$. The $k^\text{th}$-generation vertices $gt^{-k} \cdot \star$ are indexed by the cosets $g\phi^k(F)$ in $F$. The intersection $g_1 \phi^k(F) g_1^{-1} \cap g_2 \phi^k(F) g_2^{-1}$ corresponds to the simultaneous stabilizer of the ordered pair of vertices indexed by $g_1 \phi^k(F)$ and $g_2 \phi^k(F)$; the intersection is conjugate in $F$ to the stabilizer of the vertices indexed by $\phi^k(F)$ and $g \phi^k(F)$, where $g = g_1^{-1}g_2$. The intersection is also conjugate (in $F$) to the stabilizers of the vertices indexed by $\phi^k(F)$ and $hg \phi^k(F)$, where $h \in \phi^k(F)$. Thus, $F$-orbits of $k^\text{th}$-generation ordered pairs of vertices are indexed by $\phi^k(F)$-double cosets in $F$ and conjugacy classes of their nontrivial stabilizers are the components of $\Lambda_k$.
%Thus if we want to index the (conjugacy classes of) nontrivial stabilizers of ($F$-orbits of) two $k^\text{th}$-generation vertices in $\mathcal T_F$, we get the components of $\Lambda_k$.
 %The components of $\Lambda_k$ are nontrivial stabilizers (up to conjugacy) of ($\phi^k(F)$-orbits of) geodesics in $\mathcal T_F$ whose endpoints are equidistant from $\star$. Colloquially, we can think of $\mathcal T_F$ as a $\star$-descendants family-tree. In this view, the components of $\Lambda_k$ are nontrivial conjugacy classes of simultaneous stabilizers for a pair of $k^\text{th}$-generation vertices in $\mathcal T_F$ whose only common ancestor is $\star$. 

There is a function $\left.\phi\right|_{\Lambda_k}:\Lambda_k \to \Lambda_{k+1}$ induced by $\phi$, given algebraically by \[ [\phi^k(F) \cap g\phi^k(F) g^{-1}] \quad \mapsto \quad [\phi^{k+1}(F) \cap \phi(g)\phi^{k+1}(F) \phi(g)^{-1}]. \]
Graphically, the function %is the conjugation by $t$ in $F*_\phi$ that 
maps (the conjugacy class of) the stabilizer of $( g_1 t^{-k}, g_2t^{-k} )\cdot \star$ to (the conjugacy class of) the stabilizer of $t^{-1}\cdot ( g_1t^{-k}, g_2 t^{-k} ) \cdot \star$. %This graphical description makes the next lemma an obvious observation. %since the action of $t^{-1}$ is injective on the set of $\phi^k(F)$-orbits.
\begin{lem} $\left.\phi\right|_{\Lambda_k}:\Lambda_k \to \Lambda_{k+1}$ as a function on the set of components is injective.
\end{lem}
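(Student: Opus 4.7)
My plan is to reinterpret $\left.\phi\right|_{\Lambda_k}$ geometrically via the Bass-Serre tree $\mathcal{T}$. As set up in the excerpt, components of $\Lambda_k$ are in bijection with $F$-orbits of ordered pairs $(v,w)$ of $k$-th generation vertices of the rooted subtree $\mathcal{T}_F$ with nontrivial simultaneous $F$-stabilizer. Because $t^{-1}$ acts on $\mathcal{T}$ $\phi$-equivariantly (that is, $t^{-1}(xu) = \phi(x)(t^{-1}u)$ for $x \in F$ and $u \in \mathcal{T}$), translation by $t^{-1}$ descends to a well-defined map of $F$-orbits, and this descended map is exactly $\left.\phi\right|_{\Lambda_k}$: the orbit of $(v,w)$ is sent to the orbit of $(t^{-1}v, t^{-1}w)$. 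Proving injectivity on components thus reduces to showing that if $(t^{-1}v_1, t^{-1}w_1)$ and $(t^{-1}v_2, t^{-1}w_2)$ lie in the same $F$-orbit, then so do $(v_1,w_1)$ and $(v_2,w_2)$.

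I would fix $f \in F$ with $f \cdot t^{-1}v_1 = t^{-1}v_2$ and $f \cdot t^{-1}w_1 = t^{-1}w_2$, and write $v_i = g_i t^{-k}\star$ and $w_i = h_i t^{-k}\star$. Applying the HNN relation $t^{-1}x = \phi(x) t^{-1}$ to the first equation turns it into $\phi(g_2)^{-1} f \phi(g_1) \in \mathrm{Stab}(t^{-(k+1)}\star) = \phi^{k+1}(F)$, forcing $f \in \phi(F)$. By injectivity of $\phi$, there is a unique $f' \in F$ with $\phi(f') = f$. Using the companion identity $\phi^k(a) t^{-k} = t^{-k} a$ in $F*_\phi$ (valid because $a \in F$ fixes $\star$), a direct calculation verifies $f'v_1 = v_2$. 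Repeating the analysis on the $w$-coordinate produces another expression for $f$ inside $\phi(F)$ whose $\phi$-preimage must, again by injectivity, be this same $f'$, and the same calculation yields $f'w_1 = w_2$. Hence $(v_1,w_1)$ and $(v_2,w_2)$ share an $F$-orbit, as required.

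The main technical point is the bookkeeping of the HNN action together with a twofold use of injectivity of $\phi$: first to lift $f \in \phi(F)$ to $f' \in F$, then to check that the lifts recovered from the two coordinates coincide. Once the $\phi$-equivariant picture of $t^{-1}\colon \mathcal{T}_F \to \mathcal{T}_F$ is in place no genuine obstacle remains; the whole statement is essentially a descent statement for this $\phi$-equivariant self-map.
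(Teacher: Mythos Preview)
Your proof is correct and follows essentially the same approach as the paper's. Both arguments reduce injectivity to the statement that an element $f\in F$ carrying one pair of $(k{+}1)$-st generation vertices in $\mathcal T_F$ to another must lie in $\phi(F)$, after which conjugation by $t$ (equivalently, applying $\phi^{-1}$ via injectivity) recovers an $F$-orbit equivalence one level up. The paper phrases this step geometrically---both pairs are descendants of $t^{-1}\cdot\star$, so any $f\in F$ relating them fixes $t^{-1}\cdot\star$ and hence lies in $\mathrm{Stab}(t^{-1}\cdot\star)=\phi(F)$---whereas you extract the same conclusion algebraically from the HNN relation and the stabilizer computation $\mathrm{Stab}(t^{-(k+1)}\star)=\phi^{k+1}(F)$; the content is identical.
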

\begin{proof}[Graphical proof]If $t^{-1}\cdot( g_1 t^{-k}, g_2 t^{-k}) \cdot \star$ and $t^{-1}\cdot( g_1' t^{-k}, g_2' t^{-k} ) \cdot \star$ are in the same $F$-orbit, then they are in fact in the same $\phi(F)$-orbit (descendants of $t^{-1} \cdot \star$). So $(g_1 t^{-k}, g_2 t^{-k} ) \cdot \star$ and $( g_1't^{-k}, g_2' t^{-k} ) \cdot \star$ are in the same $F$-orbit by the action of $t$.
%Fix $k \ge 1$ and let $g_1$ and $g_2$ be $\phi^k(F)$-double coset representatives such that $\phi(g_1)$ and $\phi(g_2)$ were in the same $\phi^{k+1}$-double coset. So there are $x, y \in \phi^{k+1}(F)$ such that $x \phi(g_1) y = \phi(g_2)$. Equivalently, there are $x', y' \in \phi^k(F)$ such that $\phi(x') \phi(g_1) \phi(y') = \phi(g_2)$. Since $\phi$ is injective, we get $x'g_1 y' = g_2$. So $g_1$ and $g_2$ are in the same $\phi^k(F)$-double coset.
\end{proof}

One can give an algebraic proof of the lemma that replaces the action of $t$ with the injectivity of $\phi$. The lemma implies there is a chain of injections $\Lambda_0 \to \Lambda_1 \to \Lambda_2 \to \cdots$. Furthermore, the restriction to each component is an isomorphism since it is a conjugation by $t$ in $F*_\phi$. We will be mainly interested in the set compliment $\hat \Lambda_{k} = \Lambda_{k} \setminus \phi(\Lambda_{k-1})$.
%, i.e., the components of $\Lambda_{k+1}$ that are not in $\phi(\Lambda_{k})$. 
Equivalently, %$\hat \Lambda_{k}$ has the following description:
\( \hat \Lambda_{k} = \{\, [\phi^{k}(F) \cap g \phi^{k}(F) g^{-1}] \in \Lambda_{k} ~:~ g\notin \phi(F) \,\}. \)
Graphically, $\hat \Lambda_{k}$ is the set of components that stabilizes $F$-orbits of $k^\text{th}$-generation pairs of vertices in $\mathcal T_F$ whose only common ancestor is $\star$.
We might say {\bf iterated pullbacks stabilize} if $\hat \Lambda_k = \emptyset$ for some $k$. The image $\phi(F)$ is {\it malnormal}  in $F$ if and only if $\hat \Lambda_1 = \emptyset$. Iterated pullback stability is a sort of generalization of malnormality for $\phi^k(F)$ with respect to $\phi$.

\begin{lem}\label{strictincr} Suppose $\phi:F \to F$ is injective and $k\ge 1$. If $\hat \Lambda_{k}$ is empty, then so is $\hat \Lambda_{k+1}$; and if $\hat \Lambda_{k}$ has only cyclic components, then $\hat \Lambda_{k+1}$ is empty or has only cyclic components.
\end{lem}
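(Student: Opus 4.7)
The lemma will reduce to a single elementary containment. Since $\phi^{k+1}(F) \subseteq \phi^k(F)$, for every $g \in F$ we have
\[
\phi^{k+1}(F) \cap g\,\phi^{k+1}(F)\,g^{-1} \;\subseteq\; \phi^{k}(F) \cap g\,\phi^{k}(F)\,g^{-1}.
\]
Graphically this expresses the fact that if $(u,v)$ is a pair of $(k+1)$-st generation vertices in $\mathcal{T}_F$ with parents $(u',v')$ at generation $k$, then $\mathrm{Stab}_F(u,v) \subseteq \mathrm{Stab}_F(u',v')$: every $g \in F$ fixes $\star$, so acts on $\mathcal{T}_F$ by a tree automorphism, and fixing a vertex forces fixing its unique parent in the rooted tree.

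Next I will check that this containment relates components of $\hat\Lambda_{k+1}$ to components of $\hat\Lambda_k$ (rather than just to arbitrary components of $\Lambda_k$). Taking $u = t^{-(k+1)}\cdot\star$ and $v = g\,t^{-(k+1)}\cdot\star$, the condition $g \notin \phi(F)$ that characterises pairs contributing to $\hat\Lambda_{k+1}$ is exactly the condition that the parents $u' = t^{-k}\cdot\star$ and $v' = g\,t^{-k}\cdot\star$ are already distinct; the same inequality $g \notin \phi(F)$ also guarantees that the parent pair has only $\star$ as common ancestor. So the parent pair either has trivial stabilizer (in which case the child pair's stabilizer is also trivial and so does not contribute to $\hat\Lambda_{k+1}$) or else $[\phi^k(F)\cap g\,\phi^k(F)\,g^{-1}]$ is a bona fide component of $\hat\Lambda_k$ containing (a conjugate representative of) the given component of $\hat\Lambda_{k+1}$.

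Both conclusions then drop out. For the first, if $\hat\Lambda_k = \emptyset$ every parent pair has trivial stabilizer, hence every child pair does too, so $\hat\Lambda_{k+1} = \emptyset$. For the second, if every component of $\hat\Lambda_k$ is cyclic then each component of $\hat\Lambda_{k+1}$ is contained in a cyclic subgroup and therefore is itself cyclic (subgroups of cyclic groups are cyclic), so $\hat\Lambda_{k+1}$ is either empty or has only cyclic components. I do not foresee any real obstacle; the only mildly delicate step is to reconcile the set-theoretic definition $\hat\Lambda_k = \Lambda_k\setminus\phi(\Lambda_{k-1})$ with the equivalent $g\notin\phi(F)$ characterisation and to verify that the containment above respects this distinction when passing from child pair to parent pair.
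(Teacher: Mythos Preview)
Your proposal is correct and takes essentially the same approach as the paper: the paper's proof simply asserts an ``inclusion'' $\hat\Lambda_{k+1} \preccurlyeq \hat\Lambda_k$ coming from the fact that the stabilizer of a $(k{+}1)$-st generation pair is contained in the stabilizer of its $k$-th generation parent pair, and then draws both conclusions immediately. You have supplied the details the paper omits, in particular the verification that the parent pair of a $\hat\Lambda_{k+1}$-pair (with $g\notin\phi(F)$) again satisfies the $\hat\Lambda_k$ condition; one tiny quibble is that $g\notin\phi(F)$ is a sufficient but not literally ``exactly the'' condition for the parents to be distinct (that would be $g\notin\phi^k(F)$), though this does not affect the argument.
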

\begin{proof} 
There is an ``inclusion'' of components, $\hat \Lambda_{k+1} \preccurlyeq \hat \Lambda_k$, since a stabilizer of a $(k{+}1)^\text{th}$-generation pair of vertices is contained in a stabilizer of a $k^\text{th}$-generation pair of vertices.
%induced by \[ \phi^{k+1}(F) \cap g \phi^{k+1}(F) g^{-1} \le \phi^{k}(F) \cap g \phi^{k}(F) g^{-1}.\] 
So $\hat \Lambda_k = \emptyset$ implies $\hat \Lambda_{k+1} = \emptyset$. Suppose $\hat \Lambda_k$ has cyclic components, then $\hat \Lambda_{k+1}$ is empty or has cyclic components as the subgroups of a cyclic group are trivial or cyclic. 
\end{proof}

The {\bf reduced rank} of a nontrivial finite rank free group $H$ is $\mathrm{rr}(H) = \mathrm{rank}(H)-1$ and the reduced rank of a pullback $H_1 \wedge H_2$, where $H_1, H_2$ are finitely generated free groups, is the sum of the reduced ranks of components in $H_1 \wedge H_2$. The latter is denoted by $\mathrm{rr}(H_1 \wedge H_2)$. 
%We define the reduced rank of the trivial group/pullback to be $0$. 
Since the restriction $\left.\phi\right|_{\Lambda_k}$ gives natural isomorphisms of the components, the chain of injections produces a nondecreasing sequence of positive integers \[\mathrm{rank}(F) - 1 = \mathrm{rr}(F) \le \mathrm{rr}(\Lambda_1) \le \mathrm{rr}(\Lambda_2) \le  \cdots. \]

Observe that $\mathrm{rr}(\Lambda_i)=\mathrm{rr}(\Lambda_{i+1})$ if and only if $\hat \Lambda_{i+1}$ is empty or has cyclic components only. By Lemma~\ref{strictincr}, the sequence becomes constant once two consecutive entries are equal. Walter Neumann used topological pullbacks of Stallings graphs to bound the reduced ranks of algebraic pullbacks and improve Hanna Neumann's bound \cite{HN57}. %This bound is the bedrock of this section.

\begin{thm}[{\cite[Proposition~2.1]{WN90}}]\label{shnc} If $H_1, H_2 \le F$ are nontrivial finitely generated subgroups, then $\mathrm{rr}(H_1 \wedge H_2) \le 2\, \mathrm{rr}(H_1) \,\mathrm{rr}(H_2)$.
\end{thm}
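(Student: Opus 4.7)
The plan is to use topological pullbacks of Stallings graphs, following Walter Neumann. Fix a rose $R$ with an identification $\pi_1(R) = F$, so that each Stallings graph $S_j = S(H_j)$ comes with a folded immersion $\iota_j : S_j \to R$. Form the fibered product $P = S_1 \times_R S_2$: its vertices are pairs $(v_1, v_2) \in V(S_1) \times V(S_2)$ (no constraint since $R$ has a single vertex) and its oriented edges are pairs of oriented edges carrying the same $R$-label. A standard Stallings-folding argument \cite{St83} shows that the ``core'' $\widehat P$ of $P$, by which I mean the maximal subgraph in which every vertex has valence at least $2$, is itself folded over $R$ and that its components are precisely the Stallings graphs $S(H_1 \cap g H_2 g^{-1})$ as $[[g]]$ ranges over $(H_1, H_2)$-double cosets for which this intersection is nontrivial. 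Consequently
\[
\mathrm{rr}(H_1 \wedge H_2) \;=\; \sum_{K \subseteq \widehat P} \mathrm{rr}(K) \;=\; E(\widehat P) - V(\widehat P) \;=\; \tfrac{1}{2}\sum_{v \in V(\widehat P)} \bigl(d_{\widehat P}(v) - 2\bigr).
\]

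The remainder is a local valence count. After harmlessly discarding vertices of valence $\le 1$ in each $S_j$ (which preserves $\mathrm{rr}(H_j)$), I may assume $d_j(v) \ge 2$ at every vertex of $S_j$, so that $\mathrm{rr}(H_j) = \tfrac{1}{2}\sum_v (d_j(v) - 2)$. Identify the directions at the basepoint of $R$ with a set $\Delta$ of size $2\,\mathrm{rank}(F)$; each vertex $v$ of $S_j$ then has a ``type'' $T_v \subseteq \Delta$ with $|T_v| = d_j(v)$, and at a vertex $(v_1, v_2) \in V(P)$ the valence is $d_P(v_1, v_2) = |T_{v_1} \cap T_{v_2}|$. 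Since $\widehat P \subseteq P$ can only lose valence, one gets
\[
\mathrm{rr}(H_1 \wedge H_2) \;\le\; \tfrac{1}{2} \sum_{(v_1, v_2)} \max\bigl(0,\, |T_{v_1} \cap T_{v_2}| - 2\bigr),
\]
so the proof reduces to the elementary set-theoretic inequality
\[
\bigl(|A \cap B| - 2\bigr)^+ \;\le\; \bigl(|A| - 2\bigr)\bigl(|B| - 2\bigr) \qquad \text{for } A, B \subseteq \Delta \text{ with } |A|, |B| \ge 2,
\]
which I would verify by splitting on whether $\min(|A|, |B|)$ equals $2$ or is $\ge 3$. Summing over $(v_1, v_2)$ factors the right-hand side and yields $\mathrm{rr}(H_1 \wedge H_2) \le \tfrac{1}{2} \cdot 2\,\mathrm{rr}(H_1) \cdot 2\,\mathrm{rr}(H_2) = 2\,\mathrm{rr}(H_1)\,\mathrm{rr}(H_2)$.

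The main technical point is the first step: verifying that the components of $\widehat P$ really do realize the Stallings graphs of the conjugated intersections $H_1 \cap g H_2 g^{-1}$. This is a covering-space/folding calculation (the pullback of two covers of $R$ is, after removing valence-$\le 1$ vertices, a disjoint union of folded graphs with the correct fundamental groups), but it must be carried out carefully so that no double coset is missed. Once that correspondence is in place, the valence count is essentially formal; the constant $2$ in the bound can be traced directly to the factor $\tfrac{1}{2}$ in the Euler characteristic contrasted against the product structure of $V(P)$, and improving it to $1$ (the Friedman--Mineyev theorem) requires a substantially more delicate argument that I would not attempt here.
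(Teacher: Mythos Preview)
Your proposal is correct and follows precisely the approach the paper attributes to W.~Neumann: the paper does not prove this theorem itself but cites \cite{WN90} and remarks that ``W.~Neumann's proof is an elementary Euler characteristic computation for the topological pullback.'' Your fibered-product construction and vertex-by-vertex valence bound is exactly that computation, so there is nothing to compare.
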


\begin{rmk} Although this bound is weaker than the {\it Strengthened Hanna Neumann Conjecture}, now Friedman-Mineyev's Theorem \cite{Fri15, Min12}, it is sufficient for our purposes. W. Neumann's proof is an elementary Euler characteristic computation for the topological pullback.
\end{rmk}

The uniform bound given by W. Neumann's theorem allows us to conclude that the sequence of reduced ranks given above is eventually constant, and hence, past a certain point we are adding only cyclic components or nothing at all to the iterated pullbacks.

\begin{lem}\label{cyclopullback} If $\phi: F \to F$ is injective, then either $\hat \Lambda_k$ has cyclic components for all $k \ge 2\,\mathrm{rr}(F)^2$ or $\hat \Lambda_k = \emptyset$ for some $k$.
\end{lem}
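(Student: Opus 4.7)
The plan is to track the reduced rank $\mathrm{rr}(\Lambda_k)$ as $k$ grows and exploit the uniform bound in Theorem~\ref{shnc}. First I would establish additivity: since $\phi$ induces an isomorphism on each component of $\Lambda_{k-1}$ (and hence preserves reduced rank), and since $\Lambda_k$ decomposes as the disjoint union of components $\phi(\Lambda_{k-1}) \sqcup \hat\Lambda_k$, one has
\[ \mathrm{rr}(\Lambda_k) \;=\; \mathrm{rr}(\Lambda_{k-1}) \,+\, \mathrm{rr}(\hat\Lambda_k). \]
Because the reduced rank of a nontrivial finitely generated free group vanishes exactly when that group is cyclic, the vanishing $\mathrm{rr}(\hat\Lambda_k)=0$ is equivalent to $\hat\Lambda_k$ being empty or consisting only of cyclic components.

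Next, Theorem~\ref{shnc} applied with $H_1 = H_2 = \phi^k(F)$, combined with $\mathrm{rr}(\phi^k(F)) = \mathrm{rr}(F)$ (by injectivity of $\phi$), yields $\mathrm{rr}(\Lambda_k) \le 2\,\mathrm{rr}(F)^2$ for every $k$. So the nondecreasing integer sequence $(\mathrm{rr}(\Lambda_k))_{k\ge 0}$ starts at $\mathrm{rr}(F)$ and is bounded above by $2\,\mathrm{rr}(F)^2$, hence admits at most $2\,\mathrm{rr}(F)^2 - \mathrm{rr}(F)$ strict increases. A pigeonhole count then produces an index $k_0 \le 2\,\mathrm{rr}(F)^2 - \mathrm{rr}(F) + 1 \le 2\,\mathrm{rr}(F)^2$ with $\mathrm{rr}(\Lambda_{k_0}) = \mathrm{rr}(\Lambda_{k_0-1})$; by the additivity identity, $\hat\Lambda_{k_0}$ is therefore empty or has only cyclic components.

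Finally, Lemma~\ref{strictincr} says that both of these properties are preserved under $k \mapsto k+1$, so the same conclusion propagates to every $\hat\Lambda_k$ with $k \ge k_0$, and in particular to every $k \ge 2\,\mathrm{rr}(F)^2$. The dichotomy of the lemma is then immediate: if some such $\hat\Lambda_k$ is empty we get the second alternative; otherwise every $\hat\Lambda_k$ with $k \ge 2\,\mathrm{rr}(F)^2$ is nonempty with only cyclic components, giving the first. I do not foresee a substantive obstacle — the argument is a pigeonhole count on a uniformly bounded monotone sequence — and the only point requiring care is the reduced-rank additivity, which depends on the disjoint-union decomposition $\Lambda_k = \phi(\Lambda_{k-1}) \sqcup \hat\Lambda_k$ and on $\phi$ being injective, so that the components of $\phi(\Lambda_{k-1})$ contribute their full reduced ranks without being absorbed into $\hat\Lambda_k$.
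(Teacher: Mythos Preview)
Your proposal is correct and follows essentially the same approach as the paper: both use Theorem~\ref{shnc} to bound the nondecreasing sequence $(\mathrm{rr}(\Lambda_k))$, apply pigeonhole to find an index where it stabilizes, and then invoke Lemma~\ref{strictincr} to propagate. Your explicit additivity formula $\mathrm{rr}(\Lambda_k) = \mathrm{rr}(\Lambda_{k-1}) + \mathrm{rr}(\hat\Lambda_k)$ is just a rephrasing of the paper's observation (stated just before the lemma) that $\mathrm{rr}(\Lambda_{i}) = \mathrm{rr}(\Lambda_{i+1})$ if and only if $\hat\Lambda_{i+1}$ is empty or has only cyclic components.
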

\begin{proof}
Theorem~\ref{shnc} gives us a uniform bound on the reduced ranks of the iterated pullbacks $\mathrm{rr}(\Lambda_k) \le 2\,\mathrm{rr}(\phi^k(F))^2 = 2 \, \mathrm{rr}(F)^2$ for all $k \ge 1$.
By Lemma~\ref{strictincr} and the uniform bound on the nondecreasing sequence of reduced ranks, $\mathrm{rr}(\Lambda_k)$ are all equal for $k \ge k_0 = 2 \, \mathrm{rr}(F)^2$. Therefore, $\hat \Lambda_{k_0}$ is empty or has only cyclic components. The lemma follows by inductively applying Lemma~\ref{strictincr}.
\end{proof}

We say $\phi:F \to F$ has an {\bf invariant cyclic subgroup system with index $\boldsymbol{d \ge 1}$} if there is an integer $k \ge 1$, element $x\in F$, and nontrivial cyclic subgroup $\langle c \rangle \le F$ such that $\phi^k(\langle c \rangle) \le x \langle c \rangle x^{-1}$ and has index $d$. Note that this is actually a property of the outer endomorphism $[\phi]$. We can now give the main result of this section:

\begin{prop}\label{cycloinv} Let $\phi:F \to F$ be injective. If $\hat \Lambda_k$ is not empty for all $k \ge 1$, then $[\phi]$ has an invariant cyclic subgroup system with index $d \ge 2$.
\end{prop}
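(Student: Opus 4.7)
By Lemma~\ref{cyclopullback}, the hypothesis $\hat\Lambda_k \neq \emptyset$ for all $k$ forces $\hat\Lambda_k$ to be non-empty and to consist entirely of cyclic components for all $k \ge k_0 := 2\,\mathrm{rr}(F)^2$; each such set is also finite since $\phi^k(F)$ is finitely generated. The inclusion $\phi^{k+1}(F) \le \phi^k(F)$ sends a cyclic component $[\phi^{k+1}(F) \cap g\phi^{k+1}(F)g^{-1}] \in \hat\Lambda_{k+1}$ into the cyclic component $[\phi^k(F) \cap g\phi^k(F)g^{-1}] \in \hat\Lambda_k$, giving a natural forgetful map $\Pi_k\colon \hat\Lambda_{k+1} \to \hat\Lambda_k$. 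Thus $\{\hat\Lambda_k\}_{k \ge k_0}$ is an inverse system of finite non-empty sets, and its inverse limit is non-empty by compactness. I pick a compatible sequence $(\hat\alpha_k)$ and realize it, after adjusting representatives by $F$-conjugation, by genuinely nested cyclic subgroups $\langle c_{k_0}\rangle \ge \langle c_{k_0+1}\rangle \ge \cdots$. Let $c \in F$ be the primitive generator of the maximal cyclic subgroup containing them all, and write $c_k = c^{n_k}$ with $n_k \mid n_{k+1}$.

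Injectivity of $\phi$ defines unique $a_k \in F$ by $\phi^k(a_k) = c^{n_k}$. Combining this with $c_{k+1} = c_k^{m_{k+1}}$, where $m_{k+1} := n_{k+1}/n_k$, yields the algebraic recursion $\phi(a_{k+1}) = a_k^{m_{k+1}}$ for all $k \ge k_0$, and iterating gives $\phi^{j-k}(a_j) = a_k^{M_{k,j}}$ with $M_{k,j} := n_j/n_k$. I then claim the conjugacy classes $[a_k]$ occupy only finitely many values: applying bounded cancellation (Lemma~\ref{bcl}) to the identity $\phi^k(a_k) = c^{n_k}$ together with Lipschitz control of $\phi^k$ bounds $\|a_k\|_\alpha$ uniformly for a fixed marked graph $(\Gamma,\alpha)$. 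By pigeonhole there exist $k < j$ with $[a_k] = [a_j]$, and the iterated recursion identifies $\phi^{j-k}(\langle a_j\rangle)$ with the index-$M_{k,j}$ subgroup $\langle a_j^{M_{k,j}}\rangle$ of an $F$-conjugate of $\langle a_j\rangle$, producing invariance of $\langle a_j\rangle$ under $[\phi^{j-k}]$ with index $M_{k,j}$.

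The hard part will be ensuring $M_{k,j}\ge 2$, i.e., that strict nesting $n_k < n_j$ actually occurs between the repeat indices. My plan is to choose the compatible sequence so that the nesting does not stabilize whenever such a choice exists; in the degenerate case where every compatible sequence in the inverse limit has $n_k$ eventually constant, the intersection $\bigcap_k \langle c_k\rangle$ is a non-trivial cyclic subgroup of $\bigcap_k \phi^k(F)$, and $\phi$ restricts to an automorphism of this latter subgroup (since any element of $\bigcap_k \phi^k(F)$ has preimages to every depth). Handling this borderline situation---either by deriving a contradiction with $\hat\Lambda_k \neq \emptyset$ persisting for all $k$ via the non-surjectivity of $\phi$, or by exhibiting index-$\ge 2$ invariance via a separate argument using that $\phi$-periodic cyclic subgroups inside the automorphism-invariant piece must also witness $\hat\Lambda_k$ components through elements $g_k \notin \phi(F)$ that themselves grow in complexity---is the crux of the proof.
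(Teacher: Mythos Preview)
Your proposal is incomplete, as you yourself acknowledge in the final paragraph, and it contains an earlier gap that you seem to treat as routine. The claim that bounded cancellation and the Lipschitz property of $\phi^k$ bound $\|a_k\|_\alpha$ uniformly is unjustified and almost certainly false in general. Lemma~\ref{bcl} controls cancellation in concatenations; it does not give any lower bound of the form $\|\phi^k(x)\|_\alpha \ge L\|x\|_\alpha$. From $\phi^k(a_k)=c^{n_k}$ you only get $\|a_k\|_\alpha \ge n_k\|c\|_\alpha / K^k$, which tells you nothing useful, and there is no reason $n_k$ should stay bounded. So even before you reach the ``hard part'' of forcing $M_{k,j}\ge 2$, the pigeonhole step has no foundation.

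The paper's proof avoids both problems simultaneously by invoking the expanding $\mathcal A^*$-relative immersion $f:T\to T$ from Theorem~\ref{expimmersion}, which is precisely the machinery the first part of the paper was built to supply. First, malnormality of components of the $[\phi]$-fixed system forces the generators of $\hat\Lambda_k$-components (for $k$ large) to be loxodromic in $T$. Then, rather than bounding $\|a_k\|_\alpha$, the paper pigeonholes on the finitely many \emph{orbits of edges} in $T$: the axes of the $x_k$ must all cross some common edge $\bar e$ in $F\backslash T$. Because $f$ is an expanding immersion, iterates $\bar f^{k_i}(\bar e)$ are immersed subpaths of powers of the loop $\bar\alpha$ for $c$ and contain arbitrarily long powers of $\bar\alpha$; a second pigeonhole on subloops of $\bar\alpha$ then shows $\bar f^m(\bar\alpha)$ is a rotation of $\bar\alpha^d$ for some $m\ge 1$, and the expansion forces $d\ge 2$. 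The expanding immersion is doing exactly the work your approach is missing: it replaces a hopeless word-length bound with control on axes, and the expansion automatically delivers index $\ge 2$.
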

\begin{proof}
Let $k_0 = 2\, \mathrm{rr}(F)^2$ and $\phi: F \to F$ be an injective endomorphism such that $\hat \Lambda_k$ is not empty for all $k \ge 1$. By Lemma~\ref{cyclopullback}, $\hat \Lambda_k$ has cyclic components for $k \ge k_0$. 

So far nothing in the section has used relative immersions but our main motivation for constructing them was this proposition. Note that $\hat \Lambda_1 \neq \emptyset$ automatically implies $\phi$ is not surjective. So $\phi$ is injective but not surjective. By Theorem~\ref{expimmersion}, there is an expanding $\mathcal A^*$-relative immersion $f: T \to T$ for $\phi$, where $\mathcal A^*$ is the $[\phi]$-elliptic free factor system and $T$ is an $(F, \mathcal A^*)$-tree. Recall that the equality $\phi^{-1}\cdot \mathcal A^* = \mathcal A^*$ is part of our definition of $\mathcal A^*$-relative representatives. An element of $F$ is {\it elliptic} if it fixes a point on $T$, i.e., its conjugacy class is carried by $\mathcal A^*$. By construction of the $[\phi]$-elliptic free factor system (Proposition~\ref{canonical}), there is an $m \ge 1$ such that $[\phi^{k}(g)]$ is carried by the maximal $[\phi]$-fixed free factor system $\mathcal A$ for all $k \ge m$ and elliptic $g \in F$. %By $[\phi]$-invariance of $\mathcal A$, we get that $[\phi^k(g)]$ is carried by $\mathcal A$ for all $k \ge m$ and elliptic $g\in F$. 
 As $\mathcal A$ is a $[\phi]$-fixed free factor system, the components of $\phi^k(\mathcal A)$ are free factors of $F$ for all $k$.

Suppose $[\phi^k(F) \cap g\phi^k(F) g^{-1}] \in \hat \Lambda_k$ for some $k \ge k_1 = \max(k_0, m+1)$. As $k \ge k_0$, this component is cyclic and we may assume it has a representative $\phi^k(F) \cap g\phi^k(F) g^{-1}$ generated by a nontrivial element $\phi^k(x) \in \phi^k(F)$. In particular, there exists $\phi^k(y) \in \phi^k(F)$ and $g \in F \setminus \phi(F)$ such that $\phi^k(x)  = g \phi^k(y) g^{-1}$. We first show that $x$ is {\it loxodromic}, i.e., not elliptic. Suppose $x \in F$ is elliptic, i.e., $\mathcal A^*$ carries $[x]$. Then some free factor $\phi^k(A) \in \phi^k(\mathcal A)$ carries $[\phi^k(x)] = [\phi^k(y)]$ as $k\ge m+1$. So $y$ must be elliptic too since $\phi^{-1}\cdot \mathcal A^* = \mathcal A^*$ and, as noted in the previous paragraph, $\phi^m(A)$ carries $\phi^m(x)$ and $\phi^m(y)$. Thus, up to conjugation in $F$, we may assume $\phi^m(x), \phi^m(y) \in \phi^m(A)$ and, up to conjugation in $\phi^{k-m}(F)$, we may assume $\phi^k(x), \phi^k(y) \in \phi^k(A)$. {\it Malnormality} of the free factor $\phi^k(A)$ implies $g \in \phi^{k-m}(F)$. But $k-m \ge 1$ leads to the contradiction $g \in \phi(F)$.
%Since $g \notin \phi(F)$, we have $g \notin \phi^k(A)$. But this contradicts the {\it malnormality} of free factor systems: elements in a free factor system, e.g. $x,y$ in $\phi^k(\mathcal A)$, cannot be conjugated by an element not in the free factor system, e.g. $g$ not in $\phi^k(\mathcal A)$. 
Therefore, $x$ is loxodromic. The integer $k \ge k_1$ and component $[\,\langle \phi^k(x) \rangle\,] \in \hat \Lambda_k$ were arbitrary, so all components of $\hat \Lambda_k$ are loxodromic for $k \ge k_1$.
Recall from the proof of Lemma~\ref{strictincr} that there is an infinite descending chain:
\[ \hat \Lambda_1 \succcurlyeq \hat \Lambda_2 \succcurlyeq \hat \Lambda_3 \succcurlyeq \cdots \]
Since $\hat \Lambda_{k_1}$ has finitely many components and the components are all cyclic, there is a cyclic component in $\hat \Lambda_{k_1}$ which ``carries'' some component of $\hat \Lambda_k$ for all $k \ge k_1$. Suppose this component has a representative %$\phi^{k_1}(F) \cap g\phi^{k_1}(F) g^{-1}$ 
generated by $c = \phi^{k_1}(x) \in \phi^{k_1}(F)$. Then for all $k \ge k_1$, there is a cyclic component of $\hat \Lambda_k$ with a representative generated by $\phi^k(x_k) \in \phi^k(F)$ such that $\langle c \rangle$ carries $\langle \phi^k(x_k) \rangle$.
If we let $\alpha \subset T$ be the axis for element $c$, then the previous sentence implies there are sequences of element $(x_k)_{k \ge k_1}$ and $(t_k)_{k \ge k_1}$ such that the (unoriented) axes of $\phi^k(x_k)$ are all translates $t_k \cdot \alpha$ of the (unoriented) axis $\alpha$. 

For any $k \ge k_1$, replace $x_k$ with its inverse if necessary so that the action of $\phi^k(x_k)$ on its axis is coherent (respects orientation) with the action of $c$ on $\alpha$. By passing to a strictly increasing subsequence $(k_i)_{i\ge 1}$, we may assume there is an oriented edge $e$ of $T$ such that the axes $\alpha_{k_i}$ of $(x_{k_i})_{i \ge 1}$ all contain a translate of $e$. We now pass to the graph of groups $\Gamma = F \backslash T$ in order to avoid mentions of translates and orbits. The axis $\alpha$ will project to an immersed loop $\bar \alpha$ in $\Gamma$ representing $c$ and axes $\alpha_{k_i}$ project to immersed loops $\bar \alpha_{k_i}$ that represent $x_{k_i}$ and whose $\bar f^{k_i}$-image is a power $\bar \alpha^{d_i}$ up to rotation/cyclic homotopy, where $d_i \ge 1$ and $\bar f: \Gamma \to \Gamma$ is the immersion induced by $f: T \to T$. The edge $e$ projects to an edge $\bar e$ that is contained in all the loops $\bar \alpha_{k_i}$ for $i \ge 1$

 The proof now mimics the proof of \cite[Proposition~3.11]{JPMa}. Since $f$ is an immersion, it maps axes in $T$ onto axes and $\bar f$ maps immersed loops in $\Gamma$ to immersed loops. So $\bar f^{k_i}(\bar e)$ is a subpath of the immersed loop $\bar f^{k_i}(\bar \alpha_{k_i}) \simeq \bar \alpha^{d_i}$ for all $i$, and since $f$ is expanding, $\bar f^{k_i}(\bar e)$ contains arbitrarily long powers of $\bar \alpha$ as $i \to \infty$. Set $N$ to be the number of subpaths of $\bar \alpha$ (up to rotation) that are also loops. Choose $n \gg 0$ such that $\bar f^{k_n}(\bar e)$ contains the loop $\bar \alpha^{N+1}$ as a subpath. Then $\bar f^{k_{n+1}}(\bar e)$ is a subpath of $\bar \alpha^{d_{n+1}}$ that contains the loop $\bar f^{k_{n+1}-k_n}(\bar \alpha^{N+1})$ as a subpath. In fact, for all positive integers $j \le N+1$, the loop $\bar f^{k_{n+1}-k_n}(\bar \alpha^j)$ is a subpath of $\bar \alpha^{d_{n+1}}$. Thus, there is a sequence of loops $(\epsilon_j)_{j=1}^{N+1}$ that are subpaths of $\bar \alpha$ and strictly increasing positive integers $(s_j)_{j=1}^{N+1}$ such that $\bar f^{k_{n+1}-k_n}(\bar \alpha^j)\cdot \epsilon_j$ is $\bar \alpha^{s_j}$ up to rotation.
 By definition of $N$ and pigeonhole principle, some $\epsilon_t = \epsilon_{t'} = \epsilon$ for some $t < t'$ and $\bar f^{k_{n+1}-k_n}(\bar \alpha^{t'-t})$ is $\bar \alpha^{s_{t'} - s_{t}}$ up to rotation. 
  Lifting back to the $(F, \mathcal A)$-tree $T$, we find that $f^{k_{n+1}-k_n}$ maps the axis $\alpha$ to a translate of itself. So $\phi^{k_{n+1}-k_n}(c)$ is conjugate to a nontrivial power $c^d$ and $d \ge 2$ since $f$ is expanding.
\end{proof}
\begin{rmk} A careful examination of the proof reveals that it can be made ``more effective'' with the pigeonhole principle. For any expanding relative immersion $f$, we can construct a specific number $k = k(f)$ for which 
$ \hat \Lambda_k \neq \emptyset$ implies $\phi$ has an invariant cyclic subgroup system with index $d \ge 2$. Thus, one would not have to check infinitely many iterated pullbacks of $\phi$ to know that an invariant cyclic subgroup system with index $d \ge 2$ exists. Of course, for this to be useful, we need to know whether finding the $[\phi]$-elliptic free factor system and expanding relative immersion for $\phi$ can be made effective. Finally, we remark that the converse of Proposition~\ref{cycloinv} holds but we omit the proof as it is not needed for the rest of the paper.
\end{rmk}

In the next section, we will actually be using the contrapositive of the proposition: if $\phi$ is injective and has no invariant cyclic subgroup system with index $d \ge 2$, then iterated pullbacks of $\phi$ stabilize. In this case, we get control on the types of annuli in the mapping torus $F*_\phi$, which allows us to prove the main theorem: $F*_\phi$ is word-hyperbolic if $\phi$ additionally has no fixed cyclic subgroup system, i.e., invariant cyclic subgroup system with index $d = 1$.

%%%%%%%%%%%%%%%%%%%%%%%%%%%%%%%%%%%%%%%%%%%%%%%%%%%%%%%%%%
\subsection{Hyperbolic endomorphisms}\label{secHypEnds}
%%%%%%%%%%%%%%%%%%%%%%%%%%%%%%%%%%%%%%%%%%%%%%%%%%%%%%%%%%
We are finally ready to put all the major pieces together. The first piece involves understanding the relationship between {\it annuli} in the mapping torus $F*_\phi$ and iterated pullbacks of $\phi$. The second piece involves building on Brinkmann's theorem to show that {\it atoroidal} injective endomorphisms are {\it hyperbolic}.
In our previous work \cite{JPMa}, we used these two pieces to give sufficient conditions for the mapping torus to be word-hyperbolic.

\begin{thm}[{\cite[Theorem~6.4]{JPMa}}]\label{blueprint} If $f: \Gamma \to \Gamma$ is a based-hyperbolic graph map and all strictly bidirectional annuli in its mapping torus $M_f$ are shorter than some integer, then $\pi_1(M_f)$ is word-hyperbolic.
\end{thm}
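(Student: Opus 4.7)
The plan is to invoke the Bestvina-Feighn combination theorem. The fundamental group $\pi_1(M_f) = F *_\phi$ is the fundamental group of a graph of groups with a single vertex and a single loop edge, both labeled by $F$, with edge monomorphisms $\mathrm{id}_F$ and $\phi$. Since $F$ is free (hence word-hyperbolic) and the edge groups are finitely generated subgroups of a free group (hence quasi-convex), the combination theorem reduces word-hyperbolicity of $\pi_1(M_f)$ to verifying the annuli flaring condition for the action on the Bass-Serre tree $\mathcal T$.

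First I would set up the correspondence between annuli in $M_f$ and geodesics in $\mathcal T$. An annulus of length $n$ is a pair $(g,\gamma)$ where $g \in \pi_1(M_f)$ is nontrivial and $\gamma$ is a length-$n$ geodesic in $\mathcal T$ fixed setwise by $g$. Since $g$ is elliptic on $\mathcal T$, after conjugation we may assume $g \in F$. The natural orientation of $\mathcal T$ (where $t$ translates positively along its axis) splits geodesics into monotone directed segments, giving two kinds of annuli: unidirectional annuli, whose geodesic is a single directed segment, and strictly bidirectional annuli, whose geodesic changes direction at some interior vertex. By hypothesis, every strictly bidirectional annulus has length at most some constant $N$.

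Next I would show that unidirectional annuli of sufficient length flare. Under the identification of $\mathcal T_F$ with $\star$-descendants, a length-$n$ unidirectional annulus based at $\star$ corresponds to an element $g \in F$ together with $y \in F$ such that $\phi^n(y) = g$ up to conjugation. The based-hyperbolic hypothesis on $f$ is exactly the metric expansion condition stating that as one ascends from a long iterate $\phi^n(y)$ to its preimages under $\phi$, the combinatorial length in $\Gamma$ grows at a definite exponential rate. Combined with the bounded cancellation lemma (Lemma~\ref{bcl}) applied to iterates of $f$, this produces a uniform flaring constant: for $n$ larger than some threshold, the midpoint length of a unidirectional annulus is exponentially smaller than its endpoint lengths.

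Finally, I would combine the two estimates. If an annulus has length greater than $2N$, then either it is unidirectional of length $> 2N$ and flares by the previous step, or it is bidirectional and its decomposition into two directed halves must contain a unidirectional subannulus of length greater than $N$, which again flares. Thus the annuli flaring condition holds, and the Bestvina-Feighn combination theorem gives the word-hyperbolicity of $\pi_1(M_f)$. The main obstacle I anticipate is converting the based-hyperbolic condition into a genuinely uniform exponential flaring estimate for unidirectional annuli: a priori, different nontrivial conjugacy classes could require different rates, so one must use the finiteness of short conjugacy classes together with bounded cancellation to extract a single exponential rate and a single length threshold that work across all of $F$.
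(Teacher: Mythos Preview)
Your overall strategy matches the paper's: verify the annuli flaring condition and apply the Bestvina--Feighn combination theorem. The paper does not prove this theorem here (it is quoted from \cite{JPMa}), but it sketches the argument for the generalization in Theorem~\ref{blueprint2}, and that sketch follows exactly this route. However, your execution has two genuine gaps.

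First, you mischaracterize the based-hyperbolic condition. You write that it ``is exactly the metric expansion condition stating that as one ascends \ldots\ the combinatorial length grows at a definite exponential rate.'' That is not what based-hyperbolicity says: the inequality $\lambda\,|f^n(\sigma)| \le \max(|f^{2n}(\sigma)|, |\sigma|)$ is a \emph{two-sided} flaring condition, not one-sided growth. For a unidirectional annulus with middle ring $h_0$, you do not get to conclude that one specific end is large; you only get that \emph{one of the two} ends dominates $|h_0|$. This is exactly what flaring requires, so the conclusion survives, but your stated reasoning does not.

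Second, and more seriously, you omit the trace-thickness parameter. In the combination theorem as the paper states it, the flaring condition must hold for $\rho$-thin annuli of girth at least $H(\rho)$, for some function $H$. The rings $h_i$ are \emph{not} literally $f$-iterates of one another: consecutive rings differ by a based homotopy whose length is controlled by the trace, i.e., by $\rho$. The paper's sketch (Theorem~\ref{blueprint2}) handles this by an explicit induction accumulating an error of order $\rho\cdot\frac{K^M-1}{K-1}$, which is then absorbed by choosing $H(\rho)$ large enough. Your proposal treats rings as exact iterates and never introduces $H$.

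Finally, your bidirectional step is too coarse. You say a long bidirectional annulus ``contains a unidirectional subannulus of length greater than $N$, which again flares.'' Flaring of a subannulus does not automatically give flaring of the original annulus: the midpoints differ, and the exponential gain must be strong enough to survive the offset. The paper's sketch handles this by choosing $M=(2L+r)m$ with $r$ large enough that $2^{L+r}\ge 3K^{Lm}$, so that even after shifting by at most $L$ edges (the bound on the short half of a bidirectional annulus coming from the strictly-bidirectional hypothesis) one still beats the Lipschitz constant. Without this quantitative buffer, the argument does not close.
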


We will define the new terms in the theorem as we go. Suppose $\phi: F \to F$ is an injective endomorphism and $f:\Gamma \to \Gamma$ is its topological representative. Recall that the (topological) mapping torus of $f$ is the quotient space $M_f = \left(\Gamma \times [0,1] \right)/{\sim_f}$ with the identification $(x, 1) \sim_f (f(x),0)$ for all $x \in \Gamma$ and the algebraic mapping torus $F*_\phi$ is isomorphic to the fundamental group $\pi_1(M_f)$. The {\bf edge-space} of $M_f$ will be the cross-section in $M_f$ represented by $\Gamma \times \{ \frac{1}{2} \}$. 

Strictly bidirectional annuli of the mapping torus $M_f$ with length $2L$ can be thought of as the iterated pullbacks $\hat \Lambda_L$ of $\phi$ but it does take a bit of work to give the correspondence. Fix a basepoint $\nu \in S^1$. For integers $L_1 < L_2$, an {\bf (topological) annulus} in $M_f$ of length $L = L_2 - L_1$ is a homotopy of loops $h:S^1 \times [L_1, L_2] \to M_f$ satisfying the following conditions:
\begin{enumerate}
\item it is transverse to the edge-space of $M_f$;
\item the $h$-preimage of the edge-space is $S^1\times ( [-L_1, L_2] \cap \mathbb Z )$;
\item for integers $i \in [L_1, L_2]$, the {\bf rings} of the annulus $h_i = h(\cdot, i):S^1 \to M_f$ are locally injective every where except possibly at the basepoint $\nu$;
\item and for the {\bf trace} of the basepoint $h^\nu = h(\nu, \cdot ):[L_1, L_2] \to M_f$, no subpath between consecutive integer coordinates $[i, i+1]$ is homotopic rel. endpoints into the edge-space.
\end{enumerate}
%A {\bf homotopy of annuli} $h, h':S^1 \times [L_1, L_2]$ is a homotopy $H:S^1 \times [L_1, L_2] \times [0, 1] \to M_f$ such that $H(\cdot, \cdot, 0)= h$, $H(\cdot, \cdot, 1)= h'$, and $H(\cdot, \cdot, t):S^1\times [L_1, L_2] \to M_f$ are all annuli.  
This is the definition used in \cite{JPMa}. In light of our last description of $\hat \Lambda_k$ in the previous section, we give alternative definition. Let $\mathcal T$ be the Bass-Serre tree for $F*_\phi$ and $\star$ be the point whose stabilizer is $F$. An {\bf (algebraic) annulus} $([\alpha], p_\alpha)$ in $F*_\phi$ of length $L \ge 2$ is a choice a nontrivial conjugacy class $[\alpha]$  in $F*_\phi$ and an orbit of a geodesic path in $p_\alpha \subset \mathcal T$ of length $L$ fixed by $\alpha$. Since elements of $[\alpha]$ act on $\mathcal T$ with fixed points, we can always choose a representative $\alpha \in F$. Technically, we have defined a {\it conjugacy class} of algebraic annuli but the distinction will not be relevant for us.

\begin{lem}\label{algandtop} Let $\phi:F \to F$ be injective and $f:\Gamma \to \Gamma$ be a topological representative for $[\phi]$. There is a one-to-one correspondence between (homotopy classes of) annuli in $M_f$ of length $L \ge 1$ and (conjugacy classes of) annuli in $F*_\phi$ of length $L+1 \ge 2$.
\end{lem}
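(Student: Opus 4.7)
My approach is to translate between the two kinds of annuli via the Bass--Serre tree $\mathcal{T}$ of $F*_\phi$. The mapping torus $M_f$ has a natural graph-of-spaces decomposition with one vertex space and one edge space (the cross-section $\Gamma \times \{1/2\}$) that realizes the HNN splitting; its universal cover $\tilde M_f$ has $\mathcal{T}$ as its dual tree, so edges of $\mathcal{T}$ correspond to lifts of the edge space and vertices to lifts of the vertex space.

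For the forward direction, given a topological annulus $h: S^1 \times [L_1, L_2] \to M_f$ of length $L = L_2 - L_1$, all rings $h_i$ are freely homotopic to one another through the annulus, so they represent a common conjugacy class $[\alpha]$ in $\pi_1(M_f) = F*_\phi$. Choosing a representative $\alpha$ amounts to choosing an $\alpha$-equivariant lift $\tilde h: \mathbb{R} \times [L_1, L_2] \to \tilde M_f$. By condition (2), each $\mathbb{R} \times \{i\}$ lands in a lift of the edge space, corresponding to an $\alpha$-invariant edge $e_i \in \mathcal{T}$. The trace $\tilde h^\nu$ then projects to a path $e_{L_1} e_{L_1+1} \cdots e_{L_2}$ in $\mathcal{T}$. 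Condition (4) --- that no subpath of the trace between consecutive integer times is homotopic rel.\ endpoints into the edge space --- is equivalent to $e_i \ne e_{i+1}$ for every $i$, so the path is reduced and hence a geodesic $p_\alpha$ of combinatorial length $L+1$ fixed pointwise by $\alpha$. The pair $([\alpha], p_\alpha)$ depends on the topological annulus only up to homotopy and on the choice of lift only up to conjugation in $F*_\phi$, giving a well-defined algebraic annulus of length $L+1$.

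For the reverse direction, given $([\alpha], p_\alpha = e_{L_1}\cdots e_{L_2})$, pick a representative $\alpha$ and an actual lift of the geodesic. Each lift $\tilde E_i$ of the edge space corresponding to $e_i$ is a cover of $\Gamma$ on which $\alpha$ acts by a nontrivial deck transformation, and so it hosts an $\alpha$-equivariant loop $\tilde h_i: \mathbb{R} \to \tilde E_i$. Since consecutive edges of a reduced path share exactly one vertex, the lifts $\tilde E_i, \tilde E_{i+1}$ are incident to a common lift of the vertex space (also $\alpha$-fixed), and one can $\alpha$-equivariantly connect $\tilde h_i$ to $\tilde h_{i+1}$ by a strip whose trace passes through this shared vertex-space lift. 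Concatenating these strips and descending to the $\alpha$-quotient gives a map $h: S^1 \times [L_1, L_2] \to M_f$. A small homotopy makes the rings locally injective away from the basepoint and the map transverse to the edge space, producing conditions (1)--(3); reducedness of $p_\alpha$ yields (4). A direct verification shows the two constructions are mutually inverse modulo homotopy of topological annuli and $F*_\phi$-conjugacy of algebraic annuli.

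The main technical point is the equivalence of condition (4) with tree-theoretic reducedness: this uses that an edge-space lift is a deformation retract of its regular neighborhood in $\tilde M_f$, so a trace subpath between two integer times is homotopic rel.\ endpoints into the edge space exactly when its lift stays within a single edge-space lift, i.e.\ exactly when $e_i = e_{i+1}$. The remainder is essentially bookkeeping about the graph-of-spaces structure, together with the standard correspondence between $\alpha$-equivariant strips in $\tilde M_f$ and homotopies in the quotient.
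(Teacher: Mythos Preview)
Your proposal is correct and follows essentially the same approach as the paper: both arguments pass to the universal cover $\tilde M_f$, project to the Bass--Serre tree $\mathcal T$, and identify the annulus conditions with the data of a nontrivial conjugacy class fixing a reduced edge-path of the right length. The paper's reverse direction is slightly more concrete (it explicitly builds length-$1$ annuli by distinguishing the two orientation cases and then concatenates), whereas you work uniformly with $\alpha$-equivariant strips in $\tilde M_f$; these are the same construction in different packaging. One small omission: in the forward direction you should note that condition~(3) is what guarantees $[\alpha]$ is nontrivial, since a based loop in a graph that is locally injective away from the basepoint cannot be null-homotopic.
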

\begin{proof}
Given a topological annulus $h$ in $M_f$ of length $L \ge 1$, then the generator of the image $\pi_1(h):\mathbb Z \to \pi_1(M_f)$ determines a conjugacy class $[\alpha]$ in $\pi_1(M_f) \cong F*_\phi$. Condition~(3) ensures $\pi_1(h)$ is injective and $\alpha$ is nontrivial. Let $\tilde h:\mathbb R \times [L_1, L_2] \to \tilde M_f$ be the lift of the annulus to the universal cover of $M_f$. Collapsing the $\tilde \Gamma$-direction of $\tilde M_f$ produces the Bass-Serre tree $\mathcal T$ and Condition~(2) ensures the induced map \( \bar h:\mathbb R \times ([L_1, L_2]\cap \mathbb Z) \to \mathcal T \) is constant on the first factor and its image is a collection of edge-midpoint; each ring $h_i$ determines a conjugacy class in the stabilizer of the corresponding edge-midpoint. By Conditions $(1)$ and $(4)$, the midpoints extend to a geodesic edge-path $p_\alpha$ in $\mathcal T$ of length $L+1$ fixed by $\alpha$. 

The other direction works in a similar fashion. For any conjugacy class $[\alpha]$ in $F*_\phi$ and two consecutive edge-midpoints in $\mathcal T_F$ fixed by $\alpha \in F$, we can construct an annulus of length $1$ as follows. Fix a basepoint in the edge-space and assume $\star$ is the vertex between the midpoints. If the midpoints are increasing/decreasing, then $\alpha = \phi(x) \in \phi(F)$ without loss of generality. Let $\sigma, \rho$ be based loop in the edge-space representing $x, \phi(x) \in F$ and $\tau$ a based loop in $M_f$ representing $t \in F*_\phi$. Then the based path $\sigma \cdot \tau \cdot \bar \rho \cdot \bar \tau $ is null-homotopic and can be extended to an annulus with ends $\sigma, \rho$ and trace $\tau$. If the midpoints are at the same height, then \( \alpha = \phi(x) \in \phi(F)\) and \(\alpha = g\phi(y) g^{-1} \in  g \phi(F) g^{-1}\) for some $g \notin \phi(F)$. Let $\sigma, \rho, \gamma$ be based loops in the edge-space representing $x, y, g \in F$ respectively and $\tau$ a based loop in $M_f$ representing $t \in F*_\phi$. So the based path $\sigma \cdot \tau \gamma \bar \tau \cdot \bar \rho \cdot \tau \bar \gamma \bar \tau $ is null-homotopic and can be extended to an homotopy between $\sigma, \rho$ satisfying Conditions~(1)-(3) and having trace $\tau \gamma \bar \tau$. This trace satisfies Condition~(4) because $t g t^{-1} \notin F$ and thus the homotopy is a topological annulus with ends $\sigma, \rho$. Given a geodesic path in $\mathcal T$ of length $L \ge 2$ fixed by $\alpha$, we can replace the path with a translate in $\mathcal T_F$ without affecting the class $[\alpha]$ and then construct a topological annulus in $M_f$ of length $L-1$ by concatenating the length 1 annuli from the preceding discussion. This concludes the correspondence between topological and algebraic annuli. 
\end{proof}

The natural orientation on $\mathcal T$ gives a dichotomy for annuli $([\alpha], p_\alpha)$ in $F*_\phi$:
\begin{enumerate}
\item all edges of $p_\alpha$ have the same orientation--- we say $\alpha$ is {\bf unidirectional}.
\item $p_\alpha$ switches from increasing to decreasing exactly once --- we say $\alpha$ is {\bf bidirectional}.
\end{enumerate}
Each vertex of $\mathcal T$ has exactly one outgoing edge and hence the geodesic $p_\alpha$ cannot switch from decreasing to increasing because $[F:F]=1$. For similar reasons, bidirectional annuli do not exist if and only if $\phi(F)$ is malnormal in $F$. The next proposition  generalizes this equivalence of bidirectional annuli (or lack thereof) and malnormality.

 An annulus $([\alpha], p_\alpha)$ in $F*_\phi$ is {\bf strictly bidirectional} if the switch from increasing to decreasing occurs at the midpoint of $p_\alpha$. %Evidently, every bidirectional annulus has a maximal strictly bidirectional {\it subannulus} that comes from restricting to the appropriate maximal subsegment of the geodesic path for the annulus. 
\begin{lem}\label{biannulibound}Let $\phi:F \to F$ be injective. For any integer $L \ge 1$, the mapping torus $F*_\phi$ has a strictly bidirectional annulus of length $2L$ if and only if $\hat \Lambda_L[\phi]$ is not empty.
\end{lem}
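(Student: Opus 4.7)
The plan is to argue that the lemma follows essentially from the graphical description of $\hat\Lambda_L$ given earlier: components of $\hat\Lambda_L$ are exactly nontrivial simultaneous stabilizers of $F$-orbits of $L^{\text{th}}$-generation pairs of vertices in $\mathcal T_F$ whose only common ancestor is $\star$. A strictly bidirectional annulus of length $2L$, once its peak is placed at $\star$, is precisely such a pair together with a nontrivial element in its simultaneous stabilizer. So the proof is a direct translation between the two pictures.

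For the ``only if'' direction, I would start with a strictly bidirectional annulus $([\alpha],p_\alpha)$ of length $2L$. Since $\alpha$ acts elliptically on $\mathcal T$, after conjugation $\alpha\in F$ and (conjugating further by an element of $F$) the midpoint vertex $v_0$ of $p_\alpha$ may be taken to be $\star$, with $p_\alpha\subset \mathcal T_F$. Strict bidirectionality then forces the two endpoints of $p_\alpha$ to be two $L^{\text{th}}$-generation descendants of $\star$ whose LCA is exactly $\star$. Writing these endpoints as $t^{-L}\star$ and $gt^{-L}\star$ (after one more conjugation placing one endpoint at $t^{-L}\star$), the LCA condition is equivalent to the first $t^{-1}$-children on the two descending paths being distinct, i.e.\ $g\notin\phi(F)$. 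Since $\alpha$ fixes both endpoints, it lies in the simultaneous stabilizer $\phi^L(F)\cap g\phi^L(F)g^{-1}$, and this intersection, being nontrivial and having $g\notin\phi(F)$, represents a component of $\hat\Lambda_L[\phi]$.

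For the converse, suppose a component $[\phi^L(F)\cap g\phi^L(F)g^{-1}]\in\hat\Lambda_L[\phi]$ with $g\notin\phi(F)$ and pick a nontrivial $\alpha$ in this intersection. Let $p_\alpha$ be the (unique) geodesic in $\mathcal T_F$ from $t^{-L}\star$ to $gt^{-L}\star$. Because $g\notin\phi(F)$ these two $L^{\text{th}}$-generation descendants lie in different $F/\phi(F)$-cosets, so their paths up to $\star$ diverge at the very first edge; hence $\star$ is their LCA and $p_\alpha$ has length $2L$ with its direction switch occurring exactly at the midpoint vertex $\star$. Thus $p_\alpha$ is strictly bidirectional of length $2L$, and it is fixed by $\alpha$ since $\alpha$ stabilizes both endpoints; this produces a strictly bidirectional annulus of the required length.

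The only point that needs a little care is verifying that ``strictly bidirectional of length $2L$'' translates exactly to the LCA condition after the conjugation sequence, and conversely that an element realizing a nontrivial intersection produces a path whose peak sits at $\star$ rather than deeper. Both come down to the same observation about the rooted orientation of $\mathcal T_F$: the LCA of two $L^{\text{th}}$-generation descendants is $\star$ iff the coset representatives in $F/\phi(F)$ differ, which is precisely the defining condition $g\notin\phi(F)$ for $\hat\Lambda_L$. No other tools are needed beyond this direct dictionary.
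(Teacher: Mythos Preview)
Your proposal is correct and follows essentially the same approach as the paper's own proof: both translate directly between the definition of a strictly bidirectional annulus (a geodesic of length $2L$ fixed by a nontrivial element, with the direction switch at its midpoint) and the graphical description of $\hat\Lambda_L$ as nontrivial simultaneous stabilizers of pairs of $L^{\text{th}}$-generation descendants of $\star$ whose least common ancestor is $\star$. Your version is more explicit than the paper's in spelling out why the LCA condition is equivalent to $g\notin\phi(F)$, but the underlying argument is identical.
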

\begin{proof} %Algebraic annuli and the graphical description of $\hat \Lambda_L$ make this a trivial observation. 
If there is a strictly bidirectional annulus $([\alpha], p_\alpha)$ in $F*_\phi$ of length $2L$, then we may assume the midpoint of $p_\alpha$ is $\star$ after replacing $p_\alpha$ with a translate. Then the stabilizer of $p_\alpha$ contains $\alpha$ and so it is nontrivial. Since the stabilizer of $p_\alpha$ is the stabilizer of its (ordered) endpoints, the conjugacy class of the stabilizer is a component of $\hat \Lambda_L[\phi]$.

If $\hat \Lambda_L[\phi]$ is not empty, then some path in $\mathcal T_F$ of length $2L$ with midpoint at $\star$ has a nontrivial stabilizer. Choose a nontrivial element $\alpha$ in this stabilizer and $([\alpha], p_\alpha)$ is a strictly bidirectional annulus in $F*_\phi$ of length $2L$.
\end{proof}

Let $\phi:F \to F$ be injective and $f: \Gamma \to \Gamma$ be a topological representative for $\phi$. If $[\phi]$ has no invariant cyclic subgroup system with index $d \ge 2$, then there is an integer $L \ge 1$ for which $\hat \Lambda_L[\phi]$ is empty (Proposition~\ref{cycloinv}) and all strictly bidirectional annuli in $M_f$ are shorter than $2L$ (Lemma~\ref{biannulibound}). This sets up the second hypothesis of Theorem~\ref{blueprint}.

As for the first hypothesis, we begin by defining {\it (based-)\,hyperbolicity}. For a real number $\lambda  > 1$ and integer $n \ge 1$, we say a graph map $f: \Gamma \to \Gamma$ is {\bf (based-)\,$\boldsymbol{(\lambda, n)}$-hyperbolic} if all (based) loops $\sigma: S^1 \to \Gamma$ (with the basepoint mapped to a vertex) satisfy the inequality \[\lambda\, |f^n(\sigma)| \le \max(\,|f^{2n}(\sigma)|, |\sigma|\,) ~\] 
where $|\cdot|$ is the combinatorial length after tightening; whether tightening respects a basepoint (based homotopy) or not (free homotopy) will be apparent from the context. %These conditions will be referred to as $(\lambda, n)$-Conditions~\ref{cond1}/\ref{cond2} respectively.

When a graph map is (based-)\,$(\lambda', n)$-hyperbolic for some $\lambda'>1, n\ge 1$, then it is (based-)\,$(\lambda^k, nk)$-hyperbolic for all $k \ge 1$ and $\lambda \in (1, \lambda']$. So the constants can be omitted and when we do need them, we can assume $\lambda > 1$ is any preferred integer. As defined, hyperbolicity is a property of the homotopy class $[f]$. Meanwhile, based-hyperbolicity is a property of the map $f$. %In the setting we will be interested in, the former will imply the latter.

A graph map $f: \Gamma \to \Gamma$ is {\bf atoroidal} if it is $\pi_1$-injective and there is no nontrivial loop $\sigma$ in $\Gamma$ and integer $k \ge 1$ such that $f^k(\sigma) \simeq \sigma$. This again is a property of the homotopy class $[f]$. Bestvina-Feighn-Handel showed, as a step in \cite[Theorem~5.1]{BFH97}, that hyperbolic atoroidal homotopy equivalences are based-hyperbolic. Their argument is reproduced here, modified to drop the $\pi_1$-surjectivity assumption. This allows us to consider the growth rate of loops without basepoints for the rest of the section

\begin{lem}\label{tobased} If the graph map $f: \Gamma \to \Gamma$ is atoroidal and $(3, n)$-hyperbolic, then it is based-$(2, n')$-hyperbolic. %Thus hyperbolicity implies based-hyperbolicity for atoroidal maps.
\end{lem}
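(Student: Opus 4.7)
My plan is to decompose any based loop into a ``tail'' and a cyclically reduced ``core'', bound the tail's growth under iteration via bounded cancellation, and use atoroidality to force the core to dominate after enough iterations. For a based loop $\sigma$ at $v \in \Gamma$, tighten rel basepoint to write $\sigma \simeq \mu \cdot \rho \cdot \bar{\mu}$, where $\rho$ is a cyclically reduced immersed loop and $\mu$ is an immersed path from $v$ to the basepoint of $\rho$. Then $|\sigma| = 2|\mu| + \|\sigma\|$, where $|\sigma|$ denotes based length and $\|\sigma\|$ denotes free length, so the discrepancy between them is exactly the tail contribution $2|\mu|$.

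The first step is to control tail growth via bounded cancellation (Lemma~\ref{bcl}). Applying $f$ to $\sigma = \mu \cdot \rho \cdot \bar{\mu}$ and based-tightening produces a new decomposition $f(\sigma) \simeq \mu_1 \cdot \rho_1 \cdot \bar{\mu}_1$. Since $\rho$ is cyclically reduced and $f$ is $\pi_1$-injective, the cyclic reduction of $f(\rho)$ absorbs at most a bounded cancellation amount into the tails on each side, giving $|\mu_1| \le K|\mu| + C$ for $K = K(f)$ and $C = C(f)$. Iterating, $|\mu_k|$ is bounded by a geometric expression in $K^k |\mu|$ plus a geometric error term.

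The second step uses atoroidality. Since $f$ is atoroidal and $\pi_1$-injective, the free homotopy classes $[f^k(\rho)]$ are pairwise distinct for any nontrivial $\rho$ (a repeat would produce a periodic class, hence a toroidal loop). Because $\Gamma$ is finite, there are only finitely many cyclically reduced immersed loops of any bounded length, so $\|f^{k n}(\rho)\|$ cannot stay bounded. Combined with free-$(3, n)$-hyperbolicity, the sequence $\|f^{k n}(\sigma)\|$ must eventually enter a strictly-growing regime of rate $3$ per $n$-step; the ``shrinking phase'' before this entry is bounded by $\log_3\|\sigma\|$ steps, since in that phase each step reduces the free length by a factor of at least $3$. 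Since $(3, n)$-hyperbolicity upgrades to $(3^N, Nn)$-hyperbolicity for every $N \ge 1$, I am free to pick $N$ large enough that $3^N$ dominates $K^{Nn}$ with room to spare.

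The third step verifies $2|f^{n'}(\sigma)| \le \max(|f^{2n'}(\sigma)|, |\sigma|)$ for $n' = Nn$ by case analysis on whether the free length dominates the tail at the middle iterate. If the free length dominates, then all three terms are essentially determined by their free parts, and the boosted free inequality gives the based inequality with slack factor $2$ rather than $3^N$. If the tail dominates, then $\|f^{n'}(\sigma)\|$ is small relative to $|\mu_{n'}|$, which (by Step 1) forces $|\mu_0|$ to be comparable to $K^{-n'}|\mu_{n'}|$, and the based length $|\sigma| \ge 2|\mu_0|$ combined with the tail growth bound ports the based inequality from the initial side. The main obstacle is reconciling the uniformity of $n'$ across all based loops $\sigma$ in the face of the $\sigma$-dependent shrinking phase from Step 2; this is resolved by observing that the shrinking phase can only help the based inequality (it makes $|\sigma|$ large relative to $|f^{n'}(\sigma)|$), so uniform choice of $n'$ is determined purely by overtaking the Lipschitz tail growth, which depends only on $f$ and $n$.
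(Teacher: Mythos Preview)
Your Step~4 contains a fatal arithmetic error: you claim you can choose $N$ so that $3^N$ dominates $K^{Nn}$, but $3^N/K^{Nn}=(3/K^n)^N$ tends to $0$ rather than $\infty$ whenever $K^n\ge 3$, and nothing in the hypotheses forces $K^n<3$. Indeed, for an expanding irreducible train track with stretch factor $\lambda$ one has $K\ge\lambda$, and $(3,n)$-hyperbolicity typically holds only once $\lambda^n\ge 3$, so $K^n\ge 3$ is the generic situation. Case~B has an independent gap: from $|\mu_{n'}|\le K^{n'}|\mu_0|+C'$ you correctly deduce $|\mu_0|\gtrsim K^{-n'}|\mu_{n'}|$, but this only yields $|\sigma|\ge 2|\mu_0|\gtrsim K^{-n'}|f^{n'}(\sigma)|$, which is far weaker than the required $|\sigma|\ge 2|f^{n'}(\sigma)|$ since $K^{-n'}<1$. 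The tail bound is an \emph{upper} bound on $|\mu_{n'}|$ and cannot force $|\sigma|$ to be large; conjugating a short core by a long word $\mu$ whose $f$-iterates barely cancel gives $|f^{n'}(\sigma)|\approx K^{n'}|\sigma|\gg|\sigma|$, so the ``initial side'' does not win.

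The paper sidesteps the tail entirely with a short trick due to Bestvina--Feighn--Handel: given an immersed based loop $\sigma$ at $v$, choose an \emph{embedded} based loop $s$ at $v$ so that the concatenation $s\cdot\sigma$ is cyclically reduced. Then $\|s\cdot\sigma\|=|s|+|\sigma|$ exactly, and since $s$ ranges over finitely many embedded loops, $|f^k(s)|\le M$ for $k\in\{0,n,2n\}$ with $M$ a constant depending only on $f$ and $\Gamma$. Free $(3,n)$-hyperbolicity of $s\cdot\sigma$ then transfers to the based inequality $2|f^n(\sigma)|\le\max(|f^{2n}(\sigma)|,|\sigma|)$ with only an \emph{additive} error $O(M)$, valid once $|f^n(\sigma)|\ge 4M$; atoroidality disposes of the finitely many short loops by passing to a power. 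The key difference from your approach is that the error is bounded independently of $\sigma$, whereas your tail grows multiplicatively at rate $K$.
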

\noindent To avoid context-ambiguity in the proof, we use $\lVert \cdot \rVert$ for lengths of free homotopy classes of loops and $|\cdot |$ for lengths of loops rel. basepoints. However, the distinction is not needed after the proof as all loops afterwards will be considered up to free homotopy.
\begin{proof} Suppose $f: \Gamma \to \Gamma$ is atoroidal and $(3,n)$-hyperbolic for some integer $n \ge 1$. Set $M$ to be the maximum length of $f^k(s)$ rel. basepoint over all embedded based loops $s$ in $\Gamma$ for $k \in \{0, n, 2n\}$. 

Suppose $|f^n(\sigma)| \ge 4M$ for some immersed based loop $\sigma$ and pick an embedded based loop $s$ with same basepoint as $\sigma$ so that the concatenation $s \cdot \sigma$ is an immersed loop, i.e., $\lVert s\cdot \sigma \rVert = |s| + |\sigma|$. As the graph map $f$ is $(3,n)$-hyperbolic, we get \[ 3 \, \lVert f^n(s \cdot \sigma)\rVert \le \max(\, \lVert f^{2n}(s \cdot \sigma)\rVert, \lVert s \cdot \sigma\rVert \,).\]
For a concatenation of based loops $\rho_1 \cdot \rho_2$, we get $\left|\,|\rho_1| - |\rho_2|\,\right| \le \lVert \rho_1 \cdot \rho_2 \rVert \le |\rho_1| + |\rho_2|$. %Although obvious, we highlight the {\it triangle inequality} because we will implicitly use it a few times in the following calculations. 

Case~1. If $\lVert f^{2n}(s \cdot \sigma)\rVert \ge 3\, \lVert f^n(s \cdot \sigma) \rVert $, then
\[\begin{aligned}|f^{2n}(\sigma)| \ge \lVert f^{2n}(s \cdot \sigma) \rVert - |f^{2n}(s)| &\ge 3\, \lVert f^n(s \cdot \sigma) \rVert - M  \\
&\ge 3\,|f^n(\sigma)| - 3\,|f^n(s)| - M \\
&\ge 3\,|f^n(\sigma)| -4M \\ 
&\ge 2\,|f^n(\sigma)|.\end{aligned}\]

Case~2. If $\lVert s \cdot \sigma\rVert \ge 3\,\lVert f^n(s \cdot \sigma)\rVert$, then
\[|\sigma| = \lVert s \cdot \sigma \rVert - |s| \ge 3\, \lVert f^n(s \cdot \sigma) \rVert - M \ge 3\,|f^n(\sigma)| - 4M \ge 2\,|f^n(\sigma)|\]
Combining both cases: $2\,|f^n(\sigma)| \le \max(\,|f^{2n}(\sigma)|, |\sigma| \,)$. If $|f^{nk}(\sigma)| \ge 4M$ for an immersed based loop $\sigma$ in $\Gamma$ and $k \ge 1$, then by induction \[ 2^k \, |f^{nk}(\sigma)| \le \max(\,|f^{2nk}(\sigma)|, |\sigma| \,).\]
For any bound $B$, there are only finitely many immersed based loops $\sigma'$ in $\Gamma$ with $|\sigma'| \le B$. Since $f$ is atoroidal, there is an integer $k \gg 0$ such that $|f^{nk}(\sigma')| \ge 8M $ for every based loop $\sigma'$ with $|\sigma'| \le 4M$ and we conclude that  $f$ is based-$(2, nk)$-hyperbolic .
\end{proof}

When $p$ is a subpath of an immersed loop $\sigma$ and $n \ge 1$, then $[f^n(p)]_\sigma$ is the subpath of $[f^n(p)]$ that survives in $[f^n(\sigma)]$ and $|f^n(p)|_\sigma$ is the length of $[f^n(p)]_\sigma$. Bounded cancellation implies $|f^n(p)| \le |f^n(p)|_\sigma + 2C(f^n)$.
The next lemma is based on Brinkmann's Lemma 4.2 in \cite{Bri00} with a few changes made to account for graph maps that are not $\pi_1$-surjective.

\begin{lem}\label{critlength} Let $f:\Gamma \to \Gamma$ be a graph map and $R_* \subset \Gamma$ be an $f$-invariant union of roses such that the restriction $\left.f\right|_{R_{*}}:R_* \to R_*$ is $(4, n)$-hyperbolic.
For some constant $L_c$, if $p \subset R_*$ is a subpath (edge-path) of some immersed loop $\sigma$ in $\Gamma$ and $|f^n(p)|_{\sigma} \ge L_c$, then
\[ 3\, |f^n(p)|_{\sigma} \le \max\left(\,|f^{2n}(p)|_{\sigma} , |p|\, \right).\]
\end{lem}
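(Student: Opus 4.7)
The plan is to exploit the fact that $R_{*}$ is a union of roses, so any edge-path $p\subset R_{*}$ automatically has both endpoints at the unique vertex of whichever rose-component it lies in. In particular, $p$ is a loop (based at that vertex), and the same is true for each iterate $f^{k}(p)$ after tightening. This puts $p$ in the correct shape to feed into the $(4,n)$-hyperbolicity hypothesis for $\left.f\right|_{R_{*}}$. The only extra ingredient will be bounded cancellation (Lemma~\ref{bcl}) applied to $f^{n}$ and $f^{2n}$ in order to translate between the intrinsic length $|f^{k}(p)|$ and the ambient length $|f^{k}(p)|_{\sigma}$ inside the immersed loop $\sigma$.

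First, applying $(4,n)$-hyperbolicity of $\left.f\right|_{R_{*}}$ to the loop $p$ gives
\[ 4\,|f^{n}(p)| \;\le\; \max\bigl(|f^{2n}(p)|,\,|p|\bigr). \]
Second, bounded cancellation inside the immersed loop $\sigma$ yields
\[ |f^{n}(p)|_{\sigma} \;\le\; |f^{n}(p)| \;\le\; |f^{n}(p)|_{\sigma} + 2C(f^{n}), \]
together with the analogous pair of inequalities for $f^{2n}$, with constant $2C(f^{2n})$. Set $L_{c}=2C(f^{2n})$ (any larger choice works too). Assume $|f^{n}(p)|_{\sigma}\ge L_{c}$ and split into two cases according to which term realises the maximum in the hyperbolicity inequality.

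If $|p|$ realises the maximum, then $4\,|f^{n}(p)|_{\sigma}\le 4\,|f^{n}(p)|\le |p|$, so in particular $3\,|f^{n}(p)|_{\sigma}\le |p|$ and we are done. If instead $|f^{2n}(p)|$ realises the maximum, then
\[ 4\,|f^{n}(p)|_{\sigma} \;\le\; |f^{2n}(p)| \;\le\; |f^{2n}(p)|_{\sigma} + 2C(f^{2n}), \]
and the hypothesis $|f^{n}(p)|_{\sigma}\ge 2C(f^{2n})$ absorbs the cancellation error to give $3\,|f^{n}(p)|_{\sigma}\le |f^{2n}(p)|_{\sigma}$. Combining the two cases delivers the required inequality.

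The only delicate point is keeping the bookkeeping on lengths consistent: the definition of $|f^{k}(p)|$ in the excerpt refers to length after tightening, and since $p$ is a based loop in a single-vertex graph, based tightening and free tightening of $f^{k}(p)$ differ by at most the cyclic reduction at the image vertex, which is bounded in terms of the cancellation constant of $f^{k}$ on $R_{*}$. So by enlarging $L_{c}$ by at most an additive constant depending only on $n$ and $f$, we may work with whichever convention the hyperbolicity hypothesis is phrased in. That is the one place where a bit of care is needed, but it does not change the structure of the argument.
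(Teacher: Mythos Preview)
Your argument has a genuine gap at the point where you apply the $(4,n)$-hyperbolicity of $\left.f\right|_{R_*}$. The hypothesis is \emph{free} hyperbolicity, not based-hyperbolicity: the inequality $4\,\lVert f^n(p)\rVert \le \max(\lVert f^{2n}(p)\rVert,\lVert p\rVert)$ involves lengths after \emph{free} (cyclic) tightening, whereas the quantities $|f^k(p)|$ and $|f^k(p)|_\sigma$ are computed after tightening rel.\ endpoints. You attempt to bridge the two in your final paragraph by asserting that the cyclic reduction of $[f^k(p)]$ is bounded by a cancellation constant, but this is false in general. Bounded cancellation controls cyclic reduction only when the input loop is itself cyclically reduced: if $p \cdot p$ is immersed then $|f^k(p)| - \lVert f^k(p)\rVert \le 2C(f^k)$, but nothing in the hypotheses forces $p$ to be cyclically reduced. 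A subpath $p$ of an immersed loop $\sigma$ is immersed as a path, yet may begin and end with mutually inverse edges (e.g.\ $p = a^{m}b\,a^{-m}$ inside $\sigma = a^{m}b\,a^{-m}c$), in which case $|f^k(p)|$ and $\lVert f^k(p)\rVert$ can differ by an arbitrarily large amount and your chain $4|f^n(p)|_\sigma \le 4|f^n(p)| \le \max(\ldots)$ breaks at the second inequality.

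The paper repairs exactly this by concatenating $p$ with a single petal $s$ chosen so that $s\cdot p$ \emph{is} cyclically reduced (this is what the line ``pick a petal $s$ in $R_*$ such that $s\cdot p$ is an immersed loop'' accomplishes). Free hyperbolicity then applies cleanly to $s\cdot p$, and the petal's contribution under $f^n$ and $f^{2n}$ is bounded by the fixed constant $M$, which is absorbed into $L_c = 2C(f^{2n}) + 5M$. So the petal trick is not cosmetic: it is precisely what converts an arbitrary immersed edge-path in $R_*$ into a cyclically reduced loop with only bounded cost.
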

\noindent The number $L_c$ is the {\bf critical length} of the triple $(f, \Gamma, R_*)$.

\begin{proof} Let $f:\Gamma \to \Gamma$ be a graph map and $R_* \subset \Gamma$ be an $f$-invariant union of roses such that the restriction $\left.f\right|_{R_*}$ is $(4,n)$-hyperbolic. Set $M$ to be the maximum length of $f^k(s)$ rel. basepoint over all petals $s$ in $R_{*}$ and $k \in\{n, 2n\}$. Choose $L_c = 2C(f^{2n})+5M$ where $C(f^{2n})$ is the cancellation constant for $f^{2n}$.
Recall the triangle inequality: $\left|\,|p_1| - |p_2|\,\right| \le |p_1 \cdot p_2| \le |p_1| + |p_2|$ for any path decomposition of a loop $p_1 \cdot p_2$. A remark on the context: paths $[p_i]$ are tightened rel. endpoints but the loop $[p_1 \cdot p_2]$ is tightened by free homotopy. 

Given a subpath $p \subset R_*$ of some immersed loop $\sigma$ in $\Gamma$, pick a petal $s$ in $R_*$ such that $s \cdot p$ is an immersed loop in $R_*$.   As the restriction to $R_*$ is $(4,n)$-hyperbolic, we get \[4\,|f^n(s\cdot p)| \le \max(\, |f^{2n}(s\cdot p)|, |s\cdot p| \,).\]

If $ 4\,|f^n(s\cdot p)| \le |f^{2n}(s\cdot p)|$, then 
\[\begin{aligned} 
4\,|f^n(p)|_\sigma \le 4\,|f^n(p)| 
&\le 4\,|f^n(s\cdot p)| + 4\,|f^n(s)| \\
&\le |f^{2n}(s\cdot p)| + 4M \\
&\le |f^{2n}(p)| + 5M \\
&\le |f^{2n}(p)|_\sigma + 2C(f^{2n}) + 5M \quad \text{(bounded cancellation)}\end{aligned}\]

Similarly, if $ 4\,|f^n(s\cdot p)| \le |s\cdot p|$, then 
\[ \begin{aligned} 4\,|f^n(p)|_\sigma &\le  |p| + 1 + 4M \\
&\le |p| + 5M + 2C(f^{2n}) \quad(\text{since } M \ge 1) \end{aligned}\]
Since $L_c = 2C(f^{2n}) + 5M$, we have the desired implication:  
\[ |f^n(p)|_\sigma \ge L_c \implies 3\,|f^n(p)|_{\sigma} \le \max\left(\,|f^{2n}(p)|_{\sigma} , |p|\, \right).\qedhere\]
\end{proof}

An (outer class of an) injective endomorphism $\phi:F \to F$ is {\bf atoroidal} if it has no invariant cyclic subgroup system with index $d = 1$. If $f: \Gamma \to \Gamma$ is a topological representative for $\phi$, then $[f]$ is atoroidal if and only if $[\phi]$ is atoroidal.
The following proposition is an extension of Brinkmann's result \cite[Proposition~7.1]{Bri00} and is the main technical result of the section.
\begin{prop}\label{atoroidal} If $\phi:F \to F$ is injective and atoroidal, then $[\phi]$ has a $(2, n)$-hyperbolic topological representative for some  integer $n \ge 1$.
\end{prop}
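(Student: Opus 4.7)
The plan is to patch together hyperbolic representatives on two strata: a fixed stratum, where Brinkmann's theorem applies, and the top stratum, which carries the expanding relative immersion. If $\phi$ is surjective then Brinkmann's theorem \cite{Bri00} gives the conclusion directly, so assume $\phi$ is nonsurjective. By Propositions~\ref{maxfixed}--\ref{canonical} and Theorem~\ref{expimmersion}, we have the maximal $[\phi]$-fixed free factor system $\mathcal A$, the $[\phi]$-elliptic free factor system $\mathcal A^* \succeq \mathcal A$ with $\phi^{k_0}(\mathcal A^*)$ carried by $\mathcal A$ for some $k_0 \ge 0$, and an expanding $\mathcal A^*$-relative immersion for $\phi$. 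After replacing $\phi$ by a power, we may assume every component $A \in \mathcal A$ is $\phi$-invariant up to conjugacy, so each restriction $\phi|_A$ is an automorphism of $A$. Atoroidality passes to these restrictions: any invariant cyclic subgroup system of index $1$ for $\phi|_A$ would be one for $\phi$, contradicting atoroidality of $\phi$.

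Apply Brinkmann's theorem to each $\phi|_A$ to produce a $(4, n_1)$-hyperbolic topological representative $f_{\mathcal A}: R_{\mathcal A} \to R_{\mathcal A}$ on $\mathcal A$-marked roses. Next, extend $f_{\mathcal A}$ to a topological representative $f_{\mathcal A^*}: R_{\mathcal A^*} \to R_{\mathcal A^*}$ for $\phi|_{\mathcal A^*}$, where $R_{\mathcal A} \subset R_{\mathcal A^*}$ is an invariant subgraph and $f_{\mathcal A^*}^{k_0}(R_{\mathcal A^*}) \subseteq R_{\mathcal A}$. Since any loop $\sigma$ in $R_{\mathcal A^*}$ has $f_{\mathcal A^*}^{k_0}(\sigma)$ living inside $R_{\mathcal A}$, the hyperbolicity of $f_{\mathcal A}$ propagates, at the cost of enlarging the power, to $(4, n_1')$-hyperbolicity of $f_{\mathcal A^*}$. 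Finally, blow up $R_{\mathcal A^*}$ using the expanding $\mathcal A^*$-relative immersion to obtain a topological representative $f: \Gamma \to \Gamma$ for $\phi$ with $R_{\mathcal A^*}$ an $f$-invariant subgraph and the complementary top stratum consisting of edges that expand exponentially under $f$-iteration.

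Set $R_* = R_{\mathcal A^*}$ and let $L_c$ be the critical length from Lemma~\ref{critlength}. Given an immersed loop $\sigma$ in $\Gamma$, decompose it into maximal $R_*$ subpaths $p_j$ and maximal top-stratum subpaths $q_i$. By bounded cancellation, $|f^n(\sigma)|$ equals $\sum_i |f^n(q_i)|_\sigma + \sum_j |f^n(p_j)|_\sigma$ up to an additive error proportional to the number of pieces, and hence bounded in terms of $|\sigma|$. For $n$ a sufficiently large common multiple of $n_1'$ and the expansion period of the top stratum, each top stratum subpath satisfies $2|f^n(q_i)|_\sigma \le |f^{2n}(q_i)|_\sigma$, and each long $R_*$ subpath with $|f^n(p_j)|_\sigma \ge L_c$ satisfies $3|f^n(p_j)|_\sigma \le \max(|f^{2n}(p_j)|_\sigma, |p_j|)$ by Lemma~\ref{critlength}; short $R_*$ subpaths contribute at most $L_c$ each and their total is absorbed into the $|\sigma|$ side of the target inequality. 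Summing these estimates yields $2|f^n(\sigma)| \le \max(|f^{2n}(\sigma)|, |\sigma|)$ for all loops whose $f^n$-image is sufficiently long; since atoroidality rules out loops with bounded $f$-orbit, passing to a further iterate eliminates the remaining finite set of bounded exceptions and produces a $(2, n)$-hyperbolic representative.

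The main obstacle is controlling the bounded cancellation at the interfaces between top-stratum subpaths and $R_*$-subpaths so that the piecewise growth estimates aggregate into a single global inequality. This forces $n$ to be chosen large enough that the additive cancellation errors are absorbed simultaneously by the exponential growth of top-stratum segments and by the thresholding in Lemma~\ref{critlength}, and it requires atoroidality a second time to rule out loops whose $f^n$-images fail to enter the regime in which these subpath estimates become effective.
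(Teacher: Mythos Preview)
Your overall architecture matches the paper's: reduce to the nonsurjective case, invoke Brinkmann on the fixed stratum $\mathcal A$, propagate hyperbolicity to $\mathcal A^*$, and combine with exponential expansion on the top stratum coming from the relative immersion. Replacing $\phi$ by a power is harmless (if $f^k$ is $(\lambda,m)$-hyperbolic then $f$ is $(\lambda,km)$-hyperbolic by definition), though the paper works directly with $\phi$.

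The genuine gap is in your aggregation step. Lemma~\ref{critlength} gives, for each long $R_*$-subpath $p_j$, an inequality $3|f^n(p_j)|_\sigma \le \max(|f^{2n}(p_j)|_\sigma, |p_j|)$, but these maxima may be realised in \emph{different directions} for different $j$: some subpaths grow forward, others backward. Summing only yields $\sum_j \max(a_j,b_j) \le \sum_j a_j + \sum_j b_j$, so at best you obtain something like $2|f^n(\sigma)| \lesssim |f^{2n}(\sigma)| + |\sigma| \le 2\max(|f^{2n}(\sigma)|,|\sigma|)$, which loses precisely the factor of $2$ you need. The obstacle you single out---bounded cancellation at interfaces---is a minor additive error; the real obstruction is this max-versus-sum problem.

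The paper circumvents this by a dichotomy on the ratio $|f^{n_1k}(\sigma)_{low}|:|f^{n_1k}(\sigma)_{top}|$, computed in the \emph{iterate} $f^{n_1k}(\sigma)$ rather than in $\sigma$. In Case~1 (ratio $\le L_c+6M$), the top stratum alone occupies a definite fraction and its forward expansion gives $2|f^{n_1k}(\sigma)| \le |f^{2n_1k}(\sigma)|$ outright. In Case~2 (ratio $\ge L_c+6M$), a pigeonhole argument shows that the ``critical'' low-stratum subpaths (those of length $\ge L_c+6M$) occupy a definite fraction $\ge \frac{1}{L_c+6M+1}$ of $|f^{n_1k}(\sigma)|$; the critical-length estimate is then iterated over all $k$ steps to produce a factor $2^k$ (not merely $3$) for each critical subpath, and $k$ is chosen large enough that $2^k$ absorbs both the proportion constant and the factor of $2$ lost to summing. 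This definite-proportion computation, lifted from Brinkmann, is what your outline is missing.
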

\begin{proof} Suppose $\phi:F \to F$ is an injective and atoroidal endomorphism. If $\phi$ is surjective, then the proposition is precisely Brinkmann's result. So we assume $\phi$ is not surjective. By Theorem~\ref{expimmersion}, there is an expanding $\mathcal A^*$-relative immersion $g:T \to T$ for $\phi$, where $\mathcal A^*$ is the $[\phi]$-elliptic free factor system. Fix some $\mathcal A^*$-marked roses $(R_{\mathcal A^*}, \alpha_{\mathcal A^*})$ and set $(\Gamma, \alpha)$ to be the $(R_{\mathcal A^*}, \alpha_{\mathcal A^*})$-vertex blow-up of  the graph of groups $F\backslash T$, i.e., $(\Gamma, \alpha)$ is a marked graph with subgraphs $R_{\mathcal A^*}$ corresponding to the free factor system $\alpha(\mathcal A^*)$. The roses $R_{\mathcal A^*}$ form the {\it lower stratum} of $\Gamma$ and the remaining edges the {\it top stratum}.
\medskip

We outline the proof which follows closely Brinkmann's strategy. Patch together a homotopy equivalence of the lower stratum with the expanding relative immersion to get some topological representative $f$ of $[\phi]$. By Brinkmann's theorem, the restriction of $f$ to the lower stratum is hyperbolic. The expanding relative immersion on the top stratum means loops that are {\it mostly} top stratum will have uniform exponential growth under forward iteration. Lemma~\ref{critlength} implies loops that are {\it mostly} lower stratum will have uniform exponential growth  under forward and/or backward iteration. The heart of the proof lies in quantifying what being mostly top or lower stratum means and showing that all loops are one or the other. Of course, there are a few minor technicalities that need addressing; for instance, the restriction to the lower stratum is almost but not exactly a homotopy equivalence. This concludes the outline.
\medskip

Recall that the maximal $[\phi]$-fixed free factor system $\mathcal A$ is a subset of $\mathcal A^*$ and there is an integer $k_0 \ge 0$ such that $\phi^{k_0}(\mathcal A^*)$ is carried by $\mathcal A$ (Proposition~\ref{canonical}). So we may find a topological representative $f_{\mathcal A^*}: R_{\mathcal A^*} \to R_{\mathcal A^*}$ for $[\left.\phi\right|_{\mathcal A^*}\,]$ whose restriction to the periodic roses $R_{\mathcal A}$, denoted by $f_{\mathcal A}$, is a homotopy equivalence. As $\phi$ is atoroidal, the restriction $f_{\mathcal A}$ is $(4, n_0)$-hyperbolic for some $n_0 \ge 1$ (Brinkmann's theorem).
\medskip

If $\sigma$ is an immersed loop in $R_{\mathcal A^*}$, then $[f^{k}(\sigma)]$ is a loop in the periodic roses $R_{\mathcal A}$ for all $ k\ge k_0$. Since the restriction $f_{\mathcal A}$ is $(4, n_0)$-hyperbolic and $f_{\mathcal A^*}^{n_0 k}(\sigma)$ is a loop in $R_{\mathcal A}$ for any loop $\sigma$ in $R_{\mathcal A^*}$ and $k \ge k_0$, we get the inequality \[ \displaystyle 4 \cdot |f_{\mathcal A}^{n_0}(f_{\mathcal A^*}^{n_0 k}(\sigma))| \le \max(\, |f_{\mathcal A}^{2n_0}(f_{\mathcal A^*}^{n_0 k}(\sigma))|, |f_{\mathcal A^*}^{n_0 k}(\sigma) |\,) \text{ for all loops } \sigma \text{ in } R_{\mathcal A^*} \text{ and } k \ge k_0.\] 
Choose an integer $k_1 \ge 1$ so that $4^{k_1} \ge 4 K^{n_0 k_0}$, where $K = K(f_{\mathcal A^*})$ is the Lipschitz constant for $f_{\mathcal A^*}$. Suppose $\sigma$ is a loop in $R_{\mathcal A^*}$.

If $4 \cdot |f_{\mathcal A}^{n_0}(f_{\mathcal A^*}^{n_0 (k_0 + k_1 - 1)}(\sigma))| \le |f_{\mathcal A}^{2n_0}(f_{\mathcal A^*}^{n_0 (k_0 + k_1 - 1 )}(\sigma))|$, then by induction %2^{k_0+k_1} \cdot |f_{\mathcal A}^{n_0 k_1}(f_{\mathcal A^*}^{n_0 k_0}(\sigma))| &\le |f_{\mathcal A}^{2n_0 k_1}(f_{\mathcal A^*}^{2n_0 k_0}(\sigma))| \text{ and } \\
\[ 4^{k_0+k_1} \cdot |f_{\mathcal A^*}^{n_0(k_0+k_1)}(\sigma)| \le |f_{\mathcal A^*}^{2n_0(k_0+k_1)}(\sigma)|.\]

If $4 \cdot |f_{\mathcal A}^{n_0}(f_{\mathcal A^*}^{n_0 (k_0+k_1-1)}(\sigma))| \le |f_{\mathcal A^*}^{n_0 (k_0+k_1-1)}(\sigma)|$,  then by induction and Lipschitz property
\[ 4^{k_1} \cdot |f_{\mathcal A^*}^{n_0(k_0+k_1)}(\sigma)| \le |f_{\mathcal A^*}^{n_0 k_0}(\sigma)| \le K^{n_0 k_0} \cdot |\sigma|\] and \(4 \cdot |f_{\mathcal A^*}^{n_0(k_0+k_1)}(\sigma)| \le |\sigma| \text{ by choice of } k_1.\)
%\medskip

Therefore, the lower stratum map $f_{\mathcal A^*}$ is $(4, n_1)$-hyperbolic with $n_1 = n_0(k_0+k_1)$.
\medskip

Let $f:\Gamma \to \Gamma$ be a topological representative for $[\phi]$ that extends $f_{\mathcal A^*}$ to the top stratum and induces the expanding $\mathcal A^*$-relative immersion $g: T \to T$ %on the $(F, \mathcal A^*)$-tree $T$ 
upon collapsing the lower stratum in the universal cover $\tilde \Gamma$. For an arbitrary immersed loop $\sigma$ in $\Gamma$, define $\sigma_{top}$ ($\sigma_{low}$ resp.) to be the collection of maximal subpaths of $\sigma$ in the top (lower resp.) stratum. For all $n \ge 1$, define $[f^{n}(\sigma_{top})]_\sigma$ ($[f^{n}(\sigma_{low})]_\sigma$ resp.) to be the collection of paths $[f^{n}(p)]_\sigma$ where $p$ is some path in $\sigma_{top}$ ($\sigma_{low}$~resp.). That $f$ induces an immersion $g$ upon collapsing the lower stratum implies that the top stratum is {\it persistent}: if $\sigma$ is an immersed loop in $\Gamma$, then $f(\sigma)_{top}$ survives in $[f(\sigma)]$.

As the relative immersion $g:T \to T$ is expanding, there is an integer $k_2 \ge 1$, such that $g^{k_2}(e)$ has length $\ge 2$ for all edges $e$ in $T$; and as $f$ induces $g$, for any immersed loop $\sigma$ in $\Gamma$ and path $p$ in $\sigma_{top}$, we get $2|p|_\sigma \le |f^{k_2}(p)|_\sigma $. We may replace $n_1$ and $k_2$ with a common multiple and assume $n_1 = k_2$.
A similar inequality holds in the lower stratum. By the $(4, n_1)$-hyperbolicity of $\left.f\right|_{R_{\mathcal A^*}}$ and Lemma~\ref{critlength}, there is a critical length $L_c = L_c(f, \Gamma, R_{\mathcal A^*})$ such that for any immersed loop $\sigma$ in $\Gamma$ and path $p$ in $\sigma_{low}$, \[ |f^{n_1}(p)|_\sigma \ge L_c \implies 3 |f^{n_1}(p)|_\sigma \le \max(\,|f^{2n_1}(p)|_\sigma, |p|\,).\]

Set $M$ to be the maximal length amongst all paths in $f^{n_1}(e)_{low}$ for all top stratum edges $e$ of $\Gamma$. For any integer $k \ge 1$, we distinguish two cases:

{Case 1.} If $|f^{n_1 k}(\sigma)_{low}| \le (L_c + 6M) \, |f^{n_1 k}(\sigma)_{top}|$, then 
\[\begin{aligned}
|f^{n_1 k}(\sigma)| = |f^{n_1 k}(\sigma)_{low}| + |f^{n_1 k}(\sigma)_{top}| 
&\le (L_c + 6M + 1) \,|f^{n_1 k}(\sigma)_{top}| \text{ and } \\
2^{k} \, |f^{n_1 k }(\sigma)_{top}| \le |f^{n_1 k}(f^{n_1 k }(\sigma)_{top})|_\sigma &\le |f^{2 n_1 k}(\sigma)|.\end{aligned}\]

Additionally, if $2^{k} \ge 2 (L_c + 6M + 1)$, then $2 \, |f^{n_1 k}(\sigma)| \le |f^{2 n_1 k}(\sigma)| $. 
%\medskip
  
{Case 2.} Suppose $|f^{n_1 k}(\sigma)_{low}| \ge (L_c + 6M) \, |f^{n_1 k}(\sigma)_{top}|$. Set $m$ to be the number of paths in $f^{n_1 k}(\sigma)_{low}$. Then $m$ is also the number of paths in $f^{n_1 k}(\sigma)_{top}$ and $m \le |f^{n_1 k}(\sigma)_{top}|$. By the pigeonhole principle, some path $\rho$ in $f^{n_1k }(\sigma)_{low}$ satisfies $|\rho| \ge L_c+6M$. As $|\rho| \ge 6M$, we have $3(|\rho|-2M) \ge 2|\rho|$. Set $\sigma' = f^{n_1(k-1)}(\sigma)$. By definition of $M$ and persistence of $f^{n_1}(\sigma')_{top}$, there must be a path $p'$ in $\sigma'_{low}$ such that $[f^{n_1}(p')]_{\sigma'}$ is a subpath of $\rho$, $|f^{n_1}(p')|_{\sigma'} \ge |\rho| - 2M \ge L_c$, and \( 3 \,|f^{n_1}(p')|_{\sigma'} \le \max(\, |f^{2 n_1}(p')|_{\sigma'}, |p'| \,).\)

If $|f^{2n_1}(p')|_{\sigma'} \ge 3 |f^{n_1}(p')|_{\sigma'}$, then $|f^{2n_1}(p')|_{\sigma'} \ge 2|\rho|$ and $|f^{n_1(k+1)}(p')|_{\sigma'} \ge 3^{k-1} \cdot 2 |\rho|$.
 
If $|p'| \ge 3 |f^{n_1}(p')|_{\sigma'}$, then $|p'| \ge 2 |\rho|$. By inducting on the same argument used at the start of the case, there must be a path $p$ in $\sigma_{low}$ such that $|f^{n_1 k}(p)|_\sigma$ is a subpath of $\rho$ and $|p| \ge 2^k \, |\rho|$. In either case, we get $2^k |\rho| \le \max(\, |f^{n_1(k+1)}(p')|_{\sigma'}, |p| \,) $. Define $f^{n_1 k}(\sigma)_{crit}$ to be the set of paths $\rho$ in $f^{n_1 k}(\sigma)_{low}$ with $|\rho| \ge L_c + 6M$. Altogether, we have shown:
\[ 2^k |f^{n_1 k}(\sigma)_{crit}| \le \max(\, |f^{2n_1 k}(\sigma)|, |\sigma| \,).\]

The following computation is lifted from Brinkmann \cite[Proof of Proposition~7.1]{Bri00}. Set $A=|f^{n_1 k}(\sigma)_{crit}|$ to be the total length of paths in $f^{n_1 k}(\sigma)_{crit}$, $B = |f^{n_1 k}(\sigma)_{low}| - A $ to be the total length of the remaining paths in $f^{n_1 k}(\sigma)_{low}$, and $C = |f^{n_1 k}(\sigma)_{top}|$. We now find a positive lower bound of $\frac{A}{A+B+C}$ that is independent of $\sigma$ and $k$. We assumed $A+ B \ge (L_c + 6M)\,C$ and so $\frac{A}{A+B+C} \ge \frac{A(L_c+ 6M)}{(A+B)(L_c + 6M + 1)}$ and we can focus on the factor $\frac{A}{A+B} = 1 -\frac{B}{A+B}$.
Recall $m \le C$, so $A+B \ge (L_c + 6M)\, m$. Since each path $p$ in $f^{n_1 k}(\sigma)_{low}$ but not in $f^{n_1 k}(\sigma)_{crit}$  satisfies $|p| < L_c + 6M$ and there are at most $m$ of them, $B \le m \, (L_c + 6M - 1)$. Combining the last two inequalities gives the bound \[ 1 - \frac{B}{A+B} \ge 1 - \frac{m \, (L_c + 6M - 1)}{m \, (L_c + 6M)} \ge \frac{1}{L_c + 6M}.\]
Altogether, $\frac{A}{A+B+C} \ge \frac{1}{L_c +6M + 1}$. 

Additionally, if $2^k \ge 2(L_c + 6M + 1)$, then $2 |f^{n_1 k}(\sigma)| \le \max(\,|f^{2 n_1 k}(\sigma)|, |\sigma| \,)$.
%\medskip

Choose $k \ge 1$ so that $2^k \ge 2(L_c +6M+1)$; the two exhaustive cases above imply $f$ is $(2, n_1 k)$-hyperbolic.
\end{proof}

All the heavy lifting is done and we can prove our main theorem
\begin{thm}\label{main} Let $\phi:F \to F$ be an injective endomorphism. Then the following statements are equivalent:
\begin{enumerate}
\item $F*_\phi$ is word-hyperbolic;
\item $F*_\phi$ contains no $BS(1,d)$ subgroups with $d \ge 1$;
\item $[\phi]$ has no invariant cyclic subgroup system with index $d \ge 1$;
%\item $\phi$ has a hyperbolic topological representative and all strictly bidirectional annuli  in its mapping torus are shorter than some positive even integer.
\item all/some topological representatives of $[\phi]$ are based-hyperbolic and all strictly bidirectional annuli in their mapping tori are shorter than some integer.
\end{enumerate}
\end{thm}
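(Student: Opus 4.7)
The plan is to establish the cyclic implications $(1) \Rightarrow (2) \Rightarrow (3) \Rightarrow (4) \Rightarrow (1)$, invoking the machinery developed earlier in the paper and the classical obstruction theorem together with the Bestvina--Feighn combination theorem as the backbone.

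For $(1) \Rightarrow (2)$ I would cite the standard fact that word-hyperbolic groups contain no Baumslag--Solitar subgroups, which follows since $BS(1,d)$ has distorted cyclic subgroups and word-hyperbolic groups have all cyclic subgroups undistorted. For $(2) \Rightarrow (3)$ I would argue contrapositively: if $[\phi]$ has an invariant cyclic subgroup system with index $d \ge 1$, then there exist $k \ge 1$, $x \in F$, and a nontrivial $c \in F$ such that $\phi^k(c) = x c^d x^{-1}$. In $F*_\phi$ this gives the relation $(t^{-k} x)^{-1} c (t^{-k} x) = c^d$ (up to a routine identification coming from the defining relation $t^{-1}yt = \phi(y)$ iterated $k$ times), so the subgroup generated by $c$ and $t^{-k}x$ maps onto $BS(1,d)$; injectivity of $\phi$ and the Britton normal form in the ascending HNN extension rule out any collapse, so this subgroup is in fact isomorphic to $BS(1,d)$.

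The main content is $(3) \Rightarrow (4)$, which is where the structural work of the paper is consumed. Split hypothesis $(3)$ into two pieces: $\phi$ has no invariant cyclic subgroup system with index $d = 1$ (that is, $\phi$ is atoroidal), and it has none with index $d \ge 2$. Atoroidality together with injectivity feeds into Proposition~\ref{atoroidal} to yield a $(2,n)$-hyperbolic topological representative $f:\Gamma \to \Gamma$ for $[\phi]$, and then Lemma~\ref{tobased} upgrades $f$ (after replacing the hyperbolicity constant by a larger integer if needed so that the relevant representative is at least $(3,n')$-hyperbolic) to a based-hyperbolic representative. For the bidirectional annuli bound, the contrapositive of Proposition~\ref{cycloinv} says that the absence of an invariant cyclic system with $d \ge 2$ forces the iterated pullbacks to stabilize: there exists $L \ge 1$ with $\hat\Lambda_L[\phi] = \emptyset$, and by Lemma~\ref{strictincr} $\hat\Lambda_\ell[\phi] = \emptyset$ for all $\ell \ge L$. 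Lemma~\ref{biannulibound} then translates this to a uniform length bound $2L$ on strictly bidirectional annuli in any topological mapping torus $M_f$. Assembling both halves yields $(4)$.

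Finally $(4) \Rightarrow (1)$ is exactly Theorem~\ref{blueprint}: a based-hyperbolic representative whose mapping torus has strictly bidirectional annuli of bounded length satisfies the annuli flaring hypotheses of Bestvina--Feighn, so $\pi_1(M_f) \cong F*_\phi$ is word-hyperbolic. The delicate step is $(3) \Rightarrow (4)$, where both halves of the hypothesis must be used in parallel and the relative-immersion technology of the first half of the paper is essential to Proposition~\ref{atoroidal}; the other implications are short once the correct citations are lined up.
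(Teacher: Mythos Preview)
Your proposal is correct and follows essentially the same route as the paper's proof: the same cycle of implications, the same citations (Proposition~\ref{atoroidal}, Lemma~\ref{tobased}, Proposition~\ref{cycloinv}, Lemma~\ref{biannulibound}, Theorem~\ref{blueprint}), and the same split of hypothesis~(3) into the atoroidal piece and the $d\ge 2$ piece. The only notable difference is that the paper, in $(3)\Rightarrow(4)$, inserts a short bi-Lipschitz change-of-marking argument to show that \emph{all} topological representatives are hyperbolic (hence based-hyperbolic), thereby establishing the ``all'' reading of~(4) directly; you establish only the ``some'' reading, which still closes the cycle and recovers ``all'' a posteriori.
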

\begin{proof} This proof is mostly a matter of bookkeeping.

$(1){\implies}(2)$: $BS(1,d)$ subgroups are well-known obstructions to word-hyperbolicity.

$(2){\implies}(3)$: If $[\phi]$ has an invariant cyclic subgroup system with index $d \ge 1$, then there is a subgroup of $F*_\phi$ isomorphic to a quotient of $BS(1,d)$; use normal forms to show the subgroup is in fact isomorphic to $BS(1,d)$ (See \cite[Lemma~2.3]{Kap00} for details).

$(3){\implies}(4)$: Suppose $[\phi]$ has no invariant cyclic subgroup system with index $d \ge 1$. Then, by Proposition~\ref{atoroidal}, $[\phi]$ has a $(2,n)$-hyperbolic topological representative $f:\Gamma \to \Gamma$ defined on a marked graph $(\Gamma, \alpha)$. That is, for (any conjugacy class) of $x \in F$, we have
\[2\,\|\phi^n(x)\|_\alpha \le \max(\|\phi^{2n}(x)\|_\alpha, \|x\|_\alpha). \]
For any marked graph $(\Gamma', \beta)$, choose difference of markings $g:\Gamma \to \Gamma'$ and $h:\Gamma' \to \Gamma$, i.e., graph maps such that $[\pi_1(g)\circ \alpha] = [\beta] $ and $[\pi_1(h)\circ \beta]=[\alpha]$. Then $K = \max(K(g), K(h))$ satisfies 
$ \frac{1}{K} \|x\|_{\beta} \le \| x\|_\alpha \le K\|x\|_\beta$ for all $x \in F$, where $K(g), K(h)$ are the respective Lipschitz constants. Fix an integer $m \ge 1$ such that $2^m > K^2$, then for every $x\in F$,
\[ \frac{2^m}{K} \| \phi^{mn}(x)\|_\beta \le \max(K\,\|\phi^{2mn}(x)\|_\beta, K\, \|x\|_\beta) \]
and $(\Gamma', \beta, [\phi])$ is $(2^mK^{-2}, mn)$-hyperbolic, i.e., all topological representatives of $[\phi]$ on $(\Gamma', \beta)$ are hyperbolic. Since $[\phi]$ is atoroidal, all topological representatives of $[\phi]$ on $(\Gamma', \beta)$ are based-hyperbolic (Lemma~\ref{tobased}). As $(\Gamma', \beta)$ was arbitrary,  all topological representatives of $[\phi]$ are based-hyperbolic.

Let $f':\Gamma' \to \Gamma'$ be any topological representative for $[\phi]$. Since $[\phi]$ has no invariant cyclic subgroup with index $d \ge 2$, all strictly bidirectional annuli in $M_{f'}$ are shorter than some integer (Proposition~\ref{cycloinv} with Lemmas~\ref{biannulibound} and \ref{algandtop}).

$(4){\implies}(1)$ --- Theorem~\ref{blueprint}. See Theorem~\ref{blueprint2} below for a sketch of the proof.
\end{proof}

%%%%%%%%%%%%%%%%%%%%%%%%%%%%%%%%%%%%%%%%%%%%%%%%%%%%%%%%%%
\subsection{HNN extensions over free factors}\label{secHNNExts}
%%%%%%%%%%%%%%%%%%%%%%%%%%%%%%%%%%%%%%%%%%%%%%%%%%%%%%%%%%
In the last section, we extend the previous characterization to HNN extensions of free groups defined over free factors. Precisely, let $A \le F$ be a (nontrivial) free factor and $\phi:A \to F$ be an injective homomorphism. The HNN extension of $F$ over $A$ is given by:
\[ F*_A = \langle F, t~|~ t^{-1}x t = \phi(x), \forall x \in A \rangle. \]
HNN extensions, just like mapping tori, have associated Bass-Serre trees which we can use to define algebraic annuli as in the previous section. Unlike the Bass-Serre tree of a mapping torus, that of an HNN extension need not behave like a rooted tree: generally, the number of outgoing and incoming edges at a vertex will match the indices of $A$ and $\phi(A)$ in $F$ respectively. However, since free factors are malnormal, these HNN extensions share with mapping tori the dichotomy of annuli: annuli are either unidirectional or bidirectional. %In particular, long annuli will have long unidirectional {\it subannuli}.

We will say a conjugacy class of elements $[g]$ in $F$ has {\bf one forward iterate} if it is contains an element $a \in A$.  By malnormality of $A$, the conjugacy class of $[\phi(a)]$ depends only on the class $[g]$. We set the forward iterate of such a class $[g]$ to be $\phi\cdot[g] = [\phi(a)]$. Inductively, $[g]$ has $n+1$ forward iterates if it has $n$ forward iterates and the iterate $\phi^n \cdot [g]$ has one forward iterate; in this case, set $\phi^{n+1}\cdot [g] = \phi \cdot (\phi^n \cdot [g])$. %Conjugacy classes in $F$ with $n$ forward iterates are the conjugacy classes of unidirectional annuli in $F*_A$ of length $n$.

Similar to our use of infinite tails (backward iteration) to construct a canonical fixed free factor system in Section~\ref{secElliptic}, we shall use forward iteration %long unidirectional annuli 
to construct a canonical invariant free factor system of $A$. The proof will use a simpler variation of descent.

%\begin{prop}\label{maxinvariant} If $A \le F$ is a free factor and $\phi:A \to F$ is an injective homomorphism used to define $F*_A$, then there is a unique $[\phi]$-invariant free factor system of $A$ that carries all unidirectional annuli longer than some constant $L(A)$.\end{prop}
\begin{prop}\label{maxinvariant} If $A \le F$ is a free factor and $\phi:A \to F$ is injective, then there is a unique maximal $[\phi]$-invariant free factor system of $A$. Precisely, $\mathcal F$ carries every conjugacy class with $2\cdot \mathrm{rank}(A)$ forward iterates and every $[\phi]$-invariant subgroup system.
\end{prop}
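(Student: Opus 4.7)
The plan is to adapt the ascent strategy from Proposition~\ref{maxfixed} to this HNN setting, using forward iteration of conjugacy classes in place of infinite backward $[\phi]$-tails (which no longer make sense when $\phi$ is not an endomorphism of $A$). A useful preliminary observation is that the poset of free factor systems of $A$ under $\preceq$ has finite height bounded by a function of $\mathrm{rank}(A)$; the constant $2\cdot\mathrm{rank}(A)$ in the statement is chosen to strictly exceed this height, and will be used only through a pigeonhole stabilization.

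It suffices to establish the following ascent claim: given any $[\phi]$-invariant free factor system $\mathcal D$ of $A$ together with either a $[\phi]$-invariant subgroup system $\mathcal B$ of $A$, or a conjugacy class $[g]$ in $F$ with $2\cdot\mathrm{rank}(A)$ forward iterates, not carried by $\mathcal D$, one can produce a strictly larger $[\phi]$-invariant free factor system $\mathcal D' \succ \mathcal D$ of $A$ carrying the additional data. Finite height then forces the ascent to terminate at a unique maximal $[\phi]$-invariant free factor system $\mathcal F$ that dominates every $[\phi]$-invariant subgroup system of $A$ and every conjugacy class with $2\cdot\mathrm{rank}(A)$ forward iterates.

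For the subgroup system case, $\mathcal D'$ is taken to be the smallest free factor system of $A$ carrying both $\mathcal D$ and $\mathcal B$ (the free factor support); its $[\phi]$-invariance follows component-by-component from the invariance of $\mathcal D$ and $\mathcal B$. The conjugacy class case is the main content. Let $[g]=[g_0],[g_1],\ldots,[g_n]$ with $n = 2\cdot\mathrm{rank}(A)$ be the forward iterates, each admitting an $A$-representative. Form the ascending chain
\[ \mathcal D \preceq \mathcal D_0 \preceq \mathcal D_1 \preceq \cdots \preceq \mathcal D_n, \]
where $\mathcal D_i$ is the smallest free factor system of $A$ carrying $\mathcal D$ together with $[g_0],\ldots,[g_i]$. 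By the height bound on the poset, there exists $i_0 < n$ with $\mathcal D_{i_0+1} = \mathcal D_{i_0}$, so $\mathcal D_{i_0}$ already carries $[g_{i_0+1}] = \phi\cdot[g_{i_0}]$, and hence by iteration the entire tail $\{[g_j]\,:\, j \ge i_0\}$.

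It remains to verify that $\mathcal D_{i_0}$ is $[\phi]$-invariant. The components inherited from $\mathcal D$ are invariant by hypothesis, so the issue is the newly introduced component $F' \le A$, defined as the smallest free factor of $A$ containing $A$-representatives of $[g_0],\ldots,[g_{i_0}]$. Its $\phi$-image $\phi(F')$ is a free subgroup of $F$ that is contained in the free factor of $F$ generated by the $\phi$-images of those representatives, and each such $\phi$-image represents a class $[g_{j+1}]$ that is carried by $\mathcal D_{i_0}$. Hence $\phi(F') \preceq \mathcal D_{i_0}$, establishing invariance. The main obstacle I expect is justifying this final invariance check: showing cleanly that the free factor closure inside $F$ of $\phi(a_0),\ldots,\phi(a_{i_0})$ (where $a_j \in A$ represents $[g_j]$) is carried by $\mathcal D_{i_0}$ requires care because free factor closure is taken in different ambient groups on the two sides of $\phi$. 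This is where the Stallings-fold and bounded-cancellation machinery from Section~\ref{secStallBCC}, applied to $\phi\vert_{F'} \colon F' \to F$, will come in, ensuring no unexpected identifications collapse components out of $\mathcal D_{i_0}$.
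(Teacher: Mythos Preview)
Your ascent strategy has a genuine gap in the invariance check, and the Stallings/bounded-cancellation machinery from Section~\ref{secStallBCC} will not close it. The problem is that free factor closure does not commute with $\phi$: if $F'$ is the smallest free factor of $A$ containing representatives $a_0,\ldots,a_{i_0}$, then $\phi(F')$ is typically much larger than the subgroup $\langle \phi(a_0),\ldots,\phi(a_{i_0})\rangle$, and your claimed containment ``$\phi(F')$ is contained in the free factor of $F$ generated by the $\phi$-images of those representatives'' is false. For a concrete instance, take $A=F'=\langle a,b\rangle\le F=\langle a,b,c\rangle$ with $a_0=ab$ and $\phi(a)=c$, $\phi(b)=c^{-1}a$; then $\phi(a_0)=a$ so the free factor closure of $\phi(a_0)$ in $F$ is $\langle a\rangle$, yet $\phi(F')=\langle a,c\rangle$ is not contained in $\langle a\rangle$. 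Knowing that each $[\phi(a_j)]$ is carried by $\mathcal D_{i_0}$ tells you nothing about where $\phi$ sends the \emph{rest} of $F'$. The same issue undermines your subgroup-system case: the free factor support of $\mathcal D\cup\mathcal B$ need not be $[\phi]$-invariant just because $\mathcal D$ and $\mathcal B$ are.

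The paper's proof avoids this entirely by working in the opposite direction: it uses \emph{descent} via the preimage operation $\phi^{-1}$, which \emph{is} well-defined on free factor systems (through minimal subtrees, as set up in the definitions section). Starting from $\mathcal A_0=\{A\}$ and setting $\mathcal A_i=\phi^{-i}\cdot\{A\}$, one gets a genuinely descending chain of free factor systems of $A$, each automatically carrying every $[\phi]$-invariant subgroup system (since $\phi(\mathcal B)\preceq\mathcal B\preceq\mathcal A_{i-1}$ forces $\mathcal B\preceq\phi^{-1}\cdot\mathcal A_{i-1}=\mathcal A_i$). The chain must stabilize before length $2\cdot\mathrm{rank}(A)$, and stabilization $\mathcal A_{n+1}=\mathcal A_n$ is precisely $[\phi]$-invariance of $\mathcal A_n$. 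This descent is short and needs no cancellation estimates; the asymmetry between $\phi$ and $\phi^{-1}$ on free factor systems is exactly what makes the paper's direction work and yours fail.
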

\begin{proof}Let $A \le F$ be a free factor, $\phi:A \to F$ be injective, and $L(A)-1 = 2\cdot \mathrm{rank}(A)-1$ be the length of the longest chain of free factor systems in $A$. Any conjugacy class in a $[\phi]$-invariant free factor system of $A$ has infinitely many forward iterates. So if only the trivial conjugacy class has $L(A)$ forward iterates, then the trivial system is the unique $[\phi]$-invariant free factor system. Thus, we may assume some fixed nontrivial conjugacy class $[g]$ has $L(A)$ forward iterates. Since $A \le F$ is a free factor and $\phi:A \to F$ is injective, we can iterate $\phi^{-1}$ on the poset of free factor systems of $A$!

%Let us recall the construction of free factor system preimages used in the proof of Proposition~\ref{canonical}. Suppose $\mathcal F$ is a free factor system of $F$ and $\psi:H \to F$ is some injective homomorphism. Take an(y) $(F, \mathcal F)$-tree $T$ and look at the minimal tree $T(\psi(H)) \subset T$ for $\psi(H)$. The quotient graph of groups $\phi(H)\backslash T(\phi(H))$ is a free splitting of $\phi(H)$ and, by injectivity of $\phi$, we get a free factor system of $H$ that we denote by $\phi^{-1} \cdot \mathcal F$.

\begin{claim}[Descent] For some $n \ge 0$, let $\mathcal A_n \prec \cdots \prec \mathcal A_0$ be the chain of nontrivial free factor systems of $A$ such that $\mathcal A_i = \phi^{-i} \cdot \{ A \}$ and $\mathcal A_n$ carries all $[\phi]$-invariant subgroup systems of $A$. If $\mathcal A_n$ is not $[\phi]$-invariant, then $\mathcal A_{n+1} = \phi^{-1} \cdot \mathcal A_n \prec \mathcal A_n$ is a nontrivial free factor system of $A$ that carries all $[\phi]$-invariant subgroup systems. \end{claim}
Starting with $\{ A \}$, the descent will eventually stop at some $n < L(A)-1$ with a nontrivial free factor system $\mathcal A_n = \phi^{-n}\cdot \{ A \}$ that carries all conjugacy classes with $n+1$ forward iterates and all $[\phi]$-invariant subgroup systems of $A$. Since descent stopped, $\mathcal A_n$ must be $[\phi]$-invariant and hence the unique maximal system amongst all $[\phi]$-invariant free factor systems of $A$. %By nontriviality of $\mathcal A_n$, we have $n < L(A)$. So $\mathcal A_n$ carries all conjugacy classes with $L(A)$ forward iterates. It remains to prove the descent claim.
\end{proof}

\begin{proof}[Proof of descent] Let $n\ge 0$ and $\mathcal A_n \prec \cdots \prec \mathcal \mathcal A_0 = \{ A \}$ be the chain of nontrivial free factor systems such that $\mathcal A_i = \phi^{-i} \cdot \{ A \}$ and $\mathcal A_n$ carries all $[\phi]$-invariant subgroup systems of $A$. Nontriviality of $\mathcal A_n$ implies $n<L(A)-1$. Now suppose that $\mathcal A_n$ is not $[\phi]$-invariant. %For simplicity, assume the free factors of $\mathcal A_n$ are subgroups of the free factors of $\mathcal A_{n-1}$.

The existence of a nontrivial conjugacy class $[g]$ with $L(A) \ge n+2$ forward iterates implies $\mathcal A_{n+1} = \phi^{-1} \cdot \mathcal A_n$ is a nontrivial free factor system of $A$. If $n = 0$, then clearly $\mathcal A_{n+1} \preceq \mathcal A_n$. If $n > 1$, then $\mathcal A_n \prec \mathcal A_{n-1}$ by assumption; hence, $\phi^{-1} \cdot \mathcal A_n \preceq \phi^{-1} \cdot \mathcal A_{n-1}$ and, equivalently, $\mathcal A_{n+1} \preceq \mathcal A_n$. If $\mathcal A_n = \mathcal A_{n+1}$, then $\phi(\mathcal A_n)$ is carried by $\mathcal A_n$. But $\mathcal A_n$ is not $[\phi]$-invariant, thus $\mathcal A_{n+1} \prec \mathcal A_n$. 
Let $\mathcal B$ be any $[\phi]$-invariant subgroup system in $A$. By assumption, $\mathcal A_n$ carries  $\mathcal B$ and $\phi(\mathcal B)$; therefore, $\mathcal A_{n+1} = \phi^{-1}\cdot \mathcal A_n$ carries $\mathcal B$.
\end{proof}

The unique $[\phi]$-invariant free factor system of $A$ given by Proposition~\ref{maxinvariant}, denoted by $\mathcal F$, is the {\bf canonical $[\phi]$-invariant free factor system} and it captures the long-term dynamics of $[\phi]$. The system $\mathcal F$ is proper (in $A$) if and only if $A$ is not $[\phi]$-invariant; it is trivial exactly when only the trivial conjugacy class has $L(A)=2\cdot \mathrm{rank}(A)$ forward iterates. The canonical invariant free factor system allows us to naturally extend definitions and results that required iteration of an injective endomorphism. For instance, we can now say $\phi:A \to F$ has {\bf an invariant cyclic subgroup system with index $\boldsymbol d \ge 1$} if a restriction $\left.\phi\right|_{\mathcal F}:\mathcal F \to \mathcal F$ has an invariant cyclic subgroup system with index $d \ge 1$. In particular, $[\phi]$ is {\bf atoroidal} if $\left[ \left.\phi\right|_{\mathcal F}\right]$ is atoroidal. 

For $k \ge L(A)$, we define the {\bf iterated pullbacks $\boldsymbol{\Lambda_k[\phi]}$ of $\boldsymbol{[\phi]}$} to be the iterated pullbacks $\Lambda_k[\left.\phi\right|_{\mathcal F}]$ of the restriction $[\left.\phi\right|_{\mathcal F}]$. Similarly define $\hat \Lambda_k$ for $k \ge L(A)$. 
Since connectedness did not play any role in the proof of Proposition~\ref{cycloinv}, we immediately get the following extension when we replace endomorphisms of $F$ with endomorphisms of $\mathcal F$.

\begin{prop}\label{cycloinv2} If $A \le F$ is a free factor, $\phi:A \to F$ is injective, and $\hat \Lambda_k[\phi]$ is not empty for all $k \ge L(A)$, then $\phi$ has an invariant cyclic subgroup system with index $d \ge 2$.
\end{prop}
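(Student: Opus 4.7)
The plan is a direct reduction to Proposition~\ref{cycloinv} applied to the restriction $\left.\phi\right|_{\mathcal F}:\mathcal F \to \mathcal F$, where $\mathcal F$ is the canonical $[\phi]$-invariant free factor system of $A$ supplied by Proposition~\ref{maxinvariant}. By the definitions introduced earlier in Section~\ref{secHNNExts}, the iterated pullbacks satisfy $\hat\Lambda_k[\phi] = \hat\Lambda_k[\left.\phi\right|_{\mathcal F}]$ for every $k \ge L(A)$, so the hypothesis reads: $\hat\Lambda_k[\left.\phi\right|_{\mathcal F}]$ is nonempty for all $k \ge L(A)$. Lemma~\ref{strictincr} gives the implication $\hat\Lambda_k = \emptyset \Rightarrow \hat\Lambda_{k+1} = \emptyset$, whose contrapositive lets me propagate nonemptiness backwards and conclude that $\hat\Lambda_k[\left.\phi\right|_{\mathcal F}] \ne \emptyset$ holds for all $k \ge 1$, which is the hypothesis of Proposition~\ref{cycloinv}.

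Next, I invoke the remark placed at the start of Section~\ref{secStallBCC}: the entire machinery of the first part of the paper, and in particular Proposition~\ref{cycloinv} and everything it depends on (the structure theorem's elliptic free factor system, existence of an expanding relative immersion, bounded cancellation, and the pullback rank bound via Theorem~\ref{shnc}), goes through verbatim when $\phi:F \to F$ is replaced by an injective endomorphism $\phi_\ast:\mathcal F \to \mathcal F$ of a free factor system, because connectedness of the ambient graph is never used. Applying Proposition~\ref{cycloinv} in this form to $\left.\phi\right|_{\mathcal F}$ yields an invariant cyclic subgroup system of index $d \ge 2$ for $[\left.\phi\right|_{\mathcal F}]$, which by the definition given earlier in this section is exactly an invariant cyclic subgroup system of index $d \ge 2$ for $[\phi]$.

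The entire difficulty is bookkeeping rather than mathematics: one has to verify that each ingredient cited in (and used throughout the proof of) Proposition~\ref{cycloinv} — specifically Lemma~\ref{cyclopullback}, Theorem~\ref{expimmersion}, and Propositions~\ref{maxfixed} and~\ref{canonical} — really does extend to the disconnected setting of an endomorphism of a free factor system. The author's remark promises that no such step invokes connectedness of the ambient graph, so there is no genuine obstacle; the proposition is essentially an immediate corollary of Proposition~\ref{cycloinv}.
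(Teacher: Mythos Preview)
Your proposal is correct and matches the paper's approach exactly: the paper simply remarks that connectedness plays no role in the proof of Proposition~\ref{cycloinv} and declares the extension immediate. Your use of the contrapositive of Lemma~\ref{strictincr} to propagate nonemptiness from $k\ge L(A)$ down to all $k\ge 1$ is a nice explicit detail that the paper glosses over (alternatively, one could observe that the proof of Proposition~\ref{cycloinv} only ever uses the hypothesis for $k\ge k_1$ anyway).
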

\begin{rmk} By the same token, the results in this section do not need $A$ to be ``connected.'' So there is a natural generalization of these results that easily follows if we replace the free factor $A \le F$ with a free factor system $\mathcal A$ of $F$.
\end{rmk} 

For the rest of the section, we will extend the results of Section~\ref{secHypEnds} to the injective homomorphism $\phi:A \to F$. Fix a marked graph $(\Gamma, \alpha)$ such that the free factor systems $\mathcal F$ and $A$ correspond to nested core subgraphs $\Gamma_{\mathcal F} \subset \Gamma_A \subset \Gamma$ respectively, i.e., $\Gamma_{\mathcal F}$ and $\Gamma_A$ have markings $\alpha_{\mathcal F}:\mathcal F \to \pi_1(\Gamma_{\mathcal F})$ and $\alpha_A:A\to \pi_1(\Gamma_A)$ respectively such that the inclusion maps $c_{\mathcal F}:\Gamma_{\mathcal F} \to \Gamma_A$ and $c_A: \Gamma_A \to \Gamma$ satisfy $[\pi_1(c_{\mathcal F}) \circ \alpha_{\mathcal F}] = [\left.\alpha_A\right|_{\mathcal F}]$ and $[\pi_1(c_A) \circ \alpha_A] = [\left.\alpha\right|_A]$. Assume $\mathcal F$ is not trivial so that $\Gamma_{\mathcal F}$ is not degenerate (finite set of points). A {\bf topological representative for $\boldsymbol{[\phi]}$} will be a graph map $f: (\Gamma_A, \Gamma_{\mathcal F}) \to (\Gamma, \Gamma_{\mathcal F})$ with no pretrivial edges such that $[\pi_1(f)\circ \alpha_A] = [\alpha \circ \phi]$. Thus, the {\bf invariant restriction} $\left.f\right|_{\mathcal F}:\Gamma_{\mathcal F} \to \Gamma_{\mathcal F}$ is a topological representative for the restriction $[\left.\phi\right|_{\mathcal F}]$.

The following is the analogue of the mapping torus. Let $f$ be a topological representative for $\phi:A \to F$. We define the {\bf classifying space} to be $M_f = \left( \Gamma \sqcup (\Gamma_A \times [0,1])\right)/\sim$ where we identify $(x,0)\sim x$ and $(x,1)\sim f(x)$ for all $x \in \Gamma_A \subset \Gamma$. The edge-space of $M_f$ will be the cross-section represented by $\Gamma_A \times \{ \frac{1}{2} \}$. By construction, $\pi_1(M_f) \cong F*_A$. Topological annuli in $M_f$ and algebraic annuli in $F*_A$ are defined exactly as before and the correspondence between them is the same. Similarly, the correspondence between strictly bidirectional annuli and the iterated pullbacks remains. We then get this natural extension of Lemma~\ref{biannulibound}.

\begin{lem}\label{biannulibound2} Let $A \le F$ be a free factor and $\phi:A \to F$ be injective. For any integer $L \ge L(A)$, the HNN extension $F*_A$ has a strictly bidirectional annulus of length $2L$ if and only if $\hat \Lambda_L[\phi]$ is not empty.
\end{lem}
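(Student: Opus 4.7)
The plan is to reduce Lemma~\ref{biannulibound2} to Lemma~\ref{biannulibound} by realizing each strictly bidirectional annulus of sufficient length in $F*_A$ as living inside the Bass-Serre tree of an embedded mapping torus $F_i*_{\phi_i} \hookrightarrow F*_A$, where $F_i$ ranges over the components of the canonical $[\phi]$-invariant free factor system $\mathcal F$.

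First I would set up the correspondence between strictly bidirectional annuli and $\alpha$-fixed paths in the Bass-Serre tree $\mathcal T$ of $F*_A$, exactly as in Lemma~\ref{biannulibound}. Let $\star \in \mathcal T$ be a vertex with stabilizer $F$. After $F*_A$-translation, a strictly bidirectional annulus of length $2L$ centered at $\star$ consists of a nontrivial $\alpha \in F$ together with two distinct depth-$L$ ``incoming paths'' from $\star$ (paths whose consecutive edges are each incoming at the vertex being traversed to), all $2L$ edges fixed by $\alpha$. The dichotomy restricting annuli to unidirectional or bidirectional was already established in the paragraph preceding the lemma using malnormality of $A \le F$.

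Next I would use Proposition~\ref{maxinvariant} to force $\alpha$ into the iterated $\phi$-image of $\mathcal F$. The key observation is that $\alpha$ fixing an incoming edge at $\star$ with stabilizer $g\phi(A)g^{-1}$ gives $\alpha = g\phi(a)g^{-1}$ for some $a \in A$; iterating this inductively along a depth-$L$ path peels off $L$ copies of $\phi$, so that $[\alpha]_F$ admits $L$ iterated backward $\phi$-preimages terminating in an element $a_L \in A$ with $L$ forward iterates. Since $L \ge L(A)$, Proposition~\ref{maxinvariant} forces $[a_L]_A$ to be carried by $\mathcal F$, so $\alpha$ is $F$-conjugate to $\phi^L(x)$ for some $x$ in some component $F_i \in \mathcal F$.

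With this reduction, the strictly bidirectional annulus at $\star$ lies in the equivariantly embedded Bass-Serre tree $\mathcal T_i \subseteq \mathcal T$ of the mapping torus $F_i *_{\phi_i}$ (with stable letter $t_i = tx_i$ absorbing the conjugation data of $\left.\phi\right|_{\mathcal F}$). The embedding $\mathcal T_i \hookrightarrow \mathcal T$ identifies roots and respects edge orientations, so strictly bidirectional annuli of length $2L$ centered at $\star$ in $\mathcal T$ with stabilizer in (a conjugate of) $\phi^L(F_i)$ correspond bijectively to such annuli in $\mathcal T_i$. Lemma~\ref{biannulibound} applied to $F_i *_{\phi_i}$ then yields the bijection with components of $\hat\Lambda_L[\phi_i]$, which upon taking the union over $F_i \in \mathcal F$ is $\hat\Lambda_L[\phi]$ by the definition given just before Proposition~\ref{cycloinv2}. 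The converse direction reverses this construction, starting from a component of $\hat\Lambda_L[\phi]$ inside some $F_i *_{\phi_i}$ and transporting the resulting strictly bidirectional annulus back into $F*_A$. The main obstacle is the inductive step: tracking how each edge in the depth-$L$ incoming path contributes a well-defined $\phi$-preimage of $\alpha$, which requires the malnormality of $A$ in $F$ to rigidify the backward iterate at each stage and Proposition~\ref{maxinvariant} to guarantee that after $L \ge L(A)$ such steps the iteration lands inside $\mathcal F$.
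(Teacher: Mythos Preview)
Your approach is correct and fills in details that the paper deliberately omits; the paper simply asserts that ``the correspondence between strictly bidirectional annuli and the iterated pullbacks remains'' and calls the lemma a ``natural extension'' of Lemma~\ref{biannulibound} without further argument. Your use of Proposition~\ref{maxinvariant} to force the element $\alpha$ into the $\mathcal F$-part after $L \ge L(A)$ backward iterates is exactly the mechanism the paper has in mind.

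One imprecision worth flagging: you repeatedly write ``the mapping torus $F_i *_{\phi_i}$'' as though the restriction $\left.\phi\right|_{\mathcal F}$ sends each component $F_i$ to itself. In general it sends $F_i$ into a conjugate of $F_{\sigma(i)}$ for some function $\sigma$ on the index set, so $F_i *_{\phi_i}$ is not well-formed when $\sigma(i)\neq i$. The two halves of a strictly bidirectional annulus may even trace back through different components $F_i$, $F_j$ with $\sigma^L(i)=\sigma^L(j)$. The clean fix is to replace the individual $F_i *_{\phi_i}$ by the (possibly disconnected) graph of groups for $\left.\phi\right|_{\mathcal F}:\mathcal F\to\mathcal F$ --- one vertex per component, one oriented edge per component following $\sigma$ --- and apply Lemma~\ref{biannulibound} in its disconnected form (as the paper remarks is permissible). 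Alternatively, pass to $\sigma$-orbits and powers of $\phi$. Once this is corrected, your embedding of Bass-Serre trees goes through.

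A second, smaller point: you claim a \emph{bijection} between annuli and components of $\hat\Lambda_L$, but the lemma only asks for equivalence of nonemptiness, and that is all Lemma~\ref{biannulibound} itself establishes. The bijection is morally there but you need not prove it.

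Compared to your route, the paper's implicit approach is more direct: rerun the short proof of Lemma~\ref{biannulibound} verbatim in the Bass-Serre tree of $F*_A$, observing that for $L\ge L(A)$ the stabilizers of depth-$L$ vertex pairs are automatically computed inside $\mathcal F$ by Proposition~\ref{maxinvariant}, which is the definition of $\hat\Lambda_L[\phi]$. Your argument reaches the same conclusion through the intermediate step of realizing the sub-mapping-tori as subgroups, which is more structural but also more to verify.
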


The definition of (based-)\,hyperbolicity extends naturally using the invariant restriction. For $\lambda>1$ and $n \ge L(A)$, a topological representative $f:(\Gamma_A, \Gamma_{\mathcal F}) \to (\Gamma, \Gamma_{\mathcal F})$ for $[\phi]$ is {\bf (based-)\,$\boldsymbol{(\lambda, n)}$-hyperbolic} if $\left.f\right|_{\mathcal F}:\Gamma_{\mathcal F} \to \Gamma_{\mathcal F}$ is (based-)\,$ (\lambda, n)$-hyperbolic and $[f]$ is {\bf atoroidal} if $[\left.f\right|_{\mathcal F}]$ is atoroidal. When $\mathcal F$ is trivial, any topological representative of $\phi$ is vacuously (based-)\,hyperbolic. Again, since connectedness played no role in the proofs of Lemma~\ref{critlength} and Proposition~\ref{atoroidal}, we get this extension:

\begin{prop}\label{atoroidal2} Let $A \le F$ be a free factor. If $\phi:A \to F$ is injective and atoroidal, then $[\phi]$ has a $(2,n)$-hyperbolic topological representative for some integer $n \ge L(A)$.
\end{prop}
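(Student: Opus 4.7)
The plan is to reduce the statement to Proposition~\ref{atoroidal} applied to the invariant restriction $\left.\phi\right|_{\mathcal F}:\mathcal F \to \mathcal F$, exploiting the fact that the notions of $(\lambda,n)$-hyperbolicity and atoroidality for a topological representative of $\phi:A\to F$ are defined through the restriction to $\Gamma_{\mathcal F}$. If $\mathcal F$ is trivial, every topological representative is vacuously $(2,n)$-hyperbolic for every $n$, so I may assume $\mathcal F$ is a nontrivial $[\phi]$-invariant free factor system of $A$.

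First, I would package $\left.\phi\right|_{\mathcal F}$ as an injective endomorphism of the (possibly disconnected) free factor system $\mathcal F$. By hypothesis it is atoroidal, so the Remark following Proposition~\ref{cycloinv2} (together with the earlier Remark following the definition of the Stallings graph) lets me apply the disconnected version of Proposition~\ref{atoroidal}: there is a topological representative $f_{\mathcal F}: \Gamma_{\mathcal F} \to \Gamma_{\mathcal F}$ of $[\left.\phi\right|_{\mathcal F}]$ that is $(2, n_0)$-hyperbolic for some $n_0 \ge 1$. The proof of Proposition~\ref{atoroidal} goes through verbatim here since connectedness of the ambient graph played no role: one invokes Brinkmann's theorem on the periodic piece carried by the maximal $[\,\left.\phi\right|_{\mathcal F}\,]$-fixed free factor system, constructs an expanding relative immersion on the rest via Theorem~\ref{expimmersion}, and patches them together using Lemma~\ref{critlength}.

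Next, I would extend $f_{\mathcal F}$ to a topological representative $f:(\Gamma_A, \Gamma_{\mathcal F}) \to (\Gamma, \Gamma_{\mathcal F})$ of $[\phi]$. This is a standard construction: for each edge $e$ of $\Gamma_A \setminus \Gamma_{\mathcal F}$ with endpoints $v_0, v_1 \in \Gamma_{\mathcal F}$, pick an edge-path in $\Gamma$ from $f_{\mathcal F}(v_0)$ to $f_{\mathcal F}(v_1)$ whose homotopy class, combined with the given restriction on $\Gamma_{\mathcal F}$, realizes $[\alpha \circ \phi]$; since $\mathcal F$ carries $\phi(\mathcal F)$ and $\phi:A \to F$ is injective, there is no obstruction, and we may tighten to eliminate pretrivial edges. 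By definition of hyperbolicity in this setting, the resulting $f$ is $(2, n_0)$-hyperbolic.

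Finally, to produce the required bound $n \ge L(A)$, I would invoke the remark that $(\lambda', n)$-hyperbolicity implies $(\lambda^k, nk)$-hyperbolicity for every $k \ge 1$ and $\lambda \in (1, \lambda']$. Taking $\lambda' = 2$ and $\lambda = 2$, any $k$ with $n_0 k \ge L(A)$ gives a $(2, n_0 k)$-hyperbolic representative, which is the desired conclusion. The proof is essentially bookkeeping: the only mildly nontrivial step is the appeal to the disconnected version of Proposition~\ref{atoroidal}, but the author has already flagged this extension as immediate from the proof of the connected case, so there is no genuine obstacle beyond checking the definitions align.
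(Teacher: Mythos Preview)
Your proposal is correct and follows the same approach as the paper, which gives no detailed proof beyond the sentence ``since connectedness played no role in the proofs of Lemma~\ref{critlength} and Proposition~\ref{atoroidal}, we get this extension.'' You have simply unpacked this: apply the disconnected version of Proposition~\ref{atoroidal} to $\left.\phi\right|_{\mathcal F}$, extend the resulting representative on $\Gamma_{\mathcal F}$ to all of $\Gamma_A$, and boost the exponent to satisfy $n \ge L(A)$ using the standard remark on iterating hyperbolicity constants.
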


We are now ready to state and almost prove the main result of the section.

\begin{thm}\label{main2} Let $A \le F$ be a free factor and $\phi:A \to F$ be an injective homomorphism. Then the following are equivalent:
\begin{enumerate}
\item $F*_A$ is word-hyperbolic;
\item $F*_A$ contains no $BS(1,d)$ subgroups with $d \ge 1$;
\item $[\phi]$ has no invariant cyclic subgroup system with index $d \ge 1$;
\item all/some topological representative of $[\phi]$ are based-hyperbolic and all strictly bidirectional annuli in their classifying spaces are shorter than some integer.
\end{enumerate}
\end{thm}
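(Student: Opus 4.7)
The plan is to mirror the proof of Theorem~\ref{main}, exploiting the canonical $[\phi]$-invariant free factor system $\mathcal F$ of $A$ (Proposition~\ref{maxinvariant}) as the natural replacement for the entire source group used in the mapping torus case. Implications $(1)\Rightarrow(2)$ and $(2)\Rightarrow(3)$ I expect to transfer essentially verbatim: Baumslag-Solitar subgroups remain standard obstructions to word-hyperbolicity, and an invariant cyclic subgroup system for $[\phi]$ of index $d\ge 1$ furnishes an element $c \in \mathcal F$ and an integer $k\ge 1$ with $\phi^k(c)$ conjugate to $c^d$ in $F$, so that $\langle c,t^k\rangle\le F*_A$ surjects onto $BS(1,d)$ and a normal-form argument as in Kapovich~\cite{Kap00} upgrades the surjection to an isomorphism.

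For $(3)\Rightarrow(4)$ I would invoke Proposition~\ref{atoroidal2} to produce a $(2,n)$-hyperbolic topological representative $f:(\Gamma_A,\Gamma_{\mathcal F})\to(\Gamma,\Gamma_{\mathcal F})$ for some $n\ge L(A)$. The change-of-markings computation from the proof of Theorem~\ref{main} applies verbatim to the invariant restriction $\left.f\right|_{\mathcal F}$, showing that every topological representative of $[\phi]$ is hyperbolic; Lemma~\ref{tobased}, applied to the atoroidal invariant restriction, upgrades hyperbolicity to based-hyperbolicity. The absence of invariant cyclic subgroup systems of index $d\ge 2$, combined with Proposition~\ref{cycloinv2}, forces $\hat\Lambda_L[\phi]=\emptyset$ for some $L\ge L(A)$, and Lemma~\ref{biannulibound2} then produces the required bound on the length of strictly bidirectional annuli in any classifying space $M_f$.

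The main obstacle is $(4)\Rightarrow(1)$. Because $F*_A$ is not itself a mapping torus when $A$ is a proper free factor of $F$, Theorem~\ref{blueprint} does not apply directly. Following the strategy outlined in the interlude, my plan is to exhibit a canonical finite collection of mapping torus subgroups $\{F_i'*_{\phi_i'}\}\le F*_A$ --- one per $[\phi]$-orbit of components of $\mathcal F$, with $\phi_i'$ a suitable restriction of the appropriate iterate of $\phi$ --- which carry every sufficiently long annulus of $F*_A$. The action of $F*_A$ on its Bass-Serre tree should be acylindrical relative to these subgroups, so that condition (4) for $F*_A$ reduces to condition (4) for each $F_i'*_{\phi_i'}$; Theorem~\ref{main} then supplies word-hyperbolicity of each such mapping torus, and a Bestvina-Feighn combination theorem argument on the graph-of-groups decomposition of $F*_A$ with these mapping tori as vertex groups completes the proof. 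Packaging this reduction into a clean HNN analogue of Theorem~\ref{blueprint} (the forthcoming Theorem~\ref{blueprint2}) --- in particular, establishing the relative acylindricity and verifying that annuli flaring transports across the Bass-Serre decomposition --- is where the substantive work will lie.
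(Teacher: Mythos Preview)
Your treatment of $(1)\Rightarrow(2)\Rightarrow(3)\Rightarrow(4)$ matches the paper essentially verbatim, including the specific references (Proposition~\ref{atoroidal2}, Lemma~\ref{tobased}, Proposition~\ref{cycloinv2}, Lemma~\ref{biannulibound2}).

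For $(4)\Rightarrow(1)$ you and the paper diverge. You propose to realize the interlude's heuristic literally: find mapping-torus subgroups $\{F_i'*_{\phi_i'}\}$ carrying all long annuli, establish relative acylindricity of the Bass-Serre action, apply Theorem~\ref{main} to each piece, and then assemble via a combination theorem on some new graph-of-groups decomposition with the $F_i'*_{\phi_i'}$ as vertex groups. The paper does \emph{not} do this. Its Theorem~\ref{blueprint2} verifies the annuli flaring condition directly on the classifying space $M_f$ of the original HNN structure, adapting the proof of Theorem~\ref{blueprint}. The only new ingredient is a filtration $\Gamma_{\mathcal F}=\Gamma_n\subset\cdots\subset\Gamma_0=\Gamma_A\subset\Gamma_{-1}=\Gamma$ together with ``closest point'' projections $\lfloor\,\cdot\,\rfloor_i$ that push rings of an annulus into the correct stratum so that $f$ can be iterated and based-hyperbolicity of $\left.f\right|_{\mathcal F}$ can be invoked; the three-case analysis of Theorem~\ref{blueprint} then goes through with bookkeeping changes.

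Your route is plausible but carries real overhead you have not addressed: you would need to exhibit an explicit splitting of $F*_A$ with the $F_i'*_{\phi_i'}$ as vertex groups (it is not the given HNN splitting, whose vertex group is $F$), and formalize ``acylindrical relative to'' in a way compatible with whichever combination theorem you intend to invoke. The paper's direct approach sidesteps all of this by never leaving $M_f$; the interlude's language of mapping-torus subgroups is descriptive rather than a blueprint for the actual argument.
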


The proof follows the same steps as Theorem~\ref{main} and the only missing ingredient is an extension of Theorem~\ref{blueprint} (Theorem~\ref{blueprint2} below) that proves the implication $(4) \implies (1)$. 

\begin{rmk} Every HNN extension of the form $F*_A$ where $F$ is a finite rank free group and $A \le F$ is a free factor can be written as a mapping torus $\mathbb F*_\psi$ of an injective endomorphism $\psi:\mathbb F \to \mathbb F$ where $\mathbb F$ is a free group with possibly infinite rank. Conversely, every finitely generated subgroup of a mapping torus $\mathbb F*_\psi$ where $\mathbb F$ has possibly infinite rank can be written as an HNN extension $F*_A$ where $F$ has finite rank and $A\le F$ is a free factor \cite{FH99}. This argument is Feighn-Handel's result that $F*_A$ and $\mathbb F*_\psi$ are {\it coherent}. As a corollary, we get the following amusing statement:
\end{rmk}

\begin{cor}Let $\psi:\mathbb F \to \mathbb F$ be an injective endomorphism of an infinite rank free group. $\mathbb F*_\psi$ is locally word-hyperbolic if and only if it contains no $BS(1,d)$ subgroups with $d \ge 1$. 
\end{cor}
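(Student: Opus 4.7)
The plan is to deduce the corollary from Theorem~\ref{main2} by passing to finitely generated subgroups via the Feighn--Handel coherence result cited in the preceding remark. Recall that $\mathbb F*_\psi$ is locally word-hyperbolic by definition if every finitely generated subgroup is word-hyperbolic, so the task splits as two implications, one of which is essentially immediate.

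For the forward direction, I would note that $BS(1,d)$ is finitely generated for every $d \ge 1$, and it is a classical fact that no $BS(1,d)$ group is word-hyperbolic (for $d = 1$ this is $\mathbb Z^2$; for $d \ge 2$ the subgroup $\langle a \rangle$ has exponential distortion, which rules out word-hyperbolicity). Hence if $\mathbb F*_\psi$ were to contain a $BS(1,d)$ subgroup, that subgroup itself would be a finitely generated non-hyperbolic subgroup, contradicting local word-hyperbolicity.

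For the reverse direction, suppose $\mathbb F*_\psi$ contains no $BS(1,d)$ subgroups for any $d \ge 1$, and let $H \le \mathbb F*_\psi$ be an arbitrary finitely generated subgroup. By Feighn--Handel's coherence result (recorded in the remark above), $H$ is isomorphic to an HNN extension of the form $F*_A$, where $F$ is a finite rank free group and $A \le F$ is a free factor with an injective $\phi : A \to F$. Since $H$ inherits the property of containing no $BS(1,d)$ subgroups with $d \ge 1$ from $\mathbb F*_\psi$, the equivalence $(1) \iff (2)$ of Theorem~\ref{main2} applies directly to $H = F*_A$ and tells us that $H$ is word-hyperbolic. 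As $H$ was an arbitrary finitely generated subgroup, $\mathbb F*_\psi$ is locally word-hyperbolic.

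The only nontrivial content is the invocation of Feighn--Handel coherence, which is not proved here but merely cited from \cite{FH99}; assuming that result, no obstacle remains, since the main theorem of the paper does all the work on each finitely generated subgroup individually. The one step that requires a moment of care is verifying that Feighn--Handel's description produces an HNN extension over a \emph{free factor} (not merely over a finitely generated subgroup), so that Theorem~\ref{main2} genuinely applies; this is precisely the content of the remark, and once it is granted the corollary follows immediately.
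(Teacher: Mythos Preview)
Your proposal is correct and matches the paper's intended argument exactly: the paper does not give a formal proof of this corollary but simply states it as an immediate consequence of the preceding remark on Feighn--Handel coherence together with Theorem~\ref{main2}, which is precisely what you have spelled out. Your care in noting that the Feighn--Handel description yields an HNN extension over a \emph{free factor} (so that Theorem~\ref{main2} applies) is well placed.
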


\noindent By {\it locally word-hyperbolic}, we mean every finitely generated subgroup is word-hyperbolic.
\medskip

The following theorem will complete our proof of Theorem~\ref{main2}.

\begin{thm}\label{blueprint2} Let $A \le F$ be a free factor and $\phi:A \to F$ be injective. If a topological representative $f$ of $[\phi]$ is based-hyperbolic and all strictly bidirectional annuli in its classifying space $M_f$ are shorter than some integer, then $F*_A$ is word-hyperbolic.
\end{thm}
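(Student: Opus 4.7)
The plan is to apply the Bestvina-Feighn combination theorem~\cite{BF92} to the HNN splitting of $F*_A$ over $\phi:A\to F$. The vertex and edge groups of this splitting are finite rank free, hence word-hyperbolic, so the combination theorem reduces word-hyperbolicity of $F*_A$ to the annuli flaring condition for the action of $F*_A$ on its Bass-Serre tree $\mathcal T$: there should exist $\lambda>1$ and $N\ge 1$ such that every annulus $(\sigma_i)_{i=-m}^m$ of length at least $N$ satisfies $\lambda|\sigma_0|\le\max(|\sigma_{-m}|,|\sigma_m|)$. This is the same framework used to prove Theorem~\ref{blueprint} in the mapping torus setting, and the work is to extend that argument from $F*_\phi$ to $F*_A$ using the broader framework developed in Section~\ref{secHNNExts}.

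I would first reduce annuli flaring to the unidirectional case. Using the dichotomy into unidirectional and bidirectional annuli (preserved in $F*_A$ by malnormality of the free factor $A\le F$), and using Lemma~\ref{biannulibound2} together with the hypothesis bounding strictly bidirectional annuli by some $2L_0$, one deduces that every bidirectional annulus has its single increasing-to-decreasing switch within $L_0$ of one of its endpoints, since the symmetric sub-annulus about the switch is strictly bidirectional of length at most $2L_0$. Consequently, for annuli of length much greater than $L_0$, the long unidirectional post-switch segment contains the center of the full annulus, and flaring at the center of the full annulus can be extracted from flaring on that long unidirectional segment, absorbing a $K^{L_0}$-Lipschitz distortion from the bounded bidirectional side into the flaring constant $\lambda$ (which is permissible because based-hyperbolicity may be iterated to make $\lambda$ arbitrarily large).

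For a long forward-unidirectional annulus $([\alpha],p_\alpha)$, conjugate so that $\alpha=\phi^{2m}(\beta)$ with $\beta\in A$ nontrivial; then the rings $\sigma_{-m+j}$ are represented, up to the additive error supplied by Lemma~\ref{bcl}, by the tightened iterates $f^j(\sigma_{-m})$, where $\sigma_{-m}$ is a loop in $\Gamma_A$ representing $[\beta]$. Once $2m\ge L(A)$, Proposition~\ref{maxinvariant} places $[\beta]$ inside the canonical $[\phi]$-invariant free factor system $\mathcal F$, so $\sigma_{-m}$ can be taken inside $\Gamma_{\mathcal F}$, where by hypothesis the invariant restriction $f|_{\mathcal F}$ is based-hyperbolic. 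Applying the based-hyperbolicity inequality to $\sigma_{-m}$ gives
\[\lambda\,|\sigma_0|=\lambda\,|f^m(\sigma_{-m})|\le \max\bigl(|f^{2m}(\sigma_{-m})|,\,|\sigma_{-m}|\bigr)=\max(|\sigma_m|,|\sigma_{-m}|),\]
which is exactly flaring at the center. The backward-unidirectional case is symmetric, with the rings controlled by stabilizers of iterated conjugates of $\phi(A)$ and eventually sitting inside $\mathcal F$ in the same way.

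The main obstacle, and the technical heart of the argument, is the careful bookkeeping identifying combinatorial ring-lengths (measured via geodesic representatives in the edge-space cross-sections of $M_f$) with the $f$-iterates of loops in $\Gamma$, together with verifying that all error terms — the bounded cancellation constant of Lemma~\ref{bcl}, the $K^{L_0}$ Lipschitz distortion across the bounded bidirectional piece, and the uniform constants from Proposition~\ref{maxinvariant} — can be absorbed into a single pair $(\lambda,N)$ that works for every sufficiently long annulus. Once this uniform bookkeeping is in place, the annuli flaring condition is verified in both directions, and the Bestvina-Feighn combination theorem yields the word-hyperbolicity of $F*_A$.
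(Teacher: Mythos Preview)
Your overall plan matches the paper's: reduce word-hyperbolicity to the Bestvina--Feighn annuli flaring condition, use the bound on strictly bidirectional annuli to control the bidirectional part, and use based-hyperbolicity of $\left.f\right|_{\mathcal F}$ to get flaring along the unidirectional part. The paper organizes the case split slightly differently (it fixes the increasing half first and then splits into three cases rather than reducing everything to unidirectional up front), but the underlying mechanism is the same.

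There is, however, a genuine gap in your unidirectional step. You write that once $[\beta]$ is carried by $\mathcal F$, the ring $\sigma_{-m}$ ``can be taken inside $\Gamma_{\mathcal F}$'' and then apply based-hyperbolicity directly to $f^j(\sigma_{-m})$. But the rings of a topological annulus are \emph{based} loops in the edge-space $\Gamma_A$ with prescribed basepoints determined by the trace; you are not free to replace $\sigma_{-m}$ by a freely homotopic loop in $\Gamma_{\mathcal F}$ without changing the quantities $|\sigma_{-m+j}|$ you need to compare. Worse, when $A$ is not $[\phi]$-invariant, $f$ need not carry based loops in $\Gamma_A$ to based loops in $\Gamma_A$, so the expression $f^j(\sigma_{-m})$ is not even well-defined as a based loop in the edge-space. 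The paper handles this by introducing a filtration $\Gamma_n\subset\cdots\subset\Gamma_0\subset\Gamma_{-1}$ corresponding to the chain $\phi^{-i}\cdot\{A\}$ and closest-point projections $\lfloor\,\cdot\,\rfloor_i$ that push a based loop into the correct stratum so that $f^i(\lfloor s\rfloor_i)$ is again a based loop; this is exactly the machinery needed to make sense of iterating $f$ on rings and to compare $|\lfloor h_{-k}\rfloor_i|$ with $|h_{-k}|$.

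A second, related omission is the precise form of the flaring condition: Bestvina--Feighn require that every $\rho$-\emph{thin} annulus of length $2M$ and girth at least $H(\rho)$ be $\lambda$-hyperbolic, for some function $H$. The $\rho$-thinness bounds the trace segments $\tau_i$, and it is exactly this bound that lets the accumulated additive errors (from the projections and from the difference between $|h_{-k}|$ and $|f^k(\lfloor h_{-M}\rfloor_n)|$) be dominated by $H(\rho)$. Your proposal alludes to ``absorbing error terms'' but never invokes thinness, so as written there is no mechanism preventing the additive errors from swamping the multiplicative gain from based-hyperbolicity. Once you insert the filtration/projection apparatus and the $\rho$-thin hypothesis, your bookkeeping paragraph becomes the paper's inductive estimate $|f^M(\lfloor h_{-M}\rfloor_n)|\ge|h_0|-2\rho(K^M-1)/(K-1)$, and the rest goes through.
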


Roughly speaking, the extension follows from the fact that annuli of $M_f$ longer than $L(A)$ are annuli of the mapping torus of $\left.f\right|_{\mathcal F}$ up to some controlled error. Due to the close similarity with the proof of Theorem~\ref{blueprint} given in \cite{JPMa}, we will only sketch a proof of the extension. 

We start by defining {\it annuli flaring}; let $h:S^1 \times [-M, M] \to M_f$ be a topological annulus in $M_f$ of length $2M \ge 2L(A)$. The {\bf girth of $\boldsymbol h$} is the combinatorial length $|h_0|$ of the middle ring $h_0 = h(\cdot,0):S^1 \to M_f$ in the edge-space. Let $\lambda > 1$ be a real number. The annulus $h$ is {\bf $\boldsymbol \lambda$-hyperbolic} if 
\[ \lambda|h_0| \le \max(|h_{-L}|, |h_{L}|). \]
For integers $i$ between $-M$ and $M-1$ (inclusive), define $\tau_i:(i, i+1) \to \Gamma$ by projecting the trace $\left.h^{\nu}\right|_{(i, i+1)}$ to the vertex-space $\Gamma$ as follows:
\[
\tau_i(t) = 
\begin{cases}
x &\text{if } h^{\nu}(t) = (x,s) \in \Gamma_A \times [0,1] \text{ and } s < \frac{1}{2} \\
f(x) &\text{if } h^{\nu}(t) = (x,s) \in \Gamma_A \times [0,1] \text{ and } s > \frac{1}{2} \\
x &\text{if } h^{\nu}(t) = x \in \Gamma\\
\end{cases}
\]
We say $h$ is {\bf $\boldsymbol \rho$-thin} if $|\tau_i|+1 \le \rho$ for all $i$, where $|\cdot|$ is the length after tightening the path rel. endpoints. We say $M_f$ satisfies the {\bf annuli flaring condition} if there are $\lambda>1, M \ge 2L(A)$, and a function $H:\mathbb R \to \mathbb R$ such that any $\rho$-thin annulus of length $2M$ with girth at least $H(\rho)$ is $\lambda$-hyperbolic. Bestvina-Feighn's combination theorem \cite{BF92} states that $\pi_1(M_f)$ is word-hyperbolic if $M_f$ satisfies the annuli flaring condition.

\begin{proof}[Sketch proof of Theorem~{\ref{blueprint2}}] Suppose $A \le F$ is a free factor, $\phi:A \to F$ is injective, and $\mathcal F$ is the canonical $[\phi]$-invariant free factor system of $A$. Let $f:(\Gamma_A, \Gamma_{\mathcal F}) \to (\Gamma, \Gamma_{\mathcal F})$ be a based-hyperbolic topological representative for $[\phi]$ and $K = K(f) > 1$ be some Lipschitz constant for $f$. 
%If $A$ is $[\phi]$-invariant, then all annuli are contained in the mapping torus of $f:\Gamma_A \to \Gamma_A$ and the result follows from Theorem~\ref{blueprint}. 
If $\mathcal F$ is trivial, then all unidirectional annuli in $M_f$ are shorter than $L(A)$. So $M_f$ vacuously satisfies the annuli flaring condition for some $M=2L(A)$ by the annulus dichotomy (malnormality of $A$). Thus, we may assume $\mathcal F$ is not trivial. The assumptions and tool introduced in the next two paragraphs are needed in case $A$ is not $[\phi]$-invariant.
\medskip

%If $A$ is not $[\phi]$-invariant, we can assume $f:(\Gamma_A, \Gamma_{\mathcal F}) \to (\Gamma, \Gamma_{\mathcal F})$ has the additional structure. 
Let $\mathcal A_n \prec \cdots \prec \mathcal A_0 \prec \mathcal A_{-1}$ be the chain where $\mathcal A_{-1} = \{ F \}, \mathcal A_0 = \{ A \}, \mathcal A_n = \mathcal F,$ and generally $\mathcal A_i = \phi^{-i}\cdot \{ A \}$ for $0 \le i \le n$. By invariance of $\mathcal A_n$, we can extend the chain by setting $\mathcal A_i = \mathcal A_n$ for $i > n$. For $0 \le i < n$, $\mathcal A_{i-1}$ carries $\phi(\mathcal A_i)$ by definition. We assume $\Gamma$ has a filtration of nonempty core subgraphs $\cdots \subset \Gamma_1 \subset \Gamma_0 \subset \Gamma_{-1}$ where $\Gamma_{-1} = \Gamma_{\mathcal F} \vee R$ is a wedge between $\Gamma_{\mathcal F}$ and a rose $R$ and each $\Gamma_i$ is a $\mathcal A_i$-marked graph. If $n >0$, we will also assume the topological representative $f:(\Gamma_0, \Gamma_n) \to (\Gamma_{-1}, \Gamma_n)$ is of the form $(\Gamma_0,\ldots,\Gamma_{n-1}, \Gamma_n) \to (\Gamma_{-1}, \ldots, \Gamma_{n-2}, \Gamma_n)$. 
%Implicit in this assumption is the fact that all topological representative are based-hyperbolic when one of them is and $\phi$ is atoroidal. 
Note that since $\Gamma_n = \Gamma_{\mathcal F}$ was $f$-invariant to begin with, changes can made to the given representative $f$ to allow for these additional assumptions without affecting the invariant restriction $\left.f\right|_{\mathcal F}$. So $f$ is still based-hyperbolic after the changes. %Also recall that the bound on strictly bidirectional annuli depends only on the (outer class of the) injective homomorphism $\phi$.

Using this filtration, we introduce a tool needed to iterate based loops not in $\Gamma_{\mathcal F}$: for any integer $i \ge 0$ and any immersed based loop $s:S^1 \to \Gamma_0$ that is freely homotopic into $\Gamma_{\mathcal F}$, there is a ``closest point'' projection (with respect to Hausdorff distance of lifts to the universal cover) of $s$ to an immersed based loop in $\Gamma_0$ denoted by $\lfloor s \rfloor_i:S^1 \to \Gamma_0$ freely homotopic to $s$ such that $f^i(\lfloor s \rfloor_i)$ is a based loop in $\Gamma_0$. Recall that based loops must send basepoints to vertices. Since we assumed the ``complement'' of $\Gamma_n$ are roses, the projections $\lfloor s \rfloor_i$ are immersed based loops in $\Gamma_i$. So $\lfloor s \rfloor_i = \lfloor s \rfloor_n$ for $i \ge n$. Implicit in this is the fact that, for all $i \ge 0$, any conjugacy class carried by $\mathcal A_i$ but not $\mathcal A_{i+1}$ is mapped by $\phi$ to a conjugacy class carried by $\mathcal A_{i-1}$ but not $\mathcal A_i$. We are ready to start proving the theorem.
\medskip

 Suppose $f$ is based-$(2,m)$-hyperbolic for some $m \ge 1$ and strictly bidirectional annuli in $M_f$ are shorter than $2L \ge 2L(A)$. Fix $r \ge 1$ such that $2^{L+r} \ge 3\cdot K^{Lm}$. Now set $M = (2L+r)m$ and 
\[H(\rho) = 2^M \cdot 4\rho \cdot \frac{K^{2M} - 1}{K-1}.\]
Suppose $h$ is an arbitrary $\rho$-thin annulus of length $2M$ with girth $|h_0| \ge H(\rho)$. We need to show that $h$ is $2$-hyperbolic. 
Without loss of generality, assume the truncation $\left.h\right|_{[-M, 0]}$ is unidirectional and {\it increasing}. 

For $-M \le k \le 0$, consider the rings of $h_k:S^1 \to M_f$ in the edge-space $\Gamma_A \times \{\frac{1}{2}\}$. This can be viewed as an immersed based loop $h_k:S^1 \to \Gamma_0$ by forgetting the $\{ \frac{1}{2} \}$-factor.
Since $M \ge L(A)$ and $h_{-M} \simeq_{M_f} h_0$ (free homotopy in $M_f$) is not trivial, the ring $h_{-M}$ is freely homotopic (in $\Gamma$) into $\Gamma_n = \Gamma_{\mathcal F}$. Since $\Gamma_n$ is $f$-invariant and $h_{-M} \simeq_{M_f} h_k$ for all $k \le 0$, the rings $h_k$ are freely homotopic into $\Gamma_n$. In particular, these rings all have projections to subgraphs $\Gamma_i$ for all $i$.

For the first step, project $h_{-1}$ to $\Gamma_1$ so that the image $f(\lfloor h_{-1}\rfloor_1):S^1 \to \Gamma_0$ is a based loop freely homotopic to $h_0$ in $\Gamma$. The $\rho$-thin assumption (and ``closest point'' projection) means: $|f(\lfloor h_{-1}\rfloor_1)| \ge |h_0| - 2\rho$.
Similarly, we project $h_{-2}$ to $\Gamma_2$ so that $f(\lfloor h_{-2} \rfloor_2):S^1 \to \Gamma_1$ is a based loop freely homotopic to $\lfloor h_1 \rfloor_1$ in $\Gamma$. Then $|f(\lfloor h_{-2}\rfloor_2)| \ge |\lfloor h_1\rfloor_1| - 2\rho$ by $\rho$-thinness and, by the $K$-Lipschitz property of $f$, \[|f^2(\lfloor h_{-2}\rfloor_2)| \ge |f(\lfloor h_1\rfloor_1)| - 2\rho\cdot K \ge |h_0| - 2\rho \cdot (1 + K).\]
By induction, the $f$-invariance of $\Gamma_n$, and the fact $M > L(A) > n$, we get this inequality:
\[ |f^M(\lfloor h_{-M}\rfloor_n)| \ge |h_0| - 2\rho \cdot (1 +  \cdots + K^{M-1}) = |h_0| - 2\rho \cdot \frac{K^M - 1}{K-1}. \]
Since $f$ is based-$(2,m)$-hyperbolic, we know that \[2|f^M(\lfloor h_{-M}\rfloor_n)| \le \max(|f^{M+m}(\lfloor h_{-M}\rfloor_n)|, |f^{M-m}(\lfloor h_{-M}\rfloor_n)|).\]
There are three cases to consider of which we will only prove one.

{\it \noindent Case 1:} Suppose $2|f^M(\lfloor h_{-M}\rfloor_n)| \le |f^{M-m}(\lfloor h_{-M}\rfloor_n)|$. 

Then by induction on based-$(2,m)$-hyperbolicity, we get:
\[\begin{aligned}
|h_{-M}| \ge |\lfloor h_{-M}\rfloor_n| 
&\ge 2^{2L+r}|f^M(\lfloor h_{-M}\rfloor_n)| \\
&\ge 2^{2L+r}|h_0| - 2^{2L+r}\cdot2\rho \cdot \frac{K^M - 1}{K-1} \\
&\ge 3|h_0| - H(\rho) \\
&\ge 2|h_0| &\text{as } |h_0| \ge H(\rho).
\end{aligned}\]
So $h$ is $2$-hyperbolic in this case. The proof so far has been nearly identical to the proof of Theorem~\ref{blueprint} in \cite{JPMa}. But here, we need to be careful when applying iterates of $f$ to the rings of $h$ since $A$ may not be $[\phi]$-invariant. This is why we introduced the projections $\lfloor \cdot \rfloor_i$.

There are two more cases remaining.

{\it \noindent Case 2:} Suppose $2|f^M(\lfloor h_{-M}\rfloor_n)| \le |f^{M+m}(\lfloor h_{-M}\rfloor_n)|$ and $h$ is unidirectional. 

{\it \noindent Case 3:} Suppose $2|f^M(\lfloor h_{-M}\rfloor_n)| \le |f^{M+m}(\lfloor h_{-M}\rfloor_n)|$ and $h$ is bidirectional.

Case~3 is where the bound on strictly bidirectional annuli is needed. We leave the details of these cases to the reader. Alternatively, one could read the proof of Theorem~\ref{blueprint}, compare how Case~1 was handled in the two proofs, and adjust the proofs of the remaining cases accordingly.
We have covered all the cases and $M_f$ satisfies the annuli flaring condition. By the combination theorem, $F*_A \cong \pi_1(M_f)$ is word-hyperbolic.
\end{proof}

%%%%%%%%%%%%%%%%%%%%%%%%%%%%%%%%%%%%%%%%%%%%%%%%%%%%%%%%%%
\nonumsec{Epilogue}
%%%%%%%%%%%%%%%%%%%%%%%%%%%%%%%%%%%%%%%%%%%%%%%%%%%%%%%%%%
We would like to conclude this paper with a discussion of a few questions: the first question is a natural generalization of Section~\ref{secHNNExts}; the rest are problems from Ilya Kapovich's paper \cite[Section 6]{Kap00} that could be answered using expanding relative immersions.

\begin{prob} Suppose $A\le F$ is a vertex group of a cyclic splitting of $F$ and $\phi:A \to F$ is injective.
Is $F*_A$ word-hyperbolic if it contains no $BS(1,d)$ subgroups for $d \ge 1$? Can it have $BS(m,d)$ subgroups with $m, d > 1$?
\end{prob}
A {\it cyclic splitting} of $F$ is an edge of groups decomposition of $F$ with a nontrivial cyclic edge group. Vertex groups of cyclic splittings are generalizations of free factors and it would be interesting to see if the ideas in Section~\ref{secHNNExts} can be adapted to this case. The first obstacle is finding the appropriate generalization of Proposition~\ref{maxinvariant} since $\phi^{-1}$-iteration need not be as well-behaved. Furthermore, the vertex groups $A$ are not always malnormal which means annuli in $F*_A$ could exhibit more complicated behavior.
\medskip

For the remaining problems, assume $\phi:F \to F$ is injective. 

\begin{prob}[{\cite[Problem~6.4]{Kap00}}]\label{isoProb} What kind of isoperimetric functions can $F*_\phi$ have?\end{prob}

\noindent The automorphism case of this problem was answered by Bridson-Groves \cite{BG10}: they showed that free-by-cyclic groups have quadratic isoperimetric functions. 
Implicit in the second part of the paper is the idea that $F*_\phi$ is hyperbolic relative to a canonical finite collection of free-by-cyclic groups when it has no $BS(1,d)$ subgroups for $d \ge 2$. Furthermore, when $A \le F$ is a free factor and $\psi:A \to F$ is injective, then $F*_A$ is hyperbolic  relative to a canonical finite collection of ascending HNN extensions. This would imply that $F*_A$ has a quadratic isoperimetric function when it has no $BS(1,d)$ subgroups for $d \ge 2$. We intend to complete this direction in future work by employing a combination theorem for relatively hyperbolic groups \cite{Gau16, MR08}.
\medskip

When $F*_\phi$ is word-hyperbolic, Mahan Mj proved there is a continuous extension to the Gromov boundary of the inclusion map $F \le F*_\phi$ \cite{Mj98}; this map is known as the {\it Cannon-Thurston map}.

\begin{prob}[{\cite[Problem~6.7]{Kap00}}] If $F*_\phi$ is word-hyperbolic, is the Cannon-Thurston map (uniformly) finite-to-one? Is there a corresponding ending laminations theorem?
\end{prob}

For the first of these questions, it seems that expanding relative immersions reduce the problem to the automorphism case, which has been answered \cite{Gho20, Mj97}. As for the ending laminations, what is missing is the appropriate formulation of the theorem that replaces the usual short exact sequence with a graph of groups decomposition.

\begin{prob}[{\cite[Problem~6.8]{Kap00}}] Let $\partial \phi:\partial F \to \partial F$ be the extension of $\phi$ to the boundary. Can we classify injective endomorphisms $\varphi$ by the dynamics of boundary extensions $\partial \varphi$?\end{prob}

Levitt-Lustig covered the surjective case when they proved that {\it most} automorphisms of non-elementary word-hyperbolic groups have boundary extensions with {\it north-south dynamics} \cite{LL00}: two fixed points --- a repellor and an attractor. Expanding relative immersions should imply that {\it most} injective nonsurjective endomorphisms of $F$ have {\it sink dynamics}: a single fixed point and it is an attractor.
\medskip

There are of course more unresolved questions about injective endomorphisms and their expanding relative immersions. These are left for the reader to ask and answer.

%%%%%%%%%%%%%%%%%%%%%%%%%%%%%%%%%%%%%%%%%%%%%%%%%%%%%%%%%%
\appendix
%%%%%%%%%%%%%%%%%%%%%%%%%%%%%%%%%%%%%%%%%%%%%%%%%%%%%%%%%%
%%%%%%%%%%%%%%%%%%%%%%%%%%%%%%%%%%%%%%%%%%%%%%%%%%%%%%%%%%
\section{Relative train tracks}\label{relalgo}
%%%%%%%%%%%%%%%%%%%%%%%%%%%%%%%%%%%%%%%%%%%%%%%%%%%%%%%%%%
%{\it Bestvina-Handel's paper \cite{BH92} is the reference for the material in this appendix.}
%\medskip

The objective in this appendix is to sketch the proof that irreducible relative representatives with minimal stretch factor are train tracks. Bestvina-Handel's construction of train tracks for irreducible automorphisms \cite{BH92} translates verbatim to the non-free forest setting.

Let $\phi:F \to F$ be an injective endomorphism, $\mathcal A \prec \mathcal B$ be $[\phi]$-invariant free factor systems, and $T_{*}$ be a $(\mathcal B, \mathcal A)$-forest. We allow forests to have bivalent vertices.
Recall, an {\it $\mathcal A$-relative weak representative} for the restriction $\left.\phi\right|_{\mathcal B}$ is a $\left.\phi\right|_{\mathcal B}$-equivariant graph map $f_{*}: T_{*} \to T_{*}$. An {\it $\mathcal A$-relative representative} is an $\mathcal A$-relative weak representative $f_{*}$ with no pretrivial edges and whose underlying forest $T_{*}$ has no bivalent vertices.
Additionally, we say the relative representative is {\it minimal} if it has no orbit-closed invariant subforests with bounded components.

For any $\mathcal A$-relative weak representative $f_{*}$, we get the {\it transition matrix} $A(f_{*})$. %Let $A(f_{*})$ be a square matrix whose rows and columns are indexed by the number of orbits of natural edges in $T_{*}$. If $a_{ij}$ is the entry of $A(f_{*})$ in row-$i$ and column-$j$, then $a_{ij}$ is the number of components the orbit $B_i \cdot e_i$ are contained in the the edge-path $f_{*}(e_j)$, where $e_i$ is a orbit representative for the $i$-th orbit of natural edges. By local injectivity in the interior of edges, $A(f_{*})$ is a nonnegative matrix with at least one positive entry in each column. 
An $\mathcal A$-relative representative $f_{*}$ is {\it irreducible} if the matrix $A(f_{*})$ is irreducible and, in this case, the {\bf stretch factor} of $f_{*}$, denoted by $\lambda(f_{*})$, is the Perron-Frobenius eigenvalue of $A(f_{*})$. An {\bf $\boldsymbol{\mathcal A}$-relative train track} for $\left.\phi\right|_{\mathcal B}$ is an $\mathcal A$-relative representative $f_*$ for $\left.\phi\right|_{\mathcal B}$ that additionally satisfies the property: the edge-paths $f_{*}^n(e)$ are immersed for all edges $e$ in $T_{*}$ and integers $n \ge 1$. We have set the stage for the Bestvina-Handel's result.

\begin{athm}[{\cite[Theorem~1.7]{BH92}}]\label{appreltt} Let $\left.\phi\right|_{\mathcal B}$ be irreducible relative to $\mathcal A$ and $f_{*}:T_{*} \to T_{*}$ be an irreducible $\mathcal A$-relative representative for $\left.\phi\right|_{\mathcal B}$. If $f_{*}$ has minimal stretch factor, then it is an irreducible $\mathcal A$-relative train track.\end{athm}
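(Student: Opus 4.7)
The plan is to adapt the Bestvina--Handel argument from \cite[Section~1]{BH92} to the relative (simplicial forest) setting. Suppose $f_{*}:T_{*}\to T_{*}$ is an irreducible $\mathcal A$-relative representative for $\left.\phi\right|_{\mathcal B}$ with minimal stretch factor $\lambda = \lambda(f_{*})$, and suppose for contradiction that $f_{*}$ is not a train track. Then there exist an edge $e$ of $T_{*}$ and an integer $n\ge 1$ such that $f_{*}^n(e)$ fails to be immersed; equivalently, there is some vertex $v$ and a turn $(d_1,d_2)$ of distinct directions at $v$ that is \emph{illegal}, meaning some iterate $df_{*}^{\,k}$ identifies $d_1$ and $d_2$. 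The strategy is to modify $f_{*}$ using an equivariant Stallings-type operation (fold, subdivision, collapse of an invariant subforest with bounded components) so as to produce a new irreducible $\mathcal A$-relative representative $f_{*}'$ for $\left.\phi\right|_{\mathcal B}$ whose transition matrix has strictly smaller Perron--Frobenius eigenvalue, contradicting minimality of $\lambda$.

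First I would organize the possible elementary operations exactly as in \cite{BH92}: (i) an equivariant \emph{fold} identifying initial segments of two edges meeting at a vertex whose derivatives coincide under some $df_{*}^{\,k}$; (ii) equivariant \emph{subdivision} of an edge at the preimage of a vertex (used to align an illegal turn before folding); and (iii) \emph{collapse} of an orbit-closed invariant subforest with bounded components, to restore minimality. Each of these is well-defined in the $(\mathcal B,\mathcal A)$-forest setting because the $\mathcal B$-action on $T_{*}$ has trivial edge stabilizers (so folds do not identify points in the same orbit, exactly as in Lemma~\ref{relbcl}) and because the hypothesis $\phi^{-1}\cdot\mathcal A=\mathcal A$ ensures that no fold or collapse creates new point stabilizers outside conjugates of $\mathcal A$-components. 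This is precisely where the relative setup differs from the absolute one, and where the running convention of the paper pays off.

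Next I would show that whenever an illegal turn is present, a finite sequence of these operations strictly reduces $\lambda$. Following Bestvina--Handel, one iterates the map $df_{*}$ on the (finitely many orbits of) turns until an illegal turn becomes a \emph{tight} illegal turn, i.e.\ two edges $e_1,e_2$ with a common initial vertex such that $f_{*}(e_1)$ and $f_{*}(e_2)$ share a nondegenerate initial segment. After appropriately subdividing, fold these edges; this yields an equivariant quotient forest together with an induced map that is again a $\left.\phi\right|_{\mathcal B}$-equivariant weak representative. Collapsing any resulting orbit-closed invariant subforest with bounded components produces a minimal $\mathcal A$-relative representative $f_{*}'$. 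The transition matrix $A(f_{*}')$ is obtained from $A(f_{*})$ by an elementary operation of the type analyzed in \cite[Lemma~1.15]{BH92}, and the Perron--Frobenius computation there gives $\lambda(f_{*}')\le\lambda$, with strict inequality in the fold step where an illegal turn is removed. By Lemma~\ref{irredequiv}, irreducibility of $\left.\phi\right|_{\mathcal B}$ relative to $\mathcal A$ forces $f_{*}'$ to again be an irreducible $\mathcal A$-relative representative (any invariant proper subforest with unbounded components would yield a $[\phi]$-invariant free factor system strictly between $\mathcal A$ and $\mathcal B$).

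The main obstacle, I expect, is verifying that these elementary operations behave well in the relative setting: one must check that folds at vertices whose stabilizer is (conjugate to) an $\mathcal A$-component never identify edges in the same $\mathcal B$-orbit (so that trivial-edge-stabilizer condition is preserved), that subdivision and collapse interact correctly with the derivative data defining illegal turns, and that the Bestvina--Handel bookkeeping which bounds the number of operations needed to eliminate all illegal turns (or to strictly decrease $\lambda$) still terminates here. Once this bookkeeping is in place, the strict decrease of the Perron--Frobenius eigenvalue contradicts the assumed minimality of $\lambda(f_{*})$, and we conclude that $f_{*}$ had no illegal turn to begin with --- i.e.\ $f_{*}$ is an $\mathcal A$-relative train track.
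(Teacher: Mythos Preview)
Your approach is essentially the same as the paper's: both argue by contrapositive, using equivariant subdivision, folding, and collapse (the lemmas just above the theorem in the appendix) to produce from a non--train-track representative a new irreducible $\mathcal A$-relative representative with strictly smaller Perron--Frobenius eigenvalue, contradicting minimality. The paper's sketch is slightly more concrete about the sequence of folds --- it picks the minimal $n$ for which $f_*^n(e)$ fails to be immersed, subdivides near the failure point $\star$, and then folds the iterates $f_*^{n-1}(U),\ldots,f_*(U)$ in order, so that only the \emph{last} fold forces tightening and hence the strict drop in $\lambda$ (this is the second case of the folding lemma). Your phrasing ``strict inequality in the fold step where an illegal turn is removed'' is a bit loose: the strict drop comes not from the fold per se but from the subsequent tightening when the folded map is no longer a weak representative.

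One small omission: after folding, tightening, and collapsing an invariant subforest with bounded components, you obtain only a minimal \emph{weak} representative, which may have bivalent vertices; to get back to an $\mathcal A$-relative \emph{representative} you also need the bivalent (valence-two) homotopy move, which the paper records as a separate lemma showing $\lambda$ does not increase. You should add this move to your list of elementary operations.
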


Minimality is understood to be amongst irreducible representatives rather than weak representatives.
%We now state the lemmas that will be fundamental steps in the proof.
%Following Bestvina-Handel's lead, the proof shall use the following lemma to ensure the moves induce irreducible $\mathcal A$-relative representatives.
The argument relies on understanding how various moves on an irreducible $\mathcal A$-relative representative $f_{*}$ affect $\lambda(f_{*})$ and invoking minimality of $\lambda(f_*)$ to conclude that no such moves are possible. Note that although the moves are described locally, they must be performed equivariantly if we want the resultant forests to be $(\mathcal B, \mathcal A)$-forests. 
The proofs of these moves/lemmas are omitted since they are the same as the proofs in \cite{BH92}.

\begin{rmk} Recently, Bestvina \cite{Bes11} and Francaviglia-Martino \cite{FM15} gave an alternative approach to proving this theorem using the {\it Lipschitz metric} on {\it relative outer space}.
\end{rmk}

The first move is {\it subdivision}, which occurs at an interior point of an edge that is in the preimage of vertices under the representative.

\begin{alem}[{\cite[Lemma~1.10]{BH92}}]If $f_{*}: T_{*} \to T_{*}$ is an irreducible $\mathcal A$-relative weak representative for $\left.\phi\right|_{\mathcal B}$ and $f_{*}': T_{*}' \to T_{*}'$ is induced by a subdivision, then $f_{*}'$ is an irreducible $\mathcal A$-relative weak representative and $\lambda(f_{*}') = \lambda(f_{*})$.\end{alem}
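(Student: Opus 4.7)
The plan is to directly compare the transition matrices $A(f_{*})$ and $A(f_{*}')$ and to verify both halves of the conclusion—preservation of irreducibility and equality of stretch factors—using the Perron--Frobenius eigenvector of $A(f_{*})$ as the main tool. Write $[e]$ for the orbit of the subdivided edge and $[e_1], [e_2]$ for the two new orbits in $T_{*}'$. Since the subdivision point $p$ maps to a vertex of $T_{*}$ by hypothesis, the set-theoretically identical map $f_{*}'=f_{*}$ sends vertices of $T_{*}'$ to vertices of $T_{*}'$, so $f_{*}'$ is a $\left.\phi\right|_{\mathcal B}$-equivariant forest map and hence an $\mathcal A$-relative weak representative. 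The relationship between the matrices can be read off directly: for any unchanged orbit $[e_j]$,
\begin{align*}
A(f_{*}')([e_j],[e_1]) + A(f_{*}')([e_j],[e_2]) &= A(f_{*})([e_j],[e]), \\
A(f_{*}')([e_1],[e_j]) = A(f_{*}')([e_2],[e_j]) &= A(f_{*})([e],[e_j]),
\end{align*}
since every occurrence of $e$ in an image is now a consecutive $e_1 \cdot e_2$ in the subdivided cell structure.

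For the stretch factor, I switch to a metric viewpoint. Let $\ell$ be the positive left Perron--Frobenius eigenvector of $A(f_{*})$ and assign each edge in orbit $[e_i]$ the length $\ell([e_i])$; the identity $\ell\,A(f_{*}) = \lambda(f_{*})\,\ell$ is precisely the statement that $f_{*}$ stretches every edge by $\lambda(f_{*})$ in this metric. Since $|T_{*}'| = |T_{*}|$ as $F$-spaces and $f_{*}'$ agrees with $f_{*}$ pointwise, $f_{*}'$ also stretches every edge by $\lambda(f_{*})$ once we set $\ell'([e_j]) = \ell([e_j])$ for unchanged orbits and choose $\ell'([e_1]) + \ell'([e_2]) = \ell([e])$ so that $p$ sits at metric distance $\ell'([e_1])$ from the initial endpoint of $e$. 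Hence $\ell'$ is a positive left eigenvector of $A(f_{*}')$ with eigenvalue $\lambda(f_{*})$; since a nonnegative matrix with a strictly positive eigenvector has that eigenvalue equal to its spectral radius, this gives $\lambda(f_{*}') = \lambda(f_{*})$ as soon as irreducibility of $A(f_{*}')$ is in hand.

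For irreducibility of $A(f_{*}')$, I argue by contradiction: suppose some proper nonempty orbit-closed $f_{*}'$-invariant subforest $T_S \subsetneq T_{*}'$ exists. If $T_S$ respects the subdivision (contains both or neither of $[e_1], [e_2]$), then it descends to a proper $f_{*}$-invariant subforest of $T_{*}$, contradicting irreducibility of $A(f_{*})$. The main obstacle is the mixed case, say $[e_1] \in T_S$ but $[e_2] \notin T_S$. Here let $Z$ denote the set of unchanged orbits in $T_S$. For each $e_j \in Z$ the path $f_{*}(e_j) = f_{*}'(e_j)$ lies in $T_S$ and cannot contain any translate of $e$, since that would produce a consecutive $e_1 \cdot e_2$ in $f_{*}'(e_j)$ and force $[e_2] \in T_S$; hence $Z$ is $f_{*}$-invariant in $T_{*}$. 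A short combinatorial argument rules out $Z = \emptyset$: if $T_S = \{[e_1]\}$, the path $f_{*}'(e_1)$ would be an immersed concatenation of $e_1$-translates running between two $T_{*}$-vertices, but edge stabilizers in a $(\mathcal B, \mathcal A)$-forest are trivial, so every interior translate of $p$ is bivalent with only one incident $e_1$-edge, making a non-backtracking concatenation of length $\ge 1$ impossible. Thus $Z$ is a proper nonempty $f_{*}$-invariant subforest of $T_{*}$—again contradicting irreducibility of $A(f_{*})$, and closing the argument.
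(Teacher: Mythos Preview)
Your argument is correct. The paper itself omits the proof entirely, deferring to Bestvina--Handel's original \cite[Lemma~1.10]{BH92}, so there is no in-paper proof to compare against line by line.

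Two minor comments on presentation. First, in the stretch-factor step, the phrase ``choose $\ell'([e_1]) + \ell'([e_2]) = \ell([e])$ so that $p$ sits at metric distance $\ell'([e_1])$'' conflates the combinatorial location of the subdivision point with a metric choice; what you really want is to \emph{define} $\ell'([e_i]) := \lambda(f_*)^{-1}\cdot(\ell\text{-length of } f_*(e_i))$ for $i=1,2$, and then observe that these sum to $\ell([e])$ and make $\ell'$ a left eigenvector. Equivalently, reparametrize $f_*$ as a local $\lambda(f_*)$-homothety in the $\ell$-metric first, and then your phrasing becomes literally correct. Second, your irreducibility-by-contradiction is valid but heavier than needed: the standard route (and the one in \cite{BH92}) is a direct check that for any two orbits $[e_{i'}],[e_{j'}]$ of $T_*'$, some iterate $f_*'^n(e_{j'})$ crosses a translate of $e_{i'}$. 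Since $f_*'(e_{j'})$ is a nontrivial edge-path in $T_*$, it contains some full $T_*$-edge $e_k$; irreducibility of $A(f_*)$ then gives an $n$ with $f_*^n(e_k)$ crossing $e_{i'}$ (or the full edge $e$ containing it), and one is done. Your invariant-subforest analysis reaches the same conclusion and handles the delicate ``$S=\{[e_1]\}$'' case correctly, but the direct argument avoids that case analysis altogether.
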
 

The next move is {\it bivalent homotopy}, which occurs at a bivalent vertex and decreases the number of edges. %The bivalent homotopies discussed in the main text of this {\paper} differ from the ones in this appendix in that they do not remove the bivalent vertex --- the homotopy is applied on a neighborhood of a branch point whose image is bivalent until its image is a branch point.

\begin{alem}[{\cite[Lemma~1.13]{BH92}}]\label{movebivalent} If $f_{*}: T_{*} \to T_{*}$ is an irreducible $\mathcal A$-relative weak representative for $\left.\phi\right|_{\mathcal B}$ and $f_{*}'': T_{*}'' \to T_{*}''$ is an irreducible $\mathcal A$-relative weak representative induced by a bivalent homotopy followed by collapse of a maximal invariant subforest with bounded components, then $\lambda(f_{*}'') \le \lambda(f_{*})$.\end{alem}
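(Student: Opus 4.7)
The plan is to track the effect of each move on the transition matrix and invoke standard Perron--Frobenius theory. Since the lemma compares stretch factors, we only need a comparison of the respective Perron--Frobenius eigenvalues; we do not need to understand the combinatorial geometry of $T_*$ and $T_*''$ beyond what is recorded by $A(f_*)$ and $A(f_*'')$.

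First I would handle the bivalent homotopy. Let $v$ be a bivalent vertex of $T_*$ and let $e_1,e_2$ be the two edges incident to $v$, regarded as an ordered pair. A bivalent homotopy either identifies $e_1$ and $e_2$ (equivariantly, over the full $\mathcal B$-orbit of $v$), or slides $v$ along $f_*(e_1)$ so as to shorten one side and lengthen the other; in either version, applied equivariantly, the new representative $f_*'$ on the resulting $(\mathcal B,\mathcal A)$-forest $T_*'$ has transition matrix $A(f_*')$ obtained from $A(f_*)$ by the block operation of adding the row (resp.\ column) of $e_2$ to the row (resp.\ column) of $e_1$ and deleting the $e_2$-row and $e_2$-column. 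Because $v$ is bivalent, every occurrence of $e_2$ in any edge-path $f_*(e_j)$ is adjacent to a matching occurrence of $e_1$, so this block operation records exactly the new edge counts in $T_*'$. A direct check (or an appeal to the argument in the proof of \cite[Lemma~1.13]{BH92}) shows that this operation preserves the Perron--Frobenius eigenvalue: a positive right eigenvector $\vec v$ of $A(f_*)$ with eigenvalue $\lambda(f_*)$ descends to a positive right eigenvector $\vec v{\,}'$ of $A(f_*')$ with the same eigenvalue, because the coordinates corresponding to $e_1$ and $e_2$ can be combined without altering the other equations. Hence $\lambda(f_*')=\lambda(f_*)$. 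The edge case where $e_1$ and $e_2$ lie in the same $\mathcal B$-orbit is handled analogously, and the resulting identification can only decrease $\lambda$.

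Second I would handle the collapse of a maximal orbit-closed invariant subforest $S\subseteq T_*'$ whose components are bounded. Let $f_*'':T_*''\to T_*''$ be the induced representative on $T_*''=T_*'/S$. Since $S$ is $f_*'$-invariant, the image of any edge not in $S$ projects to an immersed edge-path in $T_*''$ whose counts of non-$S$ edges are the same as the counts in $f_*'(e)$; in other words, the transition matrix $A(f_*'')$ is the principal submatrix of $A(f_*')$ obtained by restricting to the orbits of edges not contained in $S$. By Perron--Frobenius theory, for a nonnegative matrix and any principal submatrix, the Perron--Frobenius eigenvalue of the submatrix is at most that of the full matrix (one sees this by extending by zeros a nonnegative eigenvector of the submatrix and comparing with the eigenvector of the full matrix, or by monotonicity of spectral radius on nonnegative matrices). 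Thus $\lambda(f_*'')\le\lambda(f_*')=\lambda(f_*)$.

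The main obstacle is checking the transition-matrix bookkeeping under the bivalent homotopy equivariantly: one must verify that the identification (or sliding) at every vertex in the $\mathcal B$-orbit of $v$ is consistent, and that folds occurring because $e_1$ and $e_2$ are in the same orbit really do not increase $\lambda$. Given this, the rest of the argument reduces to two standard and nearly formal facts about nonnegative integer matrices: invariance of the spectral radius under the row/column combination described above, and monotonicity of the spectral radius under passage to principal submatrices.
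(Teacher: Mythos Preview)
The paper omits this proof entirely, deferring to \cite{BH92}, so there is no in-paper argument to compare against. Evaluating your proposal on its own:

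Your collapse step is fine. In a forest, an embedded path meets each bounded component of $S$ in at most one subarc with distinct endpoints, so collapsing $S$ creates no backtracking; $A(f_*'')$ is genuinely the indicated principal submatrix, and spectral-radius monotonicity for nonnegative matrices finishes it.

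The bivalent-homotopy step has real problems. First, the description is garbled: the move merges $e_1\cup e_2$ into a single edge $E$ with $f_*'(E)=[f_*(e_1)\cdot f_*(e_2)]$; it neither ``identifies $e_1$ and $e_2$'' (that is a fold) nor ``slides $v$ along $f_*(e_1)$''. Second, the row bookkeeping is wrong: because $v$ is bivalent, every \emph{interior} occurrence of $e_1$ in an immersed path $f_*(e_j)$ is paired with exactly one $e_2$, so the $E$-row equals either old row (they coincide on such $j$), not their sum. Third, the equality $\lambda(f_*')=\lambda(f_*)$ is too strong: tightening $[f_*(e_1)\cdot f_*(e_2)]$ can strictly shrink the $E$-column, and one only gets $\le$. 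Most importantly, your pairing claim fails at \emph{endpoints}: if some vertex $u$ has $f_*(u)=v$, then the edge-paths $f_*(e_j)$ for $e_j$ incident to $u$ begin with an unpaired $e_1$ or $e_2$, and after erasing $v$ the image of $u$ lies in the interior of $E$. An additional homotopy is needed to push $f_*'(u)$ to a genuine vertex, and this can \emph{lengthen} some $f_*'(e_j)$. Handling this is where the content of \cite[Lemma~1.13]{BH92} actually lies, and your argument does not touch it.

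The standard fix is to work with the left Perron--Frobenius eigenvector as an edge-length function (so that $|f_*(e)|=\lambda(f_*)\,|e|$ for every $e$) and verify that after the combined move one still has $|f_*''(e)|\le\lambda(f_*)\,|e|$ for every surviving edge; then $\lambda(f_*'')\le\lambda(f_*)$ follows from the Perron--Frobenius bound.
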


The last move we need is {\it folding}, which occurs between a pair of oriented edges originating from the same vertex that have the same image under the representative. 

\begin{alem}[{\cite[Lemma~1.15]{BH92}}]\label{movefold} Suppose $f_{*}: T_{*} \to T_{*}$ is an irreducible $\mathcal A$-relative weak representative for $\left.\phi\right|_{\mathcal B}$ and $f_{*}': T_{*}' \to T_{*}'$ is induced by a fold.
If $f_{*}'$ is an $\mathcal A$-relative weak representative, then it is irreducible and $\lambda(f_{*}') = \lambda(f_{*})$. Otherwise, if $f_{*}'': T_{*}'' \to T_{*}''$ is an irreducible $\mathcal A$-relative weak representative induced by a homotopy of $f_{*}'$ that makes the final map locally injective on the interior of edges, followed by collapse of a maximal invariant subforest with bounded components, then $\lambda(f_{*}'') < \lambda(f_{*})$.
\end{alem}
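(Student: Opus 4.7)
The plan is to analyze how the transition matrix transforms under a fold and then apply Perron-Frobenius theory. First, reduce to a model fold by subdividing: a fold in general identifies only common initial segments of $f_*(e_1)$ and $f_*(e_2)$, but after subdividing $e_1, e_2$ at the appropriate preimages (which preserves $\lambda$), the fold becomes a ``full'' fold identifying two oriented edges $e_1, e_2$ originating at a common vertex with $f_*(e_1) = f_*(e_2)$ as full immersed edge-paths. The fold produces a single edge $e$ in $T_*'$ with $f_*'(e)$ equal to this common image.

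For the first case, I directly relate $A(f_*)$ and $A(f_*')$. Since $T_*'$ has the same edge orbits as $T_*$ except that the orbits of $e_1$ and $e_2$ are merged into the orbit of $e$, the matrix $A(f_*')$ is obtained from $A(f_*)$ by summing the rows and columns indexed by $e_1$ and $e_2$; the identity $A(f_*)(i, e_1) = A(f_*)(i, e_2)$ follows from $f_*(e_1) = f_*(e_2)$. Given the positive right Perron-Frobenius eigenvector $\vec v$ of $A(f_*)$ with eigenvalue $\lambda(f_*)$, the vector $\vec v'$ defined by $v'_e = v_{e_1} + v_{e_2}$ and $v'_{e'} = v_{e'}$ otherwise satisfies $A(f_*') \vec v' = \lambda(f_*)\vec v'$ by a direct computation that expands both sides and uses the above identity together with the eigenvector equation at coordinates $e_1$ and $e_2$. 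Since $\vec v'$ is positive, Perron-Frobenius forces $A(f_*')$ to be irreducible and $\lambda(f_*') = \lambda(f_*)$.

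For the second case, perform the same formal matrix manipulation to produce an intermediate matrix $A'$; the computation above still gives $\lambda(A') = \lambda(f_*)$. However, the formal fold now fails to yield an $\mathcal A$-relative weak representative: this means that concatenated edge-images share a backtrack at some vertex (the failure of local injectivity on edge interiors) and/or new pretrivial edges appear. The homotopy making the map locally injective cancels these backtracks, strictly decreasing certain entries of $A'$, and the subsequent collapse of a maximal invariant subforest with bounded components restricts to a principal submatrix. Both operations are entrywise non-increasing, and by the assumption that $f_*'$ is not already a weak representative, at least one strict decrease occurs relative to $A'$. Since $f_*''$ is irreducible by hypothesis, the strict monotonicity of the Perron-Frobenius eigenvalue for irreducible nonnegative matrices yields $\lambda(f_*'') < \lambda(A') = \lambda(f_*)$.

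The main obstacle is bookkeeping in Case 2: one must verify that the strict entrywise decrease caused by cancellation actually persists into the transition matrix of $f_*''$ (rather than being ``hidden'' in collapsed subforest edges). Here the irreducibility of $f_*''$ is the key leverage---any cancellation that happens between edges whose orbits ultimately survive will show up in $A(f_*'')$ as a strict decrease, and if all cancellation were absorbed by the subsequently collapsed subforest, one can check that $f_*'$ differed from a weak representative only through bounded material that was going to be collapsed anyway, contradicting the assumption that a nontrivial homotopy step was needed.
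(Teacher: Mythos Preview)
The paper omits the proof of this lemma entirely, referring to \cite[Lemma~1.15]{BH92}. Your approach is essentially the Bestvina--Handel argument, so there is no meaningful comparison to make with the paper's own treatment.

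There is, however, a genuine gap in your Case~1. The claim that ``since $\vec v'$ is positive, Perron--Frobenius forces $A(f_*')$ to be irreducible'' is false: a nonnegative matrix can have a strictly positive eigenvector without being irreducible (the identity matrix is a counterexample). A positive eigenvector does pin down the spectral radius, so once irreducibility is established your conclusion $\lambda(f_*') = \lambda(f_*)$ follows; but irreducibility itself must be argued separately. The standard argument is combinatorial: the fold $p:T_* \to T_*'$ is a surjective quotient satisfying $f_*' \circ p = p \circ f_*$, so any orbit-closed $f_*'$-invariant proper subforest of $T_*'$ pulls back to an orbit-closed $f_*$-invariant proper subforest of $T_*$, contradicting irreducibility of $f_*$.

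A smaller imprecision: the matrix $A(f_*')$ is obtained by \emph{summing the two rows} and \emph{identifying the two (equal) columns}, not by summing both. Your eigenvector computation is nonetheless correct, since it implicitly uses the right transformation. Your Case~2 sketch is sound in outline; the bookkeeping worry you raise is real but is handled by observing that the positive eigenvector $\vec v'$ restricts to the surviving edges and witnesses $A(f_*'')\vec w \le \lambda(f_*)\vec w$ with strict inequality in some coordinate, which for an irreducible matrix forces $\lambda(f_*'') < \lambda(f_*)$.
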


\begin{proof}[Sketch proof of Theorem~\ref{appreltt}] 

Let $\left.\phi\right|_{\mathcal B}$ be irreducible relative to $\mathcal A$ and $f_{*} : T_{*} \to T_{*}$ be an irreducible $\mathcal A$-relative representative. If $\lambda(f_{*})=1$, then $f_*$ is a simplicial embedding with minimal stretch factor and we are done. So we may assume $\lambda(f_{*}) > 1$. 

Suppose for the contrapositive that $f_{*}$ is not an $\mathcal A$-relative train track, then the edge-path $f_{*}^n(e)$ is not immersed for some edge $e$ in $T_{*}$ and integer $n \ge 1$. Let $n$ be the smallest such integer and assume $\star$ is an interior point of an edge $e$ at which $f_{*}^n$ fails to be locally injective. 
We appropriately subdivide $T_{*}$ so that a neighborhood $U$ of $\star$ and its iterates $f_{*}^k(U)$ $(1 \le k \le n)$ satisfy nice properties:
1) $U$ is an interval whose boundary consists of distinct vertices; 2) $f_*^k$ is locally injective on $U$ for $1 \le k < n$; 3) $f_*^n$ folds $U$ at $\star$ to an edge; and 4) $\star \notin f_*^k(U)$ for $1 \le k \le n$.
We can then iteratively fold $f_{*}^{n-1}(U)$, \ldots, $f_{*}^2(U)$, and $f_{*}(U)$. By minimality of $n$, all the folds except the last one induce an irreducible $\mathcal A$-relative weak representative. By the first case of Lemma~\ref{movefold}, this irreducible $\mathcal A$-relative weak representative has the same stretch factor as $f_{*}$. By construction, the last fold induces a map $f_{*}'$ that fails to be an $\mathcal A$-relative weak representative as it fails to be locally injective at $\star$. We can apply a tightening homotopy on $f_{*}'$ to make it locally injective at $\star$, then collapse a maximal invariant subforest with bounded components to get $f_{*}'':T_{*}'' \to T_{*}''$, a minimal $\mathcal A$-relative weak representative for $\left.\phi\right|_{\mathcal B}$. By Lemma~\ref{irredequiv}, the map $f_{*}''$ is irreducible. By the second case of Lemma~\ref{movefold}, the stretch factor is strictly smaller: $\lambda(f_{*}'') < \lambda(f_{*})$. 
We then sequentially apply bivalent homotopies and collapse maximal invariant subforests with bounded components until we get an irreducible $\mathcal A$-relative representative $f_{*}'''$. The stretch factor satisfies $\lambda(f_{*}''') \le \lambda(f_{*}'') < \lambda(f_*)$ by Lemma~\ref{movebivalent}. So $f_{*}$ did not have minimal stretch factor.
\end{proof}

%\nocite{*} % Insert publications even if they are not cited in the poster
\bibliography{zrefs}
\bibliographystyle{plain}

\end{document}